\theoremstyle{plain}
\newtheorem{theorem}{Theorem}[section]
\newtheorem{lemma}[theorem]{Lemma}
\newtheorem{proposition}[theorem]{Proposition}
\newtheorem{corollary}[theorem]{Corollary}
\theoremstyle{remark}
\newtheorem{remark}[theorem]{Remark}
\theoremstyle{definition}
\newtheorem{definition}[theorem]{Definition}
\newtheorem{assumption}[theorem]{Assumption}
\newtheorem{example}[theorem]{Example}
\DeclareMathOperator{\mmon}{\mu mon}
\DeclareMathOperator{\cl}{Cl}
\DeclareMathOperator{\Sh}{\mathscr{S}\mathsf{h}}
\DeclareMathOperator{\Loc}{\mathscr{L}\mathsf{oc}}
\newcommand\bbA{\mathbb{A}}
\newcommand\R{\mathbb{R}}
\newcommand\C{\mathbb{C}}
\newcommand\D{\mathbb{D}}
\newcommand\Z{\mathbb{Z}}
\newcommand\N{\mathbb{N}}
\newcommand\bS{\mathbb{S}}
\newcommand\bfa{\mathbf{a}}
\newcommand\bfb{\mathbf{b}}
\newcommand\bfc{\mathbf{c}}
\newcommand\bfd{\mathbf{d}}
\newcommand\bfe{\mathbf{e}}
\newcommand\bfs{\mathbf{s}}
\newcommand\bfx{\mathbf{x}}
\newcommand\bfy{\mathbf{y}}
\newcommand\bfz{\mathbf{z}}
\newcommand\cA{\mathcal{A}}
\newcommand\cC{\mathcal{C}}
\newcommand\cF{\mathcal{F}}
\newcommand\cL{\mathcal{L}}
\newcommand\cM{\mathcal{M}}
\newcommand\cX{\mathcal{X}}
\newcommand\sfI{\mathsf{I}}
\newcommand\sfT{\mathsf{T}}
\newcommand\sfY{\mathsf{Y}}
\newcommand{\clusterfont}{\mathcal}
\newcommand{\dynkinfont}{\mathsf}
\newcommand{\ngraphfont}{\mathscr}
\newcommand{\quiver}{\clusterfont{Q}}
\newcommand{\qbasis}{\clusterfont{B}}
\newcommand{\qcoxeter}{\mutation_{\quiver}}
\DeclareMathOperator{\exchange}{Ex}
\DeclareMathOperator{\pr}{pr}
\newcommand{\qbpr}{\clusterfont{B}^{\pr}}
\newcommand{\exchangesub}[2]{\exchange({#1},{#2})}
\newcommand{\ngraph}{\ngraphfont{G}}
\newcommand{\nbasis}{\ngraphfont{B}}
\newcommand{\ncoxeter}{\mutation_\ngraph}
\newcommand{\coxeterpadding}{\ngraphfont{C}}
\newcommand{\nannulus}{\ngraphfont{A}}
\newcommand{\dynA}{\dynkinfont{A}}
\newcommand{\dynB}{\dynkinfont{B}}
\newcommand{\dynC}{\dynkinfont{C}}
\newcommand{\dynD}{\dynkinfont{D}}
\newcommand{\dynE}{\dynkinfont{E}}
\newcommand{\dynF}{\dynkinfont{F}}
\newcommand{\dynG}{\dynkinfont{G}}
\newcommand{\dynX}{\dynkinfont{X}}
\newcommand{\dynY}{\dynkinfont{Y}}
\newcommand{\dynADE}{\dynkinfont{ADE}}
\newcommand{\dynBCFG}{\dynkinfont{BCFG}}
\newcommand{\exdynA}{\widetilde{\dynA}}
\newcommand{\exdynB}{\widetilde{\dynB}}
\newcommand{\exdynC}{\widetilde{\dynC}}
\newcommand{\exdynD}{\widetilde{\dynD}}
\newcommand{\exdynE}{\widetilde{\dynE}}
\newcommand{\exdynF}{\widetilde{\dynF}}
\newcommand{\exdynG}{\widetilde{\dynG}}
\newcommand{\exdynX}{\widetilde{\dynX}}
\newcommand{\facet}{\mathscr{F}}
\newcommand{\seed}{\Sigma}
\newcommand{\initialseed}{\Sigma_{t_0}}
\newcommand{\flags}{\mathcal{F}_\legendrian}
\newcommand{\mutation}{\mu}
\DeclareMathOperator{\Fan}{Fan}
\DeclareMathOperator{\re}{re}
\newcommand{\annulus}{\mathbb{A}}
\newcommand{\disk}{\mathbb{D}}
\newcommand{\sphere}{\mathbb{S}}
\newcommand{\legendrian}{\lambda}
\newcommand{\Legendrian}{\Lambda}
\newcommand{\boundary}{\partial}
\newcommand{\cycle}{\gamma}
\newcommand{\field}{\mathbb{F}}
\newcommand{\Roots}{\Phi}
\newcommand{\SRoots}{\Pi}
\newcommand{\Move}[1]{{\rm{(#1)}}}
\newcommand{\wavefront}{\Gamma}
\newcommand{\boundellipse}[3]
{(#1) ellipse (#2 and #3)}
\newcommand{\1}{\sigma_1}
\newcommand{\2}{\sigma_2}
\newcommand{\3}{\sigma_3}
\newcommand{\4}{\sigma_4}
\newcommand{\reducedto}{\succ}
\numberwithin{equation}{section}
\tikzset{ynode/.style = {circle, fill = yellow, inner sep = 2pt, opacity = 0.5}}
\tikzset{gnode/.style = {circle, fill=green, inner sep = 2pt, opacity = 0.5}}
\tikzstyle{Dnode}=[draw, circle, inner sep = 0.07cm]
\tikzstyle{double line} = [
\tikzstyle{triple line} = [
\newcommand*\circled[1]{\tikz[baseline=(char.base)]{
\node[shape=circle,draw,inner sep=0.8pt] (char) {#1};}}
\newcommand{\todo}[2][1]{\vspace{1 mm}\par \noindent
\marginpar{\textsc{To Do}}
\framebox{\begin{minipage}[c]{#1 \textwidth} \tt
#2 \end{minipage}}\vspace{1 mm}\par}
\newcommand{\aha}[2][0.8]{\vspace{1 mm}\par \noindent
\framebox{\begin{minipage}[c]{#1 \textwidth}  #2
\end{minipage}}\vspace{1 mm}\par}
\newcommand{\fix}[1]{\marginpar{\textsc{Fix!}}\framebox{#1}}
\tikzset{curlybrace/.style={rounded corners=2pt,line cap=round}}%
\def\cb@angle{#1},
\def\curlybrace{\pgfutil@ifnextchar[{\curly@brace}{\curly@brace[]}}%
\def\curly@brace[#1]#2#3#4{%
\pgfkeys{/curlybrace/.cd,
tip angle = 0.75}%
\pgfqkeys{/curlybrace}{#1}%
\ifnum 1>#4 \def\cbrd{0.05} \else \def\cbrd{0.075} \fi
\draw[/curlybrace/.cd,curlybrace,#1]  (#2:#4-\cbrd) -- (#2:#4) arc (#2:{(#2+#3)/2-\cb@angle}:#4) --({(#2+#3)/2}:#4+\cbrd) coordinate (curlybracetipn);
\draw[/curlybrace/.cd,curlybrace,#1] ({(#2+#3)/2}:#4+\cbrd) -- ({(#2+#3)/2+\cb@angle}:#4) arc ({(#2+#3)/2+\cb@angle} :#3:#4) --(#3:#4-\cbrd)
}
\newlength{\myline}
\newcommandx*{\triplearrow}[4][1=0, 2=1]{
  \draw[double distance=3\myline,#3] #4;
  \draw[shorten <=#1\myline,shorten >=#2\myline,#3] #4
}
\title{Lagrangian fillings for Legendrian links of affine type}
\author{Byung Hee An}
\email{anbyhee@knu.ac.kr}
\address{Department of Mathematics Education, Kyungpook National University, Republic of Korea}
\author{Youngjin Bae}
\email{yjbae@inu.ac.kr}
\address{Department of Mathematics, Incheon National University, Republic of Korea}
\author{Eunjeong Lee}
\email{eunjeong.lee@ibs.re.kr}
\address{Center for Geometry and Physics, Institute for Basic Science (IBS), Pohang 37673, Republic of Korea}
\keywords{Legendrian link, Lagrangian filling, Cluster algebra}
\subjclass[2010]{Primary: 53D10, 13F60. Secondary: 57R17.}
\begin{document}

\begin{abstract}
We prove that there are at least as many exact embedded Lagrangian fillings 
as seeds for Legendrian links of affine type $\exdynD \exdynE$. We also provide 
as many Lagrangian fillings with certain symmetries as seeds of type $\exdynB_n$, $\exdynF_4$, $\exdynG_2$, and $\dynE_6^{(2)}$.
These families are the first known Legendrian links with infinitely many fillings that exhaust all seeds in the corresponding cluster structures.
Furthermore, we show that Legendrian realization of Coxeter mutation of type $\exdynD$ corresponds to the Legendrian loop considered by Casals and Ng.
\end{abstract}
\maketitle

\tableofcontents

\section{Introduction}
\subsection{Background}

Interaction between symplectic geometry and cluster algebra has become increasingly fruitful. The study of Lagrangian fillings for Legendrian links is the one of supporting areas in symplectic geometry. Many interesting connections between these two fields are revealed and strengthened as follows:

In \cite{STWZ2019}, exact Lagrangian fillings are represented by alternating diagrams of Legendrian links, and Lagrangian surgeries are realized by square moves of the diagram which correspond to quiver mutations. In addition, the boundary measurement map assigns each alternating diagram a toric chart in the moduli space of constructible sheaves adapted to Legendrian links which we think of cluster variety as well as the space of exact Lagrangian fillings.

By the series of works in \cite{SW2019, GSW2020a, GSW2020b}, they argue that the moduli space of constructible sheaves adapted to Legendrian links of positive braid closure, admits a structure of cluster algebra. Moreover, they construct infinitely many Lagrangian fillings by considering the effect of Donaldson--Thomas transformation on the cluster variety.

In the work of \cite{TZ2018, CZ2020}, the authors introduce $N$-graphs to describe exact Lagrangian fillings in a systematic and combinatorial way. They also develop Legendrian mutations which realize Lagrangian surgeries in the geometric side, and show that its induced operation in the algebraic side coincides with the cluster mutation.

Our previous work \cite{ABL2021} mainly use $N$-graphs and Legendrian mutations 
to produce distinct Lagrangian fillings. We focus on the \emph{Coxeter 
mutations} in order to see that there is no obstruction  to realize Legendrian 
mutations in $N$-graphs. As a result, we show that there are at least as 
many exact Lagrangian fillings as seeds for the Legendrian links which admits cluster 
algebra of finite type. 

On the other hand, there is a parallel strategy to study Lagrangian fillings, the Legendrian contact differential graded algebra. By the functoriality of Legendrian DGA under exact Lagrangian cobordism \cite{EHK2016}, each Lagrangian filling gives an augmentation of the DGA. Moreover, a loop of Legendrians defines an automorphism of the DGA, and it has been used to find distinct Lagrangian fillings \cite{Kal2006,CN2021}. 

\subsection{The results}
The main result is to construct as many exact embedded Lagrangian fillings as seeds for Legendrian links of affine type $\exdynD \exdynE$. We mainly use 
$N$-graphs and their Legendrian mutations to produce distinct Lagrangian 
fillings. An $N$-graph on $\D^2$ represent Legendrian surface in $J^1\D^2$ 
whose Lagrangian projection gives an exact Lagrangian surface bounding a 
Legendrian link in $J^1\partial \D^2$.
We provide the Legendrian links of type $\exdynD \exdynE$ as follows:

\begin{align*}
\legendrian({{\exdynD}_n})=
\begin{tikzpicture}[baseline=-0.5ex,scale=0.8]
\draw[thick] (0,0) to[out=0,in=180] (1,-0.5) to[out=0,in=180] (2,-1) to[out=0,in=180] (3,-0.5) to[out=0,in=180] (4,0) to[out=0,in=180] (9,0);
\draw[thick] (0,-0.5) to[out=0,in=180] (1,0) to[out=0,in=180] (3,0) to[out=0,in=180] (4,-0.5) (5,-0.5) to[out=0,in=180] (6,-0.5) to[out=0,in=180] (7,-1) to[out=0,in=180] (8,-0.5) to[out=0,in=180] (9,-0.5);
\draw[thick] (0,-1) to[out=0,in=180] (1,-1) to[out=0,in=180] (2,-0.5) to[out=0,in=180] (3,-1) to[out=0,in=180] (4,-1) (5,-1) to[out=0,in=180] (6,-1.5) to[out=0,in=180] (8,-1.5) to[out=0,in=180] (9,-1);
\draw[thick] (0,-1.5) to[out=0,in=180] (5,-1.5) to[out=0,in=180] (6,-1) to[out=0,in=180] (7,-0.5) to[out=0,in=180] (8,-1) to[out=0,in=180] (9,-1.5);
\draw[thick] (4,-0.4) rectangle node {$\scriptstyle{n-4}$} (5, -1.1);
\draw[thick] (0,0) to[out=180,in=0] (-0.5,0.25) to[out=0,in=180] (0,0.5) to[out=0,in=180] (9,0.5) to[out=0,in=180] (9.5,0.25) to[out=180,in=0] (9,0);
\draw[thick] (0,-0.5) to[out=180,in=0] (-1,0.25) to[out=0,in=180] (0,0.75) to[out=0,in=180] (9,0.75) to[out=0,in=180] (10,0.25) to[out=180,in=0] (9,-0.5);
\draw[thick] (0,-1) to[out=180,in=0] (-1.5,0.25) to[out=0,in=180] (0,1) to[out=0,in=180] (9,1) to[out=0,in=180] (10.5,0.25) to[out=180,in=0] (9,-1);
\draw[thick] (0,-1.5) to[out=180,in=0] (-2,0.25) to[out=0,in=180] (0,1.25) to[out=0,in=180] (9,1.25) to[out=0,in=180] (11,0.25) to[out=180,in=0] (9,-1.5);
\end{tikzpicture}
\end{align*}

\begin{align*}
\legendrian(a,b,c)=
\begin{tikzpicture}[baseline=5ex,scale=0.8]
\draw[thick] (0,0) to[out=0,in=180] (1,0.5) (2,0.5) to (2.5,0.5) (3.5,0.5) to (4,0.5) (5,0.5) to (5.5,0.5);
\draw[thick] (0,0.5) to[out=0,in=180] (1,0) to (2.5,0) (3.5,0) to (5.5,0);
\draw[thick] (0,1) to[out=0,in=180] (1,1) (2,1) to (4,1) (5,1) to (5.5,1);
\draw[thick] (1,0.4) rectangle node {$\scriptstyle a$} (2, 1.1);
\draw[thick] (2.5,-0.1) rectangle node {$\scriptstyle{b-1}$} (3.5, 0.6);
\draw[thick] (4,0.4) rectangle node {$\scriptstyle c$} (5, 1.1);
\draw[thick] (0,1) to[out=180, in=0] (-0.5,1.25) to[out=0,in=180] (0,1.5) to (5.5,1.5) to[out=0,in=180] (6,1.25) to[out=180,in=0] (5.5,1);
\draw[thick] (0,0.5) to[out=180, in=0] (-1,1.25) to[out=0,in=180] (0,1.75) to (5.5,1.75) to[out=0,in=180] (6.5,1.25) to[out=180,in=0] (5.5,0.5);
\draw[thick] (0,0) to[out=180, in=0] (-1.5,1.25) to[out=0,in=180] (0,2) to (5.5,2) to[out=0,in=180] (7,1.25) to[out=180,in=0] (5.5,0);
\end{tikzpicture}
\end{align*}
Here, $\legendrian({{\exdynE}_6})=\legendrian(3,3,3)$, $\legendrian({{\exdynE}_7})=\legendrian(2,4,4)$, and $\legendrian({{\exdynE}_8})=\legendrian(2,3,6)$, which come from the triples $(a,b,c)$ satisfying ${\frac1 a} +{ \frac1 b} + {\frac1 c} =1$.

Note that the above Legendrians are the rainbow closure of \emph{positive 
braids}. By the work of Shen--Weng \cite{SW2019}, it is direct to check that 
the corresponding cluster structure of Legendrian $\legendrian(\dynX)$ is 
indeed of type $\dynX$ for $\dynX=\exdynD$ or $\exdynE$. More precisely, the coordinate 
ring of the moduli space $\cM_1(\legendrian(\dynX))$ of microlocal rank one 
sheaves in $\Sh^\bullet_{\legendrian(\dynX)}(\R^2)$ admits the aforementioned 
cluster structure.
By the way, the (candidate) Legendrians of type $\exdynA$ are not the rainbow 
closure of positive braids, in general. 
Indeed, Casals--Ng~\cite{CN2021} considered a Legendrian link of type 
$\exdynA_{1,1}$ which is not the rainbow closure of a  positive braid. 
So we can not directly apply the subsequent argument to Legendrians of type 
$\exdynA$.

By applying a sequence of Reidemeister moves to the above Legendrian link 
$\legendrian(\dynX)$, we have the $(N-1)$-colored points in $S^1$ which 
represent a Legendrian braid in $J^1 S^1$.
Now we consider the $N$-graph $\ngraph(\dynX)$ depicted in 
Figures~\ref{ngraph_nbasis_D} and~\ref{ngraph_nbasis_E} extending the boundary 
data $\legendrian(\dynX)$ with decorated edges $\nbasis(\dynX)$ to indicate an 
exact Lagrangian filling of the starting Legendrian link together with a tuple 
of one-cycles on that fillings.

\begin{figure}[ht]
\subfigure[$(\ngraph(\exdynD_{2\ell+4}),\nbasis(\exdynD_{2\ell+4}))$\label{ngraph_nbasis_D}]{
$
\begin{tikzpicture}[baseline=-.5ex,scale=0.5]
\useasboundingbox (-7,-4.25) rectangle (7,4.25);
\draw [decorate,decoration={brace,amplitude=10pt}]
(-2.2,3.1) -- (3.2,3.1) node [black,midway,yshift=0.5cm] {$\ell$} ;
\begin{scope}[xshift=-4cm]
\draw[yellow, line width=5, opacity=0.5] (-1,1)--(0,0) (-1,-1)--(0,0) (1,0)--(0,0);
\draw[green, line width=5, opacity=0.5] (-1,1)--(-2,1) (-1,-1)--(-1,-2) (1,0)--(2,0)
(3,0)--(3.5,0) (4.5,0)--(5,0) (6,0)--(7,0) (9,1)--(9,2) (9,-1)--(10,-1);
\draw[blue,thick, rounded corners] (-1.75, 3) -- (-1.5, 2.5) -- (-1.25, 3);
\draw[green, thick] (0,3) -- (0,-3) (0,0) -- (-3,0);
\draw[red,thick, fill] (0,0) -- (-1,1) circle (2pt) -- +(0,2) (-1,1) -- ++(-1,0) circle (2pt) -- +(-1,0) (-2,1) -- +(0,2);
\draw[red,thick, fill] (0,0) -- (-1,-1) circle (2pt) -- +(-2,0) (-1,-1) -- ++(0,-1) circle (2pt) -- +(0,-1) (-1,-2) -- +(-2,0);
\draw[red,thick] (0,0) -- (1,0);
\draw[thick, fill=white] (0,0) circle (2pt);
\draw[thick](1, 3) -- (-2,3) to[out=180,in=90] (-3,2) --(-3,-2) to[out=-90,in=180] (-2, -3) -- (10,-3) to[out=0,in=-90] (11,-2) -- (11,2) to[out=90,in=0] (10,3)-- (1,3);
\end{scope}
\begin{scope}[xshift=4cm,rotate=180]
\draw[yellow, line width=5, opacity=0.5] (-1,1)--(0,0) (-1,-1)--(0,0) (1,0)--(0,0);
\draw[blue, thick] (0,3) -- (0,-3) (0,0) -- (-3,0);
\draw[red,thick, fill] (0,0) -- (-1,1) circle (2pt) -- +(0,2) (-1,1) -- ++(-1,0) circle (2pt) -- +(-1,0) (-2,1) -- +(0,2);
\draw[red,thick, fill] (0,0) -- (-1,-1) circle (2pt) -- +(-2,0) (-1,-1) -- ++(0,-1) circle (2pt) -- +(0,-1) (-1,-2) -- +(-2,0);
\draw[red,thick] (0,0) -- (1,0);
\draw[thick, fill=white] (0,0) circle (2pt);
\end{scope}
\begin{scope}
\draw[yellow, line width=5, opacity=0.5] (-2,0) -- (-1,0) (1,0) -- (2,0);
\draw[red, thick, fill] (-3,0) circle (2pt) -- (-3, -3) (-3,0) -- (-2,0) circle (2pt) -- (-2,3) (-2,0) -- (-1,0) circle (2pt) -- (-1,-3) (-1,0) -- (-0.5,0) (0.5,0) -- (1,0) circle (2pt) -- (1,3) (1,0) -- (2, 0) circle (2pt) -- (2,-3) (2,0) -- (3,0) circle (2pt) -- (3,3);
\draw[red, thick, dashed] (-0.5,0) -- (0.5,0);
\end{scope}
\end{tikzpicture}
$
}
\subfigure[$(\ngraph(a,b,c),\nbasis(a,b,c))$\label{ngraph_nbasis_E}]{
$
\begin{tikzpicture}[baseline=-.5ex,scale=0.6]
\draw[thick] (0,0) circle (3cm);
\draw[yellow, line cap=round, line width=5, opacity=0.5] (60:1) -- (50:1.5) 
(70:1.75) -- (50:2) (180:1) -- (170:1.5) (190:1.75) -- (170:2) (300:1) -- 
(290:1.5) (310:1.75) -- (290:2);
\draw[green, line cap=round, line width=5, opacity=0.5] (0,0) -- (60:1) (0,0) -- (180:1) (0,0) -- (300:1) (50:1.5) -- (70:1.75) (170:1.5) -- (190:1.75) (290:1.5) -- (310:1.75);
\draw[red, thick] (0,0) -- (0:3) (0,0) -- (120:3) (0,0) -- (240:3);
\draw[blue, thick, fill] (0,0) -- (60:1) circle (2pt) -- (100:3) (60:1) -- (50:1.5) circle (2pt) -- (20:3) (50:1.5) -- (70:1.75) circle (2pt) -- (80:3) (70:1.75) -- (50:2) circle (2pt) -- (40:3);
\draw[blue, thick, dashed] (50:2) -- (60:3);
\draw[blue, thick, fill] (0,0) -- (180:1) circle (2pt) -- (220:3) (180:1) -- (170:1.5) circle (2pt) -- (140:3) (170:1.5) -- (190:1.75) circle (2pt) -- (200:3) (190:1.75) -- (170:2) circle (2pt) -- (160:3);
\draw[blue, thick, dashed] (170:2) -- (180:3);
\draw[blue, thick, fill] (0,0) -- (300:1) circle (2pt) -- (340:3) (300:1) -- (290:1.5) circle (2pt) -- (260:3) (290:1.5) -- (310:1.75) circle (2pt) -- (320:3) (310:1.75) -- (290:2) circle (2pt) -- (280:3);
\draw[blue, thick, dashed] (290:2) -- (300:3);
\draw[thick, fill=white] (0,0) circle (2pt);
\curlybrace[]{10}{110}{3.2};
\draw (60:3.5) node[rotate=-30] {\small ${ a+1}$};
\curlybrace[]{130}{230}{3.2};
\draw (180:3.5) node[rotate=90] {\small $b+1$};
\curlybrace[]{250}{350}{3.2};
\draw (300:3.5) node[rotate=30] {\small $c+1$};
\end{tikzpicture}
$
}\\
\subfigure[$\mutation_\ngraph(\ngraph(\exdynD_{2\ell+4}),\nbasis(\exdynD_{2\ell+4}))$
\label{coxeter_mutation_D}]{
$
\begin{tikzpicture}[baseline=-.5ex,scale=0.47]
\useasboundingbox (-3,-4) rectangle (15,4);
\draw[fill=black, opacity=0.1](3,3) -- (-1,3) to[out=180, in=90] (-3,1) to (-3,-1) to[out=-90,in=180] (-1,-3) to (13,-3) to[out=0, in=-90] (15,-1) to (15,1) to[out=90,in=0] (13,3) to (3,3) ;
\draw[fill=white,dashed, rounded corners]
(3,1.75) -- (0.25,1.75) -- (0.25, -1.75) -- (11.75,-1.75) -- (11.75,1.75) -- (3,1.75)
;
\draw[blue,thick, rounded corners]
(1/3,3)-- (4/3,2) --(1,4/3)-- (2/3,2) -- (-1/3,3)
;
\begin{scope}[yscale=-1]
\draw[green, line width=5, opacity=0.5, line cap=round] 
(1,0.5) -- (1,1.5)
(0.5,-1) --(1.5,-1)
;
\draw[yellow, line width=5, opacity=0.5] 
(1,0.5) -- (2,0) --(1.5,-1)
(2,0) -- (3,0)
;
\draw[green, thick] 
(2,3) -- (2,-3)
(2,2) -- (0,2) -- (-2,0) -- (-3,0)
(2,-2) -- (0,-2) -- (-2,0)
(0,2) -- (0,-2)
(0,0) -- (2,0)
;
\draw[red, thick] 
(-3,1) -- (-2,0) to[out=15,in=-105] (0,2) (0,-2) -- (-1,-3)
(0,2) -- (1,1.5) -- (2,2)
(1,3) -- (2,2)
(1,1.5) -- (1,0.5) -- (2,0) -- (3,0)
(1,0.5) -- (0,0) -- (0.5,-1) -- (1.5,-1) -- (2,0)
(0,0) to[out=-120,in=120] (0,-2) (0,2) -- (-1,3)
(0,-2) -- (0.5,-1)
(1.5,-1) -- (2,-2)
(2,-2) -- (1,-3)
(-3,-1) -- (-2,0)
;
\draw[thick, fill=white] 
(-2,0) circle (2pt) 
(0,2) circle (2pt) 
(0,-2) circle (2pt) 
(0,0) circle (2pt) 
(2,2) circle (2pt) 
(2,0) circle (2pt) 
(2,-2) circle (2pt);
\draw[red, thick, fill]
(1,0.5) circle (2pt)
(1,1.5) circle (2pt)
(0.5,-1) circle (2pt)
(1.5,-1) circle (2pt)
;
\draw[thick]
(3,3) -- (-1,3) to[out=180, in=90] (-3,1) to (-3,-1) to[out=-90,in=180] (-1,-3) to (13,-3) to[out=0, in=-90] (15,-1) to (15,1) to[out=90,in=0] (13,3) to (3,3) 
;
\end{scope}
\begin{scope}[xshift=12cm,rotate=180,yscale=-1]
\draw[green, line width=5, opacity=0.5, line cap=round] 
(1,0.5) -- (1,1.5)
(0.5,-1) --(1.5,-1)
;
\draw[yellow, line width=5, opacity=0.5] 
(1,0.5) -- (2,0) --(1.5,-1)
(2,0) -- (3,0)
;
\draw[blue, thick] 
(2,3) -- (2,-3)
(2,2) -- (0,2) -- (-2,0) -- (-3,0)
(2,-2) -- (0,-2) -- (-2,0)
(0,2) -- (0,-2)
(0,0) -- (2,0)
;
\draw[red, thick] 
(-3,1) -- (-2,0) to[out=15,in=-105] (0,2) (0,-2) -- (-1,-3)
(0,2) -- (1,1.5) -- (2,2)
(1,3) -- (2,2)
(1,1.5) -- (1,0.5) -- (2,0) -- (3,0)
(1,0.5) -- (0,0) -- (0.5,-1) -- (1.5,-1) -- (2,0)
(0,0) to[out=-120,in=120] (0,-2) (0,2) -- (-1,3)
(0,-2) -- (0.5,-1)
(1.5,-1) -- (2,-2)
(2,-2) -- (1,-3)
(-3,-1) -- (-2,0)
;
\draw[thick, fill=white] 
(-2,0) circle (2pt) 
(0,2) circle (2pt) 
(0,-2) circle (2pt) 
(0,0) circle (2pt) 
(2,2) circle (2pt) 
(2,0) circle (2pt) 
(2,-2) circle (2pt);
\draw[red, thick, fill]
(1,0.5) circle (2pt)
(1,1.5) circle (2pt)
(0.5,-1) circle (2pt)
(1.5,-1) circle (2pt)
;
\end{scope}
\begin{scope}[xshift=6cm]
\draw[green, line width=5, opacity=0.5] (-3,0) -- (-2,0) (-1,0) -- (-0.5, 0);
\draw[yellow, line width=5, opacity=0.5] (-2,0) -- (-1,0);
\draw[red, thick,rounded corners] 
(-4,-2) -- (-3, -2) -- (-3,0)
(-4,2) -- (-2,2) -- (-2,3) 
(-2,0) -- (-3,0)
(-3,-3) -- (-2,-2) -- (-2,-1) -- (-1,-1) -- (-1,0)
(-2,0) -- (-1,0) 
(-2,0) -- (-2,1) -- (-1,1) -- (-1,2) -- (-0.5,2)
(-1,0) -- (-0.5, 0) 
(-1,-3) -- (-1,-2) -- (-0.5,-2);
\draw[red, thick, fill] (-3,0) circle (2pt) (-2,0) circle (2pt) (-1,0) circle (2pt);
\draw[red, thick, dashed] (-0.5, 0) -- (0.5, 0) (-0.5, 2) -- (0.5, 2) (-0.5, -2) -- (0.5, -2);
\end{scope}
\begin{scope}[rotate=180,xshift=-6cm]
\draw[green, line width=5, opacity=0.5] (-3,0) -- (-2,0) (-1,0) -- (-0.5, 0);
\draw[yellow, line width=5, opacity=0.5] (-2,0) -- (-1,0);
\draw[red, thick,rounded corners] 
(-4,-2) -- (-3, -2) -- (-3,0)
(-4,2) -- (-2,2) -- (-2,3) 
(-2,0) -- (-3,0)
(-3,-3) -- (-2,-2) -- (-2,-1) -- (-1,-1) -- (-1,0)
(-2,0) -- (-1,0) 
(-2,0) -- (-2,1) -- (-1,1) -- (-1,2) -- (-0.5,2)
(-1,0) -- (-0.5, 0) 
(-1,-3) -- (-1,-2) -- (-0.5,-2);
\draw[red, thick, fill] (-3,0) circle (2pt) (-2,0) circle (2pt) (-1,0) circle (2pt);
\end{scope}
\end{tikzpicture} 
$
}
\subfigure[$\mutation_\ngraph(\ngraph(a,b,c),\nbasis(a,b,c))$\label{coxeter_mutation_E}]{
$
\begin{tikzpicture}[baseline=-.5ex,scale=0.47]
\useasboundingbox (-5,-5.5) rectangle (5,5.5);
\draw[thick] (0,0) circle (5cm);
\draw[dashed]  (0,0) circle (3cm);
\fill[opacity=0.1, even odd rule] (0,0) circle (3) (0,0) circle (5);
\foreach \i in {1,2,3} {
\begin{scope}[rotate=\i*120]
\draw[yellow, line cap=round, line width=5, opacity=0.5] (60:1) -- (50:1.5) (70:1.75) -- (50:2);
\draw[green, line cap=round, line width=5, opacity=0.5] (0,0) -- (60:1) (50:1.5) -- (70:1.75);
\draw[blue, thick, rounded corners] (0,0) -- (0:3.4) to[out=-75,in=80] (-40:4);
\draw[red, thick, fill] (0,0) -- (60:1) circle (2pt) (60:1) -- (50:1.5) circle (2pt) -- (70:1.75) circle (2pt) -- (50:2) circle (2pt);
\draw[red, thick, dashed, rounded corners] (50:2) -- (60:2.8) -- (60:3.3) to[out=0,in=220] (40:4) (40:4) to[out=120,in=-20] (60:4);
\draw[red, thick, rounded corners] (50:2) -- (40:2.8) -- (40:3.3) to[out=-20,in=200] (20:4) (70:1.75) -- (80:2.8) -- (80:3.3) to[out=20,in=240] (60:4) (60:1) -- (100:2.8) -- (100:3.3) to[out=40,in=260] (80:4);
\draw[red, thick, rounded corners] (50:1.5) -- (20:3) -- (20:3.5) to[out=-70,in=50] (-40:4) (20:4) to[out=-50,in=120] (0:4.5) -- (0:5);
\draw[red, thick] (20:4) to[out=100,in=-40] (40:4) (60:4) to[out=140,in=0] (80:4);
\draw[blue, thick] (20:5) -- (20:4) to[out=140,in=-80] (40:4) (60:5) -- (60:4) to[out=180,in=-40] (80:4) -- (80:5);
\draw[blue, thick, rounded corners] (20:4) to[out=-70,in=100] (-20:4.5) -- (-20:5);
\draw[blue, thick, dashed] (40:4) to[out=160,in=-60] (60:4) (40:4) -- (40:5);
\draw[fill=white, thick] (20:4) circle (2pt) (40:4) circle (2pt) (60:4) circle (2pt) (80:4) circle (2pt) (-40:4) circle (2pt);
\end{scope}
\draw[fill=white, thick] (0,0) circle (2pt);
}
\end{tikzpicture}
$}
\caption{$N$-graphs of type $\exdynD \exdynE$ and their Coxeter 
mutations.}\label{fig_N_graphs_of_DE_and_Coxeter}
\end{figure}
Note that the pair of an $N$-graph and a tuple of cycles for $\exdynD_n$ differ depending on the parity of~$n$, see Table~\ref{table:affine D type}.

The pair $(\ngraph(\dynX),\nbasis(\dynX))$ in Figure~\ref{ngraph_nbasis_D} 
or~\ref{ngraph_nbasis_E} determines the initial seed $\initialseed= 
(\bfx_{t_0}, \qbasis_{t_0})$ in the corresponding cluster structure. The 
intersection pattern of the one-cycles defines a quiver $\quiver_{t_0}$
and the exchange matrix $\qbasis_{t_0}$, which is the adjacency matrix of $\quiver_{t_0}$, and 
the microlocal monodromy assign the tuple of cycles to a tuple of regular 
functions $\bfx_{t_0}$ in the coordinate ring of the moduli space  $\cM_1(\legendrian(\dynX))$.
In order to guarantee the existence of as many exact Lagrangian fillings as seeds, it remains to apply mutations in all possible ways.

A subtle point arises from the difference between mutation in cluster structure and the corresponding operation, \emph{Legendrian mutation}, in $N$-graph. The Legendrian mutation is well-defined when the geometric intersections numbers between cycles coincide with the algebraic intersections, while there is no obstruction to mutate in the cluster structure. 

Let $\qbpr_{t_0}$ be the principal part of the exchange matrix having 
$n=|\nbasis(\dynX)|$ columns in the initial seed $\initialseed$ determined by 
$(\ngraph(\dynX),\nbasis(\dynX))$. Then the combinatorial structure of the exchange 
graph $\exchange(\qbpr_{t_0})$ plays the crucial role to realize Legendrian 
mutation on $N$-graphs. Namely, any seed in the cluster pattern is obtained by 
iterating \emph{Coxeter mutation} followed by the mutations in a certain 
induced subgraph $\exchange(\qbpr_{t_0},x_{\ell})$ of degree $n-1$. The upshot is to 
use the induction argument on the number of cycles $|\nbasis(\dynX)|$ as long as the 
Coxeter mutation is possible in the $N$-graph setup.

Now the problem boils down to realize Coxeter mutations in $N$-graphs. Let us 
consider a partition $\nbasis_+$, $\nbasis_-$ of one cycles $\nbasis$ which 
are  {\color{green!50!black} green}, {\color{orange!80!yellow} yellow}-shaded 
cycles, 
respectively. Then the $N$-graph version of the Coxeter mutation 
$\mutation_\quiver$, called the \emph{Legendrian Coxeter mutation}, is defined by a 
sequence of 
the mutations
$\mutation_\quiver=\prod_{i\in \nbasis_-}\mutation_i \cdot \prod_{i\in \nbasis_+}\mutation_i$.

By applying Legendrian mutations together with a sequence of $N$-graph moves 
(II) and (V) in Figure~\ref{fig:move1-6}, we have the resulting pair 
$\mutation_\quiver(\ngraph(\dynX),\nbasis(\dynX))$ as in 
Figures~\ref{coxeter_mutation_D} and~\ref{coxeter_mutation_E}.
The key observation is that the Legendrian Coxeter mutations are nothing but 
attaching annulus type $N$-graphs, the gray-shaded region in 
Figures~\ref{coxeter_mutation_D} and~\ref{coxeter_mutation_E}. There are no 
obstruction to realize these attaching procedure. The similar holds for 
$\mutation_\quiver^{-1}(\ngraph(\dynX),\nbasis(\dynX))$ and other 
$\exdynD$-types.

\begin{theorem}[Theorem~\ref{thm:seed many fillings}]\label{theorem:seed many}
There are at least as many distinct exact embedded Lagrangian fillings as seeds for Legendrian links of type $\exdynD\exdynE$. 
\end{theorem}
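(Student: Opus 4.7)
The plan is the following: starting from the initial pair $(\ngraph(\dynX),\nbasis(\dynX))$, whose associated intersection quiver and tuple of microlocal monodromies realize the initial seed $\initialseed$ in the cluster structure on $\cM_1(\legendrian(\dynX))$, I would construct an exact embedded Lagrangian filling for every seed in the cluster pattern by repeatedly applying Legendrian mutations on $N$-graphs, and then show that distinct seeds give non-Hamiltonian isotopic fillings via the microlocal monodromy map.

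The main structural input is a decomposition of the exchange graph $\exchange(\qbpr_{t_0})$: every seed is obtained from $\initialseed$ by iterating the Coxeter mutation $\mutation_\quiver$ a finite (positive or negative) number of times and then applying mutations inside an induced subgraph $\exchange(\qbpr_{t_0},x_\ell)$ of rank $n-1$. For $\exdynD\exdynE$, removing the appropriate vertex yields a Dynkin diagram of finite type, so the induced subgraph is of finite type, and by our previous work \cite{ABL2021} every seed in it is realized by a sequence of Legendrian mutations on $N$-graphs starting from any given pair. This reduces the problem to realizing the Coxeter mutation and its inverse as $N$-graph operations without geometric obstruction.

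The crucial geometric step is to realize the Legendrian Coxeter mutation $\mutation_\quiver=\prod_{i\in\nbasis_-}\mutation_i\cdot\prod_{i\in\nbasis_+}\mutation_i$ as a concrete sequence of elementary $N$-graph moves. Here the cycles in $\nbasis_+$ (respectively $\nbasis_-$) are mutually disjoint, so each of the two inner products is automatically well defined; the nontrivial check is that after performing the first block of mutations, the geometric intersection numbers between the transformed cycles and those in the second block still coincide with the algebraic ones. The key picture, shown in Figures~\ref{coxeter_mutation_D} and~\ref{coxeter_mutation_E}, is that the composite of these mutations together with the $N$-graph moves (II) and (V) amounts to gluing an annular $N$-graph (the gray-shaded region) onto the initial pair. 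Since this annulus can always be attached, Coxeter mutation and its inverse can be iterated indefinitely, because a fresh annular collar can always be glued to the outcome. I expect the hardest step to be precisely this move-by-move bookkeeping through the product $\prod_i \mutation_i$: one must verify at each intermediate stage that no obstruction to the next Legendrian mutation appears and that the cycle data stays compatible with the annular picture, and the parity issue in $\ngraph(\exdynD_n)$ (Table~\ref{table:affine D type}) must be handled case-by-case.

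Putting the pieces together, iterating Legendrian Coxeter mutation finitely many times in either direction and then executing a finite-type mutation sequence from \cite{ABL2021} inside the induced subgraph of rank $n-1$ realizes every seed of $\exchange(\qbpr_{t_0})$ as an $N$-graph pair, hence as an exact embedded Lagrangian filling of $\legendrian(\dynX)$. Distinctness of the resulting fillings follows from the microlocal monodromy map: it sends each $N$-graph pair to the cluster chart on $\cM_1(\legendrian(\dynX))$ corresponding to its seed, and cluster charts at different seeds of an affine-type cluster structure are pairwise distinct, so the fillings themselves are pairwise non-Hamiltonian isotopic.
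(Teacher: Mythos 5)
Your overall strategy coincides with the paper's: decompose the exchange graph via Lemma~\ref{lemma:normal form} into Coxeter mutations followed by mutations in a rank-$(n-1)$ induced subgraph, realize the Coxeter mutation geometrically as attachment of an annular $N$-graph (Corollaries~\ref{cor:coxeter realization E-type} and~\ref{cor:coxeter realization D-type}), handle the finite-type part by the earlier work, and conclude distinctness from Corollary~\ref{corollary:distinct seeds imples distinct fillings}. However, there is one genuine gap in the finite-type step.

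You write that after removing the vertex $x_\ell$ the induced subgraph is of finite type and that ``by our previous work \cite{ABL2021} every seed in it is realized by a sequence of Legendrian mutations on $N$-graphs starting from any given pair.'' That is an overstatement of what \cite{ABL2021} provides: the unobstructed-mutation result there (Proposition~5.15) applies to the specific tripod pairs $(\ngraph_{(\bullet)}(a,b,c),\nbasis_{(\bullet)}(a,b,c))$ with $\tfrac1a+\tfrac1b+\tfrac1c>1$, not to an arbitrary $N$-graph pair whose quiver happens to be of finite type. So after deleting the cycle $\gamma_\ell$ from $\mutation_\ngraph^r(\ngraph(\dynX),\nbasis(\dynX))$, one must actually identify the resulting (possibly disconnected) sub-pairs and check that each is of tripod form, or can be brought to one. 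This is the content of the case analysis (1)--(7) in the proof of Proposition~\ref{prop:every seeds come from N-graphs}: for type $\exdynE$ the induced pieces are visibly tripods, but for type $\exdynD$ the cases depend on which cycle is deleted, and in one case (deleting $\gamma_\ell$ for $\ell=2,3,n,n+1$) the induced pair is a $4$-graph of type $\dynD_n$ that is \emph{not} a priori in the required form and must first be converted to $(\ngraph_{(4)}(n-2,2,2),\nbasis_{(4)}(n-2,2,2))$ by a sequence of Move~(II). Without this verification the reduction to the finite-type result does not go through. (The ``parity issue'' you flag concerns only the shape of the initial $N$-graph of $\exdynD_n$, not this case analysis, which is where the real work lies.) The rest of your argument --- the annulus attachment for $\mutation_\ngraph^{\pm 1}$ and the distinctness via the microlocal monodromy functor --- matches the paper.
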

There are many results showing the existence of infinitely many distinct Lagrangian fillings for Legendrian links, see \cite{CG2020,CZ2020,GSW2020b,CN2021}. To the best of authors' knowledge, this is the first result of infinitely many Lagrangian fillings which exhaust all seeds in the corresponding cluster structure. 

The attached $N$-graph annuli can be seen as exact Lagrangian cobordisms. Indeed, the $N$-graph annulus corresponds to the loop $\vartheta(\exdynD)$ of Legendrians $\legendrian(\exdynD)$ in Figure~\ref{fig:legendrian loop of D_intro}.
Note that this coincides with the Legendrian loop described in \cite[Figure 
2]{CN2021} up to Reidemeister moves. For the type of $\exdynE$, the twice of 
Legendrian Coxeter mutation on the pair $(\ngraph(a,b,c),\nbasis(a,b,c))$ gives
a loop $\vartheta(\exdynE)$ of $\legendrian(\exdynE)$ in Figure~\ref{fig:legendrian loop of E_intro}.
This loop of Legendrian is obtained by encoding a closed path of the half twist $\Delta$ in the three-strand braid. This path of $\Delta$ can be seen as a generalization of the path of a single crossing, the half twist of the two-strand braid, depicted in Figure~\ref{fig:legendrian loop of D_intro}.

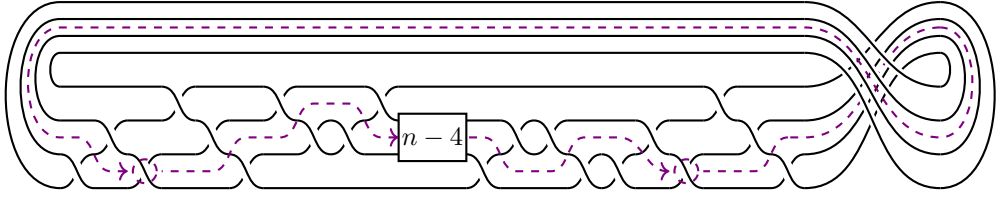
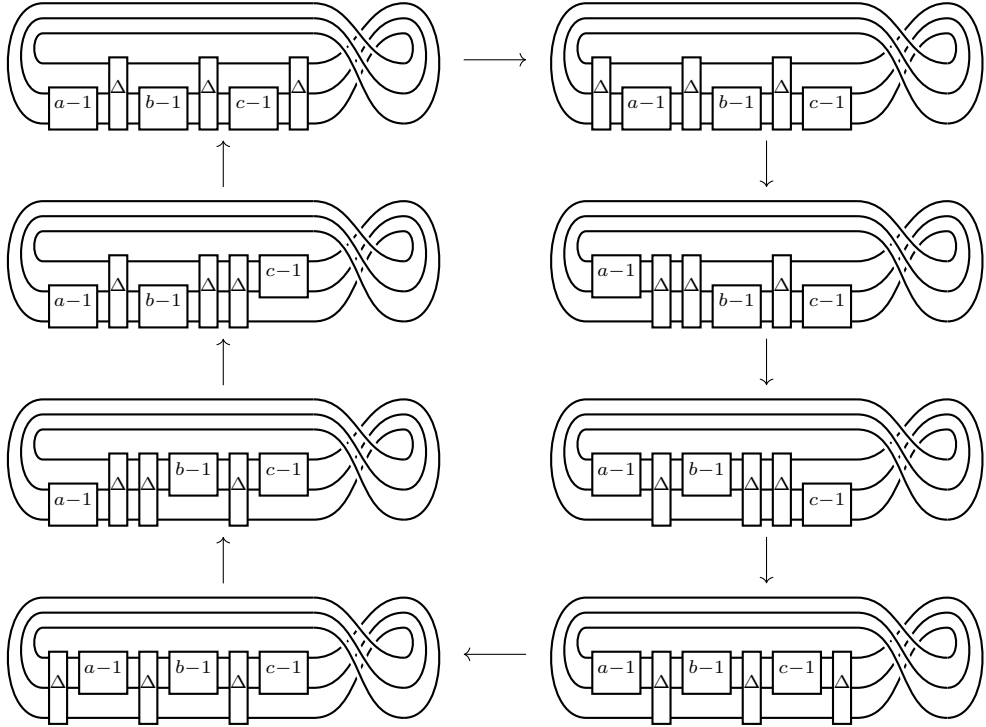
\begin{figure}[ht]
\subfigure[A Legendrian loop $\vartheta(\exdynD)$ induced from Legendrian Coxeter mutation $\mutation_\ngraph$ on $(\ngraph(\exdynD),\nbasis(\exdynD))$.
\label{fig:legendrian loop of D_intro}]{
\begin{tikzpicture}[baseline=-.5ex,scale=0.45]
\foreach \x in {0,2,5,12,15,16,18,21}
{
\begin{scope}[xshift=\x cm]
\draw[thick] (0,0) to[out=0,in=180] (1,1);
\draw[line width=5, white] (0,1) to[out=0,in=180] (1,0);
\draw[thick] (0,1) to[out=0,in=180] (1,0);
\end{scope}
}
\foreach \x in {1,4,7,8,13,14,17,20}
{
\begin{scope}[xshift=\x cm, yshift=1cm]
\draw[thick] (0,0) to[out=0,in=180] (1,1);
\draw[line width=5, white] (0,1) to[out=0,in=180] (1,0);
\draw[thick] (0,1) to[out=0,in=180] (1,0);
\end{scope}
}\foreach \x in {3,6,9,19}
{
\begin{scope}[xshift=\x cm, yshift=2cm]
\draw[thick] (0,0) to[out=0,in=180] (1,1);
\draw[line width=5, white] (0,1) to[out=0,in=180] (1,0);
\draw[thick] (0,1) to[out=0,in=180] (1,0);
\end{scope}
}
\draw[thick] 
(1,0) to (2,0) (3,0) to (5,0) (6,0) to (12,0) (13,0) to (15,0) (17,0) to (18,0) (19,0) to (21,0)
(3,1) to (4,1) (6,1) to (7,1) (9,1) to (12,1) (19,1) to (20,1)
(0,2) to (1,2) (2,2) to (3,2) (5,2) to (6,2) (10,2) to (13,2) (15,2) to (17,2) (18,2) to (19,2) (21,2) to (22,2)
(0,3) to (3,3) (4,3) to (6,3) (7,3) to (9,3) (10,3) to (19,3) (20,3) to (22,3)
;
\draw[thick,fill=white] (10,0.8) rectangle node {$n-4$} (12, 2.2);
\draw[thick,violet,dashed]
(26,4.75) to[out=180,in=0] (22,1.5) -- (21.5,1.5) 
;
\foreach \x in {0,1,2,3}
{
\draw[thick] 
(0,\x) to[out=180,in=180] (0,5.5-0.5*\x)
(0,5.5-0.5*\x) to (22,5.5-0.5*\x)
(26,\x) to[out=0,in=0] (26,5.5-0.5*\x)
(22,\x) to[out=0,in=180] (26,4+0.5*\x)
;
}
\foreach \x in {0,1,2,3}
{
\draw[line width=3, white]
(22,4+0.5*\x) to[out=0,in=180] (26,\x)
;
\draw[thick]
(22,4+0.5*\x) to[out=0,in=180] (26,\x)
;
}
\draw[thick, violet, dashed]
(2.5,0.5) circle (10pt)
(18.5,0.5) circle (10pt);
\draw[thick, violet, dashed,<-]
(2,0.5) to (1.5,0.5) to[out=180,in=0] (0.5,1.5) to (0,1.5) to[out=180,in=180] (0,4.75) to (22,4.75) to[out=0,in=180] (26,1.5) to[out=0,in=0] (26,4.75)
;
\draw[thick,violet,dashed]
(21.5,1.5) to[out=180,in=0] (20.5,0.5) -- (19,0.5);
\draw[thick,violet,dashed,<-]
(18,0.5) to[out=180,in=0] (16.5,1.5) to (15.5, 1.5)  to[out=180,in=0] (14.5, 0.5) to (13.5,0.5) to[out=180,in=0] (12.5,1.5) to (12,1.5);
\draw[thick,violet,dashed,<-]
(10,1.5) to (9.5, 1.5) to[out=180,in=0] (8.5,2.5) to (7.5,2.5) to[out=180,in=0] (6.5,1.5) to (5.5,1.5) to[out=180,in=0] (4.5,0.5) to (3,0.5)
;
\end{tikzpicture}
}

\subfigure[A Legendrian loop $\vartheta(\exdynE)$ induced from Legendrian Coxeter mutation $\mutation_\ngraph^{2}$ on $(\ngraph(\exdynE),\nbasis(\exdynE))$.
\label{fig:legendrian loop of E_intro}]{
\begin{tikzcd}[ampersand replacement=\&]
\begin{tikzpicture}[baseline=5ex,scale=0.4]
\draw[thick](0,0) -- (9,0) (0,1) -- (9,1) (0,2) -- (9,2) 
(0,3) -- (9,3) (0,3.5) -- (9,3.5) (0,4) -- (9,4)  
(0,0) to[out=180,in=180] (0,4)
(0,1) to[out=180,in=180] (0,3.5)
(0,2) to[out=180,in=180] (0,3)
(9,0) to[out=0,in=180] (12,3)
(9,1) to[out=0,in=180] (12,3.5)
(9,2) to[out=0,in=180] (12,4)
(12,4) to[out=0,in=0] (12,0)
(12,3.5) to[out=0,in=0] (12,1)
(12,3) to[out=0,in=0] (12,2);
\draw[line width=3, white]
(9,4) to[out=0,in=180] (12,2)
(9,3.5) to[out=0,in=180] (12,1)
(9,3) to[out=0,in=180] (12,0);
\draw[thick]
(9,4) to[out=0,in=180] (12,2)
(9,3.5) to[out=0,in=180] (12,1)
(9,3) to[out=0,in=180] (12,0);
\draw[thick,fill=white] (0.2,-0.2) rectangle node {$\scriptstyle{a-1}$} (1.8,1.2)
(2.2,-0.2) rectangle node {$\scriptstyle{\Delta}$} (2.8,2.2)
(3.2,-0.2) rectangle node {$\scriptstyle{b-1}$} (4.8,1.2)
(5.2,-0.2) rectangle node {$\scriptstyle{\Delta}$} (5.8,2.2)
(6.2,-0.2) rectangle node {$\scriptstyle{c-1}$} (7.8,1.2)
(8.2,-0.2) rectangle node {$\scriptstyle{\Delta}$} (8.8,2.2);
\end{tikzpicture}
\arrow[r] \&
\begin{tikzpicture}[baseline=5ex,scale=0.4]
\draw[thick](0,0) -- (9,0) (0,1) -- (9,1) (0,2) -- (9,2) 
(0,3) -- (9,3) (0,3.5) -- (9,3.5) (0,4) -- (9,4)  
(0,0) to[out=180,in=180] (0,4)
(0,1) to[out=180,in=180] (0,3.5)
(0,2) to[out=180,in=180] (0,3)
(9,0) to[out=0,in=180] (12,3)
(9,1) to[out=0,in=180] (12,3.5)
(9,2) to[out=0,in=180] (12,4)
(12,4) to[out=0,in=0] (12,0)
(12,3.5) to[out=0,in=0] (12,1)
(12,3) to[out=0,in=0] (12,2);
\draw[line width=3, white]
(9,4) to[out=0,in=180] (12,2)
(9,3.5) to[out=0,in=180] (12,1)
(9,3) to[out=0,in=180] (12,0);
\draw[thick]
(9,4) to[out=0,in=180] (12,2)
(9,3.5) to[out=0,in=180] (12,1)
(9,3) to[out=0,in=180] (12,0);
\draw[thick,fill=white] 
(0.2,-0.2) rectangle node {$\scriptstyle{\Delta}$} (0.8,2.2)
(1.2,-0.2) rectangle node {$\scriptstyle{a-1}$} (2.8,1.2)
(3.2,-0.2) rectangle node {$\scriptstyle{\Delta}$} (3.8,2.2)
(4.2,-0.2) rectangle node {$\scriptstyle{b-1}$} (5.8,1.2)
(6.2,-0.2) rectangle node {$\scriptstyle{\Delta}$} (6.8,2.2)
(7.2,-0.2) rectangle node {$\scriptstyle{c-1}$} (8.8,1.2);
\end{tikzpicture}
\arrow[d] \\
\begin{tikzpicture}[baseline=5ex,scale=0.4]
\draw[thick](0,0) -- (9,0) (0,1) -- (9,1) (0,2) -- (9,2) 
(0,3) -- (9,3) (0,3.5) -- (9,3.5) (0,4) -- (9,4)  
(0,0) to[out=180,in=180] (0,4)
(0,1) to[out=180,in=180] (0,3.5)
(0,2) to[out=180,in=180] (0,3)
(9,0) to[out=0,in=180] (12,3)
(9,1) to[out=0,in=180] (12,3.5)
(9,2) to[out=0,in=180] (12,4)
(12,4) to[out=0,in=0] (12,0)
(12,3.5) to[out=0,in=0] (12,1)
(12,3) to[out=0,in=0] (12,2);
\draw[line width=3, white]
(9,4) to[out=0,in=180] (12,2)
(9,3.5) to[out=0,in=180] (12,1)
(9,3) to[out=0,in=180] (12,0);
\draw[thick]
(9,4) to[out=0,in=180] (12,2)
(9,3.5) to[out=0,in=180] (12,1)
(9,3) to[out=0,in=180] (12,0);
\draw[thick,fill=white] (0.2,-0.2) rectangle node {$\scriptstyle{a-1}$} (1.8,1.2)
(2.2,-0.2) rectangle node {$\scriptstyle{\Delta}$} (2.8,2.2)
(3.2,-0.2) rectangle node {$\scriptstyle{b-1}$} (4.8,1.2)
(5.2,-0.2) rectangle node {$\scriptstyle{\Delta}$} (5.8,2.2)
(6.2,-0.2) rectangle node {$\scriptstyle{\Delta}$} (6.8,2.2)
(7.2,0.8) rectangle node {$\scriptstyle{c-1}$} (8.8,2.2);
\end{tikzpicture}
\arrow[u] \& 
\begin{tikzpicture}[baseline=5ex,scale=0.4]
\draw[thick](0,0) -- (9,0) (0,1) -- (9,1) (0,2) -- (9,2) 
(0,3) -- (9,3) (0,3.5) -- (9,3.5) (0,4) -- (9,4)  
(0,0) to[out=180,in=180] (0,4)
(0,1) to[out=180,in=180] (0,3.5)
(0,2) to[out=180,in=180] (0,3)
(9,0) to[out=0,in=180] (12,3)
(9,1) to[out=0,in=180] (12,3.5)
(9,2) to[out=0,in=180] (12,4)
(12,4) to[out=0,in=0] (12,0)
(12,3.5) to[out=0,in=0] (12,1)
(12,3) to[out=0,in=0] (12,2);
\draw[line width=3, white]
(9,4) to[out=0,in=180] (12,2)
(9,3.5) to[out=0,in=180] (12,1)
(9,3) to[out=0,in=180] (12,0);
\draw[thick]
(9,4) to[out=0,in=180] (12,2)
(9,3.5) to[out=0,in=180] (12,1)
(9,3) to[out=0,in=180] (12,0);
\draw[thick,fill=white] 
(0.2,0.8) rectangle node {$\scriptstyle{a-1}$} (1.8,2.2)
(2.2,-0.2) rectangle node {$\scriptstyle{\Delta}$} (2.8,2.2)
(3.2,-0.2) rectangle node {$\scriptstyle{\Delta}$} (3.8,2.2)
(4.2,-0.2) rectangle node {$\scriptstyle{b-1}$} (5.8,1.2)
(6.2,-0.2) rectangle node {$\scriptstyle{\Delta}$} (6.8,2.2)
(7.2,-0.2) rectangle node {$\scriptstyle{c-1}$} (8.8,1.2);
\end{tikzpicture}
\arrow[d] \\
\begin{tikzpicture}[baseline=5ex,scale=0.4]
\draw[thick](0,0) -- (9,0) (0,1) -- (9,1) (0,2) -- (9,2) 
(0,3) -- (9,3) (0,3.5) -- (9,3.5) (0,4) -- (9,4)  
(0,0) to[out=180,in=180] (0,4)
(0,1) to[out=180,in=180] (0,3.5)
(0,2) to[out=180,in=180] (0,3)
(9,0) to[out=0,in=180] (12,3)
(9,1) to[out=0,in=180] (12,3.5)
(9,2) to[out=0,in=180] (12,4)
(12,4) to[out=0,in=0] (12,0)
(12,3.5) to[out=0,in=0] (12,1)
(12,3) to[out=0,in=0] (12,2);
\draw[line width=3, white]
(9,4) to[out=0,in=180] (12,2)
(9,3.5) to[out=0,in=180] (12,1)
(9,3) to[out=0,in=180] (12,0);
\draw[thick]
(9,4) to[out=0,in=180] (12,2)
(9,3.5) to[out=0,in=180] (12,1)
(9,3) to[out=0,in=180] (12,0);
\draw[thick,fill=white] (0.2,-0.2) rectangle node {$\scriptstyle{a-1}$} (1.8,1.2)
(2.2,-0.2) rectangle node {$\scriptstyle{\Delta}$} (2.8,2.2)
(3.2,-0.2) rectangle node {$\scriptstyle{\Delta}$} (3.8,2.2)
(4.2,0.8) rectangle node {$\scriptstyle{b-1}$} (5.8,2.2)
(6.2,-0.2) rectangle node {$\scriptstyle{\Delta}$} (6.8,2.2)
(7.2,0.8) rectangle node {$\scriptstyle{c-1}$} (8.8,2.2);
\end{tikzpicture}
\arrow[u] \&
\begin{tikzpicture}[baseline=5ex,scale=0.4]
\draw[thick](0,0) -- (9,0) (0,1) -- (9,1) (0,2) -- (9,2) 
(0,3) -- (9,3) (0,3.5) -- (9,3.5) (0,4) -- (9,4)  
(0,0) to[out=180,in=180] (0,4)
(0,1) to[out=180,in=180] (0,3.5)
(0,2) to[out=180,in=180] (0,3)
(9,0) to[out=0,in=180] (12,3)
(9,1) to[out=0,in=180] (12,3.5)
(9,2) to[out=0,in=180] (12,4)
(12,4) to[out=0,in=0] (12,0)
(12,3.5) to[out=0,in=0] (12,1)
(12,3) to[out=0,in=0] (12,2);
\draw[line width=3, white]
(9,4) to[out=0,in=180] (12,2)
(9,3.5) to[out=0,in=180] (12,1)
(9,3) to[out=0,in=180] (12,0);
\draw[thick]
(9,4) to[out=0,in=180] (12,2)
(9,3.5) to[out=0,in=180] (12,1)
(9,3) to[out=0,in=180] (12,0);
\draw[thick,fill=white] 
(0.2,0.8) rectangle node {$\scriptstyle{a-1}$} (1.8,2.2)
(2.2,-0.2) rectangle node {$\scriptstyle{\Delta}$} (2.8,2.2)
(3.2,0.8) rectangle node {$\scriptstyle{b-1}$} (4.8,2.2)
(5.2,-0.2) rectangle node {$\scriptstyle{\Delta}$} (5.8,2.2)
(6.2,-0.2) rectangle node {$\scriptstyle{\Delta}$} (6.8,2.2)
(7.2,-0.2) rectangle node {$\scriptstyle{c-1}$} (8.8,1.2);
\end{tikzpicture}
\arrow[d] \\
\begin{tikzpicture}[baseline=5ex,scale=0.4]
\draw[thick](0,0) -- (9,0) (0,1) -- (9,1) (0,2) -- (9,2) 
(0,3) -- (9,3) (0,3.5) -- (9,3.5) (0,4) -- (9,4)  
(0,0) to[out=180,in=180] (0,4)
(0,1) to[out=180,in=180] (0,3.5)
(0,2) to[out=180,in=180] (0,3)
(9,0) to[out=0,in=180] (12,3)
(9,1) to[out=0,in=180] (12,3.5)
(9,2) to[out=0,in=180] (12,4)
(12,4) to[out=0,in=0] (12,0)
(12,3.5) to[out=0,in=0] (12,1)
(12,3) to[out=0,in=0] (12,2);
\draw[line width=3, white]
(9,4) to[out=0,in=180] (12,2)
(9,3.5) to[out=0,in=180] (12,1)
(9,3) to[out=0,in=180] (12,0);
\draw[thick]
(9,4) to[out=0,in=180] (12,2)
(9,3.5) to[out=0,in=180] (12,1)
(9,3) to[out=0,in=180] (12,0);
\draw[thick,fill=white] 
(0.2,-0.2) rectangle node {$\scriptstyle{\Delta}$} (0.8,2.2)
(1.2,0.8) rectangle node {$\scriptstyle{a-1}$} (2.8,2.2)
(3.2,-0.2) rectangle node {$\scriptstyle{\Delta}$} (3.8,2.2)
(4.2,0.8) rectangle node {$\scriptstyle{b-1}$} (5.8,2.2)
(6.2,-0.2) rectangle node {$\scriptstyle{\Delta}$} (6.8,2.2)
(7.2,0.8) rectangle node {$\scriptstyle{c-1}$} (8.8,2.2);
\end{tikzpicture}
\arrow[u] \&
\begin{tikzpicture}[baseline=5ex,scale=0.4]
\draw[thick](0,0) -- (9,0) (0,1) -- (9,1) (0,2) -- (9,2) 
(0,3) -- (9,3) (0,3.5) -- (9,3.5) (0,4) -- (9,4)  
(0,0) to[out=180,in=180] (0,4)
(0,1) to[out=180,in=180] (0,3.5)
(0,2) to[out=180,in=180] (0,3)
(9,0) to[out=0,in=180] (12,3)
(9,1) to[out=0,in=180] (12,3.5)
(9,2) to[out=0,in=180] (12,4)
(12,4) to[out=0,in=0] (12,0)
(12,3.5) to[out=0,in=0] (12,1)
(12,3) to[out=0,in=0] (12,2);
\draw[line width=3, white]
(9,4) to[out=0,in=180] (12,2)
(9,3.5) to[out=0,in=180] (12,1)
(9,3) to[out=0,in=180] (12,0);
\draw[thick]
(9,4) to[out=0,in=180] (12,2)
(9,3.5) to[out=0,in=180] (12,1)
(9,3) to[out=0,in=180] (12,0);
\draw[thick,fill=white] 
(0.2,0.8) rectangle node {$\scriptstyle{a-1}$} (1.8,2.2)
(2.2,-0.2) rectangle node {$\scriptstyle{\Delta}$} (2.8,2.2)
(3.2,0.8) rectangle node {$\scriptstyle{b-1}$} (4.8,2.2)
(5.2,-0.2) rectangle node {$\scriptstyle{\Delta}$} (5.8,2.2)
(6.2,0.8) rectangle node {$\scriptstyle{c-1}$} (7.8,2.2)
(8.2,-0.2) rectangle node {$\scriptstyle{\Delta}$} (8.8,2.2);
\end{tikzpicture}
\arrow[l]
\end{tikzcd}
}
\caption{Legendrian loops induced from Legendrian Coxeter mutation}
\label{fig:legendrian loops}
\end{figure}

\begin{theorem}[Theorem~\ref{thm:legendrian loop}]\label{theorem:legendrian loop}
The Legendrian Coxeter mutation $\mutation_\ngraph^{\pm1}$ on 
$(\ngraph(\exdynD),\nbasis(\exdynD))$ and twice of Legendrian mutation 
$\mutation_\ngraph^{\pm 2}$ on $(\ngraph(\exdynE),\nbasis(\exdynE))$ induce 
Legendrian loops $\vartheta(\exdynD)$ and $\vartheta(\exdynE)$ in Figure~\ref{fig:legendrian loops}, respectively. 
In particular, the order of the Legendrian loops are infinite as elements of the fundamental group of the space of Legendrians isotopic to $\lambda(\exdynD)$ and $\lambda(\exdynE)$, respectively.
\end{theorem}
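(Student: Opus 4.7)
The plan is to establish the two assertions separately: first, that the indicated Legendrian Coxeter mutations realize the specific loops $\vartheta(\exdynD)$ and $\vartheta(\exdynE)$ depicted in Figure~\ref{fig:legendrian loops}; and second, that these loops have infinite order in the fundamental group of the space of Legendrians isotopic to the respective link. Throughout, I would make essential use of the description of $\mutation_\ngraph$ as the attachment of an annulus-type $N$-graph (the gray-shaded regions in Figures~\ref{coxeter_mutation_D} and~\ref{coxeter_mutation_E}), together with Theorem~\ref{theorem:seed many}.

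For the realization part, I would resolve each annulus attachment into a finite sequence of elementary Legendrian mutations at the cycles of $\nbasis_+$ followed by those of $\nbasis_-$, interleaved with $N$-graph moves (II) and (V) to return to the standard form of $\ngraph(\dynX)$. Each elementary step translates, via the front-projection dictionary, into an explicit local Reidemeister-type movie on $\legendrian(\dynX)\subset J^1\partial \D^2$. For $\exdynD_n$, stitching these local movies together traces out a boundary motion that transports one crossing once around the rainbow closure and returns to the initial front, which matches the loop $\vartheta(\exdynD)$ of Figure~\ref{fig:legendrian loop of D_intro}. For $\exdynE$, a single iteration of $\mutation_\ngraph$ does not close up because it cyclically shifts the positions of the three half-twists $\Delta$ among the tassels of lengths $a-1,b-1,c-1$; a second iteration completes the cycle and produces the six-stage movie $\vartheta(\exdynE)$ of Figure~\ref{fig:legendrian loop of E_intro}.

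For the infinite-order statement I would argue by contradiction using Theorem~\ref{theorem:seed many}. Any Legendrian loop $\vartheta$ of $\legendrian(\dynX)$ induces an autoequivalence of the category $\Sh^\bullet_{\legendrian(\dynX)}(\R^2)$ and hence a permutation of the Hamiltonian isotopy classes of exact Lagrangian fillings, which under the microlocal monodromy identification is a cluster automorphism of $\cM_1(\legendrian(\dynX))$. By the construction of Step~1 and the compatibility of microlocal monodromy with cluster mutation, the action of $\vartheta(\exdynD)$ (resp.\ $\vartheta(\exdynE)$) on the seeds containing $\initialseed$ coincides with the Coxeter mutation $\mutation_\quiver$ (resp.\ its square). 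In affine cluster type $\exdynD$ or $\exdynE$, the Coxeter element acts as an infinite-order translation on the $g$-vector fan, so the seeds $\mutation_\quiver^{k}(\initialseed)$ are pairwise distinct for $k \in \Z$; Theorem~\ref{theorem:seed many} then produces infinitely many pairwise non-Hamiltonian-isotopic exact Lagrangian fillings of $\legendrian(\dynX)$ as the $\vartheta$-orbit of the initial filling. A loop of finite order would induce only finitely many filling classes in its orbit, a contradiction.

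The main technical obstacle lies in Step~1. Resolving the annulus attachment, one must carefully distinguish interior $N$-graph moves (which leave the boundary Legendrian pointwise fixed) from those that propagate to $\partial \D^2$, and then verify that the cumulative boundary motion genuinely closes up rather than terminating in a Legendrian isotopy with shifted endpoints. I expect the heart of this to be a local computation in a neighborhood of the vertices of $\ngraph(\dynX)$ adjacent to the boundary, which can be extracted directly from the figures; the rotational symmetry of $(\ngraph(a,b,c),\nbasis(a,b,c))$ and of $(\ngraph(\exdynD_n),\nbasis(\exdynD_n))$ should reduce the verification to a single sector and then extend it by symmetry.
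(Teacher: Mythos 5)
Your overall architecture matches the paper's: identify the Legendrian Coxeter mutation with the attachment of an annular $N$-graph near $\partial\disk^2$, read that annulus as a closed movie of Legendrian isotopies to obtain the loops of Figure~\ref{fig:legendrian loops}, and deduce infinite order from the fact that the loop acts on seeds by the (square of the) Coxeter mutation, which has infinite order in affine type by Remark~\ref{rmk_inifinitely_many_seeds_for_affine}. Your second step is correct and is essentially the paper's one-line argument spelled out with the functoriality of $\Sh^\bullet_{\legendrian(\dynX)}$.

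The gap is in the mechanism by which you extract a Legendrian loop from the annulus. Legendrian mutations and the interior moves $\Move{I}$--$\Move{VI'}$ are all supported in the interior of $\disk^2$ and leave the boundary Legendrian pointwise fixed; none of them ``translates into a Reidemeister-type movie on $\legendrian(\dynX)$,'' and there are no moves that ``propagate to $\partial\disk^2$.'' What the mutation sequence produces (Propositions~\ref{prop:coxeter mutation on tripod} and~\ref{proposition:coxeter realization D-type}) is the original $N$-graph with the Coxeter padding $\coxeterpadding^{\pm1}$ glued on, and a priori such an annular $N$-graph is only an exact Lagrangian self-concordance of $\legendrian(\dynX)$, not the trace of a Legendrian isotopy, so no element of $\pi_1(\cL(\legendrian(\dynX)),\legendrian(\dynX))$ has yet been defined. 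The missing ingredient is the \emph{tameness} of the Coxeter paddings: one must decompose $\coxeterpadding^{\pm1}(\exdynD_n)$ and $\bar\coxeterpadding^{-1}(a,b,c)\coxeterpadding^{-1}(a,b,c)$ into stacks of the elementary annulus $N$-graphs $\ngraph_{\Move{RIII}}^{\pm1}$, $\ngraph_{\Move{R0}}^{\pm1}$ of Figure~\ref{fig:elementary annulus N-graph}, each of which is the trace of a single Legendrian Reidemeister move; the paper does this by explicit decompositions (Figures~\ref{fig:coxeter padding affine D4 is tame} and~\ref{fig:coxeter padding affine E is tame}) and only then converts the resulting word of $\Move{RIII}$ and $\Move{R0}$ moves into the braid-word computation that identifies the loop with $\vartheta(\exdynD)$, resp.\ $\vartheta(\exdynE)$. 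Note also that the structural reason one needs $\mutation_\ngraph^{\pm2}$ in type $\exdynE$ is not a cyclic shift of the $\Delta$'s but the color switch in Proposition~\ref{prop:coxeter mutation on tripod}: a single padding is an annulus from $\partial\ngraph(a,b,c)$ to $\partial\bar\ngraph(a,b,c)$, so only the composite of two paddings returns to the same braid presentation of the boundary.
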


Any cluster pattern of non-simply-laced affine type can be obtained by folding 
a cluster pattern of type $\exdynA$, $\exdynD$, or $\exdynE$. In other words, 
those cluster pattern of non-simply-laced 
affine type can be seen as sub-patterns of $\exdynA \exdynD \exdynE$-types 
consisting of seeds with certain symmetries of finite group $G$ action. We call 
such seeds or $N$-graphs \emph{$G$-admissible}, and the mutation in the folded 
cluster structure is a sequence of mutations respecting the $G$-orbits. 
We say that a seed (or an $N$-graph) is \emph{globally foldable} if it is 
$G$-admissible and its arbitrary mutations along $G$-orbits are again 
$G$-admissible. 

The followings $N$-graphs with tuples of cycles represent folding process of 
type $\exdynG_2$, $\dynE_6^{(2)}$, and $\exdynF_4$, respectively.
\[
\begin{tikzcd}[column sep=small, row sep=small]
\begin{tikzpicture}[baseline=-.5ex,scale=0.6]
\draw[thick] (0,0) circle (3cm);
\draw[orange, opacity=0.2, fill] (0:3) arc(0:120:3) (120:3) -- (0,0) -- (0:3);
\draw[violet, opacity=0.1, fill] (0:3) -- (0,0) -- (-120:3) arc(-120:0:3) (0:3);
\draw[blue, opacity=0.1, fill] (120:3) arc(120:240:3) (240:3) -- (0,0) -- (120:3);
\draw[yellow, line cap=round, line width=5, opacity=0.5] (60:1) -- (45:1.5) (180:1) -- (165:1.5) (300:1) -- (285:1.5);
\draw[green, line cap=round, line width=5, opacity=0.5] (0,0) -- (60:1) (0,0) -- (180:1) (0,0) -- (300:1) (45:1.5) -- (60:2) (165:1.5) -- (180:2) (285:1.5) -- (300:2);
\draw[red, thick] (0,0) -- (0:3) (0,0) -- (120:3) (0,0) -- (240:3);
\draw[blue, thick, fill] (0,0) -- (60:1) circle (2pt) -- (96:3) (60:1) -- (45:1.5) circle (2pt) -- (24:3) (45:1.5) -- (60:2) circle (2pt) -- (72:3) (60:2) -- (48:3);
\draw[blue, thick, fill] (0,0) -- (180:1) circle (2pt) -- (216:3) (180:1) -- (165:1.5) circle (2pt) -- (144:3) (165:1.5) -- (180:2) circle (2pt) -- (192:3) (180:2) -- (168:3);
\draw[blue, thick, fill] (0,0) -- (300:1) circle (2pt) -- (336:3) (300:1) -- (285:1.5) circle (2pt) -- (264:3) (285:1.5) -- (300:2) circle (2pt) -- (312:3) (300:2) -- (288:3);
\draw[thick, fill=white] (0,0) circle (2pt);
\end{tikzpicture}
&
\begin{tikzpicture}[baseline=-.5ex,scale=0.6]
\draw[thick] (0,0) circle (3cm);
\draw[orange, opacity=0.2, fill] (0:3) arc(0:120:3) (120:3) -- (0,0) -- (0:3);
\draw[violet, opacity=0.1, fill] (0:3) -- (0,0) -- (-120:3) arc(-120:0:3) (0:3);
\draw[yellow, line cap=round, line width=5, opacity=0.5] (60:1) -- (45:1.5) (180:1) -- (165:1.5) (300:1) -- (285:1.5);
\draw[green, line cap=round, line width=5, opacity=0.5] (0,0) -- (60:1) (0,0) -- (180:1) (0,0) -- (300:1) (45:1.5) -- (60:2) (165:1.5) -- (180:2) (285:1.5) -- (300:2);
\draw[red, thick] (0,0) -- (0:3) (0,0) -- (120:3) (0,0) -- (240:3);
\draw[blue, thick, fill] (0,0) -- (60:1) circle (2pt) -- (96:3) (60:1) -- (45:1.5) circle (2pt) -- (24:3) (45:1.5) -- (60:2) circle (2pt) -- (72:3) (60:2) -- (48:3);
\draw[blue, thick, fill] (0,0) -- (180:1) circle (2pt) -- (216:3) (180:1) -- (165:1.5) circle (2pt) -- (144:3) (165:1.5) -- (180:2) circle (2pt) -- (192:3) (180:2) -- (168:3);
\draw[blue, thick, fill] (0,0) -- (300:1) circle (2pt) -- (336:3) (300:1) -- (285:1.5) circle (2pt) -- (264:3) (285:1.5) -- (300:2) circle (2pt) -- (312:3) (300:2) -- (288:3);
\draw[thick, fill=white] (0,0) circle (2pt);
\end{tikzpicture}
&
\begin{tikzpicture}[baseline=-.5ex,scale=0.6,rotate=120]
\draw[thick] (0,0) circle (3cm);
\draw[orange, opacity=0.2, fill] (0:3) -- (0,0) -- (-120:3) arc(-120:0:3) (0:3);
\draw[violet, opacity=0.1, fill] (120:3) arc(120:240:3) (240:3) -- (0,0) -- 
(120:3);
\draw[yellow, line cap=round, line width=5, opacity=0.5] (60:1) -- (50:2) 
(180:1) -- (170:1.5) (190:1.75) -- (170:2) (300:1) -- (290:1.5) (310:1.75) -- 
(290:2);
\draw[green, line cap=round, line width=5, opacity=0.5] (0,0) -- (60:1) (0,0) 
-- (180:1) (0,0) -- (300:1) (170:1.5) -- (190:1.75) (290:1.5) -- (310:1.75);
\draw[red, thick] (0,0) -- (0:3) (0,0) -- (120:3) (0,0) -- (240:3);
\draw[blue, thick, fill] (0,0) -- (60:1) circle (2pt) -- (90:3) (60:1) -- 
(50:2)  circle (2pt) -- (30:3) (50:2) -- (60:3);
\draw[blue, thick, fill] (0,0) -- (180:1) circle (2pt) -- (220:3) (180:1) -- 
(170:1.5)  circle (2pt) -- (140:3) (170:1.5) -- (190:1.75) circle (2pt) -- 
(200:3) (190:1.75) -- (170:2) circle (2pt) -- (160:3) (170:2) -- (180:3);
\draw[blue, thick, dashed] ;
\draw[blue, thick, fill] (0,0) -- (300:1) circle (2pt) -- (340:3) (300:1) -- 
(290:1.5) circle (2pt) -- (260:3) (290:1.5) -- (310:1.75) circle (2pt) -- 
(320:3) (310:1.75) -- (290:2) circle (2pt) -- (280:3) (290:2) -- (300:3);
\draw[blue, thick, dashed] ;
\draw[thick, fill=white] (0,0) circle (2pt);
\end{tikzpicture}
\\
\begin{tikzpicture}[baseline=-.5ex,scale=0.6]
\tikzstyle{triple line} = [
double distance = 2pt, 
double=\pgfkeysvalueof{/tikz/commutative diagrams/background color}
]    
\node[Dnode] (a1) at (0,0) {};
\node[Dnode] (a2) at (-1,0.5) {};
\node[Dnode] (a3) at (-2,0.5) {};
\node[Dnode] (a4) at (-1,0) {};
\node[Dnode] (a5) at (-2,0) {};
\node[Dnode] (a6) at (-1,-0.5) {};
\node[Dnode] (a7) at (-2,-0.5) {};

\node[gnode] at (a1) {};
\node[ynode] at (a2) {};
\node[gnode] at (a3) {};
\node[ynode] at (a4) {};
\node[gnode] at (a5) {};
\node[ynode] at (a6) {};
\node[gnode] at (a7) {};

\draw (a1)--(a2)--(a3) (a1)--(a4)--(a5) (a1)--(a6)--(a7);

\node at (-3,0) {$\exdynE_{6}$};
\node at (-1,-1.5) {\rotatebox[origin=c]{-90}{$\rightsquigarrow$}};

\node at (-3,-2) {$\exdynG_{2}$};

\coordinate[Dnode] (3) at (-2,-2) {};
\coordinate[Dnode] (2) at (-1,-2) {};
\coordinate[Dnode] (1) at (0,-2) {};

\node[gnode] at (1) {};
\node[ynode] at (2) {};
\node[gnode] at (3) {};

\draw (2)--(3);
\draw[triple line] (1)-- ++ (2) node[midway,yshift=-0.1cm ] {$>$};
\draw (1)--(2);

\end{tikzpicture}
&
\begin{tikzpicture}[baseline=-.5ex,scale=0.6]

\coordinate[Dnode] (d1) at (-1,0) {};
\coordinate[Dnode] (d2) at (0,0) {};
\coordinate[Dnode] (d3) at (1,0) {};
\coordinate[Dnode] (d4) at (2,0.5) {};
\coordinate[Dnode] (d5) at (3,0.5) {};
\coordinate[Dnode] (d6) at (2,-0.5) {};
\coordinate[Dnode] (d7) at (3,-0.5) {};

\node[gnode] at (d1) {};
\node[ynode] at (d2) {};
\node[gnode] at (d3) {};
\node[ynode] at (d4) {};
\node[gnode] at (d5) {};
\node[ynode] at (d6) {};
\node[gnode] at (d7) {};

\draw  (d1)--(d2)--(d3)--(d4)--(d5)
(d3)--(d6)--(d7);

\node at (-2,0) {$\exdynE_{6}$};
\node at (1,-1.5) {\rotatebox[origin=c]{-90}{$\rightsquigarrow$}};

\node at (-2,-2) {$\dynE_{6}^{(2)}$};
\coordinate[Dnode] (1) at (-1,-2) {};
\coordinate[Dnode] (2) at (0,-2) {};
\coordinate[Dnode] (3) at (1,-2) {};
\coordinate[Dnode] (4) at (2,-2) {};
\coordinate[Dnode] (5) at (3,-2) {};

\node[gnode] at (1) {};
\node[ynode] at (2) {};
\node[gnode] at (3) {};
\node[ynode] at (4) {};
\node[gnode] at (5) {};

\draw (1)--(2)--(3) (4)--(5) ;
\draw[double line] (3)-- ++ (4) node[midway,yshift=-0.1cm ] {$<$};

\end{tikzpicture}
&
\begin{tikzpicture}[baseline=-.5ex,scale=0.6]

\coordinate[Dnode] (e1) at (-2,0) {};
\coordinate[Dnode] (e2) at (-1,0) {};
\coordinate[Dnode] (e3) at (0,0.5) {};
\coordinate[Dnode] (e4) at (1,0.5) {};
\coordinate[Dnode] (e5) at (2,0.5) {};
\coordinate[Dnode] (e6) at (0,-0.5) {};
\coordinate[Dnode] (e7) at (1,-0.5) {};
\coordinate[Dnode] (e8) at (2,-0.5) {};

\foreach \g in {e2,e4,e7} {
\node[gnode] at (\g) {};
}
\foreach \y in {e1,e3,e5,e6,e8}{
\node[ynode] at (\y) {};
}
\draw (e1)--(e2)--(e3)--(e4)--(e5)
(e2)--(e6)--(e7)--(e8);

\node at (-3,0) {$\exdynE_{7}$};
\node at (0,-1.5) {\rotatebox[origin=c]{-90}{$\rightsquigarrow$}};

\node at (-3,-2) {$\exdynF_{4}$};
\coordinate[Dnode] (1) at (-2,-2) {};
\coordinate[Dnode] (2) at (-1,-2) {};
\coordinate[Dnode] (3) at (0,-2) {};
\coordinate[Dnode] (4) at (1,-2) {};
\coordinate[Dnode] (5) at (2,-2) {};

\foreach \y in {1,3,5} {
\node[ynode] at (\y) {};
}
\foreach \g in {2,4}{
\node[gnode] at (\g) {};
}

\draw (1)--(2)
(3)--(4)--(5);
\draw[double line] (2)-- ++ (3) node[midway,yshift=-0.1cm ] {$<$};
\end{tikzpicture}
\end{tikzcd}
\]
The three colored regions in the first $N$-graph represent rotational 
$\Z/3\Z$-symmetry, and the two colored regions in the remaining three 
$N$-graphs indicate $\Z/2\Z$-symmetry given by partial rotation. All the above 
symmetries induce that the corresponding $N$-graphs are globally foldable, and 
hence we can realize the folded seeds via $N$-graphs with symmetries. 

\begin{theorem}[Theorem~\ref{theorem:folded Lagrangian fillings}]
The following holds:
\begin{enumerate}
\item There exists a set of $\Z/2\Z$-admissible $4$-graphs of the Legendrian link $\legendrian(\exdynD_{2n})$ admits the cluster pattern of type $\exdynB_n$.
\item There exists a set of $\Z/3\Z$-admissible $3$-graphs of the Legendrian link $\legendrian(\exdynE_6)$ admits the cluster pattern of type $\exdynG_2$.
\item There exists a set of $\Z/2\Z$-admissible $3$-graphs of the Legendrian link $\legendrian(\exdynE_6)$ admits the cluster pattern of type $\dynE_6^{(2)}$.
\item There exists a set of $\Z/2\Z$-admissible $3$-graphs of the Legendrian link $\legendrian(\exdynE_7)$ admits the cluster pattern of type $\exdynF_4$.
\end{enumerate}
\end{theorem}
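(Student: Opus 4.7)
The plan is to exploit the geometric $G$-symmetries visible in the simply-laced $N$-graphs of Theorem~\ref{theorem:seed many}: the $\Z/3\Z$-rotation of $(\ngraph(\exdynE_6),\nbasis(\exdynE_6))$ induces the folding $\exdynE_6 \rightsquigarrow \exdynG_2$; a partial $\Z/2\Z$-rotation of the same $N$-graph realizes $\exdynE_6 \rightsquigarrow \dynE_6^{(2)}$; a partial $\Z/2\Z$-rotation of $(\ngraph(\exdynE_7),\nbasis(\exdynE_7))$ realizes $\exdynE_7 \rightsquigarrow \exdynF_4$; and the left-right involution of $(\ngraph(\exdynD_{2n}),\nbasis(\exdynD_{2n}))$ realizes $\exdynD_{2n} \rightsquigarrow \exdynB_n$. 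In each case the first task is to verify that the indicated symmetry fixes the boundary Legendrian, permutes $\nbasis$ within orbits, and folds the intersection quiver $\quiver_{t_0}$ to the desired non-simply-laced Cartan datum, so that the initial $N$-graph is $G$-admissible and its underlying folded seed agrees with the standard initial seed of the target cluster pattern.

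The second step is to establish global foldability. The Legendrian Coxeter mutation $\ncoxeter = \prod_{i \in \nbasis_-}\mutation_i \cdot \prod_{i \in \nbasis_+}\mutation_i$ is indexed by the green/yellow bipartition of $\nbasis$; a direct inspection of Figures~\ref{ngraph_nbasis_D}--\ref{ngraph_nbasis_E} shows that $G$ preserves this bipartition in all four cases, so $\ncoxeter$ is $G$-equivariant and sends $G$-admissible $N$-graphs to $G$-admissible $N$-graphs. For the three $\dynE$-type foldings the folded Coxeter period is $\ncoxeter^{\pm 2}$, which by Theorem~\ref{theorem:legendrian loop} realizes the Legendrian loop $\vartheta(\exdynE)$ and corresponds on the folded side to the Coxeter mutation of types $\exdynG_2$, $\dynE_6^{(2)}$, and $\exdynF_4$.

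With the folded Coxeter mutation secured, I would then run, inside the folded cluster pattern, the same induction as in Theorem~\ref{theorem:seed many}: every seed of type $\dynY \in \{\exdynB_n, \exdynG_2, \dynE_6^{(2)}, \exdynF_4\}$ is reached from the initial folded seed by alternating the folded Coxeter mutation with mutations inside an induced subgraph $\exchange(\qbpr_{t_0}, x_\ell)$ of strictly smaller rank, where each folded mutation is realized by a composition of Legendrian mutations along a single $G$-orbit of cycles. The induction base is a finite-type folded cluster pattern, which is already realized by $G$-admissible $N$-graphs through \cite{ABL2021}, and the induction step then produces a $G$-admissible $N$-graph for every seed of the folded pattern.

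The principal obstacle is the well-definedness of the intermediate Legendrian mutations along a $G$-orbit: at every stage the geometric intersection numbers between cycles in a common orbit must agree with the algebraic intersections read off from the folded quiver, otherwise the mutation is not a legal $N$-graph move. I expect this to be handled exactly as in Figures~\ref{coxeter_mutation_D}--\ref{coxeter_mutation_E}: each folded Coxeter mutation simply attaches a $G$-invariant annulus $N$-graph to the boundary of the ambient $N$-graph, so that the geometric intersection pattern is preserved orbit-by-orbit and the well-definedness of orbit mutations transfers from the simply-laced $\exdynD\exdynE$ result of Theorem~\ref{theorem:seed many} to the folded setting without new obstructions.
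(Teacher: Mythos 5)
Your proposal is correct and follows essentially the same route as the paper: the paper reduces the theorem to showing the initial $N$-graph is globally foldable (Theorem~\ref{theorem:globally foldable Ngraphs}), which it does exactly as you describe — normal form via Coxeter mutations realized by $G$-invariant Coxeter paddings, plus orbit mutations in a rank-deficient subpattern that decomposes into finite-type pieces handled by \cite[Proposition~5.15]{ABL2021} — and then invokes Theorem~\ref{thm_invariant_seeds_form_folded_pattern} to identify the $G$-admissible seeds with the folded cluster pattern. One small correction: the folded Coxeter mutation corresponds to a \emph{single} $\mutation_\ngraph^{\pm1}$ (cf.\ Remark~\ref{remark:folding and Coxeter mutation}); the exponent $\pm2$ is only needed for the Legendrian \emph{loop} of type $\exdynE$ to return to the same braid presentation, not for realizing the folded Coxeter mutation itself.
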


\subsection{Organization of the paper}
The rest of the paper is divided into six sections including appendix. 
We review, in Section~\ref{sec:cluster algebra}, some basics on affine cluster algebra. Especially we focus on structural results about the combinatorics of exchange graphs using Coxeter mutations.
In Section~\ref{sec:N-graph}, we recall how $N$-graphs and their moves encode Legendrian surfaces and the Legendrian isotopies. After that we review the assignment of seed in the cluster structure from $N$-graphs and certain flag moduli.
In Section~\ref{sec:affine DE type}, we investigate Legendrian links and $N$-graphs of type $\exdynD \exdynE$. 
We discuss $N$-graph realization of the Coxeter mutation and prove Theorem~\ref{theorem:legendrian loop} on the relationship between Coxeter mutations and Legendrian loops.
We also construct as many Lagrangian fillings as seeds for Legendrian links of type $\exdynD \exdynE$ and prove Theorem~\ref{thm:seed many fillings}.
In Section~\ref{section:folding}, we discuss the folded cluster patterns and prove Theorems~\ref{theorem:folded seed} and \ref{theorem:folded Lagrangian fillings}.
Finally, in Appendix~\ref{sec:Coxeter padding affine D_n}, the pictorial proof of $N$-graph realization for the Coxeter mutation of type $\exdynD$ will be given.

If some readers are familiar with the notion of cluster algebra and $N$-graph, then one may skip Section~\ref{sec:cluster algebra} and Section~\ref{sec:N-graph}, respectively, and start from Section~\ref{sec:affine DE type}.

\subsection*{Acknowledgement}
We thank Roger Casals for useful conversations and 
Salvatore Stella for explaining the result on the affine almost positive roots 
model.  
B. H. An and Y. Bae were supported by the National Research Foundation of Korea (NRF) grant funded by the Korea government (MSIT) (No. 2020R1A2C1A0100320).
E. Lee was supported by the Institute for Basic Science (IBS-R003-D1).

\section{Cluster algebras}\label{sec:cluster algebra}
Cluster algebras, introduced by Fomin and Zelevinsky~\cite{FZ1_2002}, are commutative algebras with specific generators, called \emph{cluster variables}, defined recursively. In this section, we recall basic notions in the theory of cluster algebras. For more details, we refer the reader to~\cite{FZ1_2002, FZ2_2003,BFZ3_2005,FZ4_2007}.

Throughout this section, we fix $m, n \in \Z_{>0}$ such that $n \leq m$, and we let $\field$ be the rational function field with $m$ independent variables over $\C$.

\subsection{Basics on cluster algebras}

\begin{definition}[{cf. \cite{FZ1_2002, FZ2_2003}}]
A \emph{seed} $\seed = (\bfx, \qbasis)$ is a pair of 
\begin{itemize}
\item a tuple $\mathbf x = (x_1,\dots,x_m)$ of algebraically independent generators of $\field$, that is, $\field = \C(x_1,\dots,x_m)$;
\item an $m \times n$ integer matrix $\qbasis = (b_{i,j})_{i,j}$ such that the \emph{principal part} $\qbasis^{\textrm{pr}} \coloneqq
(b_{i,j})_{1\leq i,j\leq n}$ is skew-symmetrizable, that is, there exist positive integers $d_1,\dots,d_n$ such that
$$\textrm{diag}(d_1,\dots,d_n) \cdot \qbasis^{\textrm{pr}}$$ is a skew-symmetric matrix.
\end{itemize}
We call elements $x_1,\dots,x_m$ \emph{cluster variables} and call $\qbasis$ \emph{exchange matrix}. Moreover, we call $x_1,\dots,x_n$ \emph{unfrozen} (or, \emph{mutable}) variables and $x_{n+1},\dots,x_m$ \emph{frozen} variables.
\end{definition}

We say that two seeds $\seed = (\bfx, \qbasis)$ and $\seed' = (\bfx', \qbasis')$ are \textit{equivalent}, denoted by $\seed\sim\seed'$ if there exists a permutation $\sigma$ of indices $1,\dots,n$ such that
\[
x_i' = x_{\sigma(i)},\quad b_{i,j}' = b_{\sigma(i),\sigma(j)},
\]
where $\bfx = (x_{1},\dots,x_{m})$, $\bfx' = (x_1',\dots,x_m')$, $\qbasis = (b_{i,j})$, and $\qbasis' = (b_{i,j}')$.

To define cluster algebras, we introduce mutations on seeds, exchange matrices, and quivers as follows. 
\begin{enumerate}
\item (Mutation on seeds)	For a seed $\seed = (\bfx, \qbasis)$ and an integer $k \in [n] \coloneqq \{1,\dots,n\}$, the \emph{mutation} $\mutation_k(\seed) = (\bfx', \qbasis')$ is defined as follows:
\begin{equation*}
\begin{split}
x_i' &= \begin{cases}
x_i &\text{ if } i \neq k,\\
\displaystyle 
x_k^{-1}\left( \prod_{b_{j,k} > 0} x_j^{b_{j,k}} + \prod_{b_{j,k} < 0}x_j^{-b_{j,k}}
\right) & \text{ otherwise}.
\end{cases}\\[1em]
b_{i,j}' &= \begin{cases}
-b_{i,j} & \text{ if } i = k \text{ or } j = k, \\
\displaystyle b_{i,j} + \frac{|b_{i,k}| b_{k,j} + b_{i,k} | b_{k,j}|} {2} & \text{ otherwise}.
\end{cases}
\end{split}
\end{equation*}
\item (Mutation on exchange matrices)
We define $\mutation_k(\qbasis) = (b_{i,j}')$, and say that \emph{$\qbasis' =(b_{i,j}')$ is the mutation of $\qbasis$ at $k$}.
\item (Mutation on quivers) We call a finite directed multigraph $\quiver$ a \emph{quiver} if it does not have directed cycles of length at most $2$. 
The adjacency matrix $\qbasis(\quiver)$ of a quiver is always skew-symmetric. Moreover, $\mutation_k(\qbasis(\quiver))$ is again the adjacency matrix of a quiver $\quiver'$. We define $\mutation_k(\quiver)$ to be the quiver satisfying 
\[
\qbasis(\mutation_k(\quiver)) = \mutation_k(\qbasis(\quiver)),
\]
and say that \emph{$\mutation_k(\quiver)$ is the mutation of $\quiver$ at $k$}.
\end{enumerate}

\begin{example}
Let $n = m = 2$. Suppose that an initial seed is given by
\[
\initialseed = \left(
(x_1,x_2), \begin{pmatrix}
0 & 1 \\ -3 & 0
\end{pmatrix}
\right).
\]
Considering mutations $\mu_1(\initialseed)$ and $\mu_2\mu_1(\initialseed)$, we obtain the following.
\[
\mu_1(\initialseed) = \left(
\left(
\frac{1+x_2^3}{x_1},x_2
\right), \begin{pmatrix}
0 & -1 \\3 & 0
\end{pmatrix}
\right), \quad
\mu_2\mu_1(\initialseed) = \left(
\left(
\frac{1+x_2^3}{x_1}, \frac{1+x_1+x_2^3}{x_1x_2}
\right),
\begin{pmatrix}
0 & 1  \\ -3 & 0
\end{pmatrix}
\right).
\]
\end{example}

\begin{remark}
Let $k$ be a vertex in a quiver $\quiver$.
The mutation $\mutation_k(\quiver)$ can also be described via a sequence of three steps:
\begin{enumerate}
\item For each directed two-arrow path $i \to k \to j$, add a new arrow $i \to j$.
\item Reverse the direction of all arrows incident to the vertex $k$.
\item Repeatedly remove directed $2$-cycles until unable to do so.
\end{enumerate}
\end{remark}

We say a quiver $\quiver'$ is \emph{mutation equivalent} to another quiver $\quiver$ if there exists a sequence  of mutations $\mutation_{j_1},\dots,\mutation_{j_{\ell}}$ which connects $\quiver'$ and $\quiver$, that is,
\[
\quiver' = (\mutation_{j_{\ell}} \cdots \mutation_{j_1})(\quiver).
\]

An immediate check shows that $\mutation_k(\seed)$ is again a seed, and a mutation is an involution, that is, its square is the identity. Since the adjacency matrix of a quiver $\quiver$ is skew-symmetric, we sometimes denote by 
\[
\seed = (\mathbf{x},\quiver) = (\mathbf{x}, \qbasis(\quiver)).
\] 
Also, note that the mutation on seeds does not change frozen variables $x_{n+1},\dots,x_m$. Let $\mathbb{T}_n$ denote the $n$-regular tree whose edges are labeled by $1,\dots,n$. Except for $n = 1$, there are infinitely many vertices on the tree $\mathbb{T}_n$. For example, we present regular trees $\mathbb{T}_2$ and $\mathbb{T}_3$ in Figure~\ref{figure_regular_trees_2_and_3}.

\begin{figure}[ht]
\begin{tabular}{cc}
\begin{tikzpicture}
\tikzset{every node/.style={scale=0.8}}
\tikzset{cnode/.style = {circle, fill,inner sep=0pt, minimum size= 1.5mm}}
\node[cnode] (1) {};
\node[cnode, right of=1 ] (2) {};
\node[cnode, right of=2 ] (3) {};
\node[cnode, right of=3 ] (4) {};	
\node[cnode, right of=4 ] (5) {};	
\node[cnode, right of=5 ] (6) {};	

\node[left of=1] {$\cdots$};
\node[right of=6] {$\cdots$};

\draw (1)--(2) node[above, midway] {$1$};
\draw (2)--(3) node[above, midway] {$2$};
\draw (3)--(4) node[above, midway] {$1$};
\draw (4)--(5) node[above, midway] {$2$};
\draw (5)--(6) node[above, midway] {$1$};
\end{tikzpicture} &
\begin{tikzpicture}
\tikzset{every node/.style={scale=0.8}}
\tikzset{cnode/.style = {circle, fill,inner sep=0pt, minimum size= 1.5mm}}
\node[cnode] (1) {};
\node[cnode, below right of =1] (2) {};
\node[cnode, below of =2] (3) {};
\node[cnode, above right of=2] (4){};
\node[cnode, above of =4] (5) {};
\node[cnode, below right of = 4] (6) {};
\node[cnode, below of= 6] (7) {};
\node[cnode, above right of = 6] (8) {};
\node[cnode, above of = 8] (9) {};
\node[cnode, below right of = 8] (10) {};
\node[cnode, below of = 10] (11) {};
\node[cnode, above right of = 10] (12) {};

\node[left of = 1] {$\cdots$};
\node[right of = 12] {$\cdots$};

\draw (1)--(2) node[above, midway, sloped] {$1$};
\draw (4)--(6) node[above, midway, sloped] {$1$};
\draw (8)--(10) node[above, midway, sloped] {$1$};

\foreach \x [evaluate ={ \x as \y using int(\x +2)} ] in {2, 6, 10}{
\draw (\x)--(\y)  node[below, midway, sloped] {$2$}; 
}
\foreach \x [evaluate = {\x as \y using int(\x +1)}] in {2, 4, 6, 8, 10}{
\draw (\x)--(\y) node[above, midway, sloped] {$3$};
}
\end{tikzpicture}\\[2ex]
$\mathbb{T}_2$ & $\mathbb{T}_3$
\end{tabular}
\caption{The $n$-regular trees for $n=2$ and $n = 3$.}
\label{figure_regular_trees_2_and_3}	
\end{figure}
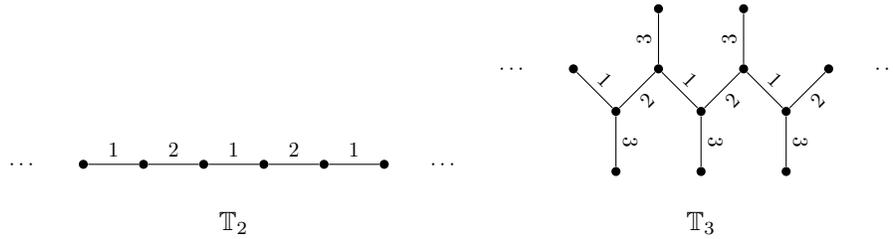
A \emph{cluster pattern} (or \emph{seed pattern}) is an assignment
\[
\mathbb{T}_n \to \{\text{seeds in } \field\}, \quad t \mapsto \seed_t = (\bfx_t, \qbasis_t)
\]
such that if $\begin{tikzcd} t \arrow[r,dash, "k"] & t' \end{tikzcd}$ in $\mathbb{T}_n$, then $\mutation_k(\seed_t) = \seed_{t'}$. Let $\{ \seed_t = (\bfx_t, \qbasis_t)\}_{t \in \mathbb{T}_n}$ be a cluster pattern with $\bfx_t = (x_{1;t},\dots,x_{m;t})$. Since the mutation does not change frozen variables, we may let $x_{n+1} = x_{n+1;t},\dots,x_m = x_{m;t}$.

\begin{definition}[{cf. \cite{FZ2_2003}}]
Let $\{ \seed_t = (\bfx_t, \qbasis_t)\}_{t \in \mathbb{T}_n}$ be a cluster pattern with $\bfx_t = (x_{1;t},\dots,x_{m;t})$. The \emph{cluster algebra \textup{(}of geometric type\textup{)}} $\cA(\{\seed_t\}_{t \in \mathbb{T}_n})$ is defined to be the $\C[x_{n+1},\dots,x_m]$-subalgebra of $\field$ generated by all the cluster variables $\bigcup_{t \in \mathbb{T}_n} \{x_{1;t},\dots,x_{n;t}\}$.
\end{definition}

If we fix a vertex $t_0 \in \mathbb{T}_n$, then a cluster pattern $\{ \seed_t \}_{t \in \mathbb{T}_n}$ is constructed from the seed $\initialseed$
and thus we simply denote by $\cA(\initialseed) = \cA(\{\seed_t\}_{t \in \mathbb{T}_n})$. 
In this case, we call $\initialseed$ an \emph{initial seed}. 
The cluster algebra does not depend on the choice of initial seed.

\begin{example}\label{example_A2_example}
Let $n = m = 2$. Suppose that an initial seed is given by 
\[
\initialseed = \left(
(x_1,x_2), \begin{pmatrix}
0 & 1  \\ -1 & 0
\end{pmatrix}
\right).
\]
We present a part of the cluster pattern obtained by the initial seed $\initialseed$.
\begin{center}
\begin{tikzcd}
\seed_{t_5} = \left( (x_2,x_1), 
\begin{pmatrix}
0 & -1 \\ 1 & 0
\end{pmatrix}
\right)
\arrow[r, color=white, "\textcolor{black}{\sim}" description]
& \initialseed
= \left(
(x_1,x_2), 
\begin{pmatrix}
0 & 1 \\ -1 & 0
\end{pmatrix}
\right) \arrow[d,<->, "\mutation_1"]
\\
\seed_{t_4} = \left(
(\frac{1+x_1}{x_2}, x_1), \begin{pmatrix}
0 & 1 \\ -1 & 0
\end{pmatrix}
\right)  \arrow[u,<->, "\mutation_1"]
& 
\left(
\left(\frac{1+x_2}{x_1}, x_2\right), \begin{pmatrix}
0 & -1 \\ 1 & 0
\end{pmatrix}
\right)  = \seed_{t_1} \arrow[d, <->,"\mutation_2"]\\
\seed_{t_3} = \left(
\left(\frac{1+x_1}{x_2}, \frac{1+x_1+x_2}{x_1x_2}\right),
\begin{pmatrix}
0 & -1 \\ 1 & 0
\end{pmatrix}
\right) \arrow[u,<->, "\mutation_2"]
&
\left(
\left(\frac{1+x_2}{x_1}, \frac{1+x_1+x_2}{x_1x_2}\right),
\begin{pmatrix}
0 & 1  \\ -1 & 0
\end{pmatrix}
\right) = \seed_{t_2} \arrow[l,<->, "\mutation_1"]
\end{tikzcd}
\end{center}
Accordingly, we have that
\[
\cA(\initialseed) = \cA(\{\seed_t\}_{t \in \mathbb{T}_n}) =  \C\left[x_1,x_2,\frac{1+x_2}{x_1}, \frac{1+x_1+x_2}{x_1x_2}, \frac{1+x_1}{x_2}\right].
\]
\end{example}

\begin{remark}\label{rmk_x_cluster_mutation}
There is another mutation operation called the \emph{cluster $\mathcal{X}$-mutation}. Let $\{ \seed_t = (\bfx_t, \qbasis_t)\}_{t \in \mathbb{T}_n}$ be a cluster pattern with $\bfx_t = (x_{1;t},\dots,x_{m;t})$. For $t \in \mathbb{T}_n$ and $j \in [n]$, we set $\mathbf{y}_t = (y_{1;t},\dots,y_{n;t})$ by
\[
y_{j;t} = \prod_{i \in [m]} x_{i;t}^{b^{(t)}_{i,j}}
\]
where $\qbasis_t = (b^{(t)}_{i,j})$. Then, the assignment $t \mapsto (\mathbf{y}_t, \qbasis_t)$ is called a cluster \emph{$Y$-pattern} and for $\begin{tikzcd} t \arrow[r,dash, "k"] & t' \end{tikzcd}$ in $\mathbb{T}_n$, we have
\[
y_{i;t'} = \begin{cases}
\displaystyle y_{i;t} y_{k;t}^{\max\{b_{k,i}^{(t)},0\}}(1+y_{k;t})^{-b_{k,i}^{(t)}} & \text{ if }i \neq k, \\
y_{k;t}^{-1} &\text{ otherwise;}
\end{cases}
\]
see~\cite[Proposition~3.9]{FZ4_2007}. For $\begin{tikzcd} t \arrow[r,dash, "k"] & t' \end{tikzcd}$ in $\mathbb{T}_n$, the operation sends $(\mathbf y_t, \qbasis_t)$ to $(\mathbf y_{t'}, \qbasis_{t'})$ is called the \emph{cluster $\mathcal{X}$-mutation} (or, \emph{$\mathcal{X}$-cluster mutation}). For exchange matrices and quivers, the cluster $\mathcal{X}$-mutation is defined the same as before. 
\end{remark}

We say that a quiver $\quiver$ is \emph{acyclic} if it does not have directed cycles.
Similarly, for a skew-symmetrizable matrix $\qbasis = (b_{i,j})$, we say that it is \emph{acyclic} if there are no sequences $j_1,j_2,\dots,j_{\ell}$ with $\ell \ge 3$ such that
\[
b_{j_1,j_2}, b_{j_2,j_3},\dots,b_{j_{\ell-1},j_{\ell}},b_{j_{\ell},j_1} > 0.
\]
We say a seed $\seed = (\mathbf x, \qbasis)$ is \emph{acyclic} if so is $\qbasis$.
The \textit{Cartan counterpart} $C(\qbpr) = (c_{i,j})$ of the principal part $\qbpr$ of an exchange matrix $\qbasis$ is defined by 
\[
c_{i,j} = \begin{cases}
2 & \text{ if } i = j,  \\
-|b_{i,j}| & \text{ if } i \neq j.
\end{cases}
\]

\begin{definition}
For a Dynkin type $\dynX$, we define a quiver $\quiver$, a matrix $\qbasis$, a cluster pattern $\{\seed_t\}_{t \in \mathbb{T}_n}$, or a cluster algebra $\cA(\initialseed)$ \emph{of type~$\dynX$} as follows.
\begin{enumerate}
\item A quiver is \textit{of type~$\dynX$} 
if it is mutation equivalent to an \emph{acyclic} quiver whose underlying graph is isomorphic to the Dynkin 
diagram of type $\dynX$.
\item A skew-symmetrizable matrix is \textit{of 
type $\dynX$} if it is mutation equivalent to an acyclic skew-symmetrizable matrix whose 
Cartan counterpart $C(\qbasis)$ is isomorphic to the Cartan matrix of type~$\dynX$. 
\item A cluster pattern $\{\seed_t\}_{t \in \mathbb{T}_n}$ is \textit{of type $\dynX$} if for some $t \in \mathbb{T}_n$, the principal part $\qbpr_t$ of the exchange matrix $\qbasis_t$ is of type $\dynX$.
\item A cluster algebra $\cA(\initialseed)$ is \textit{of type $\dynX$} if its cluster pattern is of type $\dynX$.
\end{enumerate}
\end{definition}

Here, we say that two matrices $C_1$ and $C_2$ are \emph{isomorphic} if they are conjugate to each other via a permutation matrix, that is, $C_2 = P^{-1} C_1 P$ for some permutation matrix~$P$. 
It is proved in~\cite[Corollary~4]{CalderoKeller06} that if two acyclic skew-symmetrizable matrices are mutation equivalent, then there exists a sequence of mutations from one to other such that intermediate skew-symmetrizable matrices are all acyclic. 
Indeed, if two acyclic skew-symmetrizable matrices are mutation equivalent, then their Cartan counterparts are isomorphic. 
Accordingly, a quiver or a matrix of type $\dynX$ is well-defined.

\begin{assumption}\label{assumption_affine}
Throughout this paper, we assume that for any cluster algebra, the principal part $\qbpr_{t_0}$ of the initial exchange matrix is acyclic of \textit{affine} type unless mentioned otherwise. 
\end{assumption}

\subsection{Combinatorics of exchange graphs}
\label{sec_comb_of_exchange_graphs}

The \emph{exchange graph} of a cluster pattern is the $n$-regular (finite or infinite) connected graph whose vertices are the seeds of the cluster pattern and whose edges connect the seeds related by a single mutation. 

\begin{definition} 
The \emph{exchange graph} $\exchange(\cA)$ of the cluster algebra $\cA$ is a quotient of the tree $\mathbb{T}_n$ modulo the equivalence relation on vertices defined by setting $t \sim t'$ if and only if $\seed_t \sim \seed_{t'}$. 
\end{definition} 

For example, the exchange graph in Example~\ref{example_A2_example} is a cycle graph with~$5$~vertices. We regard a seed as a vertex of the exchange graph.
For $\initialseed = (\mathbf x_{t_0}, \qbasis_{t_0})$,
the cluster algebra $\cA(\initialseed)$ is said to have \emph{principal coefficients} if the exchange matrix $\qbasis_{t_0}$ is a $(2n \times n)$-matrix of the form $\begin{pmatrix}
\qbpr_{t_0} \\ \clusterfont{I}_n
\end{pmatrix}$, and have \emph{trivial coefficients} if $\qbasis_{t_0}=\qbpr_{t_0}$.
Here $\clusterfont{I}_n$ is the identity matrix of size~$n \times n$.
We recall the following result on the combinatorics of exchange graphs.
\begin{theorem}[{\cite[Theorem~4.6]{FZ4_2007}}]\label{thm_exchange_graph_covering}
The exchange graph of an arbitrary cluster algebra $\cA$ is covered by the exchange graph of the cluster algebra $\cA(\initialseed)$ having principal coefficients and the set of principal part of exchange matrices are the same. 
\end{theorem}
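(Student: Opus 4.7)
The plan is to exhibit a covering map from $\exchange(\cA(\initialseed^{\bullet}))$ down to $\exchange(\cA)$, where $\initialseed^{\bullet}$ is an initial seed with principal coefficients whose principal part is $\qbpr_{t_0}$. Both exchange graphs are defined as quotients of the $n$-regular tree $\mathbb{T}_n$ by equivalence relations on vertices, and the two graphs share the same underlying tree and the same mutation labels. Hence it suffices to prove the implication: if $\seed^{\bullet}_t \sim \seed^{\bullet}_{t'}$ in $\cA(\initialseed^{\bullet})$ via some permutation $\sigma$ of $[n]$, then $\seed_t \sim \seed_{t'}$ in $\cA$ via the same $\sigma$. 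Once this is established, the well-defined quotient map of vertex sets is automatically a graph homomorphism that is $n$-to-$n$ on local neighborhoods, so it is a covering map in the usual sense.

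The key technical input is Fomin--Zelevinsky's \emph{separation of additions} formula, which expresses every cluster variable $x_{i;t}$ in an arbitrary cluster algebra $\cA$ as a universal rational function (determined by the $F$-polynomial $F_{i;t}$ and the $g$-vector $g_{i;t}$ from $\cA(\initialseed^{\bullet})$) of the initial cluster variables and the initial coefficients. Because the $F$-polynomials, $g$-vectors, and the principal part of the exchange matrix along a mutation path depend only on $\qbpr_{t_0}$ and the sequence of mutation directions---not on the frozen rows of $\qbasis_{t_0}$---all of this data is common to $\cA$ and $\cA(\initialseed^{\bullet})$. This is exactly what makes the principal-coefficient exchange graph universal among cluster algebras with a fixed acyclic principal part.

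With this tool in hand, I would proceed as follows. First I would show that, given $\seed^{\bullet}_t \sim \seed^{\bullet}_{t'}$ via $\sigma$, the equality of $\bfx^{\bullet}_t$ and $\sigma \cdot \bfx^{\bullet}_{t'}$ of cluster tuples forces the equality of their $g$-vectors and their $F$-polynomials (matched by $\sigma$), since the collection $\{(g_{i;t}, F_{i;t})\}_i$ can be read off from the Laurent expansion of the cluster variables in the principal-coefficient case. Second, the equality $\qbpr_t = \sigma \cdot \qbpr_{t'} \cdot \sigma^{-1}$ follows from the fact that mutation on exchange matrices restricts to the principal block identically whether one is in $\cA$ or $\cA(\initialseed^{\bullet})$. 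Substituting these matched data into the separation-of-additions expression for $x_{i;t}$ and $x_{\sigma(i);t'}$ then yields equality in $\cA$, giving the desired permutation equivalence of the full seeds.

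The main obstacle is not conceptual but notational: one must set up the separation formula precisely enough to track how the permutation $\sigma$ threads through the $g$-vectors, $F$-polynomials, and exchange-matrix blocks simultaneously, and one has to verify that the identification of $F$-polynomials (rather than just their values) is indeed forced by seed equivalence---a point that relies on algebraic independence of cluster variables and on the fact that $F$-polynomials have nonzero constant term. Once these bookkeeping issues are dispatched, the covering statement follows formally, and one may invoke the result directly from~\cite{FZ4_2007}.
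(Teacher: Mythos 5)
The paper does not prove this statement at all: it is imported verbatim from Fomin--Zelevinsky (\cite[Theorem~4.6]{FZ4_2007}) and used as a black box, so there is no internal proof to compare yours against. Measured against the original argument in \emph{Cluster algebras IV}, your sketch follows essentially the same route: both exchange graphs are quotients of $\mathbb{T}_n$, so one reduces to showing that seed equivalence in the principal-coefficient pattern implies seed equivalence in an arbitrary pattern with the same $\qbpr_{t_0}$, and the engine for this is the separation-of-additions formula together with the fact that $g$-vectors, $F$-polynomials, and the principal block of the exchange matrix along a mutation path depend only on $\qbpr_{t_0}$ and the path. Your observation that equality of principal-coefficient cluster variables forces equality of the pairs $(g_{i;t},F_{i;t})$ is correct (read off the $\mathbb{Z}^n$-multidegree and the specialization at $x_1=\dots=x_n=1$), and this is exactly the mechanism behind \cite[Theorem~4.5]{FZ4_2007}, on which Theorem~4.6 rests.

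There is one genuine omission. Seed equivalence in $\cA$ requires matching the \emph{entire} $m\times n$ exchange matrix under $\sigma$, including the frozen rows $b_{i,j}$ with $i>n$ (equivalently, the coefficients $y_{j;t}=\prod_i x_i^{b^{(t)}_{i,j}}$), and these rows do mutate nontrivially even though the frozen variables themselves do not. Your argument only establishes agreement of the cluster variables and of the principal block $\qbpr_t$. To close this, you need the companion formula of Fomin--Zelevinsky (Proposition~3.13 of \cite{FZ4_2007}) expressing the coefficients $y_{j;t}$ of an arbitrary pattern in terms of the initial coefficients, the $C$-matrix, and the tropical evaluations of the $F$-polynomials --- all data again determined by the principal-coefficient pattern --- so the gap is fillable with the same toolkit, but as written the step ``giving the desired permutation equivalence of the full seeds'' is not justified. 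The remainder of your outline, including the reduction of the covering property to a well-defined label-preserving surjection between quotients of $\mathbb{T}_n$, is sound.
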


One of the direct consequence is that the exchange graph of the cluster algebra $\cA(\initialseed)$ having trivial coefficients is covered by the exchange graph of the cluster algebra $\cA(\tilde\seed_{t_0})$ whose exchange matrix has the same principal part of $\initialseed$.
Therefore, for a fixed principal part of the exchange matrix, the cluster algebra having principal coefficients has the largest exchange graph while that having trivial coefficients has the smallest one (see~\cite[Section~4]{FZ4_2007}).

However, it is unknown whether the largest exchange graph is strictly larger than the smallest one or not. Indeed, it is conjectured in \cite[Conjecture~4.3]{FZ4_2007} that the exchange graph $\exchange(\cA)$ is determined by the principal part $\qbpr_{t_0}$ only.
The conjecture is confirmed for finite cases~\cite{FZ2_2003} or exchange matrices coming from quivers~\cite{IKLP13} as follows:

\begin{theorem}[{\cite[Theorem~1.13]{FZ2_2003}; \cite[Theorem~4.6]{IKLP13}}]\label{thm_exchange_graph_skew_symetric}
Let $\initialseed = (\mathbf x_{t_0}, \qbasis_{t_0})$ be an initial seed.
If the principal part~$\qbpr_{t_0}$ of $\qbasis_{t_0}$ is \emph{of finite type} or \emph{skew-symmetric}, then the exchange graph of a cluster algebra $\mathcal A(\initialseed)$ only depends on the principal part $\qbpr_{t_0}$ of the exchange matrix $\qbasis_{t_0}$.
\end{theorem}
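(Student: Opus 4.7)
The plan is to reduce the claim to a statement about the cluster algebra with principal coefficients and then handle the two hypotheses (finite type, skew-symmetric) separately.

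\textbf{Reduction.} By Theorem~\ref{thm_exchange_graph_covering}, the exchange graph $\exchange(\cA(\initialseed))$ is a quotient of the exchange graph of the cluster algebra with the same principal part $\qbpr_{t_0}$ but with principal coefficients. So it suffices to show that this covering is an isomorphism, i.e.\ that two seeds in the principal-coefficient model whose principal parts agree up to simultaneous permutation of rows and columns are already equivalent as seeds. Equivalently, I need to rule out ``collisions'' in the principal-coefficient exchange graph that would collapse upon forgetting coefficients.

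\textbf{Finite-type case.} I would invoke the Fomin--Zelevinsky classification: the seeds of a cluster algebra of finite Cartan--Killing type $\dynX$ are in bijection with clusters of almost positive roots of the root system $\Phi(\dynX)$, and the exchange graph is the $1$-skeleton of the generalized associahedron $\Delta(\Phi(\dynX))$. This combinatorial model depends only on the Cartan counterpart $C(\qbpr_{t_0})$, and by the Caldero--Keller theorem cited after Assumption~\ref{assumption_affine} the Cartan counterpart is an invariant of the mutation class of an acyclic skew-symmetrizable matrix. Hence the exchange graph is determined by $\qbpr_{t_0}$.

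\textbf{Skew-symmetric case.} Here I would categorify: given a quiver $\quiver$ with adjacency matrix $\qbasis(\quiver)=\qbpr_{t_0}$, form the (generalized) cluster category $\cC_\quiver$, a $2$-Calabi--Yau triangulated category whose basic cluster-tilting objects are in bijection with the seeds of the principal-coefficient cluster algebra, with Iyama--Yoshino mutation corresponding to cluster mutation and endomorphism quivers corresponding to principal parts of exchange matrices. The cluster-tilting graph of $\cC_\quiver$ depends only on $\cC_\quiver$, hence only on $\quiver$, hence only on $\qbpr_{t_0}$, so the exchange graph does too.

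\textbf{Main obstacle.} The delicate point is the skew-symmetric case: one needs to establish that if two basic cluster-tilting objects $T,T'\in\cC_\quiver$ have isomorphic endomorphism quivers, then they lie in the same orbit of the relevant auto-equivalence group, so that the corresponding seeds are genuinely equivalent rather than merely sharing a common principal part. This is precisely the uniqueness-of-lift statement that requires Keller--Yang/Plamondon-type results and constitutes the technical core of IKLP13. The finite-type case is comparatively transparent because the exchange graph is finite and can be matched directly against the explicit generalized-associahedron model.
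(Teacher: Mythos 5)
The paper does not actually prove Theorem~\ref{thm_exchange_graph_skew_symetric}; it is quoted from \cite{FZ2_2003} and \cite{IKLP13}, so there is no in-paper argument to compare against. Judged on its own terms, your sketch has the right skeleton (reduce via Theorem~\ref{thm_exchange_graph_covering} to showing the covering by the principal-coefficient exchange graph is an isomorphism, then treat the two hypotheses separately), but the formulation of what must be proven is wrong in a way that matters.

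You claim it suffices to show that ``two seeds in the principal-coefficient model whose principal parts agree up to simultaneous permutation of rows and columns are already equivalent as seeds.'' That statement is false, and it would collapse the exchange graph far too much: in Example~\ref{example_A2_example} the seeds $\seed_{t_0}$ and $\seed_{t_2}$ have identical exchange matrices but different clusters, and they are distinct vertices of the five-cycle exchange graph. Seed equivalence involves the clusters as well as the matrices, and the principal part at a vertex $t$ of $\mathbb{T}_n$ is the same for every choice of coefficients anyway, so ``collisions of principal parts'' is not the issue. What actually has to be shown is that equivalence of seeds in the trivial-coefficient pattern (equality of coefficient-free clusters up to a permutation matching the exchange matrices) forces equivalence of the corresponding principal-coefficient seeds --- in other words, that a coefficient-free cluster determines its seed. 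This is the Fomin--Zelevinsky conjecture ``different seeds have different clusters,'' and it is precisely what \cite{IKLP13} establish in the skew-symmetric case. Correspondingly, your ``main obstacle'' is misidentified: the needed categorical input is not that isomorphic endomorphism quivers force two cluster-tilting objects into one orbit of autoequivalences (this is false --- all seeds on the bipartite belt share an endomorphism quiver while being pairwise distinct), but that the cluster character, equivalently the index/$\mathbf{g}$-vector, is injective on reachable rigid indecomposables, so that the coefficient-free cluster variables determine the cluster-tilting object and hence the whole seed. The finite-type half of your sketch, via the almost-positive-roots model and the generalized associahedron, is essentially correct and matches \cite[Theorem~1.13]{FZ2_2003}.
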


We furthermore extend this result to cluster algebras whose initial exchange matrices are of affine type. We will prove this theorem later in Section~\ref{sec:folding}.
\begin{theorem}\label{theorem:principal coefficients and trivial coefficients}
Let $\initialseed = (\mathbf x_{t_0}, \qbasis_{t_0})$ be an initial seed.
If the principal part $\qbpr_{t_0}$ of $\qbasis_{t_0}$ is \emph{of affine type}, then the exchange graph of a cluster algebra $\mathcal A(\initialseed)$ only depends on the principal part $\qbpr_{t_0}$ of the exchange matrix $\qbasis_{t_0}$.
\end{theorem}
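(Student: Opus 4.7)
The plan is to reduce the theorem to the skew-symmetric case handled by Theorem~\ref{thm_exchange_graph_skew_symetric}, using folding. By Theorem~\ref{thm_exchange_graph_covering}, the principal-coefficient exchange graph covers the exchange graph of every cluster algebra with the same principal part $\qbpr_{t_0}$; in particular it covers the trivial-coefficient one. So it suffices to show this covering is a bijection on vertices, i.e., that two seeds in the principal-coefficient pattern are equivalent whenever their principal parts are equivalent as seeds in the trivial-coefficient pattern.

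First I would dispense with the simply-laced affine cases $\exdynA,\exdynD,\exdynE$: there the principal part $\qbpr_{t_0}$ is skew-symmetric, so Theorem~\ref{thm_exchange_graph_skew_symetric} applies directly. For a non-simply-laced affine type $\dynX\in\{\exdynB_n,\exdynC_n,\exdynF_4,\exdynG_2,\dynA^{(2)}_{2n-1},\dynD^{(2)}_{n+1},\dynE^{(2)}_6,\dynD^{(3)}_4\}$, I would choose a finite group $G$ (either $\Z/2\Z$ or $\Z/3\Z$) acting on the Dynkin diagram of a simply-laced affine type $\widetilde{\dynX}\in\{\exdynA,\exdynD,\exdynE\}$ such that $\dynX$ is obtained as the $G$-folding of $\widetilde{\dynX}$; a globally foldable acyclic initial seed $\widetilde{\initialseed}$ of type $\widetilde{\dynX}$ then folds to the given acyclic initial seed $\initialseed$ of type $\dynX$.

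The key step is then to interpret both the principal- and trivial-coefficient exchange graphs of type $\dynX$ as the subgraphs of the corresponding exchange graphs of type $\widetilde{\dynX}$ cut out by $G$-admissible seeds and $G$-orbit mutations. Concretely, I would show (i) the principal-coefficient seed $\widetilde{\initialseed}$ is globally foldable, so that every reachable seed is $G$-admissible and the folded mutations are realized by $G$-orbits of mutations of $\widetilde{\initialseed}$, and (ii) principal coefficients on the $\dynX$-side are compatible with this folding—that is, a $\Z$-linear bijection between the $G$-invariant coefficient data of $\widetilde{\initialseed}$ and the coefficient data of $\initialseed$ is preserved under $G$-orbit mutations. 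With (i) and (ii) in place, the exchange graph of $\cA(\initialseed)$ with principal (resp.\ trivial) coefficients is identified with the $G$-fixed subgraph of the exchange graph of $\cA(\widetilde{\initialseed})$ with principal (resp.\ trivial) coefficients. Since Theorem~\ref{thm_exchange_graph_skew_symetric} identifies the latter two subgraphs, the former two coincide as well, proving the theorem.

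The main obstacle I expect is step (i)—establishing that global foldability of the initial seed propagates through every mutation of the cluster pattern. A priori $G$-admissibility is only a local condition at $\widetilde{\initialseed}$; upgrading it to hold along arbitrary mutation sequences is precisely the content that must be developed in Section~\ref{section:folding}. Once global foldability of $\widetilde{\initialseed}$ is in hand, step (ii) is a routine verification for principal coefficients using the block form $\bigl(\begin{smallmatrix}\widetilde{\qbpr}_{t_0}\\ \clusterfont{I}\end{smallmatrix}\bigr)$ and the $G$-equivariance of the unfolding, so the serious work is concentrated in ensuring that the folding hypothesis survives all cluster transformations.
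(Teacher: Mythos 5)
Your proposal is correct and follows essentially the same route as the paper: reduce to the skew-symmetric case via the covering theorem, realize the principal- and trivial-coefficient exchange graphs of the non-simply-laced type as the $(G,\psi)$-admissible subgraphs (with orbit mutations) of the corresponding simply-laced exchange graphs, and conclude from Theorem~\ref{thm_exchange_graph_skew_symetric} together with Theorem~\ref{thm_invariant_seeds_form_folded_pattern}, which is exactly the ``global foldability propagates'' input you flag as the serious work. The only implementation detail the paper adds is realizing principal coefficients of type $\dynY$ upstairs by a quiver $\overline\quiver(\dynX)$ with one frozen vertex per $G$-orbit (acted on trivially), which is your step (ii) made concrete.
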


Because of Assumption~\ref{assumption_affine} and Theorem~\ref{theorem:principal coefficients and trivial coefficients}, we simply denote the exchange graph by
$\exchange(\qbpr) = \exchange(\cA(\mathbf x, \qbasis))$.
In Tables~\ref{Dynkin} and~\ref{table_twisted_affine}, we present lists of standard affine root 
systems and twisted affine root systems, respectively. They are the same as 
presented in Tables Aff 1, Aff 2, and Aff 3 of~\cite[Chapter~4]{Kac83}, and we 
denote by $\exdynX = \dynX^{(1)}$. 
We notice that the number of vertices of the standard affine Dynkin diagram of type $\exdynX_{n-1}$ is $n$ while we do not specify the vertex numbering. 

For a Dynkin type $\dynX$, we say that $\dynX$ is \emph{simply-laced} if its Dynkin diagram has only single edges, otherwise, $\dynX$ is \emph{non-simply-laced}.
Recall that the Cartan matrix associated to a Dynkin diagram $\dynX$ can be read directly from the diagram $\dynX$ as follows:
\begin{center}
\setlength{\tabcolsep}{20pt}
\begin{tabular}{ccccc}
\begin{tikzpicture}[scale =.5, baseline=-.5ex]
\tikzset{every node/.style={scale=0.7}}

\node[Dnode, label=below:{$i$}] (2) {};
\node[Dnode, label=below:{$j$}] (3) [right=of 2] {};

\draw (2)-- (3);

\end{tikzpicture}&
\begin{tikzpicture}[scale =.5, baseline=-.5ex]
\tikzset{every node/.style={scale=0.7}}

\node[Dnode, label=below:{$i$}] (2) {};
\node[Dnode, label=below:{$j$}] (3) [right=of 2] {};

\draw[double line] (2)-- node{\scalebox{1.3}{$>$}} (3);

\end{tikzpicture}&
\begin{tikzpicture}[scale =.5, baseline=-.5ex]
\tikzset{every node/.style={scale=0.7}}

\node[Dnode,  label=below:{$i$}] (2) {};
\node[Dnode, label=below:{$j$}] (3) [right=of 2] {};

\draw[triple line] (2)-- node{\scalebox{1.3}{$>$}} (3);
\draw (2)--(3);

\end{tikzpicture}
&
\begin{tikzpicture}[scale=.5, baseline=-.5ex]
\tikzset{every node/.style={scale=0.7}}

\node[Dnode, label=below:{$i$}] (1) {};
\node[Dnode, label=below:{$j$}] (2) [right = of 1] {};

\draw[double distance = 2.7pt] (1)--(2);
\draw[double distance = 0.9pt] (1)-- node{\scalebox{1.3}{ $>$}} (2);			
\end{tikzpicture} 
&
\begin{tikzpicture}[scale=.5, baseline=-.5ex]
\tikzset{every node/.style={scale=0.7}}
\node[Dnode, label=below:{$i$}] (1) {};
\node[Dnode, label=below:{$j$}] (2) [right = of 1] {};

\draw[double line] (1)-- node[pos=0.2]{\scalebox{1.3}{ $<$}} 
node[pos=0.7]{\scalebox{1.3}{ $>$}} (2);

\end{tikzpicture}\\
$c_{i,j} = -1$
&$c_{i,j} = -2$
& $c_{i,j} = -3$
& $c_{i,j} = -4$
& $c_{i,j} = -2$ \\
$c_{j,i} = -1$
&$c_{j,i} = -1$
& $c_{j,i} = -1$
& $c_{j,i} = -1$
& $c_{j,i} = -2$
\end{tabular}
\end{center}
For example, the Cartan matrix
of the diagram \begin{tikzpicture}[scale =.5, baseline=-.5ex]
\tikzset{every node/.style={scale=0.7}}

\node[Dnode, label=below:{$1$}] (1) {};
\node[Dnode, label=below:{$2$}] (2) [right = of 1] {};
\node[Dnode, label=below:{$3$}] (3) [right=of 2] {};

\draw[triple line] (2)-- node{\scalebox{1.3}{$>$}} (3);
\draw (1)--(2);
\draw (2)--(3);

\end{tikzpicture} of type
$\exdynG_2$ is 
\begin{equation}\label{eq_Cartan_G2}
\begin{bmatrix}
2 & -1 & 0 \\
-1 & 2 & -3 \\
0 & -1 & 2
\end{bmatrix}.
\end{equation}
Therefore, for each non-simply-laced Dynkin diagram $\dynX$, any exchange matrix $\qbasis$ of type $\dynX$ is \emph{not} skew-symmetric but skew-symmetrizable. Hence it never come from any quiver.

The Dynkin diagrams of standard affine root systems do not have cycles except 
of type $\exdynA_{n-1}$ for $n \geq 3$. We consider \textit{bipartite} coloring 
on affine Dynkin diagrams except of type $\exdynA$, that is, we color the set 
of vertices with black or white such that for any edge connecting $i$ and $j$, 
two vertices $i$ and $j$ have different colors. The coloring defines an 
orientation on the directed graph $\Gamma(\qbasis)$ such that sinks are colored 
in black. This is equivalent to saying that each nonzero entry $b_{i,j}$ of the 
matrix $\qbpr$ has positive sign if and only if $i$ is white and $j$ is black. 
Accordingly, the bipartite coloring on each affine Dynkin diagram of type 
$\dynX$ determines a quiver of type $\dynX$.

\begin{table}[ht]
\begin{center}
\begin{tabular}{l|l}
\toprule
$\Roots$ & Dynkin diagram \\
\midrule
$\exdynA_1$  &
\begin{tikzpicture}[scale=.5, baseline=-.5ex]
\tikzset{every node/.style={scale=0.7}}
\node[Dnode] (1) {};
\node[Dnode] (2) [right = of 1] {};

\draw[double line] (1)-- node[pos=0.2]{\scalebox{1.3}{ $<$}} node[pos=0.7]{\scalebox{1.3}{ $>$}} (2);

\end{tikzpicture}  \\ 
$\exdynA_{n-1}$ $(n \geq 3)$  &
\begin{tikzpicture}[scale=.5, baseline=-.5ex]
\tikzset{every node/.style={scale=0.7}}
\node[Dnode] (1) {};
\node[Dnode] (2) [right = of 1] {};
\node[Dnode] (3) [right = of 2] {};
\node[Dnode] (4) [right =of 3] {};
\node[Dnode] (5) [right =of 4] {};			
\node[Dnode] (6) [above =of 3] {};			

\draw (1)--(2)--(3)
(4)--(5)
(1)--(6)--(5);
\draw[dotted] (3)--(4);

\end{tikzpicture}  \\ 

$\exdynB_{n-1}$ $(n \geq 4)$  &
\begin{tikzpicture}[scale=.5, baseline=-.5ex]
\tikzset{every node/.style={scale=0.7}}

\node[Dnode, fill=black] (1) {};
\node[Dnode] (2) [right = of 1] {};
\node[Dnode] (3) [right = of 2] {};
\node[Dnode] (4) [right =of 3] {};
\node[Dnode] (5) [right =of 4] {};
\node[Dnode] (6) [below left = 0.6cm and 0.6cm of 1] {};
\node[Dnode] (7) [above left = 0.6cm and 0.6cm of 1] {};			

\draw (1)--(2)
(3)--(4)
(6)--(1)--(7);
\draw [dotted] (2)--(3);
\draw[double line] (4)-- node{\scalebox{1.3}{ $>$}} (5);
\end{tikzpicture}  \\ 			
$\exdynC_{n-1}$ $(n \geq 3)$ & 
\begin{tikzpicture}[scale=.5, baseline=-.5ex]
\tikzset{every node/.style={scale=0.7}}

\node[Dnode,fill=black] (1) {};
\node[Dnode] (2) [right = of 1] {};
\node[Dnode] (3) [right = of 2] {};
\node[Dnode] (4) [right =of 3] {};
\node[Dnode] (5) [right =of 4] {};			
\node[Dnode] (6) [left =of 1] {};
\draw (1)--(2)
(3)--(4);
\draw [dotted] (2)--(3);
\draw[double line] (4)-- node{\scalebox{1.3}{ $<$}} (5);
\draw[double line] (1)-- node{\scalebox{1.3}{ $>$}} (6);			
\end{tikzpicture}  \\ 	

$\exdynD_{n-1}$ $(n \geq 5)$ & 
\begin{tikzpicture}[scale=.5, baseline=-.5ex]
\tikzset{every node/.style={scale=0.7}}

\node[Dnode, fill=black,] (1) {};
\node[Dnode] (2) [right = of 1] {};
\node[Dnode] (3) [right = of 2] {};
\node[Dnode] (4) [right =of 3] {};			

\node[Dnode] (5) [ below right = 0.6cm and 0.6cm of 4] {};
\node[Dnode] (6) [above right=  0.6cm and 0.6cm of 4] {};

\node[Dnode] (7) [above left = 0.6cm and 0.6cm of 1] {};
\node[Dnode] (8) [below left=0.6cm and 0.6cm of 1] {};

\draw(1)--(2)
(3)--(4)--(5)
(4)--(6)
(7)--(1)--(8);
\draw[dotted] (2)--(3);
\end{tikzpicture}
\\ 
$\exdynE_6$ & 

\begin{tikzpicture}[scale=.5, baseline=-.5ex]
\tikzset{every node/.style={scale=0.7}}

\node[Dnode] (1) {};
\node[Dnode, fill=black] (3) [right=of 1] {};
\node[Dnode] (4) [right=of 3] {};
\node[Dnode, fill=black] (2) [above=of 4] {};
\node[Dnode,fill=black] (5) [right=of 4] {};
\node[Dnode] (6) [right=of 5]{};
\node[Dnode] (7) [above=of 2]{};

\draw(1)--(3)--(4)--(5)--(6)
(7)--(2)--(4);
\end{tikzpicture}			
\\
$\exdynE_7$ & 
\begin{tikzpicture}[scale=.5, baseline=-.5ex]
\tikzset{every node/.style={scale=0.7}}

\node[Dnode] (1) {};
\node[Dnode,fill=black] (8) [left=of 1] {};
\node[Dnode, fill=black] (3) [right=of 1] {};
\node[Dnode] (4) [right=of 3] {};
\node[Dnode, fill=black] (2) [above=of 4] {};
\node[Dnode,fill=black] (5) [right=of 4] {};
\node[Dnode] (6) [right=of 5]{};
\node[Dnode, fill=black] (7) [right=of 6]{};

\draw (8)--(1)--(3)--(4)--(5)--(6)--(7)
(2)--(4);
\end{tikzpicture}	
\\
$\exdynE_8$ & 
\begin{tikzpicture}[scale=.5, baseline=-.5ex]
\tikzset{every node/.style={scale=0.7}}

\node[Dnode] (1) {};
\node[Dnode, fill=black] (3) [right=of 1] {};
\node[Dnode] (4) [right=of 3] {};
\node[Dnode, fill=black] (2) [above=of 4] {};
\node[Dnode, fill=black] (5) [right=of 4] {};
\node[Dnode] (6) [right=of 5]{};
\node[Dnode, fill=black] (7) [right=of 6]{};
\node[Dnode] (8) [right=of 7]{};
\node[Dnode, fill=black] (9) [right=of 8]{};

\draw(1)--(3)--(4)--(5)--(6)--(7)--(8)--(9)
(2)--(4);
\end{tikzpicture}
\\
$\exdynF_4$ 
&
\begin{tikzpicture}[scale = .5, baseline=-.5ex]
\tikzset{every node/.style={scale=0.7}}

\node[Dnode,fill=black] (1) {};
\node[Dnode] (2) [right = of 1] {};
\node[Dnode, fill=black] (3) [right = of 2] {};
\node[Dnode] (4) [right =of 3] {};
\node[Dnode, fill=black] (5) [right =of 4] {};

\draw (1)--(2)
(2)--(3)
(4)--(5);
\draw[double line] (3)-- node{\scalebox{1.3}{$>$}} (4);
\end{tikzpicture}  \\
$\exdynG_2$
&
\begin{tikzpicture}[scale =.5, baseline=-.5ex]
\tikzset{every node/.style={scale=0.7}}

\node[Dnode] (1) {};
\node[Dnode, fill=black] (2) [right = of 1] {};
\node[Dnode] (3) [right=of 2] {};

\draw[triple line] (2)-- node{\scalebox{1.3}{$>$}} (3);
\draw (1)--(2);
\draw (2)--(3);

\end{tikzpicture}\\
\bottomrule
\end{tabular}
\end{center}
\caption{Dynkin diagrams of standard affine root systems}\label{Dynkin}
\end{table}

\begin{table}[ht]
\begin{tabular}{l|l}
\toprule
$\Roots$ & Dynkin diagram \\
\midrule
$\dynA_2^{(2)}$ & 
\begin{tikzpicture}[scale=.5, baseline=-.5ex]
\tikzset{every node/.style={scale=0.7}}

\node[Dnode] (1) {};
\node[Dnode] (2) [right = of 1] {};

\draw[double distance = 2.7pt] (1)--(2);
\draw[double distance = 0.9pt] (1)-- node{\scalebox{1.3}{ $<$}} (2);			
\end{tikzpicture} 
\\
$\dynA_{2(n-1)}^{(2)}$ ($n \ge 3$)
&
\begin{tikzpicture}[scale=.5, baseline=-.5ex]
\tikzset{every node/.style={scale=0.7}}

\node[Dnode, fill=black] (1) {};
\node[Dnode] (2) [right = of 1] {};
\node[Dnode] (3) [right = of 2] {};
\node[Dnode] (4) [right =of 3] {};
\node[Dnode] (5) [right =of 4] {};			
\node[Dnode] (6) [left =of 1] {};
\draw (1)--(2)
(3)--(4);
\draw [dotted] (2)--(3);
\draw[double line] (4)-- node{\scalebox{1.3}{ $>$}} (5);
\draw[double line] (1)-- node{\scalebox{1.3}{ $>$}} (6);			
\end{tikzpicture}  \\ 
$\dynA_{2(n-1)-1}^{(2)}$ ($n \ge 4$) & 
\begin{tikzpicture}[scale=.5, baseline=-.5ex]
\tikzset{every node/.style={scale=0.7}}

\node[Dnode, fill=black] (1) {};
\node[Dnode] (2) [right = of 1] {};
\node[Dnode] (3) [right = of 2] {};
\node[Dnode] (4) [right =of 3] {};
\node[Dnode] (5) [right =of 4] {};
\node[Dnode] (6) [below left = 0.6cm and 0.6cm of 1] {};
\node[Dnode] (7) [above left = 0.6cm and 0.6cm of 1] {};			

\draw (1)--(2)
(3)--(4)
(6)--(1)--(7);
\draw [dotted] (2)--(3);
\draw[double line] (4)-- node{\scalebox{1.3}{ $<$}} (5);
\end{tikzpicture}  \\ 		
$\dynD_{n}^{(2)}$ ($n \ge 3$)  & 
\begin{tikzpicture}[scale=.5, baseline=-.5ex]
\tikzset{every node/.style={scale=0.7}}

\node[Dnode] (1) {};
\node[Dnode] (2) [right = of 1] {};
\node[Dnode] (3) [right = of 2] {};
\node[Dnode] (4) [right =of 3] {};
\node[Dnode] (5) [right =of 4] {};			
\node[Dnode] (6) [left =of 1] {};
\draw (1)--(2)
(3)--(4);
\draw [dotted] (2)--(3);
\draw[double line] (4)-- node{\scalebox{1.3}{ $>$}} (5);
\draw[double line] (1)-- node{\scalebox{1.3}{ $<$}} (6);			
\end{tikzpicture}  \\ 
$\dynE_6^{(2)}$  &
\begin{tikzpicture}[scale = .5, baseline=-.5ex]
\tikzset{every node/.style={scale=0.7}}

\node[Dnode, fill=black] (1) {};
\node[Dnode] (2) [right = of 1] {};
\node[Dnode, fill=black] (3) [right = of 2] {};
\node[Dnode] (4) [right =of 3] {};
\node[Dnode, fill=black] (5) [right =of 4] {};

\draw (1)--(2)
(2)--(3)
(4)--(5);
\draw[double line] (3)-- node{\scalebox{1.3}{$<$}} (4);
\end{tikzpicture}  \\
$\dynD_4^{(3)}$  &
\begin{tikzpicture}[scale =.5, baseline=-.5ex]
\tikzset{every node/.style={scale=0.7}}

\node[Dnode] (1) {};
\node[Dnode, fill=black] (2) [right = of 1] {};
\node[Dnode] (3) [right=of 2] {};

\draw[triple line] (2)-- node{\scalebox{1.3}{$<$}} (3);
\draw (1)--(2);
\draw (2)--(3);

\end{tikzpicture}\\
\bottomrule
\end{tabular}
\caption{Dynkin diagrams of twisted affine root systems}\label{table_twisted_affine}
\end{table}

\begin{remark}
Any cycle with $n$ vertices defines a quiver of type $\exdynA_{n-1}$. If a 
quiver is a directed $n$-cycle, then it is mutation equivalent to a quiver of 
type $\dynD_n$ (see~Type IV in \cite{Vatne10}). Recall 
from~\cite[Lemma~6.8]{FST08} the mutation equivalence class in this case. Let 
$\quiver$ and $\quiver'$ are two $n$-cycles for $n \geq 3$. Suppose that in 
$\quiver$, there are $p$ edges of one direction and $q = n - p$ edges of the 
opposite direction. Also, in $\quiver'$, there are $p'$ edges of one direction 
and $q' = n - p'$ edges of the opposite direction. Then two quivers $\quiver$ 
and $\quiver'$ are mutation equivalent if and only if the unordered pairs 
$\{p,q\}$ and $\{p',q'\}$ coincide. As we already mentioned, if $p = 0$ or $p = 
n$, then the quiver $\quiver'$ is of type $\dynD_n$. We say that a quiver 
$\quiver$ is of type $\exdynA_{p,q}$ if it has $p$ edges of one direction and 
$q$ edges of the opposite direction. We depict some examples for quivers of 
type $\exdynA_{p,q}$ in Figure~\ref{fig_example_Apq}.
\end{remark}

\begin{figure}[ht]
\begin{tabular}{cccc}
\begin{tikzpicture}[scale = 0.5]			
\tikzset{every node/.style={scale=0.7}}
\node[Dnode] (3) {};
\node[Dnode] (1) [below left = 0.6cm and 0.6cm of 3]{};
\node[Dnode] (2) [below right = 0.6cm and 0.6cm  of 3] {};

\draw[->] (3)--(1);
\draw[->] (3)--(2);
\draw[->] (1)--(2);
\end{tikzpicture} 
&	\begin{tikzpicture}[scale = 0.5]			
\tikzset{every node/.style={scale=0.7}}
\node[Dnode] (1) {};
\node[Dnode] (2) [below = of 1] {};
\node[Dnode] (3) [right = of 2] {};
\node[Dnode] (4) [above = of 3] {};

\draw[->] (4) -- (1);
\draw[->]	(4) -- (3);
\draw[->]	(3) -- (2);
\draw[->]	(2) --(1);
\end{tikzpicture} 
&	\begin{tikzpicture}[scale = 0.5]			
\tikzset{every node/.style={scale=0.7}}
\node[Dnode] (1) {};
\node[Dnode] (2) [below = of 1] {};
\node[Dnode] (3) [right = of 2] {};
\node[Dnode] (4) [above = of 3] {};

\draw[->] (4) -- (1);
\draw[->]	(4) -- (3);
\draw[->]	(3) -- (2);
\draw[->]	(1)--(2);
\end{tikzpicture} 
&
\raisebox{4em}{	\begin{tikzpicture}[baseline=-.5ex,scale=0.6]
\tikzstyle{state}=[draw, circle, inner sep = 0.07cm]
\tikzset{every node/.style={scale=0.7}}
\foreach \x in {1,...,7, 9,10,11}{
\node[Dnode] (\x) at (\x*30:3) {};
}
\node(12) at (12*30:3) {$\vdots$};
\node[rotate=-30] (8) at (8*30:3) {$\cdots$};

\foreach \x [evaluate={\y=int(\x+1);}] in {3,...,6,9}{
\draw[->] (\x)--(\y);
}
\foreach \x [evaluate={\y=int(\x-1);}] in {3,2,11}{
\draw[->] (\x)--(\y);
} 
\curlybrace[]{100}{290}{3.5};
\draw (190:4) node[rotate=0] {$p$};
\curlybrace[]{-50}{80}{3.5};
\draw (15:4) node[rotate=0] {$q$};
\end{tikzpicture}}
\\	
$\exdynA_{1,2}$ & $\exdynA_{1,3}$ & $\exdynA_{2,2}$ & $\exdynA_{p,q}$
\end{tabular}
\caption{Quivers of type $\exdynA_{p,q}$.}\label{fig_example_Apq}
\end{figure}

Let $\quiver$ be a quiver having bipartite coloring, that is, each vertex is either source or sink. Let $I_+ \subset [n]$ be the set of sources (that is, white vertices); and let $I_- \subset [n]$ be the set of sinks (that is, black vertices). Then we have $[n] = I_+ \sqcup I_-$. We consider the composition
$\qcoxeter = \mutation_- \mutation_+$ of a sequence of mutations where
\[
\mutation_{\varepsilon} = \prod_{i \in I_{\varepsilon}} \mutation_i \qquad \text{ for } \varepsilon \in \{ +, -\}.
\]
We call $\qcoxeter$ the \textit{Coxeter mutation}. Because of the definition, we have 
\[
\qcoxeter(\qbpr) = (\mutation_+ \mutation_-)(\qbpr) = \qbpr \quad \text{ and } 
\quad \qcoxeter^{-1}(\qbpr)= (\mutation_- \mutation_+)(\qbpr) = \qbpr.
\] 
The initial seed $\seed_{t_0} = \seed_0 = (\bfx_0, \qbasis_0)$ is included in a \textit{bipartite belt} consisting of the seeds $\seed_r = (\bfx_r, \qbasis_0)$ for $r \in \Z$ defined by 
\[
\seed_r = (\bfx_r, \qbasis_0) = \begin{cases}
\qcoxeter^r(\seed_0) & \text{ if } r > 0, \\
(\mutation_+ \mutation_-)^{-r}(\seed_0) & \text{ if } r < 0.
\end{cases}
\]
We write 
\[
\bfx_r = (x_{1;r},\dots,x_{n;r}) \quad \text{ for }r \in \Z.
\]

Let $\Phi$ be the root system defined by the Cartan counterpart of $\qbpr$. Let $\Pi$ be the set of simple roots $\alpha_1,\dots,\alpha_n$. We denote by $\Phi^+$ the set of positive roots.  
The positivity of Laurent phenomenon, which was conjectured by Fomin and Zelevinsky in~\cite{FZ1_2002}, and proved by Gross, Hacking, Keel, and Kontsevich in~\cite[Corollary~0.4]{GHKK18}, states that every non-zero cluster variable $z$ can be uniquely written as 
\[
z = \frac{f(\bfx_{t_0})}{x_{1;0}^{d_1} \cdots x_{n;0}^{d_n}}
\]
where $f$ is a polynomial with nonnegative integer coefficients in the cluster variables $x_{1;0},\dots,x_{n;0}$ and it is not divisible by any cluster variables $x_{1;0},\dots,x_{n;0}$. 
The \textit{denominator vector} $\mathbf d(z) =  \mathbf d_{\bfx_{t_0}}(z)$ of $z$ with respect to the cluster $\bfx_{t_0}$ is defined by
\[
\mathbf d(z) = \mathbf d_{\bfx_{t_0}}(z) = \sum_{i=1}^n d_i \alpha_i. 
\]
For example, for the initial seed $\initialseed = (\bfx_{t_0}, \qbasis_{t_0})$, we have $\mathbf d(x_{i;0}) = -\alpha_i$ for all $i \in [n]$. Using these terminologies, we recall the following:
\begin{theorem}[{\cite[Theorems~1.1 and 1.2]{ReadingStella20}}]\label{thm_RS_theorems} 
Suppose that the principal part $\qbpr_{t_0}$ of the exchange matrix in the initial seed is acyclic and its Cartan counterpart is of affine type. Let $\Phi$ be the associated root system with simple roots $\alpha_1,\dots,\alpha_n$. Then, the map from the cluster variables in $\cA(\qbpr_{t_0})$ defined by
\(
z \mapsto \mathbf d(z) 
\)
is injective and the image lies in $\Phi$. Moreover, collecting the nonnegative linear span of $\mathbf{d}$-vectors of cluster variables in each seed, we get a simplicial fan such that the dual graph of its underlying simplicial complex is isomorphic to the exchange graph~$\exchange(\qbpr)$.
\end{theorem}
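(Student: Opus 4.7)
The plan is to combine an analysis of how Coxeter mutation acts on denominator vectors with an induction on the rank $n$, exploiting the bipartite belt structure already highlighted in the paper. First, I would verify that under a single mutation at a source (resp.\ sink) in an acyclic seed, the $\mathbf d$-vectors of the non-mutated cluster variables are unchanged while the new $\mathbf d$-vector is given by an explicit piecewise-linear rule involving the simple reflection $s_k$ at the mutated index. Composing the mutations along $\mutation_+$ and then $\mutation_-$, one obtains that along the bipartite belt the $\mathbf d$-vector of the $i$-th cluster variable after $r$ applications of $\qcoxeter$ is $c^{-r}(-\alpha_i)$ on the positive side and $c^{r}(\alpha_i)$ on the negative side, where $c = s_- s_+$ is the associated Coxeter element. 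This identifies the denominator vectors on the bipartite belt with the $c$-orbits of $\pm\Pi$ in the root lattice.

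Next, I would invoke the Kac-style description of the root system of an affine Cartan matrix: the real roots are precisely the union of the $c$-orbits of $\pm\Pi$, and the imaginary roots span a one-dimensional $c$-invariant subspace that is the common limit of these orbits. This immediately gives the first assertion along the bipartite belt: the $\mathbf d$-vectors land in $\Phi$ and the assignment is injective there, because different $c$-orbits are disjoint and $c$ has infinite order on real roots in the affine case. To get injectivity and the root-system image on \emph{all} cluster variables, I would use the structural fact (recalled in the paper before the theorem) that every seed is obtained from some $\qcoxeter^r(\initialseed)$ by performing mutations inside a rank $n-1$ induced subgraph $\exchange(\qbpr_{t_0}, x_\ell)$. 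By the inductive hypothesis, that induced subgraph corresponds either to a finite-type cluster algebra, in which case Fomin–Zelevinsky's finite-type denominator theorem applies, or to a smaller affine one handled by induction; in both cases the $\mathbf d$-vectors of the new cluster variables are obtained from positive roots of a Levi subsystem translated by the image of $-\alpha_\ell$ under the appropriate $c^r$.

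For the simplicial fan claim I would show cone-by-cone that the $\mathbf d$-vectors of any single seed are linearly independent and that adjacent seeds share a common codimension-one face, so that the collection of cones forms a complete simplicial fan in the real span of the root lattice (after passing to the projectivization transverse to the imaginary direction, if necessary). Since adjacency of seeds in $\exchange(\qbpr)$ corresponds exactly to sharing $n-1$ cluster variables, and hence $n-1$ of the $\mathbf d$-vectors, the dual graph of this simplicial complex is $\exchange(\qbpr)$ by construction. Combined with the injectivity of $z \mapsto \mathbf d(z)$, this produces the asserted isomorphism of combinatorial structures.

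The main obstacle, and the place where the argument genuinely departs from the finite-type case treated by Fomin–Zelevinsky, is the presence of the imaginary root and the infiniteness of the $c$-orbits. One must rule out that two distinct cluster variables, one reached via a large positive power of $\qcoxeter$ and the other via a mutation far inside some induced subgraph, accidentally share a denominator vector as the orbits accumulate toward the isotropic direction. I would address this by combining the positivity of the Laurent phenomenon (so that $\mathbf d$-vectors are controlled by $F$-polynomials with nonnegative integer coefficients) with a careful analysis of the $\mathbf g$-vector fan of Gross–Hacking–Keel–Kontsevich, which in affine type is known to have no accumulation on the real-root cones. The rest of the argument is essentially bookkeeping of Coxeter orbits against the inductive rank-$(n-1)$ picture.
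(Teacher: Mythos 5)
The paper does not prove this statement at all: it is imported verbatim (modulo a translation into the language of exchange graphs, explained in the paragraph following the theorem via the identification of $\Fan^{\re}_c(\Phi)$ with the $\mathbf{g}$-vector fan and the result of Reading--Speyer) from Theorems~1.1 and~1.2 of Reading--Stella. So you are attempting to reconstruct the content of an external paper, and your sketch, while pointing in a reasonable direction, has two concrete gaps that would sink it as written.

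First, your argument is circular relative to the logical structure of the source. You propose to reduce the general case to the bipartite belt by invoking ``the structural fact (recalled in the paper before the theorem) that every seed is obtained from some $\qcoxeter^r(\initialseed)$ by performing mutations inside a rank $n-1$ induced subgraph $\exchangesub{\qbpr_{t_0}}{x_\ell}$.'' That fact is Theorem~\ref{thm_ReadingStella} (Propositions~5.4 and~5.14 of Reading--Stella), which is \emph{derived from} the affine almost positive roots model, i.e.\ from the very Theorems~1.1 and~1.2 you are trying to prove; it is not available as an independent input. You would need a separate proof of the normal-form decomposition, and that is essentially where the real work in the affine case lives. Second, the root-theoretic claim you call the ``Kac-style description'' --- that the real roots of an affine system are precisely the union of the $c$-orbits of $\pm\SRoots$ --- is false in general for rank $\geq 3$: the $c$-orbits of $\pm\SRoots$ account only for the $\mathbf d$-vectors of bipartite-belt cluster variables, and in types such as $\exdynD_4$ there are many cluster variables off the belt whose $\mathbf d$-vectors are real roots lying in other $c$-orbits (as well as real roots that are not $\mathbf d$-vectors at all). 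So the step that is supposed to give you injectivity and containment in $\Phi$ ``immediately'' does not. The remaining issues (linear independence of the $\mathbf d$-vectors in each seed, the fact that the cones glue into a fan without overlap near the imaginary direction) are also far from bookkeeping --- this is precisely why Reading--Stella need the machinery of sortable elements and Cambrian frameworks, and why an appeal to the $\mathbf g$-vector fan of Gross--Hacking--Keel--Kontsevich would at best replace one deep external input by another rather than yield a self-contained proof.
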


The above theorem provides so-called \textit{affine almost positive roots model} for an affine root system. By analyzing the affine almost positive roots, they also provide the following results.
\begin{theorem}[{\cite[Propositions 5.4 and 5.14]{ReadingStella20}}]\label{thm_ReadingStella}
Suppose that the principal part $\qbpr_{t_0}$ of the exchange matrix in the initial seed $\initialseed= (\bfx_{t_0}, \qbasis_{t_0})$ is acyclic and its Cartan counterpart is of affine type. 
\begin{enumerate}
\item The Coxeter mutation $\qcoxeter$ acts on 
the exchange graph $\exchange(\qbpr_{t_0})$.
\item For $\ell \in [n]$ and $r \in \Z$, we denote by $\exchangesub{\qbpr_{t_0}}{x_{\ell;r}}$ the induced subgraph  of $\exchange(\qbpr_{t_0})$ consisting of seeds having the cluster variable $x_{\ell;r}$. Then, we have
\[
\exchangesub{\qbpr_{t_0}}{x_{\ell;r}} \cong \exchange(\qbpr_{t_0}|_{[n] \setminus \{\ell\}}).
\]
\item For a seed $\seed = (\bfx, \qbasis)$, there exists $r\in \Z$ such that 
\[
|\{ x_{1;r},\dots,x_{n;r}\} \cap \{ x_{1},\dots,x_{n}  \}| \geq 2.
\]
\end{enumerate}
\end{theorem}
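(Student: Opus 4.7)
The plan is to derive all three assertions from the affine almost positive roots model supplied by Theorem~\ref{thm_RS_theorems}: the denominator vector map $z\mapsto\mathbf{d}(z)$ identifies the cluster variables of $\cA(\qbpr_{t_0})$ with a subset of $\Roots$, and the clusters with the maximal cones of a simplicial fan $\mathcal{F}$ whose dual graph is $\exchange(\qbpr_{t_0})$. Under this dictionary every statement about seeds becomes a combinatorial statement about cones of $\mathcal{F}$.

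For part (1), I would first check that $\qcoxeter=\mutation_-\mutation_+$ acts on $\mathbf{d}$-vectors by the linear action of the Coxeter element $c$ of the Weyl group of $\Roots$. Since $c$ preserves $\Roots$ and, as part of the Reading--Stella construction, preserves the fan $\mathcal{F}$ together with all of its faces, $c$ permutes the maximal cones of $\mathcal{F}$ as well as the face-containment relations among them. Transporting back through Theorem~\ref{thm_RS_theorems} yields an action of $\qcoxeter$ on $\exchange(\qbpr_{t_0})$ by graph automorphisms.

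For part (2), the seeds containing $x_{\ell;r}$ correspond to the maximal cones of $\mathcal{F}$ containing the ray $\R_{\ge 0}\cdot\mathbf{d}(x_{\ell;r})$, so $\exchangesub{\qbpr_{t_0}}{x_{\ell;r}}$ is the dual graph of the link of that ray in $\mathcal{F}$. I would first treat the base case $r=0$: there $\mathbf{d}(x_{\ell;0})=-\alpha_\ell$, and the link of $\R_{\ge 0}\cdot(-\alpha_\ell)$ can be identified with the affine almost positive roots fan for the sub-Cartan $C(\qbpr_{t_0})|_{[n]\setminus\{\ell\}}$, which, by Theorem~\ref{thm_RS_theorems} applied to this smaller datum, is the dual of $\exchange(\qbpr_{t_0}|_{[n]\setminus\{\ell\}})$. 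This is the combinatorial shadow of the standard ``freeze-a-variable'' principle: freezing $x_{\ell;0}$ produces a cluster algebra whose principal exchange matrix is exactly $\qbpr_{t_0}|_{[n]\setminus\{\ell\}}$. For general $r$, I would use part (1): the graph automorphism $\qcoxeter^r$ sends $x_{\ell;0}$ to $x_{\ell;r}$ and therefore restricts to an isomorphism of the two induced subgraphs.

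Part (3) is where the affine (rather than finite) hypothesis is genuinely needed, and it is the step I expect to be the main obstacle. The strategy is to exploit the structural decomposition of the real roots of $\Roots$ into those that lie in a $\qcoxeter$-orbit of some initial $\pm\alpha_i$ --- these are exactly the $\mathbf{d}$-vectors of the bipartite-belt variables $x_{i;r}$ --- and the remaining ``regular'' real roots, all of which in affine type lie in a common proper subspace of the root space containing the null root. Because a full-dimensional cone $\sigma\in\mathcal{F}$ cannot be spanned entirely by rays in that proper subspace, $\sigma$ must contain at least two extremal rays generated by $\mathbf{d}$-vectors of bipartite-belt variables, say $\mathbf{d}(x_{i;r})$ and $\mathbf{d}(x_{j;s})$. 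The remaining and most delicate step is to upgrade this to the stated conclusion by showing that any such compatible pair is jointly contained in a single bipartite-belt cone $\sigma_{r'}$; this is a compatibility statement among the $\qcoxeter$-orbits of the simple roots and is the genuine combinatorial content of \cite[Proposition~5.14]{ReadingStella20}, ultimately resting on the explicit characterization of compatibility for affine almost positive roots.
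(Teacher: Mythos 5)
The first thing to note is that the paper does not prove this statement at all: it is imported verbatim from Reading--Stella (the citation sits in the theorem header), and the only argument the authors supply is the translation paragraph that follows the theorem --- each maximal cone of $\Fan^{\re}_c(\Roots)$ is the nonnegative span of the $\mathbf{d}$-vectors of a cluster, this fan is identified with the $\mathbf{g}$-vector fan, and the dual graph of the latter is $\exchange(\qbpr_{t_0})$ by Reading--Speyer. So there is no proof in the paper for your argument to be measured against; what you are attempting is a reconstruction of the proofs of the cited propositions, which is a much more ambitious exercise. Your outline of parts (1) and (2) has roughly the right shape (it is essentially how the model is used in \cite{ReadingStella20}), but part (1) contains an error and part (3) has genuine gaps.

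In part (1), $\qcoxeter$ does not act on denominator vectors by the \emph{linear} Coxeter element $c$: for $i\in I_+$ one has $\mathbf{d}(x_{i;0})=-\alpha_i$ and $\mathbf{d}(x_{i;1})=\alpha_i$, whereas $c(-\alpha_i)=c_-(\alpha_i)\neq\alpha_i$ as soon as $i$ has a neighbour in $I_-$. The correct map is the piecewise-linear twist of type $\tau_-\tau_+$ (identity on certain negative simples, $c_\pm$ elsewhere), and the fact that this map preserves the fan is itself one of the substantive points of the cited paper rather than a formal consequence of $c$ preserving $\Roots$. In part (3), the observation that the regular $\mathbf{d}$-vectors lie in a proper subspace (the defect-zero hyperplane) excludes only \emph{one} ray of a full-dimensional simplicial cone, not two; the count actually needed is that a pairwise compatible set of regular roots has at most $\sum_i(p_i-1)=n-2$ elements, where the $p_i$ are the ranks of the tubes, and this rests on the tube decomposition rather than on a single linear constraint. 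More seriously, your final step is circular: the claim that the two transjective variables can be taken from one and the same belt seed is precisely the content of \cite[Proposition~5.14]{ReadingStella20}, and you resolve it by appealing to that proposition. Moreover, the intermediate claim as you phrase it --- that any compatible pair of transjective roots lies in a common bipartite-belt cone $\sigma_{r'}$ --- is false as stated: for $j\in I_-$ and $i\in I_+$ the variables $x_{j;r}$ and $x_{i;r+1}$ are compatible, since both occur in the half-belt seed $\mutation_+(\seed_r)$, yet they lie in no common belt seed because $\seed_r$ and $\seed_{r+1}$ share no cluster variables. So the argument for (3) needs a genuinely different final step, which is exactly where the cited proposition does its work.
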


Because we rephrase statements in the paper~\cite{ReadingStella20} in terms of exchange graphs, we briefly explain how we convert their theorem in this form. Reading and Stella denoted by $\Fan^{\re}_c(\Phi)$ the fan in Theorem~\ref{thm_RS_theorems}, that is, each maximal cone of $\Fan^{\re}_c(\Phi)$ is nonnegative linear span of $\mathbf{d}$-vectors of
cluster variables in a seed. Moreover, it is also proved in~\cite{ReadingStella20} that  $\Fan^{\re}_c(\Phi)$ is isomorphic to \textit{the fan of $\mathbf{g}$-vector cones}. Since the exchange graph $\exchange(\qbpr)$ is isomorphic the dual graph of the fan of $\mathbf{g}$-vector cones for the cluster algebra   by Reading and Speyer~\cite[Corollaries~1.2 and~1.3]{ReadingSpeyer18},  the combinatorics of exchange graph can be obtained by considering the fan $\Fan^{\re}_c(\Phi)$. The paper~\cite{ReadingStella20} provides several properties of $\Fan^{\re}_c(\Phi)$, and we rephrase them in terms of exchange graphs.

As a direct consequence of Theorem~\ref{thm_ReadingStella}, we have the following lemma which will be used later. 
\begin{lemma}\label{lemma:normal form}
Suppose that the principal part $\qbpr_{t_0}$ of the exchange matrix in the initial seed is acyclic and its Cartan counterpart is of affine type. For any seed $\seed_t = (\bfx_t, \qbasis_t)$, there exist $\ell \in [n]$ and $r \in \Z$ such that two seeds $\seed_t$ and $\seed_r = \qcoxeter^r(\initialseed)$ are in the induced subgraph $\exchangesub{\qbpr_{t_0}}{x_{\ell;r}}$. Indeed, there is a sequence $j_1,\dots,j_L \in [n] \setminus \{\ell\}$ of indices such that
\begin{align*}
\qcoxeter^r(\initialseed), \mutation_{j_1}(\qcoxeter^r(\initialseed)),
(\mutation_{j_2} \mutation_{j_1})(\qcoxeter^r(\initialseed)), \dots,
(\mutation_{j_L} \cdots \mutation_{j_1})(\qcoxeter^r(\initialseed)) \in  \exchangesub{\qbpr_{t_0}}{x_{\ell;r}}
\end{align*}
and 
\[
\seed_t = (\mutation_{j_L} \cdots \mutation_{j_1})(\qcoxeter^r(\initialseed)).
\]
\end{lemma}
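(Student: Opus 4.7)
The plan is to combine the three parts of Theorem~\ref{thm_ReadingStella}. First, by part~(3), for the given seed $\seed_t$ there exists $r \in \Z$ such that the clusters $\bfx_t$ and $\bfx_r = \qcoxeter^r(\bfx_{t_0})$ share at least two variables. Choose any such common variable and let $\ell \in [n]$ be its position within $\bfx_r$, so that $x_{\ell;r}$ is simultaneously a cluster variable of $\seed_r$ and of $\seed_t$. By definition, both $\seed_r$ and $\seed_t$ then lie in the induced subgraph $\exchangesub{\qbpr_{t_0}}{x_{\ell;r}}$.

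Next, by part~(2), this induced subgraph is isomorphic to the exchange graph $\exchange(\qbpr_{t_0}|_{[n]\setminus\{\ell\}})$ of the reduced cluster algebra obtained by freezing $x_{\ell;r}$. Any exchange graph is connected, being a quotient of the regular tree $\mathbb{T}_{n-1}$, so there is a path in $\exchangesub{\qbpr_{t_0}}{x_{\ell;r}}$ joining $\seed_r$ to $\seed_t$. Reading off the mutation label on each edge of such a path produces a sequence $\mutation_{j_1}, \ldots, \mutation_{j_L}$ carrying $\qcoxeter^r(\initialseed)$ to $\seed_t$, and every intermediate seed still contains $x_{\ell;r}$ as a cluster variable. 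Consequently none of the indices $j_k$ can equal $\ell$: a mutation at the position occupied by $x_{\ell;r}$ would, via the exchange relation, replace $x_{\ell;r}$ and remove it from all subsequent clusters, contradicting containment in $\exchangesub{\qbpr_{t_0}}{x_{\ell;r}}$. Hence $j_1,\ldots,j_L \in [n]\setminus\{\ell\}$, which is the desired conclusion.

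The only substantive input is part~(3) of Theorem~\ref{thm_ReadingStella}, which guarantees the existence of a Coxeter shift $\qcoxeter^r(\initialseed)$ sharing two cluster variables with $\seed_t$; this is where the affine-type hypothesis is essential, since the argument would fail for wild types. Everything else is a book-keeping statement about connectedness of the link subgraph once a shared variable has been located, so no further obstacle is anticipated beyond correctly extracting $\ell$ and $r$ from the statement of Theorem~\ref{thm_ReadingStella}(3).
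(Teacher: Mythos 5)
Your proof is correct and follows essentially the same route as the paper: invoke Theorem~\ref{thm_ReadingStella}(3) to find $r$ and a shared cluster variable $x_{\ell;r}$, then Theorem~\ref{thm_ReadingStella}(2) together with connectedness of the exchange graph $\exchange(\qbpr_{t_0}|_{[n]\setminus\{\ell\}})$ to produce the mutation sequence inside $\exchangesub{\qbpr_{t_0}}{x_{\ell;r}}$. The extra remark explaining why no $j_k$ can equal $\ell$ is a harmless elaboration of what the paper leaves implicit.
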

\begin{proof}
By Theorem~\ref{thm_ReadingStella}(3), there exists $r \in \Z$ and $\ell \in [n]$ such that  both seeds $\seed_t$ and $\qcoxeter^r(\initialseed)$ have the same cluster variable $x_{\ell;r}$. By Theorem~\ref{thm_ReadingStella}(2), the induced subgraph $\exchangesub{\qbpr_{t_0}}{x_{\ell;r}}$ consisting of seeds having the cluster variable $x_{\ell;r}$ is isomorphic to the exchange graph $\exchange(\qbpr_{t_0}|_{[n] \setminus \{\ell\}})$. Therefore, there exists a sequence of indices $j_1,\dots,j_L \in [n] \setminus \{\ell\}$ such that the sequence $\mutation_{j_1},\dots,\mutation_{j_L}$ of mutations connects the seed $\qcoxeter^r(\initialseed)$ and $\seed_t$ inside the graph $\exchangesub{\qbpr_{t_0}}{x_{\ell;r}}$ as desired.
\end{proof}

\begin{remark}\label{rmk_inifinitely_many_seeds_for_affine}
In general, there are infinitely many seeds in the bipartite belt $\{ \seed_r \mid r \in \Z\}$. It is proved in~\cite[Theorem~8.8]{FZ4_2007} that there are finitely many seeds in the bipartite belt if and only if the Cartan counterpart $C(\qbpr_{t_0})$ is a Cartan matrix of finite type. Indeed, there are finitely many seeds in the cluster pattern if and only if the Cartan counter part is a Cartan matrix of finite type.
\end{remark}

\subsection{Folding}\label{sec:folding}
Under certain conditions, one can \textit{fold} cluster patterns to produce new ones. This procedure is used to study cluster algebras of non-simply-laced affine type from those of simply-laced affine type (see Table~\ref{figure:all possible foldings}). As before, we fix $m, n \in \Z_{>0}$ such that $n \leq m$. In this section, we recall \textit{folding} of cluster algebras from~\cite{FWZ_chapter45}. We refer the reader to~\cite{Dupont08}.

Let $\quiver$ be a quiver on $[m]$. Let $G$ be a finite group acting on the set $[m]$. For $i, i' \in [m]$, the notation $i \sim i'$ will mean that $i$ and $i'$ lie in the same
$G$-orbit. To study folding of cluster algebras, we prepare some terminologies.

For each $g \in G$, let $\quiver' = g \cdot \quiver$ be the quiver whose adjacency matrix $\qbasis(\quiver') = (b_{i,j}')$ is given by 
\[
b_{i,j}' = b_{g(i),g(j)}.
\]
\begin{definition}[{cf.~\cite[\S4.4]{FWZ_chapter45} and~\cite[\S 3]{Dupont08}}]\label{definition:admissible quiver}
Let $\quiver$ be a quiver on $[m]$ and $G$ a finite
group acting on the set $[m]$.
\begin{enumerate} 
\item A quiver $\quiver$ is \emph{$G$-invariant} if $g \cdot \quiver = \quiver$ for any $g \in G$.
\item A $G$-invariant quiver $\quiver$ is \emph{$G$-admissible} if
\begin{enumerate}
\item for any $i \sim i'$, index $i$ is mutable if and only if so is $i'$;
\item for mutable indices $i \sim i'$, we have $b_{i,i'} = 0$;
\item for any $i \sim i'$, and any mutable $j$, we have $b_{i,j} b_{i',j} \geq 0$.
\end{enumerate}
\item For a $G$-admissible quiver $\quiver$, we call a $G$-orbit \emph{mutable} (respectively, \emph{frozen}) if it consists of mutable (respectively, frozen) vertices. 
\end{enumerate}        
\end{definition}
For a $G$-admissible quiver $\quiver$, we define the matrix $\qbasis^G = \qbasis(\quiver)^G = (b_{I,J}^G)$ whose rows (respectively, columns) are labeled by the $G$-orbits (respectively, mutable $G$-orbits) by
\[
b_{I,J}^G = \sum_{i \in I} b_{i,j}
\]
where $j$ is an arbitrary index in $J$. We then say $\qbasis^G$ is obtained from $\qbasis$ (or from the quiver $\quiver$) by \textit{folding} with respect to the given $G$-action.

\begin{remark}
We note that the $G$-admissibility and the folding can also be defined for exchange matrices. 
\end{remark}

\begin{example}\label{example_D4_to_G2}
Let $\quiver$ be a quiver of type $\exdynE_6$ whose adjacency matrix 
$\qbasis(\quiver)$ is 
\[
\qbasis(\quiver) = \begin{pmatrix}
0 & 1 & 0 & 1 & 0 & 1 & 0\\
-1 & 0 & -1 & 0 & 0 & 0 & 0 \\
0 & 1 & 0 & 0 & 0 & 0 & 0 \\
-1 & 0 & 0 & 0 & -1 & 0 & 0 \\
0 & 0 & 0  & 1 & 0 & 0 &  0\\
-1 & 0 & 0 & 0 & 0 & 0 & -1 \\
0 & 0 & 0 & 0 & 0 & 1 & 0 
\end{pmatrix}.
\]
Suppose that the finite group $G = \Z / 3 \Z$ acts on $[7]$ as depicted in Figure~\ref{fig_E6_Z3}. Here, we denote the generator of $G$ by $\tau$. We decorate vertices of the quiver $\quiver$ with white and black for presenting sources and sinks, respectively. One may check that the quiver $\quiver$ is $G$-admissible. By setting $I_1 = \{1\}$, $I_2 = \{2, 4, 6\}$, and $I_3 = \{3, 5, 7\}$, we obtain
\[
\begin{split}
b_{I_1,I_2}^G &= \sum_{i \in I_1} b_{i,2} = b_{1,2} = 1, \\
b_{I_1, I_3}^G &= \sum_{i \in I_1} b_{i,3} = b_{1,3} = 0, \\
b_{I_2, I_3}^G &= \sum_{i \in I_2} b_{i,3} = b_{2,3} + b_{4,3} + b_{6,3} = -1, \\
b_{I_2, I_1}^G &= \sum_{i \in I_2} b_{i,1} = b_{2,1} + b_{4,1} + b_{6,1} = -3, \\
b_{I_3, I_1}^G &= \sum_{i \in I_3} b_{i,1} = b_{3,1} + b_{5,1} + b_{7,1} = 0, \\
b_{I_3,I_2}^G &= \sum_{i \in I_3} b_{i, 2} = b_{3,2} + b_{5,2} + b_{7,2} = 1.
\end{split}
\]
Accordingly, we obtain the matrix 
\[
\qbasis^G = \begin{pmatrix} 
0 & 1 & 0 \\ -3 & 0 & -1 \\ 0 & 1 & 0 
\end{pmatrix}
\] 
whose Cartan counterpart is the Cartan matrix of type
$\exdynG_2$ (cf.~\eqref{eq_Cartan_G2}).
\end{example}

For a $G$-admissible quiver $\quiver$ and a mutable $G$-orbit $I$, we consider a composition of mutations given by
\[
\mutation_I = \prod_{i \in I} \mutation_i
\]
which is well-defined because of the definition of admissible quivers. We call $\mutation_I$ an \textit{orbit mutation}. If $\mutation_I(\quiver)$ is again $G$-admissible, then we have that
\begin{equation*}
(\mutation_I(\qbasis))^G = \mutation_I(\qbasis^G).
\end{equation*}
We notice that the quiver $\mutation_I(\quiver)$ may \textit{not} be $G$-admissible in general. Therefore, we present the following definition.
\begin{definition}
Let $G$ be a group acting on the vertex set of a quiver $\quiver$. We say that $\quiver$ is \emph{globally foldable} with respect to $G$ if $\quiver$ is $G$-admissible, and moreover, for any sequence of mutable $G$-orbits $I_1,\dots,I_\ell$, the quiver $(\mutation_{I_\ell} \dots \mutation_{I_1})(\quiver)$ is $G$-admissible.
\end{definition}

For a globally foldable quiver, we can fold all the seeds in the corresponding cluster pattern. 
Let $\field^G$ be the field of rational functions in $\#[m]/G$ independent variables. Let $\psi \colon \field \to \field^G$ be a surjective homomorphism. A seed $\seed = (\mathbf{x}, \qbasis(\quiver))$ is called \emph{$(G, \psi)$-invariant} (respectively, \emph{$(G,\psi)$-admissible}) if 
\begin{itemize}
\item for any $i \sim i'$, we have $\psi(x_i) = \psi(x_{i'})$;
\item $\quiver$ is $G$-invariant (respectively, $G$-admissible).
\end{itemize}
In this situation, we define a new ``folded'' seed $\seed^G = (\bfx^G, \qbasis^G)$ in $\field^G$ whose exchange matrix is given as before and cluster variables $\bfx^G = (x_I)$ are indexed by the $G$-orbits and given by $x_I = \psi(x_i)$.

\begin{proposition}[{\cite[Corollary~4.4.11]{FWZ_chapter45}}]\label{proposition:folded cluster pattern}
Let $\quiver$ be a quiver which is globally foldable with respect to a group $G$ acting on the set of its vertices. Let $\initialseed = (\mathbf{x}, \qbasis(\quiver))$ be a seed in the field $\field$ of rational functions freely generated by a cluster $\mathbf{x} = (x_1,\dots,x_m)$. Define $\psi \colon \field  \to \field^G$ so that $\initialseed$ is a $(G, \psi)$-admissible seed. Then, for any mutable $G$-orbits $I_1,\dots,I_\ell$, the seed $(\mutation_{I_\ell} \dots \mutation_{I_1})(\initialseed)$ is $(G, \psi)$-admissible, and moreover, the folded seeds $((\mutation_{I_\ell} \dots \mutation_{I_1})(\initialseed))^G$ form a cluster pattern in $\field^G$ with the initial seed $\initialseed^G=(\bfx^G, (\qbasis(\quiver))^G)$.
\end{proposition}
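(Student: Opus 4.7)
The plan is to prove both statements simultaneously by induction on the length $\ell$ of the sequence of orbit mutations $I_1, \ldots, I_\ell$. The base case $\ell = 0$ is exactly the hypothesis that $\initialseed$ is $(G,\psi)$-admissible and that its folding is $\initialseed^G$. For the inductive step, assuming the seed $\seed_t = (\bfx_t, \qbasis(\quiver_t))$ obtained from $\initialseed$ by a length-$\ell$ sequence is $(G,\psi)$-admissible and that its folding coincides with the length-$\ell$ image of $\initialseed^G$ under the corresponding orbit mutations in the folded cluster pattern, I would verify both claims for $\mutation_I(\seed_t)$ with $I$ any mutable $G$-orbit.

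For the quiver side, $G$-admissibility of $\mutation_I(\quiver_t)$ is immediate from the definition of globally foldable. The constituent mutations $\mutation_i$ for $i \in I$ commute pairwise since condition (b) of $G$-admissibility gives $b_{i,i'} = 0$ for distinct $i,i' \in I$, so the orbit mutation $\mutation_I = \prod_{i \in I} \mutation_i$ is unambiguously defined as a composition. For the cluster variables, I need to check that $\psi(\mutation_I(x_i)) = \psi(\mutation_I(x_{i'}))$ whenever $i \sim i'$; indices outside $I$ are untouched by $\mutation_I$, so only $i, i' \in I$ require attention. Writing $i' = g(i)$, the $G$-invariance $b_{j,i'} = b_{g^{-1}(j), i}$ and the reindexing $j' = g^{-1}(j)$ transform the exchange relation for $x_{i'}$ into that for $x_i$ with each $x_j$ replaced by $x_{g(j')}$; since $\psi(x_{g(j')}) = \psi(x_{j'})$ by the inductive hypothesis, applying $\psi$ yields the same expression.

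To close the induction, I would verify the compatibility $(\mutation_I(\seed_t))^G = \mutation_I(\seed_t^G)$, where the right side is a single ordinary cluster mutation in the folded cluster structure at the vertex $I$. For the cluster-variable half this follows from the computation above, since $\psi(\mutation_I(x_i))$ for any $i \in I$ equals the exchange value for $x_I^G$ read off $\qbasis_t^G$: the entries $b^G_{J,I} = \sum_{j \in J} b_{j,i}$ record exactly the contribution of each orbit $J$, and condition (c) of admissibility, namely the sign consistency $b_{j,i}\, b_{j',i} \geq 0$ for $j \sim j'$ with $i$ mutable, ensures that the sign of $b^G_{J,I}$ agrees with that of each individual $b_{j,i}$, so the $\max\{\cdot, 0\}$ products in the two mutation formulas line up. The exchange-matrix identity $\mutation_I(\qbasis_t)^G = \mutation_I(\qbasis_t^G)$ is then a direct entrywise computation using the same sign-consistency. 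The main obstacle is precisely this final bookkeeping step, where all three conditions defining $G$-admissibility are used in concert to ensure that summation over $G$-orbits commutes with the piecewise formulas appearing in the matrix mutation rule; without the sign-consistency (c), the operations $\sum_{j \in J}$ and $\max\{\cdot, 0\}$ would not commute, and the folded exchange matrix would depend on more than just $\qbasis_t^G$.
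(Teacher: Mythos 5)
Your argument is correct; the paper itself gives no proof of this proposition, quoting it directly from \cite[Corollary~4.4.11]{FWZ_chapter45}, and your induction on the length of the orbit-mutation sequence is precisely the standard argument behind that citation. In particular, you correctly isolate the three places where admissibility enters: condition (b) makes $\mutation_I$ well defined, $G$-invariance plus the inductive hypothesis gives $\psi(\mutation_I(x_i))=\psi(\mutation_I(x_{i'}))$ for $i\sim i'$ in $I$, and the sign-coherence condition (c) is exactly what lets the orbit sums commute with the piecewise mutation formulas so that $(\mutation_I(\seed_t))^G=\mutation_I(\seed_t^G)$.
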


\begin{example}\label{example_folding_E}
The quiver in Example~\ref{example_D4_to_G2} is globally foldable, and 
moreover, the corresponding cluster pattern is of type $\exdynG_2$. In fact, seed 
patterns of type~$\exdynB\exdynC\exdynF\exdynG$ are obtained by folding quivers 
of type~$\exdynD\exdynE$ in general 
(cf.~\cite{FeliksonShapiroTumarkin12_unfoldings}). In 
Figure~\ref{figure:G-actions}, we present some examples of foldings. 
We decorate vertices of quivers with white and black colors for presenting 
source and sink, respectively. We denote the generator of $G$ by $\tau$. For 
each case, the finite group action that makes each quiver globally foldable is 
depicted in Figure~\ref{figure:G-actions}. Note that the alternating coloring 
on quivers of type $\exdynE_6$ or $\exdynE_7$ provide 
that on quivers of type  $\exdynG_2$, $\dynE_6^{(2)}$, or $\exdynF_4$. 
Here, we decorate the vertices of folded quivers with orbits $I_i \coloneqq G 
\cdot i \subset [n] $. 
All possible foldings between simply-laced affine Dynkin diagrams and 
non-simply-laced affine Dynkin diagrams are given in 
Table~\ref{figure:all possible foldings}.
\end{example}

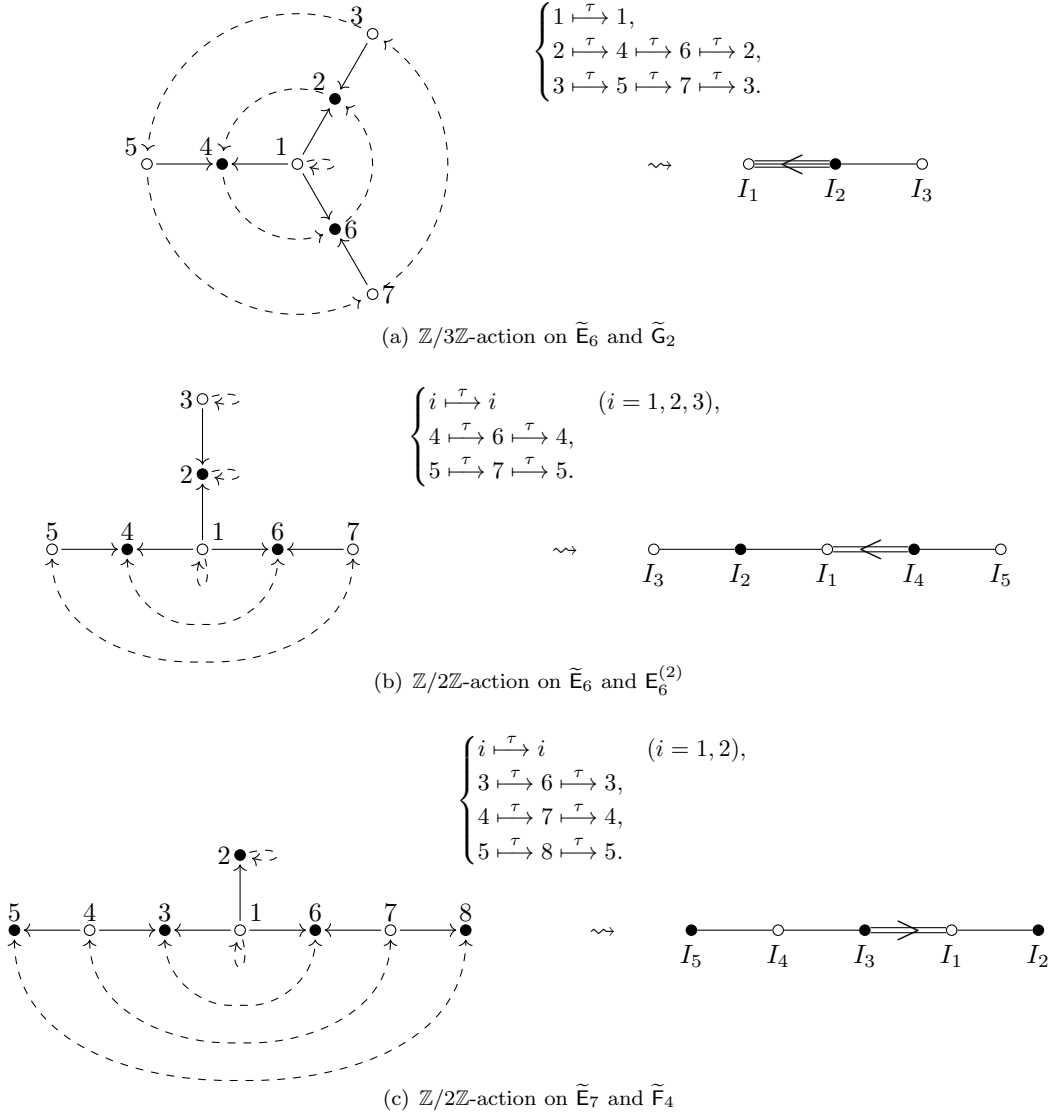
\begin{figure}[ht]
\subfigure[$\Z/3\Z$-action on $\exdynE_6$ and $\exdynG_2$\label{fig_E6_Z3}]{
$
\begin{tikzpicture}[baseline=-.5ex]	
\draw[fill=white] (0,0) circle (2pt) node (A1) {} (60:2) circle (2pt) node (A3) {} (180:2) circle (2pt) node (A5) {} (300:2) circle (2pt) node (A7) {};
\draw[fill] (60:1) circle (2pt) node (A2) {} (180:1) circle (2pt) node (A4) {} (300:1) circle (2pt) node (A6) {};
\draw[->] (A1) node[above left] {$1$} -- (A2) node[above left] {$2$};
\draw[->] (A1) -- (A4) node[above left] {$4$};
\draw[->] (A1) -- (A6) node[right] {$6$};
\draw[->] (A3) node[above left] {$3$} -- (A2);
\draw[->] (A5) node[above left] {$5$} -- (A4);
\draw[->] (A7) node[right] {$7$} -- (A6);
\draw[->, dashed, thin] (A1) edge[loop right] (A1);
\draw[->, dashed, thin] (70:1) arc (70:170:1) node[midway, above left] {};
\draw[->, dashed, thin] (65:2) arc (65:175:2) node[midway, above left] {};
\draw[->, dashed, thin] (190:1) arc (190:290:1) node[midway, below left] {};
\draw[->, dashed, thin] (185:2) arc (185:295:2) node[midway, below left] {};
\draw[->, dashed, thin] (310:1) arc (310:410:1) node[midway, right] {};
\draw[->, dashed, thin] (305:2) arc (305:415:2) node[midway, right] {};

\begin{scope}[xshift = 6cm]
\tikzstyle{state}=[draw, circle, inner sep = 1.4pt]
\tikzstyle{double line} = [
double distance = 1.5pt, 
double=\pgfkeysvalueof{/tikz/commutative diagrams/background color}
]
\tikzstyle{triple line} = [
double distance = 2pt, 
double=\pgfkeysvalueof{/tikz/commutative diagrams/background color}
]

\node[state, label=below:{$I_1$}] (1) {};
\node[state, fill=black, label=below:{$I_2$}] (2) [right = of 1] {};
\node[state, label=below:{$I_3$}] (3) [right=of 2] {};

\draw[triple line] (1)-- node{\scalebox{1.3}{$<$}} (2);
\draw (1)--(2);
\draw (2)--(3);

\node[label={below:\normalsize{$\rightsquigarrow$}}] [above left = 0.1cm and 1cm of 1] (arrow) {};	

\node[above = 0.3cm of arrow, scale=0.9] {$\begin{cases}
1\stackrel{\tau}{\longmapsto}1,\\
2\stackrel{\tau}{\longmapsto}4\stackrel{\tau}{\longmapsto}6\stackrel{\tau}{\longmapsto}2,\\
3\stackrel{\tau}{\longmapsto}5\stackrel{\tau}{\longmapsto}7\stackrel{\tau}{\longmapsto}3.
\end{cases}$};

\end{scope}

\end{tikzpicture}
$
}

\subfigure[$\Z/2\Z$-action on $\exdynE_6$ and $\dynE_6^{(2)}$]{
$
\begin{tikzpicture}[baseline=-.5ex]
\draw[fill=white] (0,0) circle (2pt) node (A1) {} (90:2) circle (2pt) node (A3) {} (180:2) circle (2pt) node (A5) {} (0:2) circle (2pt) node (A7) {};
\draw[fill] (90:1) circle (2pt) node (A2) {} (180:1) circle (2pt) node (A4) {} (0:1) circle (2pt) node (A6) {};
\draw[->] (A1) node[above right] {$1$} -- (A2) node[left] {$2$};
\draw[->] (A1) -- (A4) node[above] {$4$};
\draw[->] (A1) -- (A6) node[above] {$6$};
\draw[->] (A3) node[left] {$3$} -- (A2);
\draw[->] (A5) node[above] {$5$} -- (A4);
\draw[->] (A7) node[above] {$7$} -- (A6);
\draw[->, dashed, thin] (A1) edge[loop below] (A1);
\draw[->, dashed, thin] (A2) edge[loop right] (A2);
\draw[->, dashed, thin] (A3) edge[loop right] (A3);
\draw[<->, dashed, thin, rounded corners] (A4) to[out=-90,in=180] (0,-1) to[out=0,in=-90] (A6);
\draw[<->, dashed, thin, rounded corners] (A5) to[out=-90,in=180] (0,-1.5) to[out=0,in=-90] (A7);

\begin{scope}[xshift = 6cm]
\tikzstyle{state}=[draw, circle, inner sep = 1.4pt]
\tikzstyle{double line} = [
double distance = 1.5pt, 
double=\pgfkeysvalueof{/tikz/commutative diagrams/background color}
]

\node[state,label=below:{$I_3$}] (1) {};
\node[state,label=below:{$I_2$},fill=black] (2) [right = of 1] {};
\node[state,label = below:{$I_1$}] (3) [right = of 2] {};
\node[state,label = below:{$I_4$},fill=black] (4) [right=of 3] {};
\node[state,label = below:{$I_5$}] (5) [right=of 4] {};

\draw (1)--(2)
(2)--(3)
(4)--(5);
\draw[double line] (3)-- node{\scalebox{1.3}{$<$}} (4);

\node[label={below:\normalsize{$\rightsquigarrow$}}] [above left = 0.1cm and 1cm of 1] (arrow) {};	
\end{scope}

\node[above = 0.3cm of arrow, scale=0.9] {$		\begin{cases}
i\stackrel{\tau}{\longmapsto}i & (i=1,2,3),\\
4\stackrel{\tau}{\longmapsto}6\stackrel{\tau}{\longmapsto}4,\\
5\stackrel{\tau}{\longmapsto}7\stackrel{\tau}{\longmapsto}5.
\end{cases}$};

\end{tikzpicture}
$
}

\subfigure[$\Z/2\Z$-action on $\exdynE_7$ and $\exdynF_4$]{
$
\begin{tikzpicture}[baseline=-.5ex]
\draw[fill=white] (0,0) circle (2pt) node (A1) {} (180:2) circle (2pt) node (A4) {} (0:2) circle (2pt) node (A7) {};
\draw[fill] (90:1) circle (2pt) node (A2) {} (180:1) circle (2pt) node (A3) {} (180:3) circle (2pt) node (A5) {} (0:1) circle (2pt) node (A6) {} (0:3) circle (2pt) node (A8) {};
\draw[->] (A1) node[above right] {$1$} -- (A2) node[left] {$2$};
\draw[->] (A1) -- (A3) node[above] {$3$};
\draw[->] (A1) -- (A6) node[above] {$6$};
\draw[->] (A4) node[above] {$4$} -- (A3);
\draw[->] (A4) -- (A5) node[above] {$5$};
\draw[->] (A7) node[above] {$7$} -- (A6);
\draw[->] (A7) -- (A8) node[above] {$8$};
\draw[->, dashed, thin] (A1) edge[loop below] (A1);
\draw[->, dashed, thin] (A2) edge[loop right] (A2);
\draw[<->, dashed, thin, rounded corners] (A3) to[out=-90,in=180] (0,-1) to[out=0,in=-90] (A6);
\draw[<->, dashed, thin, rounded corners] (A4) to[out=-90,in=180] (0,-1.5) to[out=0,in=-90] (A7);
\draw[<->, dashed, thin, rounded corners] (A5) to[out=-90,in=180] (0,-2) to[out=0,in=-90] (A8);

\begin{scope}[xshift = 6cm]
\tikzstyle{state}=[draw, circle, inner sep = 1.4pt]
\tikzstyle{double line} = [
double distance = 1.5pt, 
double=\pgfkeysvalueof{/tikz/commutative diagrams/background color}
]

\node[state,fill=black,  label=below:{$I_5$}] (1) {};
\node[state, label=below:{$I_4$}] (2) [right = of 1] {};
\node[state, fill=black, label=below:{$I_3$}] (3) [right = of 2] {};
\node[state, label=below:{$I_1$}] (4) [right =of 3] {};
\node[state, fill=black, label=below:{$I_2$}] (5) [right =of 4] {};

\draw (1)--(2)
(2)--(3)
(4)--(5);
\draw[double line] (3)-- node{\scalebox{1.3}{$>$}} (4);
\node[label={below:\normalsize{$\rightsquigarrow$}}] [above left = 0.1cm and 1cm of 1] (arrow) {};	

\node[above = 0.3cm of arrow, scale=0.9] {$		\begin{cases}
i\stackrel{\tau}{\longmapsto}i & (i=1,2),\\
3\stackrel{\tau}{\longmapsto}6\stackrel{\tau}{\longmapsto}3,\\
4\stackrel{\tau}{\longmapsto}7\stackrel{\tau}{\longmapsto}4,\\
5\stackrel{\tau}{\longmapsto}8\stackrel{\tau}{\longmapsto}5.
\end{cases}$};

\end{scope}

\end{tikzpicture}
$
}
\caption{$G$-actions on Dynkin diagrams of affine type}
\label{figure:G-actions}
\end{figure}

\newcolumntype{?}{!{\vrule width 0.6pt}}
\begin{table}[ht]
{
\setlength{\tabcolsep}{4pt}
\renewcommand{\arraystretch}{1.5}		
\begin{tabular}{c||c?c?c?c?c?c?c?c?c?c?c}
\toprule
$\dynX$ & $\exdynA_{2,2}$ & $\exdynA_{n,n}$ 
& \multicolumn{2}{c?}{$\exdynD_4$ }
&  \multicolumn{2}{c?}{$\exdynD_n$ } 
&  \multicolumn{2}{c?}{$\exdynD_{2n}$ } 
&  \multicolumn{2}{c?}{$\exdynE_6$ }
& $\exdynE_7$ \\
\hline 
$G$ & $\Z/2\Z$& $\Z/2\Z$  
& $(\Z/2\Z)^2$ & $\Z/3\Z$ 
& $\Z/ 2\Z$ & $\Z/2\Z$
& $\Z/2\Z$  & $(\Z/2\Z)^2$
& $\Z/3\Z$ & $\Z/2\Z$ 
& $\Z/2\Z$ \\
\hline 
$\dynY$ & $\exdynA_1$ & $\dynD_{n+1}^{(2)}$ 
& $\dynA_2^{(2)}$ & $\dynD_4^{(3)}$ 
& $\exdynC_{n-2}$ & $\dynA_{2(n-1)-1}^{(2)}$
& $\exdynB_n$ & $\dynA_{2n-2}^{(2)}$
& $\exdynG_2$ & $\dynE_6^{(2)}$
& $\exdynF_4$ \\
\bottomrule
\end{tabular}}
\caption{Foldings appearing in affine Dynkin diagrams. For 
$(\dynX, G, \dynY)$ in each column, the quiver of type~$\dynX$ is globally 
foldable with respect to $G$, and the corresponding folded cluster pattern is of 
type~$\dynY$.}
\label{figure:all possible foldings}
\end{table}

\begin{remark}\label{remark:folding and Coxeter mutation}
Suppose that the alternating coloring on quivers of type $\dynX$ provide that on quivers of type $\dynY$. If a cluster pattern of simply-laced type $\dynX$ gives a cluster pattern of type $\dynY$ via the folding procedure, then the Coxeter mutation of type~$\dynY$ is the same as that of type~$\dynX$. More precisely, for a globally foldable seed $\seed$ with respect to $G$ defining a cluster algebra of type $\dynX$ and its Coxeter mutation $\qcoxeter^{\dynX}$, we have
\[
\qcoxeter^{\dynY}(\seed^G) = (\qcoxeter^{\dynX}(\seed))^G.
\]
Here,  $\qcoxeter^{\dynY}$ is the Coxeter mutation on the cluster pattern determined by $\seed^G$.	This observation implies that the bipartite belt of the cluster pattern of type $\dynY$ can be identified with that of type $\dynX$.
\end{remark}

As we saw in Definition~\ref{definition:admissible quiver}, if a seed $\seed = 
(\mathbf{x}, \quiver)$ is $(G,\psi)$-admissible, then $\seed$ is 
$(G,\psi)$-invariant. 
The converse holds when we consider the foldings presented in Table~\ref{figure:all possible foldings}, and moreover they form the folded cluster pattern.

\begin{theorem}[{\cite{AL2021}}]\label{thm_invariant_seeds_form_folded_pattern}
Let $(\dynX, G, \dynY)$ be a triple given by a column of Table~\ref{figure:all 
possible foldings}.
Let $\initialseed = (\mathbf{x}_{t_0},\quiver_{t_0})$ be a seed in the field $\field$. Suppose that $\quiver_{t_0}$ is of type $\dynX$. Define $\psi 
\colon \field  \to \field^G$ so that $\initialseed$ is a $(G, \psi)$-admissible 
seed. Then, for any seed $\seed = (\mathbf{x}, \quiver)$ in the cluster pattern, 
if the quiver $\quiver$ is $G$-invariant, then it is $G$-admissible.
Moreover, any $(G,\psi)$-invariant seed $\seed = (\mathbf{x}, \quiver)$ can be 
reached with a sequence of orbit mutations from the 
initial seed. Indeed,  the set of such seeds forms the cluster 
pattern of the `folded' cluster algebra $\cA(\initialseed^G)$ of type $\dynY$.
\end{theorem}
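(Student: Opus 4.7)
My plan is to reduce the theorem to the finite type case, exploiting the structural decomposition of the affine exchange graph provided by Lemma~\ref{lemma:normal form} together with the fact that the Coxeter mutation behaves well with respect to the $G$-action.

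As a first step, I would verify that the Coxeter mutation itself is a composition of orbit mutations. For each triple $(\dynX, G, \dynY)$ appearing in Table~\ref{figure:all possible foldings}, direct inspection shows that the $G$-action preserves the bipartite coloring of the initial acyclic quiver $\quiver_{t_0}$, so both the source set $I_+$ and the sink set $I_-$ are unions of $G$-orbits. Hence $\qcoxeter = \mutation_- \mutation_+$ and its inverse are compositions of orbit mutations, and iterating Proposition~\ref{proposition:folded cluster pattern} guarantees that every Coxeter iterate $\qcoxeter^r(\initialseed)$ is $(G,\psi)$-admissible, with the folded seed $(\qcoxeter^r(\initialseed))^G$ obtained from $\initialseed^G$ via the Coxeter mutation of the folded cluster pattern of type $\dynY$ (cf.\ Remark~\ref{remark:folding and Coxeter mutation}).

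For a general $(G,\psi)$-invariant seed $\seed_t$, I would next apply Lemma~\ref{lemma:normal form} to produce indices $\ell \in [n]$ and $r \in \Z$ such that both $\seed_t$ and $\qcoxeter^r(\initialseed)$ lie in the induced subgraph $\exchangesub{\qbpr_{t_0}}{x_{\ell;r}}$. By Theorem~\ref{thm_ReadingStella}(2) this subgraph is the exchange graph of the cluster algebra with initial exchange matrix $\qbpr_{t_0}|_{[n]\setminus\{\ell\}}$, which is of \emph{finite} type since deleting any vertex of an affine Dynkin diagram yields a disjoint union of finite-type Dynkin diagrams. Inside this finite-type piece, where the number of seeds is finite, one can verify directly (or by invoking the finite-type analogue of the present theorem) that every $G$-invariant seed is $G$-admissible and reachable by orbit mutations from $\qcoxeter^r(\initialseed)$. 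Combining this with the first step yields both claims: invariance implies admissibility, and any $(G,\psi)$-invariant seed is reachable by orbit mutations from $\initialseed$. The remaining assertion, that the folded seeds $\seed_t^G$ form the cluster pattern of $\cA(\initialseed^G)$ of type $\dynY$, then follows from Proposition~\ref{proposition:folded cluster pattern} and the definition of the folded exchange matrix.

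The main obstacle is reconciling the choice of $\ell$ produced by Lemma~\ref{lemma:normal form} with the $G$-symmetry, since the orbit $G\cdot\ell$ need not be a singleton. To carry out a sequence of \emph{orbit} mutations one needs the entire $G$-orbit of $\ell$ to be shared between $\seed_t$ and $\qcoxeter^r(\initialseed)$, so I would aim to strengthen Lemma~\ref{lemma:normal form} in the $(G,\psi)$-invariant setting to produce a full common $G$-orbit of cluster variables rather than a single one. This should follow by applying the lemma and then using $G$-invariance of $\seed_t$ to transport $x_{\ell;r}$ across its orbit, showing the intersection $\bigcap_{g \in G}\exchangesub{\qbpr_{t_0}}{x_{g(\ell);r}}$ is nonempty and again of finite type; the verification reduces, for each triple $(\dynX, G, \dynY)$ in Table~\ref{figure:all possible foldings}, to a case-by-case check of how the $G$-action interacts with the vertex deletion.
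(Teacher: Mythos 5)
First, note that this paper does not prove Theorem~\ref{thm_invariant_seeds_form_folded_pattern}: it is imported wholesale from \cite{AL2021}, so there is no internal proof to compare against. Your architecture (show $\qcoxeter$ is a composition of orbit mutations, then use Lemma~\ref{lemma:normal form} to drop into a finite-type induced subgraph $\exchangesub{\qbpr_{t_0}}{x_{\ell;r}}$ and invoke the finite-type analogue there) is the natural one and matches how the present paper deploys these tools elsewhere (e.g.\ in the proofs of Proposition~\ref{prop:every seeds come from N-graphs} and Theorem~\ref{theorem:globally foldable Ngraphs}). The first step is fine: for each triple in Table~\ref{figure:all possible foldings} the $G$-action preserves the bipartite coloring, so $I_{\pm}$ are unions of orbits and $\qcoxeter^{\pm 1}$ is a composition of orbit mutations.

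The genuine gap is exactly the point you flag at the end, and your proposed repair does not close it. To run the finite-type reduction equivariantly you must know that $\seed_t$ and $\qcoxeter^r(\initialseed)$ share the \emph{entire} $G$-orbit $\{x_{g(\ell);r}\}_{g\in G}$, so that the deleted vertex set is $G$-stable and the restricted action on $[n]\setminus G\cdot\ell$ is defined. Your transport argument asserts that $(G,\psi)$-invariance of $\seed_t$ lets you move $x_{\ell;r}$ around its orbit, but $(G,\psi)$-invariance only constrains the images $\psi(x_i)=\psi(x_{i'})$ in the quotient field $\field^G$, where the orbit variables are already identified; it gives no action of $G$ on the actual cluster variables of $\seed_t$ in $\field$, hence no reason that $x_{g(\ell);r}\in\seed_t$ when $x_{\ell;r}\in\seed_t$. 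What is needed — and what constitutes the real content of \cite{AL2021} — is an honest $G$-action on the set of all cluster variables and on the exchange graph, compatible with seeds, which is constructed there via the $G$-equivariance of the injective denominator-vector map $z\mapsto\mathbf{d}(z)$ into the affine root system (Theorem~\ref{thm_RS_theorems}); only after that can one conclude that a common cluster variable forces a common orbit. A second, related omission: your reduction of the first claim (every seed whose quiver is merely $G$-invariant has a $G$-admissible quiver) to the finite-type piece also presupposes a $G$-stable deleted vertex set, and here there is not even a $(G,\psi)$-invariance hypothesis to lean on, so that claim needs a separate argument (again via the parametrization of cluster variables by roots, or by $\mathbf{c}$-/$\mathbf{g}$-vectors). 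As written, the proposal is a correct outline with the decisive equivariance step missing.
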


Under the aid of Proposition~\ref{proposition:folded cluster pattern}, we will prove Theorem~\ref{theorem:principal coefficients and trivial coefficients}.

\begin{proof}[Proof of Theorem~\ref{theorem:principal coefficients and trivial coefficients}]
By Theorem~\ref{thm_exchange_graph_skew_symetric}, it is enough to consider the case where the principal part is of \emph{non-simply-laced affine type}. 
Let $(\dynX, G, \dynY)$ be a column in Table~\ref{figure:all 
possible foldings}.
Let $\quiver(\dynX)$ be the quiver of type $\dynX$ and $\qbasis(\dynX)=\qbasis(\quiver(\dynX))$ be the adjacency matrix of $\quiver(\dynX)$, which is a square matrix of size $n$.
Let $\tilde\qbasis(\dynX) = \begin{pmatrix}
\qbasis(\dynX)\\ \clusterfont{I}_n
\end{pmatrix}$ be the $(2n\times n)$ matrix having principal coefficients whose principal part is given by~$\qbasis(\dynX)$. 
On the other hand, we consider a quiver $\overline{\quiver}(X)$ by adding $n' \coloneqq \#([n]/G)$ frozen vertices and arrows. Here, each frozen vertex is indexed by a $G$-orbit and we draw an arrow from the frozen vertex to each mutable vertex in the corresponding $G$-orbit. 
For some algebraic independent elements $\bfx=(x_1,\dots, x_n)$, $\overline{\bfx} = (x_1,\dots,x_n,x_{n+1},\dots,x_{n+n'})$, and $\tilde\bfx=(x_1,\dots, x_n, x_{n+1},\dots, x_{2n})$ in $\field$, we denote cluster algebras by
\[
\tilde\cA(\dynX)=\cA(\tilde\bfx, \tilde\qbasis(\dynX)),
\quad 
\overline\cA(\dynX)=\cA(\overline\bfx, \qbasis(\overline\quiver(\dynX))), 
\quad\text{ and }\quad
\cA(\dynX)=\cA(\bfx,\qbasis(\dynX)).
\]
Then, by Theorem~\ref{thm_exchange_graph_skew_symetric}, their exchange graphs are isomorphic.

Similarly, for an exchange matrix $\qbasis(\dynY)$ of type $\dynY$ of size $n'$, let $\tilde\qbasis(\dynY)=\begin{pmatrix}
\qbasis(\dynY)\\ \clusterfont{I}_{n'}
\end{pmatrix}$ and we denote cluster algebras by
\[
\tilde\cA(\dynY)=\cA(\tilde\bfx', \tilde\qbasis(\dynY))\quad\text{ and }\quad
\cA(\dynY)=\cA(\bfx',\qbasis(\dynY)).
\]
Here, $\tilde\bfx' = (x_1',\dots,x_{n'}',x_{n'+1}',\dots,x_{2n'}')$ and $\bfx' = (x_1',\dots,x_{n'}')$.

Extending the action of $G$ on $\quiver$ of type $\dynX$ to  $\overline\quiver(\dynX)$ such that $G$ acts trivially on frozen vertices, the quiver $\overline\quiver(\dynX)$ becomes a globally foldable quiver with respect to $G$ (see~\cite[Lemma~5.5.3]{FWZ_chapter45}).
Moreover, via $\psi \colon \field \to \field^G$, the folded seed $(\overline\bfx, \overline\quiver(\dynX))^G$ produces the principal coefficient cluster algebra $\tilde\cA(\dynY)$ of type $\dynY$.
This produces the following diagram.
\[
\begin{tikzcd}
\exchange(\tilde\cA(\dynX)) 
	\arrow[r,twoheadrightarrow, "\cong"]
& \exchange(\overline\cA(\dynX)) 
	\arrow[r, twoheadrightarrow,"\cong"]
& \exchange(\cA(\dynX)) \\
& \exchange(\overline\cA(\dynX))|_{\text{$(G,\psi)$-admissible}}  
	\arrow[u, hookrightarrow]  
	\arrow[r,rightarrowtail]
	\arrow[d,equal]
& \exchange(\cA(\dynX))|_{\text{$(G,\psi)$-admissible}}  
	\arrow[u, hookrightarrow]
	\arrow[d,equal] \\
& \exchange(\tilde\cA(\dynY)) \arrow[r,twoheadrightarrow]
& \exchange(\cA(\dynY))
\end{tikzcd}
\] 
Here, the graphs in the second row are the graphs whose vertices are the $(G,\psi)$-admissible seeds in the graphs $ \exchange(\overline\cA(\dynX))$ and  $\exchange(\cA(\dynX))$, respectively; each pair of vertices is connected if and only if they are related via an \emph{orbit mutation}. 
The inclusion from the second row to the first row means that there is an inclusion between the set of vertices.
The surjectivity in the top and bottom row is induced by the maximality of the exchange graph of a cluster algebra having principal coefficients in Theorem~\ref{thm_exchange_graph_covering}. 
Moreover, the equalities connecting the second and third rows are given by Theorem~\ref{thm_invariant_seeds_form_folded_pattern}. 
This proves the theorem.
\end{proof}

\section{\texorpdfstring{$N$-graphs}{N-graphs} and seeds}\label{sec:N-graph}

\subsection{\texorpdfstring{$N$}{N}-graphs}\label{subsec:N-graph}
Let us recall the notion of $N$-graphs and its moves which present Legendrian surfaces and Legendrian isotopies in $\R^5$.

\begin{definition}\cite[Definition~2.2]{CZ2020}
An  $N$-graph $\ngraph$ on a smooth surface $S$ is an $(N-1)$-tuple of graphs $(\ngraph_1,\dots, \ngraph_{N-1})$ satisfying the following conditions:
\begin{enumerate}
\item Each graph $\ngraph_i$ is embedded, trivalent, possibly empty and non necessarily connected.
\item Any consecutive pair of graphs $(\ngraph_i,\ngraph_{i+1})$, $1\leq i \leq N-2$, intersects only at hexagonal points depicted as in Figure~\ref{fig:hexagonal_point}.
\item Any pair of graphs $(\ngraph_i, \ngraph_j)$ with $1\leq i,j\leq N-1$ and $|i-j|>1$  intersects transversely at edges.
\end{enumerate}
\end{definition}

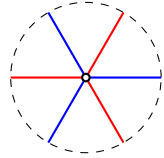
\begin{figure}[ht]
\begin{tikzpicture}
\begin{scope}
\draw[dashed] (0,0) circle (1cm);
\draw[red, thick] (60:1)--(0,0) (180:1)--(0,0) (-60:1)--(0,0);
\draw[blue, thick] (0:1)--(0,0) (120:1)--(0,0) (240:1)--(0,0);
\draw[thick,black,fill=white] (0,0) circle (0.05);
\end{scope}
\end{tikzpicture}
\caption{A hexagonal point}
\label{fig:hexagonal_point}
\end{figure}

Let $\ngraph\subset S$ be an $N$-graph.
A finite cover $\{U_i\}_{i\in I}$ is called {\em $\ngraph$-compatible} if
\begin{enumerate}
\item each $U_i$ is diffeomorphic to the open disk $\mathring{\disk}^2$,
\item $U_i \cap \ngraph$ is connected, and
\item $U_i \cap \ngraph$ contains at most one vertex or a hexagonal point.
\end{enumerate}

\begin{definition}\cite[Definition~2.7]{CZ2020}\label{def:free}
Let $\ngraph$ be an $N$-graph on a surface $S$. The {\em Legendrian weave} $\Legendrian(\ngraph)\subset J^1 S$ is an embedded Legendrian surface whose wavefront $\wavefront(\ngraph)\subset S\times \R$ is constructed by weaving the wavefronts $\{\wavefront(U_i)\}_{i\in I}$ as depicted in Figure~\ref{fig:local_chart_3-graphs}  from a $\ngraph$-compatible cover $\{U_i\}_{i\in I}$ with respect to the gluing data given by $\ngraph$.
\end{definition}

\begin{figure}[ht]
\begin{tikzpicture}

\begin{scope}[xshift=-3.5cm]
\draw[thick] \boundellipse{0,1}{1.25}{0.25};

\draw[thick] \boundellipse{0,-1}{1.25}{0.25};
\draw[thick] \boundellipse{0,-2}{1.25}{0.25};
\draw[thick, ->] (0,-1.35)--(0,-1.9);

\node at (1.5,1){\tiny $N$};
\node at (0,0){\tiny $\vdots$};
\node at (1.5,-1){\tiny $1$};

\end{scope}

\begin{scope}
\draw[thick] \boundellipse{0,1}{1.25}{0.25};

\draw[blue, thick] (-1,0)--(1,0);
\draw[thick] (-3/4,1/3)--(5/4,1/3);
\draw[thick] (5/4,1/3)--(3/4,-1/3);
\draw[thick] (3/4,-1/3)--(-5/4,-1/3);
\draw[thick] (-5/4,-1/3)--(-3/4,1/3);
\draw[thick] (-5/4,3/12)--(3/4,3/12);
\draw[thick] (3/4,3/12)--(5/4,-3/12);
\draw[thick] (5/4,-3/12)--(-3/4,-3/12);
\draw[thick] (-3/4,-3/12)--(-5/4,3/12);

\draw[thick] \boundellipse{0,-1}{1.25}{0.25};
\draw[thick] \boundellipse{0,-2}{1.25}{0.25};
\draw[blue, thick] (-5/4,-2)--(5/4,-2);
\draw[thick, ->] (0,-1.35)--(0,-1.9);

\node at (1.5,1){\tiny $N$};
\node at (0,2/3){\tiny $\vdots$};
\node at (1.5,1/3){\tiny $i+1$};
\node at (1.5,-1/3){\tiny $i$};
\node at (0,-0.45){\tiny $\vdots$};
\node at (1.5,-1){\tiny $1$};

\end{scope}

\begin{scope}[xshift=3.5cm]
\draw[thick] \boundellipse{0,1}{1.25}{0.25};

\draw[blue, thick] (-1.25,0)--(0,0);
\draw[blue, thick] (0,0)--(1.25,1/3);
\draw[blue, thick] (0,0)--(1/2,-1/3);
\draw[thick,blue,fill=blue] (0,0) circle (0.05);

\draw[thick] (-1.25,0) to[out=25,in=175] (1.25,1/3);
\draw[dotted, thick] (-1.25,0) to[out=10,in=190] (1.25,1/3);

\draw[thick] (-1.25,0) to[out=-25,in=180] (1/2,-1/3);
\draw[thick] (-1.25,0) to[out=-10,in=160] (1/2,-1/3);

\draw[dotted,thick] (1/2,-1/3) to[out=80,in=220] (1.25,1/3);
\draw[thick] (1/2,-1/3) to[out=30,in=250] (1.25,1/3);

\draw[thick] \boundellipse{0,-1}{1.25}{0.25};
\draw[thick] \boundellipse{0,-2}{1.25}{0.25};
\draw[blue, thick] (-5/4,-2)--(0,-2);
\draw[blue, thick] (0,-2)--(0.90,-1.825);
\draw[blue, thick] (0,-2)--(1/2,-2.225);

\draw[thick,blue,fill=blue] (0,-2) circle (0.05);
\draw[thick, ->] (0,-1.35)--(0,-1.9);

\node at (1.5,1){\tiny $N$};
\node at (0,2/3){\tiny $\vdots$};
\node at (1.5,1/3){\tiny $i+1$};
\node at (1.5,-1/3){\tiny $i$};
\node at (0,-0.45){\tiny $\vdots$};
\node at (1.5,-1){\tiny $1$};

\end{scope}

\begin{scope}[xshift=7cm]
\draw[thick] \boundellipse{0,1}{1.25}{0.25};

\draw[blue, thick] (0,0)--(1,0);
\draw[blue, thick] (-5/4,3/12)--(0,0);
\draw[blue, thick] (-3/4,1/3)--(0,0);

\draw[red, thick] (-1,0)--(0,0);
\draw[red, thick] (0,0)--(5/4,-3/12);
\draw[red, thick] (0,0)--(3/4,-1/3);

\draw[thick] (-3/4,1/3)--(5/4,1/3);
\draw[thick] (5/4,1/3)--(3/4,-1/3);
\draw[thick] (3/4,-1/3)--(-5/4,-1/3);
\draw[thick] (-5/4,-1/3)--(-3/4,1/3);
\draw[thick,black,fill=white] (0,0) circle (0.05);
\draw[thick] (-5/4,3/12)--(3/4,3/12);
\draw[thick] (3/4,3/12)--(5/4,-3/12);
\draw[thick] (5/4,-3/12)--(-3/4,-3/12);
\draw[thick] (-3/4,-3/12)--(-5/4,3/12);

\draw[thick] (-5/4,3/12)--(-3/4,1/3);
\draw[thick] (-3/4,1/3)--(5/4,-3/12);
\draw[thick] (5/4,-3/12)--(3/4,-1/3);
\draw[thick] (3/4,-1/3)--(-5/4,3/12);

\draw[thick] \boundellipse{0,-1}{1.25}{0.25};
\draw[thick] \boundellipse{0,-2}{1.25}{0.25};
\draw[red, thick] (-5/4,-2)--(0,-2);
\draw[red, thick] (0,-2)--(0.90,-1.825);
\draw[red, thick] (0,-2)--(1/2,-2.225);

\draw[blue, thick] (5/4,-2)--(0,-2);
\draw[blue, thick] (0,-2)--(-0.90,-2.175);
\draw[blue, thick] (0,-2)--(-1/2,-1.775);
\draw[thick,black,fill=white] (0,-2) circle (0.05);

\draw[thick, ->] (0,-1.35)--(0,-1.9);

\node at (1.5,1){\tiny $N$};
\node at (0,2/3){\tiny $\vdots$};
\node at (1.5,1/3){\tiny $i+2$};
\node at (1.5,0){\tiny $i+1$};
\node at (1.5,-1/3){\tiny $i$};
\node at (0,-0.45){\tiny $\vdots$};
\node at (1.5,-1){\tiny $1$};

\end{scope}

\end{tikzpicture}
\caption{Four-types of local charts for $N$-graphs.}
\label{fig:local_chart_3-graphs}
\end{figure}
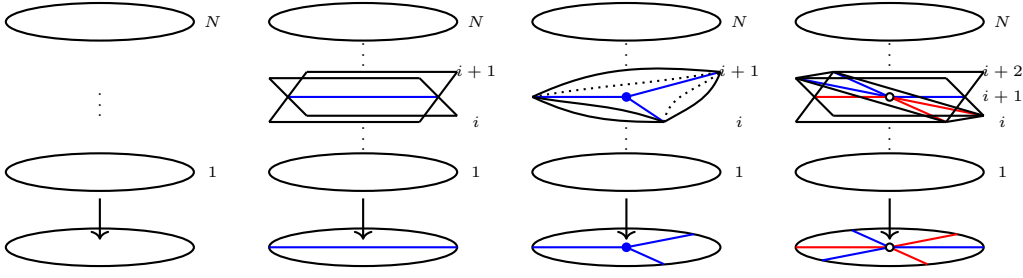

\begin{definition}
An $N$-graph $\ngraph\subset \disk^2$ is called {\em free} if the induced Legendrian weave $\Legendrian(\ngraph)\subset J^1\disk^2$ can be woven without interior Reeb chord. 
\end{definition}

Let $\annulus$ be the oriented annulus with two boundaries $\partial_+\annulus$ and $\partial_-\annulus$ homeomorphic to $\sphere^1$. Consider an $N$-graph $\ngraph$ on $\annulus$, then its restriction on the boundaries give Legendrian links $\legendrian(\partial_+\ngraph)$ and $\legendrian(\partial_-\ngraph)$ in $J^1(\partial_+\annulus)$ and $J^1(\partial_-\annulus)$, respectively.

Note that a Legendrian link in $J^1\sphere^1$ can be considered as a Legendrian link in $\R^3$ under an embedding $\iota:J^1\sphere^1 \to \R^3$. A free $N$-graph $\ngraph$ on $\annulus$ induces an embedded exact Lagrangian cobordism in $\R^3\times [0,1]$ from $\iota(\legendrian(\partial_+\ngraph))\subset \R^3\times \{1\}$ to $\iota(\legendrian(\partial_-\ngraph))\subset \R^3\times \{0\}$. Moreover, a free $N$-graph $\ngraph$ on $\disk^2$ gives a Legendrian weave $\Legendrian(\ngraph)$ in $J^1\disk^2$ which can be regarded as an {\em embedded} Lagrangian filling in $\R^4$ of a Legendrian link $\iota(\legendrian(\partial_+\ngraph))$.

On the other hand, Legendrian isotopies in $J^1\sphere^1$ produce \emph{elementary annulus $N$-graph}s. The following two Legendrian Reidemeister moves \Move{RIII} and \Move{R0} can be interpreted as $N$-graphs $\ngraph_{\Move{RIII}}$ and $\ngraph_{\Move{R0}}$ on the annulus $\annulus$, respectively, as depicted in Figure~\ref{fig:elementary annulus N-graph}. The Move \Move{I} and \Move{V} of $N$-graphs in Figure~\ref{fig:move1-6} imply that the inverses $\ngraph_{\Move{RIII}}^{-1}$ and $\ngraph_{\Move{R0}}^{-1}$ can be obtained by reversing the role of the inner- and outer boundaries.

Let $\ngraph_1$, $\ngraph_2$ be two $N$-graphs on $\annulus$ with $\partial_-\ngraph_1=\partial_+\ngraph_2$. Then we can glue $\ngraph_1$, $\ngraph_2$ along $\partial_-\ngraph_1=\partial_+\ngraph_2$ to obtain a new $N$-graph $\ngraph_1\cdot \ngraph_2$ with two boundaries $\partial_+\ngraph_1$ and $\partial_-\ngraph_2$. If $\partial_-\ngraph_1$ is rotationally symmetric, then the gluing $\ngraph_1\cdot \ngraph_2$ is only well-defined up to that symmetry.

\begin{figure}[ht]
\begin{tikzcd}[row sep=0.25cm]
\begin{tikzpicture}[baseline=-.5ex,scale=1]
\draw [thick] (0,0) circle [radius=0.7];
\draw [thick] (0,0) circle [radius=1.5];
\draw [thick, dotted] (260:1) arc (260:280:1);
\draw [thick, blue] (60:0.7) to[out=60, in=-30] (90:1.1) (120:0.7) to[out=120, in=-150] (90:1.1) -- (90:1.5);
\draw [thick, red] (90:0.7) -- (90:1.1) to[out=150,in=-60] (120:1.5) (90:1.1) to[out=30,in=-120] (60:1.5);
\draw [thick, black]
(30:0.7) -- (30:1.5) 
(150:0.7) -- (150:1.5)
(0:0.7) -- (0:1.5) 
(180:0.7) -- (180:1.5);
\node at (0,0) {$\ngraph_{\Move{RIII}}$};
\end{tikzpicture}
\cdot
\begin{tikzpicture}[baseline=-.5ex,scale=1]
\draw [thick] (0,0) circle [radius=0.7];
\draw [thick, black]
(30:0.5) -- (30:0.7) 
(150:0.5) -- (150:0.7)
(0:0.5) -- (0:0.7) 
(180:0.5) -- (180:0.7);
\draw [thick, red] (90:0.5) -- (90:0.7);
\draw [thick, blue] (60:0.5) -- (60:0.7) (120:0.5) -- (120:0.7) ;
\draw [thick, dotted] (250:0.6) arc (250:290:0.6);
\draw [double] (0,0) circle [radius=0.5];
\node at (0,0) {$\ngraph$};
\end{tikzpicture}
=
\begin{tikzpicture}[baseline=-.5ex,scale=1]
\draw [thick] (0,0) circle [radius=1.5];
\draw [thick, dotted] (260:1) arc (260:280:1);
\draw [thick, blue] (60:0.7) to[out=60, in=-30] (90:1.1) (120:0.7) to[out=120, in=-150] (90:1.1) -- (90:1.5);
\draw [thick, red] (90:0.7) -- (90:1.1) to[out=150,in=-60] (120:1.5) (90:1.1) to[out=30,in=-120] (60:1.5);
\draw [thick, black]
(30:0.7) -- (30:1.5) 
(150:0.7) -- (150:1.5)
(0:0.7) -- (0:1.5) 
(180:0.7) -- (180:1.5);
\draw [thick, black]
(30:0.5) -- (30:0.7) 
(150:0.5) -- (150:0.7)
(0:0.5) -- (0:0.7) 
(180:0.5) -- (180:0.7);
\draw [thick, red] (90:0.5) -- (90:0.7);
\draw [thick, blue] (60:0.5) -- (60:0.7) (120:0.5) -- (120:0.7) ;
\draw [double] (0,0) circle [radius=0.5];
\node at (0,0) {$\ngraph$};
\end{tikzpicture}
\\
\begin{tikzpicture}[baseline=-.5ex,scale=1]
\draw [thick] (0,0) circle [radius=0.7];
\draw [thick] (0,0) circle [radius=1.5];
\draw [thick, dotted] (260:1) arc (260:280:1);
\draw [thick, blue] (120:0.7) to[out=120, in=-120] (60:1.5);
\draw [thick, yellow] (60:0.7) to[out=60,in=-60] (120:1.5);
\draw [thick, black]
(30:0.7) -- (30:1.5) 
(150:0.7) -- (150:1.5)
(0:0.7) -- (0:1.5) 
(180:0.7) -- (180:1.5);
\node at (0,0) {$\ngraph_{\Move{R0}}$};
\end{tikzpicture}
\cdot
\begin{tikzpicture}[baseline=-.5ex,scale=1]
\draw [thick] (0,0) circle [radius=0.7];
\draw [thick, black]
(30:0.5) -- (30:0.7) 
(150:0.5) -- (150:0.7)
(0:0.5) -- (0:0.7) 
(180:0.5) -- (180:0.7);
\draw [thick, yellow] (60:0.5) -- (60:0.7);
\draw [thick, blue] (120:0.5) -- (120:0.7) ;
\draw [thick, dotted] (250:0.6) arc (250:290:0.6);
\draw [double] (0,0) circle [radius=0.5];
\node at (0,0) {$\ngraph$};
\end{tikzpicture}
=
\begin{tikzpicture}[baseline=-.5ex,scale=1]
\draw [thick] (0,0) circle [radius=1.5];
\draw [thick, dotted] (260:1) arc (260:280:1);
\draw [thick, blue] (120:0.7) to[out=120, in=-120] (60:1.5);
\draw [thick, yellow] (60:0.7) to[out=60,in=-60] (120:1.5);
\draw [thick, black]
(30:0.7) -- (30:1.5) 
(150:0.7) -- (150:1.5)
(0:0.7) -- (0:1.5) 
(180:0.7) -- (180:1.5);
\draw [thick, black]
(30:0.5) -- (30:0.7) 
(150:0.5) -- (150:0.7)
(0:0.5) -- (0:0.7) 
(180:0.5) -- (180:0.7);
\draw [thick, yellow] (60:0.5) -- (60:0.7);
\draw [thick, blue] (120:0.5) -- (120:0.7) ;
\draw [double] (0,0) circle [radius=0.5];
\node at (0,0) {$\ngraph$};
\end{tikzpicture}
\end{tikzcd}
\caption{Elementary annulus operations on $N$-graphs on $\disk^2$.}
\label{fig:elementary annulus N-graph}
\end{figure}
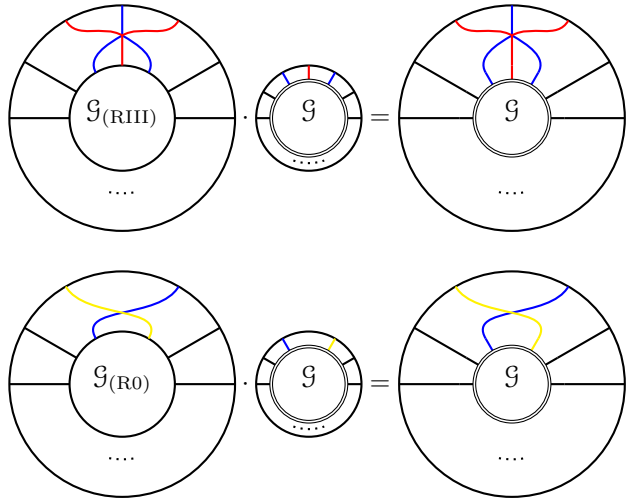

\begin{theorem}\cite[Theorem~1.1]{CZ2020}\label{thm:N-graph moves and legendrian isotopy}
Let $\ngraph$ be a local $N$-graph. The combinatorial moves in Figure~\ref{fig:move1-6} are Legendrian isotopies for $\Legendrian(\ngraph)$.
\end{theorem}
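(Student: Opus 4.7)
The plan is to verify the six combinatorial moves in Figure~\ref{fig:move1-6} one at a time by exhibiting a concrete Legendrian isotopy in $J^1\disk^2$ that interpolates between the Legendrian weaves $\Legendrian(\ngraph)$ and $\Legendrian(\ngraph')$ associated to the two sides of the move. Because each move is supported in a compact disk, the argument takes place inside a fixed contact ball where the $N$-graph is altered only on its interior, while the rest of $\Legendrian(\ngraph)$ is held fixed.

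First I would identify the wavefront model attached to each local chart in the $N$-graph dictionary of Figure~\ref{fig:local_chart_3-graphs}: away from $\ngraph$ the sheets are smooth and parallel; over an edge of $\ngraph_i$ two consecutive sheets meet along a standard $A_1$-fold; at a trivalent vertex of $\ngraph_i$ three sheets meet at an $A_2$-cusp; where $\ngraph_i$ and $\ngraph_j$ with $|i-j|\ge 2$ cross transversely, two independent folds pass through each other; and at a hexagonal point of $\ngraph_i\cap\ngraph_{i+1}$ six sheets interact through a $D_4^-$-singularity. These are precisely the fronts of the miniversal Legendrian unfoldings of the corresponding simple singularities, and $\Legendrian(\ngraph)$ is obtained by gluing these standard pieces along the combinatorics of $\ngraph$.

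Next, for each move I would produce an explicit one-parameter family of wavefronts $\wavefront_t\subset\disk^2\times\R$ agreeing with the two sides at $t=0,1$, and verify that the Legendrian lift $\Legendrian_t\subset J^1\disk^2$ (i) remains Legendrian, which is automatic once $\wavefront_t$ is a smooth family of fronts, and (ii) remains embedded, i.e.\ acquires no interior Reeb chord during the deformation. For the moves that involve only $A_1$-folds, $A_2$-cusps, and transverse crossings (edge/vertex contractions, push-through moves, and the like), one can take the family to be a polynomial deformation of the defining function inside the miniversal unfolding, and genericity within the unfolding base forces the deformation to avoid the discriminant, guaranteeing embeddedness.

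The main obstacle is the hexagonal-point moves, in which the interpolation passes close to the codimension-two $D_4$-stratum and many sheets interact simultaneously. Here one has to parametrize the deformation by a path in the miniversal base $\C^4/W(D_4)$ that dodges the discriminant; the Legendrian isotopy then lifts to the resolved surface, and the absence of new Reeb chords is read off from the explicit signs of the height coordinates along that path. This verification is the technical heart of \cite{CZ2020}, and a self-contained proof would essentially unpack the case analysis presented there.
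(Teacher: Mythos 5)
The first thing to say is that the paper contains no proof of this statement: it is imported verbatim from Casals--Zaslow \cite[Theorem~1.1]{CZ2020}, so there is nothing in-paper to compare your argument against, and the comparison has to be made with the cited proof. Your overall strategy --- attach a standard local wavefront model to each chart of Figure~\ref{fig:local_chart_3-graphs} and then realize each move of Figure~\ref{fig:move1-6} by an explicit one-parameter family of fronts --- is indeed the shape of the argument in \cite{CZ2020}. But your singularity dictionary is off: in the weave setting the sheets are (multi-)graphs over the disk, so an edge of $\ngraph_i$ is a transverse \emph{crossing} curve of two consecutive sheets, not an $A_1$-fold; a trivalent vertex is the $D_4^-$ cone singularity of the front (three crossing curves meeting), not an $A_2$-cusp; and a hexagonal point is a generic \emph{triple point} of the sheets $i,i+1,i+2$ (type $A_1^3$, with six double-point curves emanating), not a $D_4$-singularity. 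Since the whole program consists of writing down front homotopies inside these local models, getting the models wrong is not cosmetic.

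The more substantive error is in your step (ii). Absence of interior Reeb chords is not what makes the Legendrian lift $\Legendrian_t$ embedded: a Reeb chord is a pair of points of the weave with equal differentials and different heights, and such chords are perfectly compatible with $\Legendrian_t$ being an embedded Legendrian surface in $J^1\disk^2$. What embeddedness of the lift actually requires is that $\wavefront_t$ remain a generic front throughout the homotopy --- every multiple point must be a transverse intersection of sheets with distinct slopes, and the only degenerations allowed are the standard Legendrian front perestroikas. Reeb chords only become relevant for the separate statement that the Lagrangian projection is an embedded exact filling; that is the ``free'' condition of Definition~\ref{def:free}, which the paper records as being preserved by the moves but which is logically independent of Theorem~\ref{thm:N-graph moves and legendrian isotopy}. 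So the condition you propose to monitor along the deformation is the wrong one, and the condition you actually need (genericity of the front homotopy, in particular that the paths near trivalent and hexagonal points avoid the non-generic strata of the front discriminant) is left unverified.
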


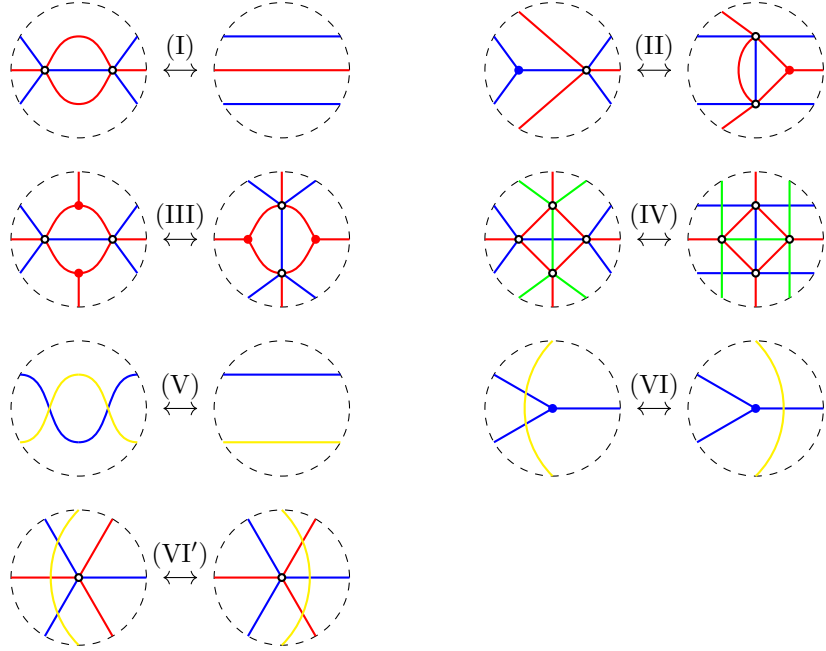
\begin{figure}[ht]
\begin{tikzpicture}[scale=0.9]
\begin{scope}
\draw [dashed] (0,0) circle [radius=1];
\draw [dashed] (3,0) circle [radius=1];
\draw [<->] (1.25,0) -- (1.75,0) node[midway, above] {\Move{I}};

\draw [blue, thick] ({-sqrt(3)/2},1/2)--(-1/2,0);
\draw [blue, thick] ({-sqrt(3)/2},-1/2)--(-1/2,0);
\draw [blue, thick] ({sqrt(3)/2},1/2)--(1/2,0);
\draw [blue, thick] ({sqrt(3)/2},-1/2)--(1/2,0);
\draw [blue, thick] (-1/2,0)--(1/2,0);

\draw [red, thick] (-1,0)--(-1/2,0) to[out=60,in=180] (0,1/2) to[out=0,in=120] (1/2,0)--(1,0);
\draw [red, thick] (-1/2,0) to[out=-60,in=180] (0, -1/2) to[out=0, in=-120] (1/2,0); 

\draw[thick,black,fill=white] (-1/2,0) circle (0.05);
\draw[thick,black,fill=white] (1/2,0) circle (0.05);

\draw [blue, thick] ({3-sqrt(3)/2},1/2)--({3+sqrt(3)/2},1/2);
\draw [blue, thick] ({3-sqrt(3)/2},-1/2)--({3+sqrt(3)/2},-1/2);
\draw [red, thick] (2,0)--(4,0);

\end{scope}

\begin{scope}[xshift=7cm]
\draw [dashed] (0,0) circle [radius=1];
\draw [dashed] (3,0) circle [radius=1];
\draw [<->] (1.25,0) -- (1.75,0) node[midway, above] {\Move{II}};

\draw [blue, thick] ({-sqrt(3)/2},1/2)--(-1/2,0);
\draw [blue, thick] ({-sqrt(3)/2},-1/2)--(-1/2,0);
\draw [blue, thick] ({sqrt(3)/2},1/2)--(1/2,0);
\draw [blue, thick] ({sqrt(3)/2},-1/2)--(1/2,0);
\draw [blue, thick] (-1/2,0)--(1/2,0);

\draw [red, thick] (-1/2,{sqrt(3)/2}) -- (1/2,0)--(1,0);
\draw [red, thick] (-1/2,{-sqrt(3)/2}) -- (1/2,0);

\draw[thick,blue,fill=blue] (-1/2,0) circle (0.05);
\draw[thick,black,fill=white] (1/2,0) circle (0.05);

\draw [blue, thick] ({3-sqrt(3)/2},1/2)--({3+sqrt(3)/2},1/2);
\draw [blue, thick] ({3-sqrt(3)/2},-1/2)--({3+sqrt(3)/2},-1/2);
\draw [blue, thick] (3,1/2)--(3,-1/2);

\draw [red, thick] (5/2,{sqrt(3)/2})--(3,1/2) to[out=-150,in=150] (3,-1/2)--(5/2,{-sqrt(3)/2});
\draw [red, thick] (3,1/2)--(7/2,0) -- (4,0);
\draw [red, thick] (3,-1/2)--(7/2,0);

\draw[thick,black,fill=white] (3,1/2) circle (0.05);
\draw[thick,black,fill=white] (3,-1/2) circle (0.05);
\draw[thick,red,fill=red] (7/2,0) circle (0.05);

\end{scope}

\begin{scope}[yshift=-2.5cm]
\draw [dashed] (0,0) circle [radius=1];
\draw [<->] (1.25,0) -- (1.75,0) node[midway, above] {\Move{III}};

\draw [blue, thick] ({-sqrt(3)/2},1/2)--(-1/2,0);
\draw [blue, thick] ({-sqrt(3)/2},-1/2)--(-1/2,0);
\draw [blue, thick] ({sqrt(3)/2},1/2)--(1/2,0);
\draw [blue, thick] ({sqrt(3)/2},-1/2)--(1/2,0);
\draw [blue, thick] (-1/2,0)--(1/2,0);

\draw [red, thick] (-1,0)--(-1/2,0) to[out=60,in=180] (0,1/2) to[out=0,in=120] (1/2,0)--(1,0);
\draw [red, thick] (-1/2,0) to[out=-60,in=180] (0, -1/2) to[out=0, in=-120] (1/2,0); 
\draw [red, thick] (0,1) to (0,1/2);
\draw [red, thick] (0,-1) to (0,-1/2);

\draw[thick,black,fill=white] (-1/2,0) circle (0.05);
\draw[thick,black,fill=white] (1/2,0) circle (0.05);

\draw[thick,red,fill=red] (0,1/2) circle (0.05);
\draw[thick,red,fill=red] (0,-1/2) circle (0.05);

\end{scope}

\begin{scope}[yshift=-2.5cm, xshift=3cm]

\draw [dashed] (0,0) circle [radius=1];

\draw [blue, thick] (-1/2,{sqrt(3)/2}) to (0,1/2) to (0,-1/2) to (-1/2,-{sqrt(3)/2});
\draw [blue, thick] (1/2,{sqrt(3)/2})--(0,1/2);
\draw [blue, thick] (1/2,-{sqrt(3)/2})--(0,-1/2);

\draw [red, thick] (-1,0)--(-1/2,0) to[out=60,in=180] (0,1/2) to[out=0,in=120] (1/2,0)--(1,0);
\draw [red, thick] (-1/2,0) to[out=-60,in=180] (0, -1/2) to[out=0, in=-120] (1/2,0); 
\draw [red, thick] (0,1) to (0,1/2);
\draw [red, thick] (0,-1) to (0,-1/2);

\draw[thick,red,fill=red] (-1/2,0) circle (0.05);
\draw[thick,red,fill=red] (1/2,0) circle (0.05);

\draw[thick,black,fill=white] (0,1/2) circle (0.05);
\draw[thick,black,fill=white] (0,-1/2) circle (0.05);
\end{scope}

\begin{scope}[xshift=7cm, yshift=-2.5cm]
\draw [dashed] (0,0) circle [radius=1];
\draw [<->] (1.25,0) -- (1.75,0) node[midway, above] {\Move{IV}};

\draw [blue, thick] ({-sqrt(3)/2},1/2)--(-1/2,0);
\draw [blue, thick] ({-sqrt(3)/2},-1/2)--(-1/2,0);
\draw [blue, thick] ({sqrt(3)/2},1/2)--(1/2,0);
\draw [blue, thick] ({sqrt(3)/2},-1/2)--(1/2,0);
\draw [blue, thick] (-1/2,0)--(1/2,0);

\draw [red, thick] (-1,0)--(-1/2,0) to (0,1/2) to (1/2,0)--(1,0);
\draw [red, thick] (-1/2,0) to (0, -1/2) to (1/2,0); 
\draw [red, thick] (0,1) to (0,1/2);
\draw [red, thick] (0,-1) to (0,-1/2);

\draw [green, thick] (-1/2,{sqrt(3)/2}) to (0,1/2) to (0,-1/2) to (-1/2,-{sqrt(3)/2});
\draw [green, thick] (1/2,{sqrt(3)/2})--(0,1/2);
\draw [green, thick] (1/2,-{sqrt(3)/2})--(0,-1/2);

\draw[thick,black,fill=white] (-1/2,0) circle (0.05);
\draw[thick,black,fill=white] (1/2,0) circle (0.05);

\draw[thick,black,fill=white] (0,1/2) circle (0.05);
\draw[thick,black,fill=white] (0,-1/2) circle (0.05);

\end{scope}

\begin{scope}[xshift=10cm, yshift=-2.5cm]
\draw [dashed] (0,0) circle [radius=1];
\draw [blue, thick] ({-sqrt(3)/2},1/2)--({+sqrt(3)/2},1/2);
\draw [blue, thick] ({-sqrt(3)/2},-1/2)--({+sqrt(3)/2},-1/2);
\draw [blue, thick] (0,1/2)--(0,-1/2);

\draw [red, thick] (-1,0)--(-1/2,0) to (0,1/2) to (1/2,0)--(1,0);
\draw [red, thick] (-1/2,0) to (0, -1/2) to (1/2,0); 
\draw [red, thick] (0,1) to (0,1/2);
\draw [red, thick] (0,-1) to (0,-1/2);

\draw [green, thick] (-1/2,{sqrt(3)/2}) to (-1/2,{-sqrt(3)/2});
\draw [green, thick] (1/2,{sqrt(3)/2}) to (1/2,{-sqrt(3)/2});
\draw [green, thick] (-1/2,0) to (1/2,0);

\draw[thick,black,fill=white] (-1/2,0) circle (0.05);
\draw[thick,black,fill=white] (1/2,0) circle (0.05);

\draw[thick,black,fill=white] (0,1/2) circle (0.05);
\draw[thick,black,fill=white] (0,-1/2) circle (0.05);

\end{scope}

\begin{scope}[xshift=0cm, yshift=-5cm]
\draw [dashed] (0,0) circle [radius=1];
\draw [<->] (1.25,0) -- (1.75,0) node[midway, above] {\Move{V}};

\draw [blue, thick] ({-sqrt(3)/2},1/2)to[out=0,in=180](0,-1/2);
\draw [blue, thick] ({sqrt(3)/2},1/2)to[out=180,in=0](0,-1/2);

\draw [yellow, thick] ({-sqrt(3)/2},-1/2)to[out=0,in=180](0,1/2);
\draw [yellow, thick] ({sqrt(3)/2},-1/2)to[out=180,in=0](0,1/2);

\end{scope}

\begin{scope}[xshift=3cm, yshift=-5cm]
\draw [dashed] (0,0) circle [radius=1];

\draw [blue, thick] ({-sqrt(3)/2},1/2) to ({sqrt(3)/2},1/2);

\draw [yellow, thick] ({-sqrt(3)/2},-1/2) to ({sqrt(3)/2},-1/2);
\end{scope}

\begin{scope}[xshift=7cm, yshift=-5cm]
\draw [dashed] (0,0) circle [radius=1];
\draw [<->] (1.25,0) -- (1.75,0) node[midway, above] {\Move{VI}};
\draw [blue, thick] ({-sqrt(3)/2},1/2) to (0,0) to(1,0);
\draw [blue, thick] ({-sqrt(3)/2},-1/2) to (0,0);

\draw[thick,blue,fill=blue] (0,0) circle (0.05);

\draw[yellow, thick] (0,1) to[out=-135,in=135] (0,-1);
\end{scope}

\begin{scope}[xshift=10cm, yshift=-5cm]
\draw [dashed] (0,0) circle [radius=1];
\draw [blue, thick] ({-sqrt(3)/2},1/2) to (0,0) to(1,0);
\draw [blue, thick] ({-sqrt(3)/2},-1/2) to (0,0);

\draw[thick,blue,fill=blue] (0,0) circle (0.05);

\draw[yellow, thick] (0,1) to[out=-45,in=45] (0,-1);
\end{scope}

\begin{scope}[xshift=0cm, yshift=-7.5cm]
\draw [dashed] (0,0) circle [radius=1];
\draw [<->] (1.25,0) -- (1.75,0) node[midway, above] {$\Move{VI'}$};
\draw [blue, thick] ({-1/2},{sqrt(3)/2}) to (0,0) to(1,0);
\draw [blue, thick] ({-1/2},{-sqrt(3)/2}) to (0,0);

\draw [red, thick] (-1,0) to (0,0) to(1/2,{sqrt(3)/2});
\draw [red, thick] (0,0) to(1/2,{-sqrt(3)/2});

\draw[thick,black,fill=white] (0,0) circle (0.05);

\draw[yellow, thick] (0,1) to[out=-135,in=135] (0,-1);
\end{scope}

\begin{scope}[xshift=3cm, yshift=-7.5cm]
\draw [dashed] (0,0) circle [radius=1];
\draw [blue, thick] ({-1/2},{sqrt(3)/2}) to (0,0) to(1,0);
\draw [blue, thick] ({-1/2},{-sqrt(3)/2}) to (0,0);

\draw [red, thick] (-1,0) to (0,0) to(1/2,{sqrt(3)/2});
\draw [red, thick] (0,0) to(1/2,{-sqrt(3)/2});

\draw[thick,black,fill=white] (0,0) circle (0.05);

\draw[yellow, thick] (0,1) to[out=-45,in=45] (0,-1);
\end{scope}
\end{tikzpicture}
\caption{Combinatorial moves for Legendrian isotopies of surface $\Legendrian(\ngraph)$. Here the pairs ({\color{blue} blue}, {\color{red} red}) and ({\color{red} red}, {\color{green} green}) are consecutive. Other pairs are not.}
\label{fig:move1-6}
\end{figure}

Let us denote the equivalence class of an $N$-graph $\ngraph$ up to the moves $\Move{I},\dots,\Move{VI'}$ by $[\ngraph]$.

Note that the moves in Figure~\ref{fig:move1-6}  preserve the freeness of $N$-graphs. There are other combinatorial moves in $N$-graphs involving cusps which induces Legendrian isotopies of Legendrian weaves, see \cite[Figure~3]{CZ2020}.

\subsection{One-cycles and flag moduli of \texorpdfstring{$N$}{N}-graphs}
Let us recall from \cite{CZ2020,ABL2021} the construction of a seed, a quiver together with cluster variables, from a free $N$-graphs $\ngraph\subset \disk^2$. Let $\Legendrian(\ngraph)$ be the corresponding Legendrian surface, then the set of one-cycles in $\Legendrian(\ngraph)$ and their intersection data define a quiver, and a monodromy along each cycle assigns a coordinate function to each vertex which plays a role of cluster variable.

There is an operation in $N$-graph, so-called \emph{Legendrian mutation}, which is analogous to the mutation in the cluster structure. This Legendrian mutation is important in producing as many distinct $N$-graphs as seeds which can be interpreted as Lagrangian fillings of the Legendrian link $\legendrian(\boundary\ngraph)$.

We present one-cycles of the Legendrian surface $\Legendrian(\ngraph)$ in terms of subgraphs of $\ngraph$. Instead of giving general definition of subgraphs which gives one-cycles of $\Legendrian(\ngraph)$, let us focus on certain type of cycles which are of main interest in the current article. See \cite{CZ2020,ABL2021} for the general construction of one-cycles.

\begin{figure}[ht]
\subfigure[An $\sfI$-cycle $\cycle(\sfI(e))$\label{figure:I-cycle}]{\makebox[.4\textwidth]{
\begin{tikzpicture}
\draw [dashed] (0,0) circle [radius=1.5];

\draw [yellow, line cap=round, line width=5, opacity=0.5] (-1/2,0) to (1/2,0);
\draw [blue, thick] ({-3*sqrt(3)/4},3/4)--(-1/2,0);
\draw [blue, thick] ({-3*sqrt(3)/4},-3/4)--(-1/2,0);
\draw [blue, thick] ({3*sqrt(3)/4},3/4)--(1/2,0);
\draw [blue, thick] ({3*sqrt(3)/4},-3/4)--(1/2,0);
\draw [blue, thick] (-1/2,0)--(1/2,0) node[above, midway] {$e$};

\draw[thick,blue,fill=blue] (-1/2,0) circle (0.05);
\draw[thick,blue,fill=blue] (1/2,0) circle (0.05);

\draw[->] (1,0) node[right] {\scriptsize $i$} to[out=90,in=0] (0,0.5) node[above] {\scriptsize $i+1$} to[out=180,in=90] (-1,0) node[left] {\scriptsize $i$} to[out=-90,in=180] (0,-0.5)node[below] {\scriptsize $i+1$} to[out=0,in=-90] (1,0);

\end{tikzpicture}
}}
\subfigure[A long $\sfI$-cycle $\cycle(\sfI(e_1,e_2))$\label{figure:long I-cycle}]{\makebox[.4\textwidth]{
\begin{tikzpicture}

\draw[dashed] \boundellipse{0,0}{3}{1.5};

\draw [yellow, line cap=round, line width=5, opacity=0.5] (-1.5,0) to (1.5,0);

\draw[red, thick] (0,0)--(1.35,1.35);
\draw[red, thick] (0,0)--(1.35,-1.35);
\draw[red, thick] (0,0)--(-1.5,0) node[above, midway] {$e_1$};
\draw[red, thick] (-1.5,0)--(-1.5-0.9,0.9);
\draw[red, thick] (-1.5,0)--(-1.5-0.9,-0.9);

\draw[blue, thick] (0,0)--(-1.35,1.35);
\draw[blue, thick] (0,0)--(-1.35,-1.35);
\draw[blue, thick] (0,0)--(1.5,0) node[above, midway] {$e_2$};
\draw[blue, thick] (1.5,0)--(1.5+0.9,0.9);
\draw[blue, thick] (0,0)--(1.5,0)--(1.5+0.9,-0.9);

\draw[thick,red,fill=red] (-1.5,0) circle (0.05);
\draw[thick,blue,fill=blue] (1.5,0) circle (0.05);
\draw[thick,black,fill=white] (0,0) circle (0.05);

\draw[->] (2,0) node[right]{\scriptsize $i$} to[out=90,in=0] (0,0.5) node[above]{\scriptsize $i+2$} to[out=180,in=90] (-2,0) node[left]{\scriptsize $i+1$} to[out=-90,in=180] (0,-0.5) node[below]{\scriptsize $i+2$} to[out=0,in=-90] (2,0);
\node[above] at (1.5,0.5) {\scriptsize $i+1$};
\node[below] at (1.5,-0.5) {\scriptsize $i+1$};
\node[above] at (-1.5,0.5) {\scriptsize $i+2$};
\node[below] at (-1.5,-0.5) {\scriptsize $i+2$};

\end{tikzpicture}}}

\subfigure[An upper $\sfY$-cycle $\cycle(\sfY(e_1,e_2,e_3))$\label{figure:Y-cycle_1}]{\makebox[.4\textwidth]{
\begin{tikzpicture}
\draw [dashed] (0,0) circle [radius=2];

\begin{scope}
\draw [yellow, line cap=round, line width=5, opacity=0.5] 
(0:0) -- (-30:1);
\draw [blue, thick](0:0)--(270:2);
\draw [red, thick](0,0)--(330:1) (310:2)--(330:1)--(350:2);
\draw[->] (270:0.5) to[out=0,in=-120] (330:1.5);
\draw(330:1.5) to[out=60,in=-60] (30:0.5);
\node[rotate=60] at (330:1.75) {$\scriptstyle i+1$};
\node[rotate=-30] at (290:1) {$\scriptstyle i+2$};
\node[rotate=-30] at (0:1) {$\scriptstyle i+2$};
\end{scope}
\begin{scope}[rotate=120]
\draw [yellow, line cap=round, line width=5, opacity=0.5] 
(0:0) -- (-30:1);
\draw [blue, thick](0:0)--(270:2);
\draw [red, thick](0,0)--(330:1) (310:2)--(330:1)--(350:2);
\draw[->] (270:0.5) to[out=0,in=-120] (330:1.5);
\draw(330:1.5) to[out=60,in=-60] (30:0.5);
\end{scope}
\begin{scope}[rotate=240]
\draw [yellow, line cap=round, line width=5, opacity=0.5] 
(0:0) -- (-30:1);
\draw [blue, thick](0:0)--(270:2);
\draw [red, thick](0,0)--(330:1) (310:2)--(330:1)--(350:2);
\draw[->] (270:0.5) to[out=0,in=-120] (330:1.5);
\draw(330:1.5) to[out=60,in=-60] (30:0.5);
\end{scope}
\draw[thick, red, fill] 
(90:1) circle (1.5pt)
(210:1) circle (1.5pt)
(330:1) circle (1.5pt);

\node at (90:1.75) {$\scriptstyle i+1$};
\node[rotate=-90] at (60:1) {$\scriptstyle i+2$};
\node[rotate=90] at (120:1) {$\scriptstyle i+2$};
\node[rotate=-60] at (210:1.75) {$\scriptstyle i+1$};
\node[rotate=30] at (180:1) {$\scriptstyle i+2$};
\node[rotate=30] at (250:1) {$\scriptstyle i+2$};

\draw[thick,black, fill=white] (0:0) circle (1.5pt);
\end{tikzpicture}}}
\subfigure[A lower $\sfY$-cycle $\cycle(\sfY(e_1,e_2,e_3))$\label{figure:Y-cycle_2}]{\makebox[.4\textwidth]{
\begin{tikzpicture}
\draw [dashed] (0,0) circle [radius=2];

\begin{scope}
\draw [yellow, line cap=round, line width=5, opacity=0.5] 
(0:0) -- (-30:1);
\draw [red, thick](0:0)--(270:2);
\draw [blue, thick](0,0)--(330:1) (310:2)--(330:1)--(350:2);
\draw (270:0.3) to[out=0,in=60] (330:1.5);
\draw[->] (30:0.3) to[out=-60,in=-120] (330:1.5)  ;
\end{scope}
\begin{scope}[rotate=120]
\draw [yellow, line cap=round, line width=5, opacity=0.5] 
(0:0) -- (-30:1);
\draw [red, thick](0:0)--(270:2);
\draw [blue, thick](0,0)--(330:1) (310:2)--(330:1)--(350:2);
\draw (270:0.3) to[out=0,in=60] (330:1.5);
\draw[->] (30:0.3) to[out=-60,in=-120] (330:1.5)  ;
\end{scope}
\begin{scope}[rotate=240]
\draw [yellow, line cap=round, line width=5, opacity=0.5] 
(0:0) -- (-30:1);
\draw [red, thick](0:0)--(270:2);
\draw [blue, thick](0,0)--(330:1) (310:2)--(330:1)--(350:2);
\draw (270:0.3) to[out=0,in=60] (330:1.5);
\draw[->] (30:0.3) to[out=-60,in=-120] (330:1.5)  ;
\end{scope}
\draw[thick, blue, fill] 
(90:1) circle (1.5pt)
(210:1) circle (1.5pt)
(330:1) circle (1.5pt);

\node[rotate=60] at (330:1.75) {\small$i$};
\node at (90:1.75) {\small$i$};
\node[rotate=-60] at (210:1.75) {\small$i$};

\node[right] at (80:1) {\scriptsize$i+1$};
\node[left] at (100:1) {\scriptsize$i+1$};
\node[above right]  at (330:1) {\scriptsize$i+1$};
\node[below left]  at (330:1) {\scriptsize$i+1$};
\node[above left]  at (210:1) {\scriptsize$i+1$};
\node[below right]  at (210:1) {\scriptsize$i+1$};
\node[above]  at (30:0.3) {\scriptsize$i$};
\node[right]  at (30:0.3) {\scriptsize$i$};
\node[above]  at (150:0.3) {\scriptsize$i$};
\node[left]  at (150:0.3) {\scriptsize$i$};
\node[below left]  at (270:0.2) {\scriptsize$i$};
\node[below right]  at (270:0.2) {\scriptsize$i$};

\draw[thick,black, fill=white] (0:0) circle (1.5pt);
\end{tikzpicture}}}
\caption{(Long) $\sfI$- and $\sfY$-cycles}
\label{fig:I and Y cycle}
\end{figure}

\begin{definition}[(Long) $\sfI$-cycles]
For an edge $e$ of $\ngraph$ connecting two trivalent vertices, let $\sfI(e)$ be the subgraph of $\ngraph$ consisting of a single edge $e$. Then the cycle $[\cycle(\sfI(e))]$ depicted in  Figure~\ref{figure:I-cycle} is called an \emph{$\sfI$-cycle}.

Consider a linear chain of edges $(e_1,e_2,\dots, e_n)$ satisfying
\begin{itemize}
\item $e_i$ connects a trivalent vertex and a hexagonal point for $i=1,n$;
\item $e_i$ and $e_{i+1}$ meet at a hexagonal point in the opposite way, see Figure~\ref{figure:long I-cycle}, for $i=2,\dots, n-1$.
\end{itemize}
Then the cycle $[\cycle(\sfI(e_1,\dots, e_n))]$ is called a \emph{long $\sfI$-cycle}.
\end{definition}

\begin{definition}[$\sfY$-cycles]
Let $e_1,e_2,e_3$ be monochromatic edges joining a hexagonal point $h$ and trivalent vertices $v_i$ for $i=1,2,3$. Then the subgraph $\sfY(e_1,e_2,e_3)$ consisting of three edges $e_1, e_2$ and $e_3$ defines a cycle $[\cycle(\sfY(e_1,e_2,e_3))]$ called an \emph{upper} or \emph{lower} \emph{$\sfY$-cycle} according to the relative position of sheets that edges represent. See Figures~\ref{figure:Y-cycle_1} and \ref{figure:Y-cycle_2}.
\end{definition}

\begin{remark}
Black thin lines in Figure~\ref{fig:I and Y cycle} represent the lift of a circle in $\disk^2$ to a circle in $\disk^2\times \R$ and labels on each region of the black thin line indicate the index of the lift on that region, see Figure~\ref{fig:local_chart_3-graphs}.
\end{remark}

\begin{definition}\label{def:good cycle}
Let $\ngraph\subset \disk^2$ be an $N$-graph, and $\Legendrian(\ngraph)$ be an induced Legendrian surface in $J^1\disk^2$. A cycle $[\cycle]\in H_1(\Legendrian(\ngraph))$ is \emph{good} if $[\cycle]$ can be transformed to an $\sfI$-cycle in $H_1(\Legendrian(\ngraph'))$ for some $[\ngraph']=[\ngraph]$.

A tuple of linearly independent good cycles $\nbasis=\{[\gamma_i]\}_{i\in I}$ in $H_1(\Legendrian(\ngraph))$ is \emph{good} if for any pair of dictinct cycles $[\gamma_i]$ and $[\gamma_j]$, two cycles $[\cycle_i]$ and $[\cycle_j]$ can be simultaneously transformed to $\sfI$-cycles in $H_1(\Legendrian(\ngraph'))$ for some $[\ngraph']=[\ngraph]$.
\end{definition}

\begin{definition}\label{def:equiv on N-graph and N-basis}
Let $(\ngraph, \nbasis)$ and $(\ngraph', \nbasis')$ be pairs of an $N$-graph and good tuples of one-cycles. We say that $(\ngraph, \nbasis)$ and $(\ngraph', \nbasis')$ are \emph{equivalent} if $[\ngraph]=[\ngraph']$ and the induced isomorphism $H_1(\Legendrian(\ngraph))\cong H_1(\Legendrian(\ngraph'))$ identifies $\nbasis$ with $\nbasis'$. We denote the equivalent class of $(\ngraph, \nbasis)$ by $[\ngraph, \nbasis]$.
\end{definition}

Let us recall from \cite{CZ2020} the construction of the algebraic invariant $\cM(\ngraph)$ of the Legendrian weave $\Legendrian(\ngraph)$ by considering legible model of the moduli spaces of constructible sheaves associated to $\Legendrian(\ngraph)$ as follows:

\begin{definition}[\cite{CZ2020}]\label{def:flag moduli space}
Let $\ngraph\subset \disk^2$ be an $N$-graph. Let $\{F_i\}_{i\in I}$ be a set of closures of connected components of $\disk^2\setminus \ngraph$, call each closure a \emph{face}. The \emph{framed flag moduli space} $\widetilde \cM(\ngraph)$ is a collection of \emph{flags} $\cF_{\Legendrian(\ngraph)}=\{\cF^\bullet(F_i)\}_{i\in I}$ in $\C^N$ such that for any pair of faces $F_1$ and $F_2$  sharing an edge in $\ngraph_i$, the corresponding flags $\cF^\bullet(F_1)$ and $\cF^\bullet(F_2)$ satisfy
\begin{align}\label{equation:flag conditions}
\begin{cases}
\cF^j(F_1)=\cF^j(F_2), \qquad 0\leq j \leq N, \quad j\neq i;\\
\cF^i(F_1)\neq \cF^i(F_2).
\end{cases}
\end{align}

Let us consider the general linear group $\operatorname{GL}_N$ action on $\cM(\ngraph)$ by acting on all flags at once. The \emph{flag moduli space} of the $N$-graph $\ngraph$ is defined by the quotient space (a stack, in general)
\[
\cM(\ngraph):=\widetilde{\cM}(\ngraph)/\operatorname{GL}_N.
\] 
\end{definition}

From now on, we will regard flags $\cF_{\Legendrian(\ngraph)}$ as a formal parameter for the flag moduli space $\cM(\ngraph)$.

\begin{theorem}[{\cite[Theorem~5.3]{CZ2020}}]
The flag moduli space $\cM(\ngraph)$ is
a Legendrian isotopy invariant of $\Legendrian(\ngraph)$.
\end{theorem}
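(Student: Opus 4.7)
The plan is to reduce invariance under Legendrian isotopy to invariance under the combinatorial moves \Move{I}--\Move{VI'} of Figure~\ref{fig:move1-6}, and then verify each move locally. By Theorem~\ref{thm:N-graph moves and legendrian isotopy}, any two $N$-graphs whose Legendrian weaves are Legendrian isotopic are connected by a finite sequence of the moves \Move{I}--\Move{VI'} (together with cusp moves which we treat separately since they involve cusp edges). It therefore suffices to produce, for each move, an isomorphism between the framed flag moduli $\widetilde{\cM}(\ngraph)$ and $\widetilde{\cM}(\ngraph')$ that is equivariant with respect to the global $\operatorname{GL}_N$-action, so that it descends to $\cM(\ngraph) \cong \cM(\ngraph')$.

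First I would observe that each combinatorial move is local: $\ngraph$ and $\ngraph'$ differ only inside a small disk $D$, and outside $D$ the $N$-graphs and their face decompositions are identical. Hence only the faces meeting $D$ and the faces on $\boundary D$ are affected. Fixing the flags on the faces adjacent to $\boundary D$ (the ``boundary data''), I would prove a local statement: the subspace of $\widetilde{\cM}(\ngraph)$ of flag assignments extending a given boundary datum is naturally isomorphic to the corresponding subspace of $\widetilde{\cM}(\ngraph')$. Since the boundary data form the same moduli on both sides, gluing the local isomorphisms over all choices of boundary data yields a global isomorphism $\widetilde{\cM}(\ngraph) \cong \widetilde{\cM}(\ngraph')$ commuting with the $\operatorname{GL}_N$-action.

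For the local check I would treat the moves case by case. Moves \Move{I} and \Move{V} are essentially tautological: they reorganize the face structure without changing any flag-equality constraint, so the identity on the flags on $\boundary D$ extends uniquely. For the ``color-disjoint'' moves \Move{IV}, \Move{VI}, \Move{VI'}, the only constraints that appear are transversality between flags of two non-consecutive colors (indices $i,j$ with $|i-j|>1$), and these commute with the moves exactly because the corresponding flag constraints at levels $i$ and $j$ are independent. The substantive content is in moves \Move{II} and \Move{III}: here I would parameterize the interior flags in terms of the boundary flags using the condition~\eqref{equation:flag conditions}, writing each interior flag explicitly as a choice of a line or hyperplane compatible with its neighbors. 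A direct matching of these parameterizations on the two sides of the move (using that consecutive colors interact through a hexagonal point precisely by a braid/Yang--Baxter type relation on flags) yields the required bijection.

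The main obstacle I anticipate is move \Move{III}, where a trivalent vertex is pushed across a hexagonal point: here the interior of $D$ gains or loses two internal faces, and the flag-equality constraints on the edges of the two graphs $\ngraph_i$ and $\ngraph_{i+1}$ interact nontrivially. I would handle this by choosing, on each side, a convenient affine chart in which the relevant flags are pinned down by generically-transverse subspaces, and then writing the isomorphism as a concrete birational map between these charts; the fact that this is an \emph{isomorphism} (rather than just a birational equivalence) follows from the stack-theoretic definition of $\cM(\ngraph)$, where the genericity conditions \eqref{equation:flag conditions} carve out an open substack on which the map is regular. Finally, to promote this to an invariant of the Legendrian surface rather than of its planar presentation, I would verify that cusp-edge moves (from~\cite[Figure~3]{CZ2020}), which relate $N$-graphs realizing the same $\Legendrian(\ngraph)$ up to isotopy, similarly preserve the flag moduli; these are again handled by an explicit local parameterization.
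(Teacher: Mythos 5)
The paper you are reading does not prove this statement at all: it is imported verbatim as \cite[Theorem~5.3]{CZ2020}, so there is no internal proof to compare against, and your proposal must be judged against the argument in the cited source. There your strategy diverges in an essential way, and the divergence hides a genuine gap. Your opening reduction asserts that, by Theorem~\ref{thm:N-graph moves and legendrian isotopy}, any two $N$-graphs with Legendrian isotopic weaves are connected by a finite sequence of the moves \Move{I}--\Move{VI'}. That theorem states only the converse implication: each combinatorial move \emph{induces} a Legendrian isotopy of $\Legendrian(\ngraph)$. No completeness statement for the move set is available (neither in this paper nor in \cite{CZ2020}), and indeed a generic Legendrian isotopy in $J^1S$ need not preserve the property of being presented by an $N$-graph at all, so a normalization result would be required before any move-by-move verification could establish isotopy invariance. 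As written, your argument proves only that $\cM(\ngraph)$ is an invariant of the equivalence class $[\ngraph]$ under the combinatorial moves, which is a strictly weaker statement than the theorem.

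The actual proof in \cite{CZ2020} sidesteps this entirely: the flag moduli space $\cM(\ngraph)$ is identified (via the ``legible model'') with the moduli of microlocal rank-one objects in the category of constructible sheaves on $S\times\R$ with singular support controlled by $\Legendrian(\ngraph)$, and invariance under Legendrian isotopy is then inherited from the Guillermou--Kashiwara--Schapira quantization of contact isotopies, which produces equivalences of these sheaf categories. Your local analysis of the moves (the treatment of \Move{I}, \Move{V} as tautological, the independence of non-consecutive colors for \Move{IV}, \Move{VI}, \Move{VI'}, and the explicit parameterization for \Move{II}, \Move{III}) is sound and is in fact carried out in \cite{CZ2020} as a consistency check, but it cannot replace the sheaf-theoretic input that makes the statement a theorem about Legendrian isotopy rather than about move equivalence.
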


Let $\legendrian=\legendrian_\beta$ be a Legendrian in $J^1\sphere^1$, which gives us an $(N-1)$-tuple $X=(X_1,\dots, X_{N-1})$ of points  in $\sphere^1$ which given by the alphabet $\sigma_1,\dots,\sigma_{N-1}$ of the braid word $\beta$. Let $\{f_j\}_{j\in J}$ be the set of closures of connected components of $\sphere^1\setminus X$. The flags $\flags=\{\cF^\bullet(f_j)\}_{j\in J}$ in $\C^N$ satisfying exactly the same conditions in \eqref{equation:flag conditions} will be called simply by \emph{flags on $\legendrian$}.
As before, we will regard $\flags$ as a formal parameter for the flag moduli space $\cM(\boundary \ngraph)$ of $\legendrian(\boundary\ngraph)$.

\begin{definition}\label{def:good N-graph}
Let $\ngraph\subset \disk^2$ be an $N$-graph, and let $\flags$ be flags adapted to $\legendrian\subset J^1\boundary\disk^2$ given by $\boundary\ngraph$. An $N$-graph $\ngraph$ is \emph{good}, if the flags $\flags$ uniquely determine flags $\cF_{\Legendrian(\ngraph)}$ in Definition~\ref{def:flag moduli space}.
\end{definition}

Note that $\ngraph(a,b,c)$ in the introduction is good in an obvious way. If an $N$-graph $\ngraph\subset \disk^2$ is good and $[\ngraph]=[\ngraph']$, then $\ngraph'$ is also good.

\subsection{Seeds from \texorpdfstring{$N$}{N}-graphs and their mutations}

\begin{definition}
For each a pair $(\ngraph, \nbasis)$ of an $N$-graph and a good tuple of cycles, we define a quiver $\quiver=\quiver(\ngraph,\nbasis)$ as follows:
\begin{enumerate}
\item the set of vertices is $[n]$ where $\nbasis=\{[\cycle_i]\mid i\in[n]\}\subset H_1(\Legendrian(\ngraph))$, and
\item the $(i,j)$-entry $b_{i,j}$ for $\qbasis(\quiver)=(b_{i,j})$ is the algebraic intersection number between $[\cycle_i]$ and $[\cycle_j]$, see Figure~\ref{fig:I-cycle with orientation and intersections}.
\end{enumerate}
\end{definition}

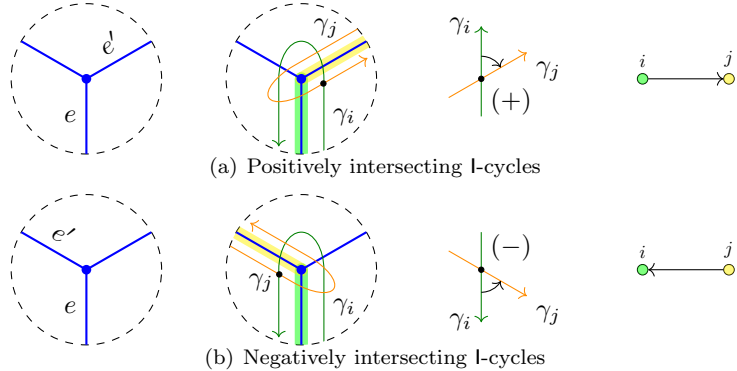
\begin{figure}[ht]
\subfigure[Positively intersecting $\sfI$-cycles]{
$
\def\arraycolsep{1pc}
\begin{array}{cccc}
\begin{tikzpicture}[baseline=-.5ex]
\draw [dashed] (0,0) circle [radius=1];
\clip (0,0) circle (1);
\draw [blue, thick] (0,0)--({cos(30)},{sin(30)}) node[above, midway, rotate=30] {\color{black}$e'$};
\draw [blue, thick] (0,0)--({cos(-90)},{sin(-90)}) node[left, midway] {\color{black}$e$};
\draw [blue, thick] (0,0)--({cos(150)},{sin(150)});
\draw[thick,blue,fill=blue] (0,0) circle (0.05);
\end{tikzpicture}
&
\begin{tikzpicture}[baseline=-.5ex]
\draw [dashed] (0,0) circle [radius=1];
\clip (0,0) circle (1);
\draw [green, line cap=round, line width=5, opacity=0.5] (0,0) to (0,-1);
\draw [yellow, line cap=round, line width=5, opacity=0.5] (0,0) to ({cos(30)},{sin(30)});
\draw [blue, thick] (0,0)--({cos(30)},{sin(30)});
\draw [blue, thick] (0,0)--({cos(-90)},{sin(-90)});
\draw [blue, thick] (0,0)--({cos(150)},{sin(150)});
\draw[thick,blue,fill=blue] (0,0) circle (0.05);
\draw [green!50!black,->] (0.3,-1) -- node[midway,right] {\color{black}$\cycle_i$} (0.3, 0) arc (0:180:0.3 and 0.5) -- (-0.3,-0.9);
\begin{scope}[rotate=120]
\draw [orange!90!yellow,->] (0.2,-1) -- node[midway,above] {\color{black}$\cycle_j$} (0.2, 0) arc (0:180:0.2 and 0.5) -- (-0.2,-0.9);
\end{scope}
\draw [fill] (-12:0.3) circle (1pt);
\end{tikzpicture}
&
\begin{tikzpicture}[baseline=-.5ex]
\begin{scope}
\draw [green!50!black,->] (90:-0.5) -- (90:0.7) node[left] {\color{black}$\cycle_i$};
\draw [orange!90!yellow,->] (30:-0.5) -- (30:0.7) node[below right] {\color{black}$\cycle_j$};
\draw [fill] (0,0) circle (1pt);
\draw [->] (90:0.3) arc (90:30:0.3);
\draw (0,0) node[below right] {$(+)$};
\end{scope}
\end{tikzpicture}
&
\begin{tikzpicture}[baseline=-.5ex]
\tikzstyle{state}=[draw, circle, inner sep = 0.07cm]
\tikzset{every node/.style={scale=0.7}}    
\node[state, label=above:{$i$}] (1) at (3,0) {};
\node[state, label=above:{$j$}] (2) [right = of 1] {};
\node[ynode] at (2) {};
\node[gnode] at (1) {};
\draw[->] (1)--(2);
\end{tikzpicture}
\end{array}
$
}
\subfigure[Negatively intersecting $\sfI$-cycles]{
$
\def\arraycolsep{1pc}
\begin{array}{cccc}
\begin{tikzpicture}[baseline=-.5ex]
\draw [dashed] (0,0) circle [radius=1];
\clip (0,0) circle (1);
\draw [blue, thick] (0,0)--({cos(30)},{sin(30)});
\draw [blue, thick] (0,0)--({cos(-90)},{sin(-90)}) node[left, midway] {\color{black}$e$};
\draw [blue, thick] (0,0)--({cos(150)},{sin(150)}) node[above, midway, rotate=-30] {\color{black}$e'$};
\draw[thick,blue,fill=blue] (0,0) circle (0.05);
\end{tikzpicture}
&
\begin{tikzpicture}[baseline=-.5ex]
\draw [dashed] (0,0) circle [radius=1];
\clip (0,0) circle (1);
\draw [green, line cap=round, line width=5, opacity=0.5] (0,0) to (0,-1);
\draw [yellow, line cap=round, line width=5, opacity=0.5] (0,0) to ({cos(150)},{sin(150)});
\draw [blue, thick] (0,0)--({cos(30)},{sin(30)});
\draw [blue, thick] (0,0)--({cos(-90)},{sin(-90)});
\draw [blue, thick] (0,0)--({cos(150)},{sin(150)});
\draw[thick,blue,fill=blue] (0,0) circle (0.05);
\draw [green!50!black,->] (0.3,-1) -- node[midway,right] {\color{black}$\cycle_i$} (0.3, 0) arc (0:180:0.3 and 0.5) -- (-0.3,-0.9);
\begin{scope}[rotate=-120]
\draw [orange!90!yellow,->] (0.2,-1) -- node[midway,below] {\color{black}$\cycle_j$} (0.2, 0) arc (0:180:0.2 and 0.5) -- (-0.2,-0.9);
\end{scope}
\draw [fill] (-168:0.3) circle (1pt);
\end{tikzpicture}
&
\begin{tikzpicture}[baseline=-.5ex]
\begin{scope}
\draw [green!50!black,->] (-90:-0.5) -- (-90:0.7) node[left] {\color{black}$\cycle_i$};
\draw [orange!90!yellow,->] (-30:-0.5) -- (-30:0.7) node[below right] {\color{black}$\cycle_j$};
\draw [fill] (0,0) circle (1pt);
\draw [->] (-90:0.3) arc (-90:-30:0.3);
\draw (0,0) node[above right] {$(-)$};
\end{scope}
\end{tikzpicture}
&
\begin{tikzpicture}[baseline=-.5ex]
\tikzstyle{state}=[draw, circle, inner sep = 0.07cm]
\tikzset{every node/.style={scale=0.7}}    
\node[state, label=above:{$i$}] (1) at (3,0) {};
\node[state, label=above:{$j$}] (2) [right = of 1] {};
\node[ynode] at (2) {};
\node[gnode] at (1) {};
\draw[->] (2)--(1);
\end{tikzpicture}
\end{array}
$
}
\caption{$\sfI$-cycles with intersections.}
\label{fig:I-cycle with orientation and intersections}
\end{figure}

In order to assign a cluster variable to each one-cycle, let us consider the microlocal monodromy functor
\[
\mmon:\cM(\ngraph) \to\Loc(\Legendrian(\ngraph)),
\]
which sends flags $\{\cF^\bullet(F_i)\}_{i \in I}\in\cM(\ngraph)$ to rank-one local systems $\mmon(\cF_{\Legendrian(\ngraph)})$ on the Legendrian surface $\Legendrian(\ngraph)$. Then (cluster) variables $\bfx$ for the triple $(\ngraph, \nbasis, \cF_{\Legendrian(\ngraph)})$ are defined by
\[
\bfx=\left(
\mmon(\cF_{\Legendrian(\ngraph)})([\cycle_1]),
\dots,
\mmon(\cF_{\Legendrian(\ngraph)})([\cycle_n])\right).
\]
Let us denote the above assignment by 
\[
\Psi(\ngraph, \nbasis, \cF_{\Legendrian(\ngraph)})=(\bfx(\Legendrian(\ngraph), \nbasis, \cF_{\Legendrian(\ngraph)}),\quiver(\Legendrian(\ngraph),\nbasis)).
\]

Especially when an $N$-graph $\ngraph$ is good, see Definition~\ref{def:good N-graph}, $\cF_{\Legendrian(\ngraph)}$ is determined by the flags $\flags\in \Sh_{\legendrian}^1(\boundary\disk^2\times\R)$ at the boundary, where the Legendrian link $\legendrian$ is given by $\boundary\ngraph$. Then, by the functorial property of the microlocal monodromy functor $\mmon$, we have
\begin{theorem}[{\cite[\S7.2.1]{CZ2020}}]\label{thm:N-graph to seed}
Let $\ngraph\subset \disk^2$ be a good $N$-graph with a good tuple $\nbasis$ of cycles in $H_1(\Legendrian(\ngraph))$, and with flags $\flags$ on $\legendrian\subset J^1\sphere^1$ at the boundary. Then the assignment $\Psi$ to a seed in a cluster structure
\[
\Psi(\ngraph,\nbasis,\flags)= (\bfx(\Legendrian(\ngraph),\nbasis,\flags),\quiver(\Legendrian(\ngraph),\nbasis))
\]
is well-defined up to Legendrian isotopy.
\end{theorem}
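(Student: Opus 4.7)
The plan is to verify that the assignment $\Psi$ is unchanged, up to seed isomorphism, under each generator of the equivalence relation on triples $(\ngraph,\nbasis,\flags)$, namely the combinatorial moves \Move{I}--$\Move{VI'}$ of Figure~\ref{fig:move1-6} together with the homological identifications described in Definition~\ref{def:equiv on N-graph and N-basis}. Since composition of such moves corresponds to composition of Legendrian isotopies, and since any Legendrian-isotopic pair of $N$-graphs is connected by a finite sequence of moves, it suffices to verify invariance under a single elementary move applied in a disk where the $N$-graph is in the standard local form.

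For the quiver $\quiver(\Legendrian(\ngraph),\nbasis)$, I would argue purely topologically: by Theorem~\ref{thm:N-graph moves and legendrian isotopy} each move induces a Legendrian isotopy, hence an ambient smooth isotopy of the underlying surfaces $\Legendrian(\ngraph) \approx \Legendrian(\ngraph')$, which gives an isomorphism $H_1(\Legendrian(\ngraph)) \cong H_1(\Legendrian(\ngraph'))$ sending $\nbasis$ to $\nbasis'$ by hypothesis. The algebraic intersection form on first homology of an oriented surface is a topological invariant, so the entries $b_{i,j}=[\cycle_i]\cdot[\cycle_j]$ agree, giving $\quiver(\Legendrian(\ngraph),\nbasis)=\quiver(\Legendrian(\ngraph'),\nbasis')$.

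For the cluster variables $\bfx$, the key inputs are the invariance of the flag moduli space $\cM(\ngraph)$ under Legendrian isotopy~(\cite[Theorem~5.3]{CZ2020}) and the functoriality of the microlocal monodromy $\mmon\colon \cM(\ngraph)\to\Loc(\Legendrian(\ngraph))$. Each move induces an isomorphism $\cM(\ngraph)\xrightarrow{\sim}\cM(\ngraph')$, and goodness of $\ngraph$ (unaffected by the moves, since they leave the boundary data $\boundary\ngraph$ fixed) ensures that $\cF_{\Legendrian(\ngraph)}$ is recovered uniquely from $\flags$; the same therefore holds for $\cF_{\Legendrian(\ngraph')}$. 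Functoriality of $\mmon$ along the induced isotopy then yields
\[
\mmon(\cF_{\Legendrian(\ngraph)})([\cycle_i]) \;=\; \mmon(\cF_{\Legendrian(\ngraph')})([\cycle_i'])
\]
for each identified pair of cycles, which is the desired equality of cluster variables. The identification of a general good cycle with an $\sfI$-cycle guaranteed by Definition~\ref{def:good cycle} fits into the same framework, since by definition it is realized by a further sequence of moves.

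The main obstacle will be the verification of the functoriality of $\mmon$ on cycle classes whose representatives actually cross the local disk in which a move is performed, in particular long $\sfI$-cycles and $\sfY$-cycles whose defining chains traverse edges or hexagonal points affected by moves \Move{II}, \Move{III}, \Move{IV}, and $\Move{VI'}$. For such cycles one must explicitly trace how the flag data on the adjacent faces transform under the local model of the move and check that, after applying the microlocal monodromy, the resulting element of $\C^\times$ coincides with the one computed from the new graph. This is not a single topological assertion but a finite case check, and it is precisely the content of the legible-model computation in~\cite[\S7.2.1]{CZ2020}, from which the theorem is extracted; I would organize the case analysis by the move type and by whether a given cycle is local to, adjacent to, or traverses the affected region.
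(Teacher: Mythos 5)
Your outline matches the paper's treatment: the paper does not prove this statement at all but cites it directly from \cite[\S 7.2.1]{CZ2020}, offering only the same justification you give --- goodness of $\ngraph$ lets the boundary flags $\flags$ determine $\cF_{\Legendrian(\ngraph)}$ uniquely, and functoriality of the microlocal monodromy $\mmon$ under the Legendrian isotopies induced by the moves then gives well-definedness of both $\quiver(\Legendrian(\ngraph),\nbasis)$ and $\bfx$. Your identification of the remaining work (the move-by-move verification of how flags and monodromies transform for cycles traversing the modified disk) as precisely the content of the cited section of Casals--Zaslow is accurate, so the proposal is consistent with the paper's approach.
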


In turn, this gives a tool to distinguish exact Lagrangian fillings as follows:

\begin{corollary}\label{corollary:distinct seeds imples distinct fillings}
As in the above setup, if two triples $(\ngraph, \nbasis, \flags)$, $(\ngraph', \nbasis', \flags)$ with the same boundary condition define different seeds, then two induced Lagrangian fillings $\pi\circ\iota(\Legendrian(\ngraph))$, $\pi\circ\iota(\Legendrian(\ngraph'))$ bounding $\iota(\legendrian)$ are not exact Lagrangian isotopic to each other.
\end{corollary}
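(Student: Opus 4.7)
The plan is a direct contrapositive argument from Theorem~\ref{thm:N-graph to seed}. Suppose, for contradiction, that the two induced Lagrangian fillings $L = \pi\circ\iota(\Legendrian(\ngraph))$ and $L' = \pi\circ\iota(\Legendrian(\ngraph'))$ are exact Lagrangian isotopic relative to the boundary Legendrian $\iota(\legendrian)$. The goal is to deduce that the two triples $(\ngraph, \nbasis, \flags)$ and $(\ngraph', \nbasis', \flags)$ give rise to the same seed, contradicting the hypothesis.

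First, I would invoke the standard correspondence between exact Lagrangian fillings and Legendrian lifts: an exact Lagrangian surface in $\R^4$ bounding a Legendrian in $\R^3$ admits a canonical Legendrian lift in the one-jet space, unique up to Reeb translation, provided the primitive of the Liouville form vanishes on the boundary. A smooth exact Lagrangian isotopy rel boundary therefore lifts to a Legendrian isotopy between the weaves $\Legendrian(\ngraph)$ and $\Legendrian(\ngraph')$ in $J^1\disk^2$, restricting to the identity on the boundary Legendrian $\legendrian \subset J^1\sphere^1$.

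Next, I would transport the good cycle tuple $\nbasis$ along this Legendrian isotopy to a good cycle tuple $\tilde\nbasis \subset H_1(\Legendrian(\ngraph'))$ and apply Theorem~\ref{thm:N-graph to seed} to conclude $\Psi(\ngraph, \nbasis, \flags) = \Psi(\ngraph', \tilde\nbasis, \flags)$ as seeds. Finally, I would identify $\tilde\nbasis$ with $\nbasis'$ under the equivalence of Definition~\ref{def:equiv on N-graph and N-basis}, so that $\Psi(\ngraph, \nbasis, \flags) = \Psi(\ngraph', \nbasis', \flags)$, which contradicts the assumption that the two triples define different seeds.

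The main obstacle is the last identification step. The Legendrian lift of the Lagrangian isotopy produces \emph{some} transported cycle tuple $\tilde\nbasis$, but it need not coincide on the nose with the prescribed $\nbasis'$. To handle this, one can argue via the naturality of the microlocal monodromy functor $\mmon$: its values on cycles are intrinsic to the Legendrian surface up to Legendrian isotopy, so the cluster variables assigned to $\tilde\nbasis$ and to $\nbasis'$ via the shared equivalence class $[\ngraph] = [\ngraph']$ agree. Reducing both tuples to a common $\sfI$-cycle presentation via the goodness condition of Definition~\ref{def:good cycle} then yields equivalence in the sense of Definition~\ref{def:equiv on N-graph and N-basis}, completing the argument.
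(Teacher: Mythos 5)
Your contrapositive argument---lift an exact Lagrangian isotopy of the fillings to a Legendrian isotopy of the weaves, transport the cycles, and invoke the well-definedness of $\Psi$ from Theorem~\ref{thm:N-graph to seed}---is precisely how the paper intends this corollary, which it states without a written proof as an immediate consequence of that theorem. The one step you elaborate beyond the paper, matching the transported tuple $\tilde{\nbasis}$ with $\nbasis'$ in the sense of Definition~\ref{def:equiv on N-graph and N-basis}, is left implicit there as well, so your proposal is, if anything, more careful than the source.
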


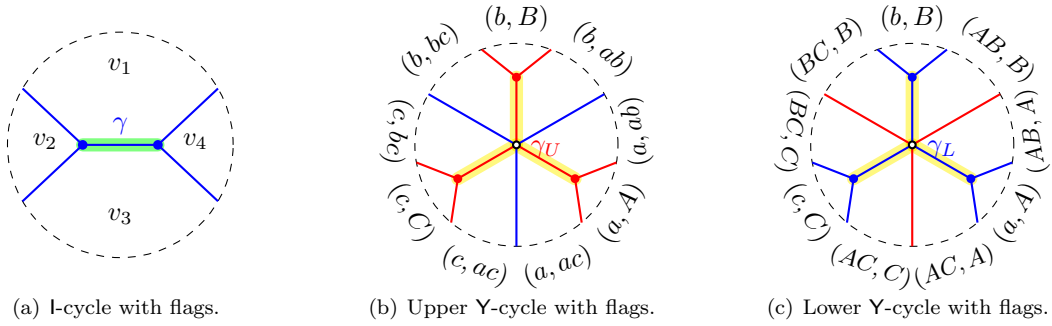
\begin{figure}[ht]
\begin{tikzcd}
\subfigure[$\sfI$-cycle with flags.\label{figure:I-cycle with flags}]{
\begin{tikzpicture}[baseline=.5ex]
\useasboundingbox (-2,-2) rectangle (2,2);
\begin{scope}
\draw [dashed] (0,0) circle [radius=1.5];

\draw [green, line cap=round, line width=5, opacity=0.5] (-1/2,0) to (1/2,0);

\draw [blue, thick] ({-3*sqrt(3)/4},3/4)--(-1/2,0);
\draw [blue, thick] ({-3*sqrt(3)/4},-3/4)--(-1/2,0);
\draw [blue, thick] ({3*sqrt(3)/4},3/4)--(1/2,0);
\draw [blue, thick] ({3*sqrt(3)/4},-3/4)--(1/2,0);
\draw [blue, thick] (-1/2,0)--(1/2,0) node[above, midway] {$\cycle$};

\draw[thick,blue,fill=blue] (-1/2,0) circle (0.05);
\draw[thick,blue,fill=blue] (1/2,0) circle (0.05);

\node at (0,1) {$v_1$};
\node at (-1,0) {$v_2$};
\node at (0,-1) {$v_3$};
\node at (1,0) {$v_4$};

\end{scope}
\end{tikzpicture}}
&
\subfigure[Upper $\sfY$-cycle with flags.\label{figure:Y-cycle with flags I}]{
\begin{tikzpicture}[baseline=.5ex]
\useasboundingbox (-2,-2) rectangle (2,2);
\begin{scope}[scale=0.9]
\draw [dashed] (0,0) circle [radius=1.5];

\draw [yellow, line cap=round, line width=5, opacity=0.5] (0,0) to ({cos(0-30)},{sin(0-30)});
\draw [yellow, line cap=round, line width=5, opacity=0.5] (0,0) to ({cos(120-30)},{sin(120-30)});
\draw [yellow, line cap=round, line width=5, opacity=0.5] (0,0) to ({cos(240-30)},{sin(240-30)});

\draw [blue, thick] ({1.5*cos(180-30)},{1.5*sin(180-30)})--(0,0)--({1.5*cos(60-30)},{1.5*sin(60-30)});
\draw [blue, thick] (0,0)--({1.5*cos(60+30)},{-1.5*sin(60+30)});

\draw [red, thick] ({1.5*cos(20-30)},{1.5*sin(20-30)})--({cos(0-30)},{sin(0-30)})--(0,0);
\draw [red, thick] ({1.5*cos(20+30)},{-1.5*sin(20+30)})--({cos(0-30)},{sin(0-30)});

\draw [red, thick] ({1.5*cos(100-30)},{1.5*sin(100-30)})--({cos(120-30)},{sin(120-30)})--(0,0);
\draw [red, thick] ({1.5*cos(140-30)},{1.5*sin(140-30)})--({cos(120-30)},{sin(120-30)});

\draw [red, thick] ({1.5*cos(220-30)},{1.5*sin(220-30)})--({cos(240-30)},{sin(240-30)})--(0,0);
\draw [red, thick] ({1.5*cos(260-30)},{1.5*sin(260-30)})--({cos(240-30)},{sin(240-30)});

\draw[thick,red,fill=red] ({cos(0-30)},{sin(0-30)}) circle (0.05);
\draw[thick,red,fill=red] ({cos(120-30)},{sin(120-30)}) circle (0.05);
\draw[thick,red,fill=red] ({cos(240-30)},{sin(240-30)}) circle (0.05);
\draw[thick,black,fill=white] (0,0) circle (0.05);
\end{scope}
\begin{scope}[yshift=-0.1cm]
\node at (0,1.7) {$(b,B)$};
\node[rotate=60] at ({1.7*cos(-30)},{1.7*sin(-30)}) {$(a,A)$};
\node[rotate=-60] at ({1.7*cos(-150)},{1.7*sin(-150)}) {$(c,C)$};

\node[rotate=100] at ({1.7*cos(10)},{1.7*sin(10)}) {$(a,ab)$};
\node[rotate=-100] at ({1.7*cos(170)},{1.7*sin(170)}) {$(c,bc)$};

\node[rotate=20] at ({1.7*cos(-70)},{1.7*sin(-70)}) {$(a,ac)$};
\node[rotate=-20] at ({1.7*cos(-110)},{1.7*sin(-110)}) {$(c,ac)$};

\node[rotate=-40] at ({1.7*cos(50)},{1.7*sin(50)}) {$(b,ab)$};
\node[rotate=40] at ({1.7*cos(130)},{1.7*sin(130)}) {$(b,bc)$};

\draw[red] (0,0) node[xshift=0.4cm] {$\cycle_U$};
\end{scope}
\end{tikzpicture}}
&
\subfigure[Lower $\sfY$-cycle with flags.\label{figure:Y-cycle with flags II}]{
\begin{tikzpicture}[baseline=.5ex]
\useasboundingbox (-2,-2) rectangle (2,2);
\begin{scope}[scale=0.9]
\draw [dashed] (0,0) circle [radius=1.5];

\draw [yellow, line cap=round, line width=5, opacity=0.5] (0,0) to ({cos(0-30)},{sin(0-30)});
\draw [yellow, line cap=round, line width=5, opacity=0.5] (0,0) to ({cos(120-30)},{sin(120-30)});
\draw [yellow, line cap=round, line width=5, opacity=0.5] (0,0) to ({cos(240-30)},{sin(240-30)});

\draw [red, thick] ({1.5*cos(180-30)},{1.5*sin(180-30)})--(0,0)--({1.5*cos(60-30)},{1.5*sin(60-30)});
\draw [red, thick] (0,0)--({1.5*cos(60+30)},{-1.5*sin(60+30)});

\draw [blue, thick] ({1.5*cos(20-30)},{1.5*sin(20-30)})--({cos(0-30)},{sin(0-30)})--(0,0);
\draw [blue, thick] ({1.5*cos(20+30)},{-1.5*sin(20+30)})--({cos(0-30)},{sin(0-30)});

\draw [blue, thick] ({1.5*cos(100-30)},{1.5*sin(100-30)})--({cos(120-30)},{sin(120-30)})--(0,0);
\draw [blue, thick] ({1.5*cos(140-30)},{1.5*sin(140-30)})--({cos(120-30)},{sin(120-30)});

\draw [blue, thick] ({1.5*cos(220-30)},{1.5*sin(220-30)})--({cos(240-30)},{sin(240-30)})--(0,0);
\draw [blue, thick] ({1.5*cos(260-30)},{1.5*sin(260-30)})--({cos(240-30)},{sin(240-30)});

\draw[thick,blue,fill=blue] ({cos(0-30)},{sin(0-30)}) circle (0.05);
\draw[thick,blue,fill=blue] ({cos(120-30)},{sin(120-30)}) circle (0.05);
\draw[thick,blue,fill=blue] ({cos(240-30)},{sin(240-30)}) circle (0.05);
\draw[thick,black,fill=white] (0,0) circle (0.05);
\end{scope}
\begin{scope}[yshift=-0.1cm]
\node at (0,1.7) {$(b,B)$};
\node[rotate=60] at ({1.7*cos(-30)},{1.7*sin(-30)}) {$(a,A)$};
\node[rotate=-60] at ({1.7*cos(-150)},{1.7*sin(-150)}) {$(c,C)$};

\node[rotate=100] at ({1.7*cos(10)},{1.7*sin(10)}) {\small$(AB,A)$};
\node[rotate=-100] at ({1.7*cos(170)},{1.7*sin(170)}) {\small$(BC,C)$};

\node[rotate=20] at ({1.7*cos(-70)},{1.7*sin(-70)}) {\small$(AC,A)$};
\node[rotate=-20] at ({1.7*cos(-110)},{1.7*sin(-110)}) {\small$(AC,C)$};

\node[rotate=-40] at ({1.7*cos(50)},{1.7*sin(50)}) {\small$(AB,B)$};
\node[rotate=40] at ({1.7*cos(130)},{1.7*sin(130)}) {\small$(BC,B)$};

\draw[blue] (0,0) node[xshift=0.4cm] {$\cycle_L$};
\end{scope}
\end{tikzpicture}}
\end{tikzcd}
\caption{$\sfI$- and $\sfY$-cycles with flags.}
\label{fig:I and Y cycle with flags}
\end{figure}

Let us consider an $\sfI$-cycle $[\cycle]$ represented by a loop $\cycle(e)$ for some monochromatic edge $e$ as in Figure~\ref{figure:I-cycle with flags}. Let us denote four flags corresponding to each region by $F_1,F_2,F_3,F_4$, respectively. Suppose that $e \subset \ngraph_i$, then by the construction of flag moduli space $\cM(\ngraph)$, a two-dimensional vector space $V:=\cF^{i+1}(F_*)/\cF^{i-1}(F_*)$ is independent of $*=1,2,3,4$. Moreover, $\cF^{i}(F_*)/\cF^{i-1}(F_*)$ defines a one-dimensional subspace $v_*\subset V$ for $*=1,2,3,4$, satisfying
\[
v_1\neq v_2 \neq v_3 \neq v_4 \neq v_1.
\]
Then $\mmon(\cF_{\Legendrian(\ngraph)})$ along the one-cycle $[\gamma(e)]$ is defined by the cross ratio 
\[
\mmon(\cF_{\Legendrian(\ngraph)})([\cycle])\coloneqq\langle v_1,v_2,v_3,v_4 \rangle=\frac{v_1 \wedge v_2}{v_2 \wedge v_3}\cdot\frac{v_3 \wedge v_4}{v_4\wedge v_1}.
\]

Suppose that local flags $\{F_j\}_{j\in J}$ near the upper $\sfY$-cycle $[\cycle_U]$ look like in Figure~\ref{figure:Y-cycle with flags I}. Let $\ngraph_i$ and $\ngraph_{i+1}$ be the $N$-subgraphs in red and blue, respectively. Then the $3$-dimensional vector space $V=\cF^{i+2}(F_*)/\cF^{i-1}(F_*)$ is independent of $*\in J$. Now regard $a,b,c$ and $A,B,C$ are subspaces of $V$ of dimension one and two, respectively. Then the microlocal monodromy along the $\sfY$-cycle $[\cycle_U]$  becomes
\[
\mmon(\cF_{\Legendrian(\ngraph)})([\cycle_U])\coloneqq\frac{B(a)C(b)A(c)}{B(c)C(a)A(b)}.
\]
Here $B(a)$ can be seen as a paring between the vector $a$ and the covector $B$.

Now consider the lower $\sfY$-cycle $[\cycle_L]$ whose local flags given as in Figure~\ref{figure:Y-cycle with flags II}. We already have seen that the orientation convention of the loop in Figure~\ref{fig:I and Y cycle} for the upper and lower $\sfY$-cycle is different. Then microlocal monodromy along $[\cycle_L]$ follows the opposite orientation and becomes 
\[
\mmon(\cF_{\Legendrian(\ngraph)})([\cycle_L])\coloneqq\frac{C(a)B(c)A(b)}{C(b)B(a)A(c)}.
\]
Here, $B(a)$ is a pairing between the vector $B$ and covector $a$ which is the same as the above.

Let us define an operation called (\emph{Legendrian}) \emph{mutation} on $N$-graphs $\ngraph$ which corresponds to a geometric operation on the induced Legendrian surface $\Legendrian(\ngraph)$ that producing a smoothly isotopic but not necessarily Legendrian isotopic to $\Legendrian(\ngraph)$, see \cite[Definition~4.19]{CZ2020}.

\begin{definition}[{\cite[Definition~4.19]{CZ2020}}]\label{def:legendrian mutation}
Let $\ngraph$ be a (local) $N$-graph and $e\in \ngraph_i\subset \ngraph$ be an edge between two trivalent vertices corresponding to an $\sfI$-cycle $[\cycle]=[\cycle(e)]$. The mutation $\mutation_\cycle(\ngraph)$ of $\ngraph$ along $\cycle$ is obtained by applying the local change depicted in the left of Figure~\ref{fig:Legendrian mutation on N-graphs}.
\end{definition}

\begin{figure}[ht]
\begin{tikzcd}
\subfigure[A mutation along $\sfI$-cycle.\label{figure:I-mutation}]{
\begin{tikzpicture}[baseline=-.5ex,scale=1.2]
\begin{scope}
\draw [dashed] (0,0) circle [radius=1];
\draw [->,yshift=.5ex] (1.25,0) -- (1.75,0) node[midway, above] {$\mutation_\cycle$};
\draw [<-,yshift=-.5ex] (1.25,0) -- (1.75,0) node[midway, below] {$\mutation_{\cycle'}$};

\draw [green, line cap=round, line width=5, opacity=0.5] (-1/2,0) to (1/2,0);

\draw [blue, thick] ({-1*sqrt(3)/2},1*1/2)--(-1/2,0);
\draw [blue, thick] ({-1*sqrt(3)/2},-1*1/2)--(-1/2,0);
\draw [blue, thick] ({1*sqrt(3)/2},1*1/2)--(1/2,0);
\draw [blue, thick] ({1*sqrt(3)/2},-1*1/2)--(1/2,0);
\draw [blue, thick] (-1/2,0)-- node[midway,above] {$\cycle$}(1/2,0);

\draw[thick,blue,fill=blue] (-1/2,0) circle (0.05);
\draw[thick,blue,fill=blue] (1/2,0) circle (0.05);

\end{scope}

\begin{scope}[xshift=3cm]

\draw [dashed] (0,0) circle [radius=1];

\draw [green, line cap=round, line width=5, opacity=0.5] (0,-1/2) to (0,1/2);

\draw [blue, thick] (-1*1/2,{1*sqrt(3)/2}) to (0,1/2) to (0,-1/2)  node[midway,right] {$\cycle'$} to (-1*1/2,-{1*sqrt(3)/2});
\draw [blue, thick] (1*1/2,{1*sqrt(3)/2})--(0,1/2);
\draw [blue, thick] (1*1/2,-{1*sqrt(3)/2})--(0,-1/2);

\draw[thick,blue,fill=blue] (0,1/2) circle (0.05);
\draw[thick,blue,fill=blue] (0,-1/2) circle (0.05);
\end{scope}
\end{tikzpicture}
}
&
\subfigure[A mutations along $\sfY$-cycle.\label{figure:Y-mutation}]{
\begin{tikzpicture}[baseline=-.5ex,scale=1.2]
\begin{scope}

\draw [->,yshift=.5ex] (1.25,0) -- (1.75,0) node[midway, above] {$\mutation_\cycle$};
\draw [<-,yshift=-.5ex] (1.25,0) -- (1.75,0) node[midway, below] {$\mutation_{\cycle'}$};

\draw [dashed] (0,0) circle [radius=1];

\draw [yellow, line cap=round, line width=5, opacity=0.5] (0,1/2) to (0,0);
\draw [yellow, line cap=round, line width=5, opacity=0.5] ({sqrt(3)/4},-1/4) to (0,0);
\draw [yellow, line cap=round, line width=5, opacity=0.5] (-{sqrt(3)/4},-1/4) to (0,0);

\draw [blue, thick] (-1*1/2,{1*sqrt(3)/2}) to (0,1/2) to (0,0) node[right] {$\cycle$} to ({-sqrt(3)/4},-1/4) to (-1,0);
\draw [blue, thick] (1/2,{1*sqrt(3)/2}) to (0,1/2) to (0,0) to ({sqrt(3)/4},-1/4) to (1,0);
\draw [blue, thick] ({-sqrt(3)/4},-1/4) to ({-1/2},{-sqrt(3)/2});
\draw [blue, thick] ({sqrt(3)/4},-1/4) to ({1/2},{-sqrt(3)/2});

\draw [red, thick] (0,0) to ({sqrt(3)/2},1/2);
\draw [red, thick] (0,0) to ({-sqrt(3)/2},1/2);
\draw [red, thick] (0,0) to (0,-1);

\draw[thick,blue,fill=blue] (0,1/2) circle (0.05);
\draw[thick,blue,fill=blue] ({-sqrt(3)/4},-1/4) circle (0.05);
\draw[thick,blue,fill=blue] ({sqrt(3)/4},-1/4) circle (0.05);
\draw[thick,black,fill=white] (0,0) circle (0.05);
\end{scope}

\begin{scope}[xshift=3cm]

\draw [dashed] (0,0) circle [radius=1];

\draw [yellow, line cap=round, line width=5, opacity=0.5] (0,1/2) to (0,0);
\draw [yellow, line cap=round, line width=5, opacity=0.5] ({sqrt(3)/4},-1/4) to (0,0);
\draw [yellow, line cap=round, line width=5, opacity=0.5] (-{sqrt(3)/4},-1/4) to (0,0);

\draw [blue, thick] (-1/2,{sqrt(3)/2}) to ({-sqrt(3)/4},1/4) to (-1,0);
\draw [blue, thick] (1/2,{sqrt(3)/2}) to ({sqrt(3)/4},1/4) to (1,0);
\draw [blue, thick] ({-1/2},{-sqrt(3)/2}) to (0,-1/2) to ({1/2},{-sqrt(3)/2});
\draw [blue, thick] ({sqrt(3)/4},1/4) to (0,0);
\draw [blue, thick] ({-sqrt(3)/4},1/4) to (0,0);
\draw [blue, thick] (0,-1/2) to (0,0);

\draw [red, thick] ({-sqrt(3)/4},1/4) to ({-sqrt(3)/4},-1/4) to (0,-1/2) to ({sqrt(3)/4},-1/4) to ({sqrt(3)/4},1/4) to (0,1/2) to ({-sqrt(3)/4},1/4);
\draw [red, thick] ({sqrt(3)/4},1/4) to ({sqrt(3)/2},1/2);
\draw [red, thick] ({-sqrt(3)/4},1/4) to ({-sqrt(3)/2},1/2);
\draw [red, thick] (0,-1/2) to (0,-1);
\draw [red, thick] ({-sqrt(3)/4},-1/4) to (0,0);
\draw [red, thick] ({sqrt(3)/4},-1/4) to (0,0);
\draw [red, thick] (0,1/2) to (0,0) node[right] {$\cycle'$};

\draw[thick,red,fill=red] (0,1/2) circle (0.05);
\draw[thick,red,fill=red] ({-sqrt(3)/4},-1/4) circle (0.05);
\draw[thick,red,fill=red] ({sqrt(3)/4},-1/4) circle (0.05);
\draw[thick,black,fill=white] (0,0) circle (0.05);
\draw[thick,black,fill=white] ({-sqrt(3)/4},1/4) circle (0.05);
\draw[thick,black,fill=white] ({sqrt(3)/4},1/4) circle (0.05);
\draw[thick,black,fill=white] (0,-1/2) circle (0.05);
\end{scope}

\end{tikzpicture}
}
\end{tikzcd}
\caption{Legendrian mutations at $\sfI$- and $\sfY$-cycles.}
\label{fig:Legendrian mutation on N-graphs}
\end{figure}
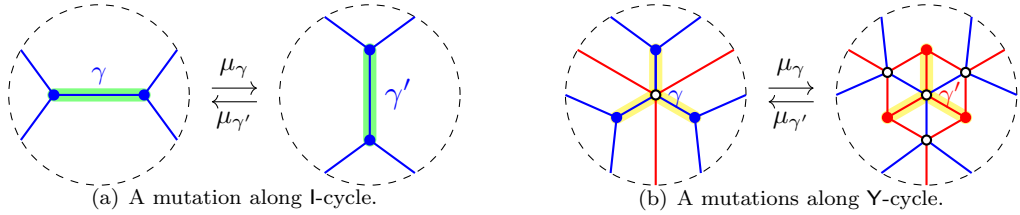

For the $\sfY$-cycle, the Legendrian mutation becomes as in the right of Figure~\ref{fig:Legendrian mutation on N-graphs}. Note that the mutation at $\sfY$-cycle can be decomposed into a sequence of Move~\Move{I} and Move~\Move{II} together with a mutation at $\sfI$-cycle.

Let us remind our main purpose of finding exact embedded Lagrangian fillings for a Legendrian links. The following lemma guarantees that Legendrian mutation preserves the embedding property of Lagrangian fillings.
\begin{proposition}[{\cite[Lemma~7.4]{CZ2020}}]
Let $\ngraph\subset \disk^2$ be a free $N$-graph. Then mutation $\mutation(\ngraph)$ at any $\sfI$- or $\sfY$-cycle is again free $N$-graph. 
\end{proposition}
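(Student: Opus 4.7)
The plan is to reduce everything to a local check on a small disk. Because a mutation at a $\sfY$-cycle decomposes into Moves \Move{I} and \Move{II} together with a mutation at an $\sfI$-cycle, as noted just after Definition~\ref{def:legendrian mutation}, it suffices to verify that (i) each of the Legendrian isotopy moves \Move{I}--\Move{VI'} of Figure~\ref{fig:move1-6} preserves freeness, and (ii) the $\sfI$-cycle mutation of Figure~\ref{figure:I-mutation} preserves freeness.

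Each of these operations is supported in a small disk $U\subset\disk^2$ and leaves $\ngraph$ unchanged on $\disk^2\setminus U$. In particular, the weaving of $\Legendrian(\ngraph)$ over $\disk^2\setminus U$ is unchanged, so no interior Reeb chord of $\Legendrian(\mutation(\ngraph))$ can lie in the complement of $U$. Moreover, no Reeb chord can run between $U$ and its complement provided the sheet heights of the new local weaving over $U$ agree with the old heights along $\partial U$. Thus freeness reduces to exhibiting, for the post-move (respectively post-mutation) local $N$-graph on $U$, a local Legendrian weaving with prescribed boundary heights and no interior Reeb chord.

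For Moves \Move{I}--\Move{VI'} this is essentially built into Theorem~\ref{thm:N-graph moves and legendrian isotopy}: each move is realized by a local Legendrian isotopy of the wavefront in $U\times\R$ that involves only the two or three consecutive sheets named by the coloured subgraphs in the move, while all other sheets can be kept flat with fixed vertical spacing over $U$. The required Reeb-free local weaving therefore exists on both sides of every such move. For the $\sfI$-cycle mutation, only four consecutive sheets (indexed $i-1,i,i+1,i+2$) interact nontrivially over $U$, and the boundary of $U$ sees the same four crossing points on both sides of Figure~\ref{figure:I-mutation}; one constructs the mutated local wavefront so that sheets $i$ and $i+1$ meet exactly along the new vertical edge of the right-hand model and its two trivalent endpoints, while the remaining pairs of sheets stay vertically separated. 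This yields a local weaving without interior Reeb chord, which glues to the unchanged exterior to produce a free $N$-graph $\mutation(\ngraph)$.

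The main obstacle, and the only non-formal content, is the explicit construction of the Reeb-chord-free local wavefront for the $\sfI$-cycle model after mutation: one must track relative sheet heights near both trivalent vertices so that the mandatory sheet crossings occur only along the designated edge of $\ngraph_i$ and nowhere else in $U$. This is carried out case-by-case in the weaves calculus of~\cite{CZ2020}, and once done, the analogous inspection for Moves \Move{I}--\Move{VI'} is immediate because these are already Legendrian isotopies of local weaves with the required boundary data.
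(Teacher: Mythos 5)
The paper does not prove this proposition at all: it is imported verbatim as \cite[Lemma~7.4]{CZ2020}, so there is no in-paper argument to compare yours against. Your sketch is a reasonable reconstruction of the locality strategy that the cited proof (and the paper's own freeness lemma for the initial $N$-graphs, with its explicit local gradient models near edges, trivalent vertices and hexagonal points) is built on, and the reduction of the $\sfY$-cycle case to Moves \Move{I}, \Move{II} plus an $\sfI$-cycle mutation is exactly the observation recorded after Definition~\ref{def:legendrian mutation}.

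As a standalone proof, however, your argument is incomplete at precisely the two places where the content lives. First, the claim that Moves \Move{I}--\Move{VI'} preserve freeness does not follow from Theorem~\ref{thm:N-graph moves and legendrian isotopy}: that theorem only says the moves induce Legendrian isotopies, and freeness (existence of a weaving with no interior Reeb chord, i.e.\ of nonvanishing gradient fields $\nabla h_{ij}$) is not a priori an invariant of Legendrian isotopy class. One must exhibit, for the local $N$-graph on each side of each move, a Reeb-chord-free local weaving whose sheet heights (and gradient directions, on a collar, not just on $\partial U$) match the exterior data; the paper asserts this without proof and your appeal to the isotopy statement does not supply it. Second, for the $\sfI$-cycle mutation you correctly identify the needed construction — a local wavefront in which sheets $i$ and $i+1$ cross only along the new edge and its endpoints, with all height-difference functions critical-point-free — but you then defer it back to \cite{CZ2020} rather than carrying it out. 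So the proposal is an accurate roadmap of the cited proof, but it does not constitute an independent verification of the statement.
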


\begin{proposition}
Let $\ngraph\subset \disk^2$ be a good $N$-graph. Then mutation $\mutation_\gamma(\ngraph)$ at an $\sfI$-cycle $\gamma$ is again a good $N$-graph.
\end{proposition}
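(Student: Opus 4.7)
The central observation is that Legendrian mutation at an $\sfI$-cycle is strictly local: there exists an open disk $U \subset \disk^2$ containing the edge $e$ of $\gamma = \gamma(e)$ such that $\ngraph$ and $\mutation_\gamma(\ngraph)$ coincide on $\disk^2 \setminus U$. Hence $\boundary\ngraph = \boundary\mutation_\gamma(\ngraph)$ and the boundary flag datum $\flags$ is the same for the two $N$-graphs; the only thing to verify is that the uniqueness of the flag extension inside $U$ is preserved under the combinatorial change depicted in Figure~\ref{figure:I-mutation}.

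First I would set up a canonical bijection between the faces of the two $N$-graphs. Both $\ngraph|_U$ and $\mutation_\gamma(\ngraph)|_U$ meet $\partial U$ transversely in exactly four points, cutting $\partial U$ into four arcs, and in each local picture there are exactly four internal faces, each containing one arc. Matching faces that share the same arc of $\partial U$, and extending by the identity on $\disk^2 \setminus U$, yields a canonical bijection between all faces of $\ngraph$ and those of $\mutation_\gamma(\ngraph)$.

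Given $\flags$, by goodness of $\ngraph$ the flags $\cF_{\Legendrian(\ngraph)}$ are uniquely determined, and in particular so are the flag values on the four arcs of $\partial U$; any valid extension of $\flags$ to $\mutation_\gamma(\ngraph)$ must agree with these values, and is therefore unique via the face bijection. To verify that the inherited assignment satisfies condition~\eqref{equation:flag conditions} for each edge of $\mutation_\gamma(\ngraph)|_U$, let $i$ be the color with $e \in \ngraph_i$; every edge of $\ngraph|_U$ and every edge of $\mutation_\gamma(\ngraph)|_U$ belongs to the $i$-th subgraph. Propagating the flag equations of $\ngraph|_U$ through the two trivalent $\ngraph_i$-vertices forces the four local face-flags to coincide in every slot $j \neq i$, so the non-$i$-slot parts of the flag conditions for $\mutation_\gamma(\ngraph)|_U$ are automatic, and the distinctness-in-slot-$i$ parts across the four boundary edges of $U$ are inherited unchanged from $\ngraph$.

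The main subtlety is the distinctness-in-slot-$i$ condition across the rotated edge $e'$ of $\mutation_\gamma(\ngraph)$, which asks that the two faces not adjacent across $e$ in $\ngraph$ differ in the $i$-th slot; this is precisely the non-vanishing locus of the microlocal monodromy $\mmon(\cF_{\Legendrian(\ngraph)})([\gamma])$, i.e., the locus on which the corresponding cluster $\mathcal{X}$-mutation is well-defined. Reading goodness as uniqueness of the flag extension (or restricting to this open locus, which is where the Legendrian mutation is combinatorially defined in the first place), the preceding steps show that $\mutation_\gamma(\ngraph)$ is good.
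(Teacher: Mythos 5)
Your argument is correct and is essentially the paper's own proof: the paper simply declares the claim straightforward from Definition~\ref{def:good N-graph} and the local picture in Figure~\ref{figure:I-mutation}, and your locality, face-matching, and slot-by-slot flag analysis is precisely the inspection being invoked there. One minor imprecision: the condition $v_L\neq v_R$ across the rotated edge is not the \emph{non-vanishing} locus of the microlocal monodromy but the locus where the cross-ratio avoids a specific value (equivalently, where the cluster $\mathcal{X}$-mutation is defined); you identify and handle the right genericity issue, just with a slightly off description of it.
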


\begin{proof}
The proof is straightforward from the notion of the good $N$-graph in Definition~\ref{def:good N-graph} and of the Legendrian mutation depicted in Figure~\ref{figure:I-mutation}. Note that the Legendrian mutation $\mutation_\gamma(\ngraph)$ at a $\sfY$-cycle $\gamma$ is also good, since $\mutation_\gamma(\ngraph)$ is a composition of Moves \Move{I} and \Move{II}, and a mutation at an $\sfI$-cycle.
\end{proof}

An important observation is the Legendrian mutation on $(\ngraph,\nbasis)$ induces a cluster mutation on the induced seed $(\bfx(\Legendrian(\ngraph),\nbasis,\flags),\quiver(\Legendrian(\ngraph),\nbasis))$.

\begin{proposition}[{\cite[\S7.2]{CZ2020}}]\label{proposition:equivariance of mutations}
Let $\ngraph\subset \disk^2$ be a good $N$-graph and $\nbasis$ a good tuple of cycles in~$H_1(\Legendrian(\ngraph))$. Let $\mutation_{\cycle_i}(\ngraph,\nbasis)$ be a Legendrian mutation  of $(\ngraph,\nbasis)$ along a one-cycle $\cycle_i$. Then, for flags $\flags$ on $\legendrian$, we have
\[
\Psi(\mutation_{\cycle_i}(\ngraph,\nbasis),\flags)=\mutation_{i}(\Psi(\ngraph,\nbasis,\flags)).
\]
Here, $\mutation_{i}$ is the cluster $\mathcal{X}$-mutation at the vertex $i$ \textup{(}cf. Remark~\ref{rmk_x_cluster_mutation}\textup{)}.
\end{proposition}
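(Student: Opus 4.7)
The plan is to reduce everything to a local computation near a single mutated edge, and then match the microlocal monodromy formula to the cluster $\mathcal{X}$-mutation formula by a direct cross-ratio calculation.

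First I would dispose of the $\sfY$-cycle case. A Legendrian mutation at a $\sfY$-cycle is, by construction, a composition of Moves~\Move{I} and \Move{II} together with a single $\sfI$-cycle mutation (this is exactly the content of the right-hand portion of Figure~\ref{fig:Legendrian mutation on N-graphs}). Since Moves~\Move{I}--\Move{VI'} are Legendrian isotopies (Theorem~\ref{thm:N-graph moves and legendrian isotopy}), and the flag moduli $\cM(\ngraph)$ and its microlocal monodromies are Legendrian isotopy invariants, these auxiliary moves leave $\Psi$ unchanged. Thus it suffices to verify the proposition for an $\sfI$-cycle $\cycle_i = \cycle(e)$ with $e \in \ngraph_i$ an edge between two trivalent vertices.

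Next I would verify the quiver part $\quiver(\Legendrian(\ngraph),\nbasis) \mapsto \quiver(\Legendrian(\mutation_{\cycle_i}\ngraph), \mutation_{\cycle_i}\nbasis)$ agrees with the Fomin--Zelevinsky rule. The $90^\circ$ rotation of the edge $e$ depicted in Figure~\ref{figure:I-mutation} reverses the local geometric intersection of $\cycle_i$ with each $\sfI$- or $\sfY$-cycle $\cycle_j$ sharing a trivalent vertex with $e$: the sign conventions for intersection numbers in Figure~\ref{fig:I-cycle with orientation and intersections} immediately give $b_{i,k}' = -b_{i,k}$ and $b_{k,i}' = -b_{k,i}$. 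For the remaining entries, one checks by enumerating the possible local pictures around a shared endpoint that the new composed cycles produce exactly the correction $b_{j,k}' = b_{j,k} + \tfrac{|b_{j,i}|b_{i,k} + b_{j,i}|b_{i,k}|}{2}$; this is a finite case-check using only Figure~\ref{fig:I and Y cycle}.

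The heart of the proof, which I expect to be the main obstacle, is the monodromy computation. Label the four local flags around $e$ by $F_1,F_2,F_3,F_4$ as in Figure~\ref{figure:I-cycle with flags}, and denote by $v_1,v_2,v_3,v_4$ the induced lines in the two-plane $V = \cF^{i+1}(F_\ast)/\cF^{i-1}(F_\ast)$. By definition
\[
\mmon(\cF_{\Legendrian(\ngraph)})(\cycle_i) = \langle v_1,v_2,v_3,v_4\rangle = \frac{v_1\wedge v_2}{v_2\wedge v_3}\cdot\frac{v_3\wedge v_4}{v_4\wedge v_1}.
\]
After mutation, the rotated edge carries a new $\sfI$-cycle $\cycle_i'$ whose four adjacent flags define the same lines $v_1,\dots,v_4$ but are cyclically re-indexed by one step, so the corresponding cross ratio equals $\langle v_2,v_3,v_4,v_1\rangle = \langle v_1,v_2,v_3,v_4\rangle^{-1}$. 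This proves $y_{i;t'} = y_{i;t}^{-1}$. For $j \ne i$, the only flags entering $\mmon(\cycle_j)$ that are altered by the mutation are those sitting in the bigon regions neighboring $e$; across the rotated edge the new line is characterized by the four-term cross-ratio relation together with the already-computed value $y_{i;t}^{-1}$, and substituting this into the cross-ratio or $\sfY$-cycle formula for $\cycle_j$ produces exactly the factor $y_{k;t}^{\max\{b_{k,i}^{(t)},0\}}(1+y_{k;t})^{-b_{k,i}^{(t)}}$ appearing in Remark~\ref{rmk_x_cluster_mutation}.

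Finally, assembling the two pieces, the identity $\Psi(\mutation_{\cycle_i}(\ngraph,\nbasis),\flags) = \mutation_i(\Psi(\ngraph,\nbasis,\flags))$ is immediate. The good-ness hypotheses guarantee, respectively, that the mutated tuple is again good (so the left-hand side is well-defined, by the preceding propositions), and that the boundary flags $\flags$ continue to determine all interior flags after mutation, so the identification takes place in the same moduli $\cM(\ngraph)$ before and after applying $\mutation_{\cycle_i}$.
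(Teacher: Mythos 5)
This proposition is imported by the paper as a citation to \cite[\S 7.2]{CZ2020}; the paper itself supplies no proof, so there is nothing internal to compare your argument against line by line. That said, your skeleton is the standard one and matches how the cited source proceeds: reduce the $\sfY$-cycle case to the $\sfI$-cycle case via the decomposition into Moves \Move{I}, \Move{II} plus an $\sfI$-mutation (which the paper explicitly records after Definition~\ref{def:legendrian mutation}, and which is legitimate because the moves are Legendrian isotopies preserving $\cM(\ngraph)$), check the exchange-matrix transformation by a local intersection count, and compute the monodromies by cross-ratios. The inversion $y_{i}\mapsto y_{i}^{-1}$ via the cyclic shift of the four lines is correct.

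The genuine gap is in the step for $j\neq i$, which is where all the content of the proposition lives. First, you treat $\cycle_j$ as if it were the same homology class before and after mutation and attribute the entire correction to a change of interior flags. In fact the mutated tuple $\mutation_{\cycle_i}(\nbasis)$ involves a Picard--Lefschetz-type change of basis ($\cycle_j\mapsto\cycle_j+\max\{b_{i,j},0\}\,\cycle_i$ up to sign conventions), and it is this transvection that produces the monomial factor $y_{i}^{\max\{b_{i,j},0\}}$ in the $\mathcal X$-mutation formula, while the change of the interior flag across the rotated edge produces the factor $(1+y_{i})^{-b_{i,j}}$. Conflating the two mechanisms means your ``substitution into the cross-ratio'' cannot actually be carried out as described. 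Second, the assertion that only the flags ``in the bigon regions neighboring $e$'' change is unexamined: in the local model all four regions meet the boundary of the local disk, so one must use goodness of $\ngraph$ to track which globally interior flag is re-determined by $\flags$ after the mutation, and then verify the flag condition $v_2\neq v_4$ required for the rotated edge. Without these two ingredients the identity with Remark~\ref{rmk_x_cluster_mutation} is asserted rather than proved.
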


\section{Legendrian links and \texorpdfstring{$N$}{N}-graphs of \texorpdfstring{type ${\exdynD\exdynE}$}{Affine DE type}}
\label{sec:affine DE type}

Throughout this section, we denote $\dynX$ by Dynkin type of $\exdynD\exdynE$.
We investigate Legendrian links and $N$-graphs of type $\dynX$. 
We realize Coxeter mutations via $N$-graphs, and interpret them as Legendrian loops.
With these terminologies, we construct as many Lagrangian fillings as seeds for Legendrian links of type $\dynX$.

\subsection{Legendrian links of \texorpdfstring{type ${\exdynD\exdynE}$}{Affine DE type}}

Let us start by presenting Legendrian links of type $\dynX$.
\begin{align*}
\legendrian({\exdynD}_n)=
\begin{tikzpicture}[baseline=0ex,scale=0.8]
\draw[thick] (0,0) to[out=0,in=180] (1,-0.5) to[out=0,in=180] (2,-1) to[out=0,in=180] (3,-0.5) to[out=0,in=180] (4,0) to[out=0,in=180] (9,0);
\draw[thick] (0,-0.5) to[out=0,in=180] (1,0) to[out=0,in=180] (3,0) to[out=0,in=180] (4,-0.5) (5,-0.5) to[out=0,in=180] (6,-0.5) to[out=0,in=180] (7,-1) to[out=0,in=180] (8,-0.5) to[out=0,in=180] (9,-0.5);
\draw[thick] (0,-1) to[out=0,in=180] (1,-1) to[out=0,in=180] (2,-0.5) to[out=0,in=180] (3,-1) to[out=0,in=180] (4,-1) (5,-1) to[out=0,in=180] (6,-1.5) to[out=0,in=180] (8,-1.5) to[out=0,in=180] (9,-1);
\draw[thick] (0,-1.5) to[out=0,in=180] (5,-1.5) to[out=0,in=180] (6,-1) to[out=0,in=180] (7,-0.5) to[out=0,in=180] (8,-1) to[out=0,in=180] (9,-1.5);
\draw[thick] (4,-0.4) rectangle node {$\scriptstyle{n-4}$} (5, -1.1);
\draw[thick] (0,0) to[out=180,in=0] (-0.5,0.25) to[out=0,in=180] (0,0.5) to[out=0,in=180] (9,0.5) to[out=0,in=180] (9.5,0.25) to[out=180,in=0] (9,0);
\draw[thick] (0,-0.5) to[out=180,in=0] (-1,0.25) to[out=0,in=180] (0,0.75) to[out=0,in=180] (9,0.75) to[out=0,in=180] (10,0.25) to[out=180,in=0] (9,-0.5);
\draw[thick] (0,-1) to[out=180,in=0] (-1.5,0.25) to[out=0,in=180] (0,1) to[out=0,in=180] (9,1) to[out=0,in=180] (10.5,0.25) to[out=180,in=0] (9,-1);
\draw[thick] (0,-1.5) to[out=180,in=0] (-2,0.25) to[out=0,in=180] (0,1.25) to[out=0,in=180] (9,1.25) to[out=0,in=180] (11,0.25) to[out=180,in=0] (9,-1.5);
\end{tikzpicture}
\end{align*}
\begin{align*}
\legendrian(a,b,c)=
\begin{tikzpicture}[baseline=5ex,scale=0.8]
\draw[thick] (0,0) to[out=0,in=180] (1,0.5) (2,0.5) to (2.5,0.5) (3.5,0.5) to (4,0.5) (5,0.5) to (5.5,0.5);
\draw[thick] (0,0.5) to[out=0,in=180] (1,0) to (2.5,0) (3.5,0) to (5.5,0);
\draw[thick] (0,1) to[out=0,in=180] (1,1) (2,1) to (4,1) (5,1) to (5.5,1);
\draw[thick] (1,0.4) rectangle node {$\scriptstyle a$} (2, 1.1);
\draw[thick] (2.5,-0.1) rectangle node {$\scriptstyle{b-1}$} (3.5, 0.6);
\draw[thick] (4,0.4) rectangle node {$\scriptstyle c$} (5, 1.1);
\draw[thick] (0,1) to[out=180, in=0] (-0.5,1.25) to[out=0,in=180] (0,1.5) to (5.5,1.5) to[out=0,in=180] (6,1.25) to[out=180,in=0] (5.5,1);
\draw[thick] (0,0.5) to[out=180, in=0] (-1,1.25) to[out=0,in=180] (0,1.75) to (5.5,1.75) to[out=0,in=180] (6.5,1.25) to[out=180,in=0] (5.5,0.5);
\draw[thick] (0,0) to[out=180, in=0] (-1.5,1.25) to[out=0,in=180] (0,2) to (5.5,2) to[out=0,in=180] (7,1.25) to[out=180,in=0] (5.5,0);
\end{tikzpicture}
\end{align*}

Note that $\legendrian({\exdynE}_6)=\legendrian(3,3,3)$, $\legendrian({\exdynE}_7)=\legendrian(2,4,4)$, and $\legendrian({\exdynE}_8)=\legendrian(2,3,6)$ each of which comes from the triples $(a,b,c)$ satisfying ${\frac1 a} +{ \frac1 b} + {\frac1 c} =1$. By the work of \cite{SW2019,GSW2020b} the Legendrian link $\legendrian({\exdynD}_n)$ in $\R^3$ admits the brick quiver diagram $\quiver^{\mathsf{brick}}({\exdynD}_n).$

\begin{align*}
\quiver^{\mathsf{brick}}({\exdynD}_n)&=
\begin{tikzpicture}[baseline=-5.5ex]
\draw[gray] (0,0) to (8,0) (0,-0.5) to (8,-0.5) (0,-1) to (8,-1) (0,-1.5) to (8,-1.5);
\draw[gray] (0.5,0) node[above] {$\sigma_3$} to (0.5,-0.5) (2.5,0) node[above] {$\sigma_3$} to (2.5,-0.5) 
(1,-0.5) to (1,-1) node[below] {$\sigma_2$} (2,-0.5) to (2,-1)node[below] {$\sigma_2$} (3,-0.5) node[above] {$\sigma_2$} to (3,-1) (5,-0.5) node[above] {$\sigma_2$} to (5,-1) (6,-0.5)  node[above] {$\sigma_2$} to (6,-1) (7,-0.5)  node[above] {$\sigma_2$} to (7,-1)
(5.5,-1) to (5.5,-1.5)node[below] {$\sigma_1$} (7.5,-1) to (7.5,-1.5) node[below] {$\sigma_1$} (4,-0.75) node (dots) {$\cdots$};
\draw[thick,fill] 
(1.5,-0.25) circle (1pt) node (D1) {}
(1.5,-0.75) circle (1pt) node (D2) {}
(2.5,-0.75) circle (1pt) node (D3) {}
(5.5,-0.75) circle (1pt) node (D4) {}
(6.5,-0.75) circle (1pt) node (D5) {}
(6.5,-1.25) circle (1pt) node (D6) {}
;
\draw (4,-0.5) node[yshift=5ex] {$\overbrace{\hphantom{\hspace{2.5cm}}}^{n-4}$};
\draw[thick,->] (D2) to (D3);
\draw[thick,->] (D3) to (dots);
\draw[thick,->] (dots) to (D4);
\draw[thick,->] (D4) to (D5);
\draw[thick,->] (D3) to (D1);
\draw[thick,->] (D6) to (D4);
\end{tikzpicture}\\
\quiver^{\mathsf{brick}}(a,b,c)&=\begin{tikzpicture}[baseline=-.5ex,scale=1]
\draw[gray] (-5,0.5) -- (5,0.5) (-5,0) -- (5,0) (-5,-0.5) -- (5,-0.5);
\draw[gray] (-4.5,0) -- (-4.5,-0.5) node[below] {$\sigma_1$} (-4,0) -- (-4, 0.5) node[above] {$\sigma_2$} (-3.5, 0) -- (-3.5, 0.5) node[above] {$\sigma_2$} (-3, 0) -- (-3, 0.5) node[above] {$\sigma_2$} (-2.5,0.5) node[above] {$\cdots$} (-2,0) -- (-2,0.5) node[above] {$\sigma_2$} (-1.5,0) -- (-1.5,0.5) node[above] {$\sigma_2$};
\draw[gray] (-1,0) -- (-1, -0.5) node[below] {$\sigma_1$} (-0.5,0) -- (-0.5, -0.5) node[below] {$\sigma_1$} (0, 0) -- (0, -0.5) node[below] {$\sigma_1$} (0.5,-0.5) node[below] {$\cdots$} (1,0) -- (1,-0.5) node[below] {$\sigma_1$} (1.5,0) -- (1.5, -0.5) node[below] {$\sigma_1$};
\draw[gray] (2,0) -- (2,0.5) node[above] {$\sigma_2$} (2.5,0) -- (2.5,0.5) node[above] {$\sigma_2$} (3,0) -- (3,0.5) node[above] {$\sigma_2$} (3.5,0.5) node[above] {$\cdots$} (4,0) -- (4,0.5) node[above] {$\sigma_2$} (4.5,0) -- (4.5,0.5) node[above] {$\sigma_2$};
\draw (-2.75,0.5) node[yshift=4ex] {$\overbrace{\hphantom{\hspace{3cm}}}^a$};
\draw (0.25,-0.5) node[yshift=-4ex] {$\underbrace{\hphantom{\hspace{3cm}}}_{b-1}$};
\draw (3.25,0.5) node[yshift=4ex] {$\overbrace{\hphantom{\hspace{3cm}}}^c$};
\draw[thick, fill] (-3.75, 0.25) circle (1pt) node (A1) {} (-3.25, 0.25) circle (1pt) node (A2) {} (-1.75, 0.25) circle (1pt) node (A3) {} (0.25, 0.25) circle (1pt) node (Aa) {} (-2.5,0.25) node (Adots) {$\cdots$} (2.25, 0.25) circle (1pt) node (B1) {} (2.75,0.25) circle (1pt) node (B2) {} (4.25, 0.25) circle (1pt) node (Bb) {} (3.5,0.25) node (Bdots) {$\cdots$};
\draw[thick, fill] (-2.75, -0.25) circle (1pt) node (C1) {} (-0.75, -0.25) circle (1pt) node (C2) {} (-0.25, -0.25) circle (1pt) node (C3) {} (1.25, -0.25) circle (1pt) node (Cc) {} (0.5, -0.25) node (Cdots) {$\cdots$};
\draw[thick,->] (A1) -- (A2);
\draw[thick,->] (A2) -- (Adots);
\draw[thick,->] (Adots) -- (A3);
\draw[thick,->] (A3) -- (Aa);
\draw[thick,->] (Aa) -- (B1);
\draw[thick,->] (B1) -- (B2);
\draw[thick,->] (B2) -- (Bdots);
\draw[thick,->] (Bdots) -- (Bb);
\draw[thick,->] (C1) -- (C2);
\draw[thick,->] (C2) -- (C3);
\draw[thick,->] (C3) -- (Cdots);
\draw[thick,->] (Cdots) -- (Cc);
\draw[thick,->] (Aa) -- (C1);
\end{tikzpicture}
\end{align*}

Note that the moduli $\cM_1(\legendrian(\dynX))$ of microlocal rank one sheaves in $\Sh^\bullet_{\legendrian(\dynX)}(\R^2)$ is a Legendrian invariant and its coordinate ring is isomorphic to cluster algebra of type $\dynX$.

The Legendrians $\legendrian({\exdynD}_n)$ and $\legendrian(a,b,c)$ are rainbow closures of the following positive braids respectively:
\begin{align*}
\beta({\exdynD}_n)&=\sigma_3\sigma_2\sigma_2\sigma_3 \sigma_2^{n-4} \sigma_1 \sigma_2 \sigma_2 \sigma_1,&
\beta(a,b,c)&=\sigma_1 \sigma_2^a \sigma_1^{b-1} \sigma_2^c.
\end{align*}
So induced links in $J^1\sphere^1$ have the following braid presentation.
\begin{align*}
\widehat\beta({\exdynD}_n)
&=\Delta_4(\sigma_3\sigma_2\sigma_2\sigma_3 \sigma_2^{n-4} \sigma_1 \sigma_2 \sigma_2\sigma_1)\Delta_4,&
\widehat\beta(a,b,c)&=\Delta_3 \sigma_1 \sigma_2^a \sigma_1^{b-1} \sigma_2^c \Delta_3,
\end{align*}
where $\Delta_3=\sigma_2 \sigma_1 \sigma_2$ and $\Delta_4=\sigma_1 \sigma_2 \sigma_1 \sigma_3 \sigma_2 \sigma_1$ are half twists of 3- and 4-strand braid, respectively.

Before considering $N$-graphs bounding $\legendrian(\dynX)$, let us manipulate its corresponding braid presentation to obtain simpler $N$-graphs.
For $\dynX = {\exdynD_n}$, let $k=\lfloor \frac{n-3}2\rfloor$ and $\ell=\lfloor \frac{n-4}2\rfloor$. Then we have the following computation.

\begin{align}\label{eqn:front braid to N-graph braid}\allowdisplaybreaks
\widehat\beta({\exdynD}_n)
&=\Delta_4\sigma_3\sigma_2\sigma_2\sigma_3 \sigma_2^{n-4} \sigma_1 \sigma_2 \sigma_2\sigma_1 \Delta_4 \nonumber\\
&\doteq \sigma_2^k \sigma_1 \sigma_2 \sigma_2\sigma_1 \Delta_4 \Delta_4 \sigma_3\sigma_2\sigma_2\sigma_3 \sigma_2^\ell \nonumber\\
&= \sigma_2^k \sigma_1 \sigma_2 \sigma_2 \sigma_1 {\color{red} \sigma_1 \sigma_2 \sigma_1} \sigma_3 \sigma_2 \sigma_1 \sigma_1 \sigma_2 {\color{blue}\sigma_1 \sigma_3} \sigma_2 \sigma_1\sigma_3\sigma_2\sigma_2\sigma_3 \sigma_2^\ell \nonumber\\
&= \sigma_2^k \sigma_1 \sigma_2 \sigma_2 {\color{red} \sigma_1 \sigma_2 \sigma_1} \sigma_2 \sigma_3 \sigma_2 \sigma_1 \sigma_1 \sigma_2 \sigma_3 {\color{red}\sigma_1 \sigma_2 \sigma_1}\sigma_3\sigma_2\sigma_2\sigma_3 \sigma_2^\ell \nonumber\\
&= \sigma_2^k \sigma_1 \sigma_2 \sigma_2 \sigma_2 \sigma_1 \sigma_2 \sigma_2 \sigma_3 \sigma_2 \sigma_1 \sigma_1 {\color{red}\sigma_2 \sigma_3 \sigma_2} \sigma_1 \sigma_2 \sigma_3\sigma_2\sigma_2\sigma_3 \sigma_2^\ell \nonumber\\
&= \sigma_2^k \sigma_1 \sigma_2 \sigma_2 \sigma_2 \sigma_1 \sigma_2 \sigma_2 \sigma_3 \sigma_2 {\color{blue}\sigma_1 \sigma_1 \sigma_3} \sigma_2 \sigma_3 \sigma_1 \sigma_2 \sigma_3\sigma_2\sigma_2\sigma_3 \sigma_2^\ell \nonumber\\
&= \sigma_2^k \sigma_1 \sigma_2 \sigma_2 \sigma_2 \sigma_1 \sigma_2 \sigma_2 {\color{red}\sigma_3 \sigma_2  \sigma_3} \sigma_1 \sigma_1 \sigma_2 {\color{blue} \sigma_3 \sigma_1} \sigma_2 \sigma_3\sigma_2\sigma_2\sigma_3 \sigma_2^\ell \nonumber\\
&= \sigma_2^k \sigma_1 \sigma_2 \sigma_2 \sigma_2 \sigma_1 \sigma_2 \sigma_2 \sigma_2 \sigma_3  \sigma_2 \sigma_1 {\color{red}\sigma_1 \sigma_2 \sigma_1} \sigma_3 \sigma_2 \sigma_3\sigma_2\sigma_2\sigma_3 \sigma_2^\ell \nonumber\\\displaybreak
&= \sigma_2^k \sigma_1 \sigma_2 \sigma_2 \sigma_2 \sigma_1 \sigma_2 \sigma_2 \sigma_2 \sigma_3  {\color{red}\sigma_2 \sigma_1 \sigma_2} \sigma_1 \sigma_2 \sigma_3 \sigma_2 \sigma_3\sigma_2\sigma_2\sigma_3 \sigma_2^\ell \nonumber\\
&= \sigma_2^k \sigma_1 \sigma_2 \sigma_2 \sigma_2 \sigma_1 \sigma_2 \sigma_2 \sigma_2 {\color{blue}\sigma_3  \sigma_1} \sigma_2 \sigma_1 \sigma_1 \sigma_2 \sigma_3 \sigma_2 \sigma_3\sigma_2\sigma_2\sigma_3 \sigma_2^\ell \nonumber\\
&= \sigma_2^k \sigma_1 \sigma_2 \sigma_2 \sigma_2 \sigma_1 \sigma_2 \sigma_2 \sigma_2 \sigma_1  \sigma_3 \sigma_2 \sigma_1 \sigma_1 \sigma_2 \sigma_3 \sigma_2 \sigma_3\sigma_2\sigma_2\sigma_3 {\color{violet}\sigma_2^\ell} \nonumber\\
&\stackrel{\star}{=} \sigma_2^k \sigma_1 \sigma_2 \sigma_2 \sigma_2 \sigma_1 \sigma_2 \sigma_2 \sigma_2 \sigma_1 \sigma_2^\ell \sigma_3 \sigma_2 \sigma_1 \sigma_1 \sigma_2 {\color{red} \sigma_3 \sigma_2 \sigma_3} \sigma_2\sigma_2\sigma_3  \nonumber\\
&= \sigma_2^k \sigma_1 \sigma_2 \sigma_2 \sigma_2 \sigma_1 \sigma_2 \sigma_2 \sigma_2 \sigma_1 \sigma_2^\ell \sigma_3 \sigma_2 \sigma_1 \sigma_1 \sigma_2 \sigma_2 \sigma_3 \sigma_2 \sigma_2\sigma_2\sigma_3 
\end{align}

Here $\doteq$ is the braid equivalence in $J^1\sphere^1$ up to cyclic rotation. The equivalence $\stackrel{\star}{=}$ also can be checked directly. Indeed, the relation 
\[
(\sigma_3 \sigma_2 \sigma_1 \sigma_1 \sigma_2 \sigma_3 \sigma_2 \sigma_3\sigma_2\sigma_2\sigma_3 )\sigma_2^\ell
\stackrel{\star}{=}
\sigma_2^\ell(\sigma_3 \sigma_2 \sigma_1 \sigma_1 \sigma_2 \sigma_3 \sigma_2 \sigma_3\sigma_2\sigma_2\sigma_3 )
\]
is justfied by the following moves in braids:
\begin{align*}
\begin{tikzpicture}[scale=0.6]
\begin{scope}
\draw[thick] 
(-1,0) to (2,0) to[out=0,in=180] (3,0.5) to[out=0,in=180] (4,0) to (12,0)
(-1,0.5) to (1,0.5) to[out=0,in=180] (2,1) to (4,1) to[out=0,in=180] (5,0.5) to (6,0.5) to[out=0,in=180] (7,1) to[out=0,in=180] (8,1.5) to (10,1.5) to[out=0,in=180] (11,1) to (12,1)
(-1,1) to (0,1) to[out=0,in=180] (1,1.5) to (5,1.5) to[out=0,in=180] (6,1) to[out=0,in=180] (7,0.5) to (8,0.5) to[out=0,in=180] (9,1) to[out=0,in=180] (10,0.5) to (12,0.5)
(-1,1.5) to (0,1.5) to[out=0,in=180] (1,1) to[out=0,in=180] (2,0.5) to[out=0,in=180] (3,0) to[out=0,in=180] (4,0.5) to[out=0,in=180] (5,1) to[out=0,in=180] (6,1.5) to (7,1.5) to[out=0,in=180] (8,1) to[out=0,in=180] (9,0.5) to[out=0,in=180] (10,1) to[out=0,in=180] (11,1.5) to (12,1.5)
;
\draw[thick,fill=white] (11,1.1) rectangle node {$\scriptstyle \ell$} (11.5, 0.4);
\draw[thick,blue,dashed,->] (11,0.75) to[out=180, in=0] (10.5,0.75) to[out=180, in=0] (9.5,1.25) to[out=180, in=0] (8.5,1.25) to[out=180, in=0] (7.5,0.75) to[out=180, in=0] (7,0.75);
\end{scope}
\begin{scope}[yshift=-2.5cm]
\draw[thick] 
(-1,0) to (2,0) to[out=0,in=180] (3,0.5) to[out=0,in=180] (4,0) to (12,0)
(-1,0.5) to (1,0.5) to[out=0,in=180] (2,1) to (4,1) to[out=0,in=180] (5,0.5) to (6,0.5) to[out=0,in=180] (7,1) to[out=0,in=180] (8,1.5) to (10,1.5) to[out=0,in=180] (11,1) to (12,1)
(-1,1) to (0,1) to[out=0,in=180] (1,1.5) to (5,1.5) to[out=0,in=180] (6,1) to[out=0,in=180] (7,0.5) to (8,0.5) to[out=0,in=180] (9,1) to[out=0,in=180] (10,0.5) to (12,0.5)
(-1,1.5) to (0,1.5) to[out=0,in=180] (1,1) to[out=0,in=180] (2,0.5) to[out=0,in=180] (3,0) to[out=0,in=180] (4,0.5) to[out=0,in=180] (5,1) to[out=0,in=180] (6,1.5) to (7,1.5) to[out=0,in=180] (8,1) to[out=0,in=180] (9,0.5) to[out=0,in=180] (10,1) to[out=0,in=180] (11,1.5) to (12,1.5)
;
\draw[thick,fill=white] (6,1.1) rectangle node {$\scriptstyle{\ell+1}$} (7, 0.4);
\draw[thick,blue,dashed,->] (6,0.75) to[out=180, in=0] (5.5,0.75) to[out=180, in=0] (4.5,1.25) to[out=180, in=0] (1.5,1.25) to[out=180, in=0] (0.5,0.75) to (0,0.75);
\end{scope}
\begin{scope}[yshift=-5cm]
\draw[thick] 
(-1,0) to (2,0) to[out=0,in=180] (3,0.5) to[out=0,in=180] (4,0) to (12,0)
(-1,0.5) to (1,0.5) to[out=0,in=180] (2,1) to (4,1) to[out=0,in=180] (5,0.5) to (6,0.5) to[out=0,in=180] (7,1) to[out=0,in=180] (8,1.5) to (10,1.5) to[out=0,in=180] (11,1) to (12,1)
(-1,1) to (0,1) to[out=0,in=180] (1,1.5) to (5,1.5) to[out=0,in=180] (6,1) to[out=0,in=180] (7,0.5) to (8,0.5) to[out=0,in=180] (9,1) to[out=0,in=180] (10,0.5) to (12,0.5)
(-1,1.5) to (0,1.5) to[out=0,in=180] (1,1) to[out=0,in=180] (2,0.5) to[out=0,in=180] (3,0) to[out=0,in=180] (4,0.5) to[out=0,in=180] (5,1) to[out=0,in=180] (6,1.5) to (7,1.5) to[out=0,in=180] (8,1) to[out=0,in=180] (9,0.5) to[out=0,in=180] (10,1) to[out=0,in=180] (11,1.5) to (12,1.5)
;
\draw[thick,fill=white] (-0.5,1.1) rectangle node {$\scriptstyle{\ell}$} (0, 0.4);
\end{scope}
\end{tikzpicture}
\end{align*}
For $\dynX =\exdynE_n= \quiver(a,b,c)$ with $n=a+b+c-3$, we have 
\begin{align*}
\widehat\beta(a,b,c)
&=\Delta_3 \sigma_1 \sigma_2^a \sigma_1^{b-1} \sigma_2^c \Delta_3\\
&=\sigma_2 \sigma_1 {\color{red} \sigma_2 \sigma_1 \sigma_2^a} \sigma_1^{b-1} {\color{red} \sigma_2^c \sigma_2 \sigma_1 \sigma_2}\\
&=\sigma_2 \sigma_1  \sigma_1^{a} \sigma_2 \sigma_1 \sigma_1^{b-1} \sigma_1 \sigma_2 \sigma_1^{c+1}\\
&=\sigma_2 \sigma_1^{a+1} \sigma_2 \sigma_1^{b+1} \sigma_2 \sigma_1^{c+1}.
\end{align*}

\subsection{\texorpdfstring{$N$}{N}-graphs of \texorpdfstring{type ${\exdynD}{\exdynE}$}{Affine Dn and En type}}

Now we consider $N$-graphs on $\disk^2$ whose boundary data come from the 
Legendrian of type ${\exdynD}_n$ or $\exdynE_n=\quiver(a,b,c)$ with $n=a+b+c-3$. More concretely, the braids
\begin{align*}
\widehat \beta({\exdynD}_n) &=\sigma_2^k \sigma_1 \sigma_2^3 \sigma_1 
\sigma_2^3 \sigma_1 \sigma_2^\ell \sigma_3 \sigma_2 \sigma_1^2 \sigma_2^2 
\sigma_3  \sigma_2^3\sigma_3,\\
\widehat \beta(a,b,c) &= \sigma_2 \sigma_1^{a+1} \sigma_2 \sigma_1^{b+1} 
\sigma_2 \sigma_1^{c+1}
\end{align*}
in $J^1\sphere^1$ give the boundary data on $\partial\disk^2$ as in 
Figure~\ref{fig:legendrian link of affine DE}. 

We define $N$-graphs on~$\disk^2$ as depicted in Tables~\ref{table:affine D type} and~\ref{table:affine E type} and denote pairs of the $N$-graphs and the set of one cycles by $(\ngraph(\dynX), \nbasis(\dynX))$. 
For $\dynX=\exdynE_n=\quiver(a,b,c)$, we also use the notation $(\ngraph(a,b,c),\nbasis(a,b,c))$ instead.

\begin{figure}[ht]
\subfigure[$\widehat \beta({\exdynD}_n)$]{
\begin{tikzpicture}[baseline=-.5ex,scale=0.6]
\draw[thick] (0,3) arc (90:270:3) (6,-3) arc (-90:90:3);
\draw[thick] (0,-3) to (6,-3) (6,3) to (0,3);
\draw[blue, thick, fill] 
(6,0)+(-90:3) circle (2pt) 
(6,0)+(0:3) circle (2pt)
(6,0)+(90:3) circle (2pt)
(90+90/4+90/12:3) circle (2pt) 
(90+90/4+90/6:3) circle (2pt) ;
\draw[green, thick, fill] 
(90:3) circle (2pt) 
(180:3) circle (2pt)
(270:3) circle (2pt);
\draw[red, thick, fill] 
(6,0)+(-90+90/4:3) circle (2pt) 
(6,0)+(-90+2*90/4:3) circle (2pt) 
(6,0)+(-90+3*90/4:3) circle (2pt) 
(6,0)+(90/4:3) circle (2pt) 
(6,0)+(2*90/4:3) circle (2pt) 
(6,0)+(3*90/4:3) circle (2pt) 
(90+90/4:3) circle (2pt) 
(90+2*90/4:3) circle (2pt) 
(90+3*90/4:3) circle (2pt)
(180+90/4:3) circle (2pt) 
(180+2*90/4:3) circle (2pt) 
(180+3*90/4:3) circle (2pt)
(1,-3) circle (2pt)
(2,-3) circle (2pt)
(4,-3) circle (2pt)
(5,-3) circle (2pt)
(1,3) circle (2pt)
(2,3) circle (2pt)
(4,3) circle (2pt)
(5,3) circle (2pt)
;
\draw [decorate,decoration={brace,amplitude=5pt,mirror,raise=1ex}]
(0.9,-3) -- (5.1,-3) node[midway,yshift=-1.5em]{$k=\lfloor \frac{n-3}2\rfloor$};
\draw [decorate,decoration={brace,amplitude=5pt,raise=1ex}]
(0.9,3) -- (5.1,3) node[midway,yshift=1.5em]{$\ell=\lfloor \frac{n-4}2\rfloor$};
\end{tikzpicture}
}
\subfigure[$\widehat \beta(a,b,c)$]{
\begin{tikzpicture}[baseline=-.5ex,scale=0.6]
\draw[thick] (0,0) circle (3cm);
\draw[red, thick, fill] (0:3) circle (2pt) (120:3) circle (2pt) (240:3) circle (2pt);
\draw[blue, thick, fill] (20:3) circle (2pt) (40:3) circle (2pt)  (80:3) circle (2pt) (100:3) circle (2pt) (140:3) circle (2pt) (160:3) circle (2pt)  (200:3) circle (2pt) (220:3) circle (2pt) (260:3) circle (2pt) (280:3) circle (2pt)  (320:3) circle (2pt) (340:3) circle (2pt);
\curlybrace[]{10}{110}{3.2};
\draw (60:3.5) node[rotate=-30] {$a+1$};
\curlybrace[]{130}{230}{3.2};
\draw (180:3.5) node[rotate=90] {$b+1$};
\curlybrace[]{250}{350}{3.2};
\draw (300:3.5) node[rotate=30] {$c+1$};
\end{tikzpicture}
}

\caption{Legendrian links in $J^1\sphere^1$ of type $\exdynD\exdynE$.}
\label{fig:legendrian link of affine DE}
\end{figure}
\begin{table}[ht]
\begin{tabular}{c|c|c}
\toprule
$\widehat \beta$ & $(\ngraph,\nbasis)$ & $\quiver(\ngraph,\nbasis)$\\
\midrule
$\widehat \beta({\exdynD}_4)$ &
\begin{tikzpicture}[baseline=-.5ex,scale=0.5]
\useasboundingbox (-2.5,-3.5) rectangle (2.5,3.5);
\begin{scope}[xshift=-1cm]
\draw[yellow,   line width=5, opacity=0.5] (-1,1)--(0,0) (-1,-1)--(0,0) 
(1,0)--(0,0);
\draw[green, line cap=round, line width=5, opacity=0.5] (-2,1)-- (-1,1) 
(-1,-1)--(-1,-2)
(3,1)--(3,2) (3,-1)--(4,-1);
\draw[blue,thick, rounded corners] (-1.75, 3) -- (-1.5, 2.5) -- (-1.25, 3);
\draw[thick] (1, 3) -- (-2,3) to[out=180,in=90] (-3,2) --(-3,-2) 
to[out=-90,in=180] (-2, -3) -- (4,-3) to[out=0,in=-90] (5,-2) -- (5,2) 
to[out=90,in=0] (4,3)-- (1,3);
\draw[green, thick] (0,3) -- (0,-3) (0,0) -- (-3,0);
\draw[red, thick, fill] (0,0) -- (-1,1) circle (2pt) -- +(0,2) (-1,1) -- 
++(-1,0) circle (2pt) -- +(-1,0) (-2,1) -- +(0,2);
\draw[red,thick, fill] (0,0) -- (-1,-1) circle (2pt) -- +(-2,0) (-1,-1) -- 
++(0,-1) circle (2pt) -- +(0,-1) (-1,-2) -- +(-2,0);
\draw[red,thick] (0,0) -- (1,0);
\draw[thick, fill=white] (0,0) circle (2pt);
\end{scope}
\begin{scope}[xshift=1cm,rotate=180]
\draw[yellow,  line width=5, opacity=0.5] (-1,1)--(0,0) (-1,-1)--(0,0) 
(1,0)--(0,0);
\draw[blue, thick] (0,3) -- (0,-3) (0,0) -- (-3,0);
\draw[red ,thick, fill] (0,0) -- (-1,1) circle (2pt) -- +(0,2) (-1,1) -- 
++(-1,0) circle (2pt) -- +(-1,0) (-2,1) -- +(0,2);
\draw[red,thick, fill] (0,0) -- (-1,-1) circle (2pt) -- +(-2,0) (-1,-1) -- 
++(0,-1) circle (2pt) -- +(0,-1) (-1,-2) -- +(-2,0);
\draw[red,thick] (0,0) -- (1,0);
\draw[thick, fill=white] (0,0) circle (2pt);
\end{scope}
\end{tikzpicture}
&
\begin{tikzpicture}[baseline=-.5ex]
\draw[fill] (0,0) circle(2pt) node (A1) {} (135:1) circle(2pt) node (A2) {} 
(225:1) circle(2pt) node (A3) {};
\draw[fill] (-45:1) circle(2pt) node (A4) {} (45:1) circle(2pt) node (A5) {};
\draw[->] (A2) node[above] {$2$} -- (A1) node[above] {$1$};
\draw[->] (A3) node[below] {$3$} -- (A1);
\draw[->] (A4) node[below] {$5$} -- (A1);
\draw[->] (A5) node[above] {$4$} -- (A1);
\end{tikzpicture}\\
\hline
$\widehat \beta({\exdynD}_5)$ &
\begin{tikzpicture}[baseline=-.5ex,scale=0.5]
\useasboundingbox (-2.5,-3.5) rectangle (2.5,3.5);
\begin{scope}[xshift=-1cm]
\draw[yellow, line width=5,opacity=0.5] (-1,1)--(0,0) (-1,-1)--(0,0) 
(1,0)--(0,0);
\draw[green, line width=5,opacity=0.5] (-2,1)--(-1,1) (-1,-1)--(-1,-2) 
(1,0)--(2,0)--(3,1) (2,0)--(3,-1);
\draw[blue,thick, rounded corners] (-1.75, 3) -- (-1.5, 2.5) -- (-1.25, 3);
\draw[thick] (1, 3) -- (-2,3) to[out=180,in=90] (-3,2) --(-3,-2) 
to[out=-90,in=180] (-2, -3) -- (4,-3) to[out=0,in=-90] (5,-2) -- (5,2) 
to[out=90,in=0] (4,3)-- (1,3);
\draw[green, thick] (0,3) -- (0,-3) (0,0) -- (-3,0);
\draw[red,thick, fill] (0,0) -- (-1,1) circle (2pt) -- +(0,2) (-1,1) -- 
++(-1,0) circle (2pt) -- +(-1,0) (-2,1) -- +(0,2);
\draw[red,thick, fill] (0,0) -- (-1,-1) circle (2pt) -- +(-2,0) (-1,-1) -- 
++(0,-1) circle (2pt) -- +(0,-1) (-1,-2) -- +(-2,0);
\draw[red,thick] (0,0) -- (1,0);
\draw[thick, fill=white] (0,0) circle (2pt);
\end{scope}
\draw[red, thick, fill] (0,0) circle (2pt) -- (0,-3);
\begin{scope}[xshift=1cm, yscale=-1,rotate=180]
\draw[yellow, line width=5, opacity=0.5] (-1,-1) -- (-1,-2) (-1,1) -- (-2,1);
\draw[blue, thick] (0,3) -- (0,-3) (0,0) -- (-3,0);
\draw[red,thick, fill] (0,0) -- (-1,1) circle (2pt) -- +(0,2) (-1,1) -- 
++(-1,0) circle (2pt) -- +(-1,0) (-2,1) -- +(0,2);
\draw[red,thick, fill] (0,0) -- (-1,-1) circle (2pt) -- +(-2,0) (-1,-1) -- 
++(0,-1) circle (2pt) -- +(0,-1) (-1,-2) -- +(-2,0);
\draw[red,thick] (0,0) -- (1,0);
\draw[thick, fill=white] (0,0) circle (2pt);
\end{scope}
\end{tikzpicture}
&
\begin{tikzpicture}[baseline=-.5ex]
\draw[fill] (0,0) circle(2pt) node (A1) {} (120:1) circle(2pt) node (A2) {} 
(240:1) circle(2pt) node (A3) {} (1,0) circle(2pt) node (A4) {};
\draw[fill] (1,0) ++(-60:1) circle(2pt) node (A5) {} (1,0) ++(60:1) circle(2pt) 
node (A6) {};
\draw[->] (A2) node[above] {$2$} -- (A1) node[left] {$1$};
\draw[->] (A3) node[below] {$3$} -- (A1) ;
\draw[->] (A4) node[right] {$4$} -- (A1) ;
\draw[->] (A4) -- (A5) node[below] {$6$};
\draw[->] (A4) -- (A6) node[above] {$5$};
\end{tikzpicture}\\
\hline
$\widehat \beta({\exdynD}_{2\ell+4})$ &
\begin{tikzpicture}[baseline=-.5ex,scale=0.5]
\useasboundingbox (-7.5,-3.5) rectangle (7.5,3.5);
\begin{scope}[xshift=-4cm]
\draw[yellow, line width=5, opacity=0.5] (-1,1)--(0,0) (-1,-1)--(0,0) 
(1,0)--(0,0);
\draw[green, line width=5, opacity=0.5] (-1,1)--(-2,1) (-1,-1)--(-1,-2) 
(1,0)--(2,0)
(3,0)--(3.5,0) (4.5,0)--(5,0) (6,0)--(7,0) (9,1)--(9,2) (9,-1)--(10,-1);
\draw[blue,thick, rounded corners] (-1.75, 3) -- (-1.5, 2.5) -- (-1.25, 3);
\draw[green, thick] (0,3) -- (0,-3) (0,0) -- (-3,0);
\draw[red,thick, fill] (0,0) -- (-1,1) circle (2pt) -- +(0,2) (-1,1) -- 
++(-1,0) circle (2pt) -- +(-1,0) (-2,1) -- +(0,2);
\draw[red,thick, fill] (0,0) -- (-1,-1) circle (2pt) -- +(-2,0) (-1,-1) -- 
++(0,-1) circle (2pt) -- +(0,-1) (-1,-2) -- +(-2,0);
\draw[red,thick] (0,0) -- (1,0);
\draw[thick, fill=white] (0,0) circle (2pt);
\draw[thick](1, 3) -- (-2,3) to[out=180,in=90] (-3,2) --(-3,-2) 
to[out=-90,in=180] (-2, -3) -- (10,-3) to[out=0,in=-90] (11,-2) -- (11,2) 
to[out=90,in=0] (10,3)-- (1,3);
\end{scope}
\begin{scope}[xshift=4cm,rotate=180]
\draw[yellow, line width=5, opacity=0.5] (-1,1)--(0,0) (-1,-1)--(0,0) 
(1,0)--(0,0);
\draw[blue, thick] (0,3) -- (0,-3) (0,0) -- (-3,0);
\draw[red,thick, fill] (0,0) -- (-1,1) circle (2pt) -- +(0,2) (-1,1) -- 
++(-1,0) circle (2pt) -- +(-1,0) (-2,1) -- +(0,2);
\draw[red,thick, fill] (0,0) -- (-1,-1) circle (2pt) -- +(-2,0) (-1,-1) -- 
++(0,-1) circle (2pt) -- +(0,-1) (-1,-2) -- +(-2,0);
\draw[red,thick] (0,0) -- (1,0);
\draw[thick, fill=white] (0,0) circle (2pt);
\end{scope}
\begin{scope}
\draw[yellow, line width=5, opacity=0.5] (-2,0) -- (-1,0) (1,0) -- (2,0);
\draw[red, thick, fill] (-3,0) circle (2pt) -- (-3, -3) (-3,0) -- (-2,0) circle 
(2pt) -- (-2,3) (-2,0) -- (-1,0) circle (2pt) -- (-1,-3) (-1,0) -- (-0.5,0) 
(0.5,0) -- (1,0) circle (2pt) -- (1,3) (1,0) -- (2, 0) circle (2pt) -- (2,-3) 
(2,0) -- (3,0) circle (2pt) -- (3,3);
\draw[red, thick, dashed] (-0.5,0) -- (0.5,0);
\end{scope}
\end{tikzpicture}&
\begin{tikzpicture}[baseline=-.5ex]
\draw[fill] (0,0) circle(2pt) node (A1) {} (120:1) circle(2pt) node (A2) {} 
(240:1) circle(2pt) node (A3) {} (1,0) circle(2pt) node (A4) {} (2,0) circle 
(2pt) node (A5) {} (3,0) circle (2pt) node (A6) {};
\draw[fill] (3,0) ++(60:1) circle(2pt) node (A7) {} (3,0) ++(-60:1) circle(2pt) 
node (A8) {};
\draw[->] (A2) node[above] {$2$} --(A1) node[left] {$1$} ;
\draw[->]  (A3) node[below] {$3$} -- (A1) ;
\draw[->]  (A4) node[above] {$4$} -- (A1);
\draw[dashed] (A4) -- (A5) node[above] {$n-2$};
\draw[->] (A5) -- (A6) node[right] {$n-1$};
\draw[->] (A7) node[above] {$n$} -- (A6);
\draw[->] (A8) node[below] {$n+1$} -- (A6);
\end{tikzpicture}\\
\hline
$\widehat \beta({\exdynD}_{2\ell+5})$ &
\begin{tikzpicture}[baseline=-.5ex,scale=0.5]
\useasboundingbox (-7.5,-3.5) rectangle (7.5,3.5);
\begin{scope}[xshift=-3.5cm]
\draw[yellow, line width=5, opacity=0.5] (-1,1)--(0,0) (-1,-1)--(0,0) 
(1,0)--(0,0);
\draw[green, line width=5, opacity=0.5] (-1,1)--(-2,1) (-1,-1)--(-1,-2) 
(1,0)--(2,0)
(3,0)--(3.5,0) (4.5,0)--(5,0) (6,0)--(7,0) (7,0)--(8,1) (7,0)--(8,-1);
\draw[blue,thick, rounded corners] (-1.75, 3) -- (-1.5, 2.5) -- (-1.25, 3);
\draw[green, thick] (0,3) -- (0,-3) (0,0) -- (-3,0);
\draw[red,thick, fill] (0,0) -- (-1,1) circle (2pt) -- +(0,2) (-1,1) -- 
++(-1,0) circle (2pt) -- +(-1,0) (-2,1) -- +(0,2);
\draw[red,thick, fill] (0,0) -- (-1,-1) circle (2pt) -- +(-2,0) (-1,-1) -- 
++(0,-1) circle (2pt) -- +(0,-1) (-1,-2) -- +(-2,0);
\draw[red,thick] (0,0) -- (1,0);
\draw[thick, fill=white] (0,0) circle (2pt);
\draw[thick](1, 3) -- (-2,3) to[out=180,in=90] (-3,2) --(-3,-2) 
to[out=-90,in=180] (-2, -3) -- (9,-3) to[out=0,in=-90] (10,-2) -- (10,2) 
to[out=90,in=0] (9,3)-- (1,3);
\end{scope}
\begin{scope}[xshift=3.5cm,rotate=180]
\draw[yellow, line width=5, opacity=0.5] (-1,-1) -- (-1,-2) (-1,1)--(-2,1);
\draw[blue, thick] (0,3) -- (0,-3) (0,0) -- (-3,0);
\draw[red,thick, fill] (0,0) -- (-1,1) circle (2pt) -- +(0,2) (-1,1) -- 
++(-1,0) circle (2pt) -- +(-1,0) (-2,1) -- +(0,2);
\draw[red,thick, fill] (0,0) -- (-1,-1) circle (2pt) -- +(-2,0) (-1,-1) -- 
++(0,-1) circle (2pt) -- +(0,-1) (-1,-2) -- +(-2,0);
\draw[red,thick] (0,0) -- (1,0);
\draw[thick, fill=white] (0,0) circle (2pt);
\end{scope}
\begin{scope}[xshift=0.5cm]
\draw[yellow, line width=5, opacity=0.5] (-2,0) -- (-1,0) (1,0) -- (2,0);
\draw[red, thick, fill] (-3,0) circle (2pt) -- (-3, -3) (-3,0) -- (-2,0) circle 
(2pt) -- (-2,3) (-2,0) -- (-1,0) circle (2pt) -- (-1,-3) (-1,0) -- (-0.5,0) 
(0.5,0) -- (1,0) circle (2pt) -- (1,3) (1,0) -- (2, 0) circle (2pt) -- (2,-3);
\draw[red, thick, dashed] (-0.5,0) -- (0.5,0);
\end{scope}
\end{tikzpicture}&
\begin{tikzpicture}[baseline=-.5ex]
\draw[fill] (0,0) circle(2pt) node (A1) {} (120:1) circle(2pt) node (A2) {} 
(240:1) circle(2pt) node (A3) {} (1,0) circle(2pt) node (A4) {} (2,0) circle 
(2pt) node (A5) {} (3,0) circle (2pt) node (A6) {};
\draw[fill] (3,0) ++(60:1) circle(2pt) node (A7) {} (3,0) ++(-60:1) circle(2pt) 
node (A8) {};
\draw[->] (A2) node[above] {$2$} -- (A1) node[left] {$1$};
\draw[->]  (A3) node[below] {$3$} -- (A1);
\draw[->] (A4) node[above] {$4$} -- (A1);
\draw[dashed] (A5) node[above] {$n-2$} -- (A4);
\draw[->] (A6) node[right] {$n-1$} -- (A5);
\draw[->] (A6) -- (A7) node[above] {$n$};
\draw[->] (A6) -- (A8) node[below] {$n+1$};
\end{tikzpicture}\\
\bottomrule
\end{tabular}
\caption{$N$-graphs and their quivers of type~${\exdynD_n}$}
\label{table:affine D type}
\end{table}
\begin{table}[ht]
\begin{tabular}{c|c|c}
\toprule
$\widehat \beta$ & $(\ngraph,\nbasis)$ & $\quiver(\ngraph,\nbasis)$\\
\midrule
$\widehat\beta(3,3,3)$ &
\begin{tikzpicture}[baseline=-.5ex,scale=0.8]
\draw[thick] (0,0) circle (3cm);
\draw[yellow, line cap=round, line width=5, opacity=0.5] (60:1) -- (45:1.5) (180:1) -- (165:1.5) (300:1) -- (285:1.5);
\draw[green, line cap=round, line width=5, opacity=0.5] (0,0) -- (60:1) (0,0) -- (180:1) (0,0) -- (300:1) (45:1.5) -- (60:2) (165:1.5) -- (180:2) (285:1.5) -- (300:2);
\draw[red, thick] (0,0) -- (0:3) (0,0) -- (120:3) (0,0) -- (240:3);
\draw[blue, thick, fill] (0,0) -- (60:1) circle (2pt) -- (96:3) (60:1) -- (45:1.5) circle (2pt) -- (24:3) (45:1.5) -- (60:2) circle (2pt) -- (72:3) (60:2) -- (48:3);
\draw[blue, thick, fill] (0,0) -- (180:1) circle (2pt) -- (216:3) (180:1) -- (165:1.5) circle (2pt) -- (144:3) (165:1.5) -- (180:2) circle (2pt) -- (192:3) (180:2) -- (168:3);
\draw[blue, thick, fill] (0,0) -- (300:1) circle (2pt) -- (336:3) (300:1) -- (285:1.5) circle (2pt) -- (264:3) (285:1.5) -- (300:2) circle (2pt) -- (312:3) (300:2) -- (288:3);
\draw[thick, fill=white] (0,0) circle (2pt);
\end{tikzpicture}
&
\begin{tikzpicture}[baseline=.5ex]
\useasboundingbox (-2.5,-3) rectangle (2.5,3);
\draw[fill] (0,0) circle (2pt) node (O) {};
\draw[fill] (60:1) circle (2pt) node (A1) {};
\draw[fill] (60:2) circle (2pt) node (A2) {};
\draw[fill] (180:1) circle (2pt) node (B1) {};
\draw[fill] (180:2) circle (2pt) node (B2) {};
\draw[fill] (-60:1) circle (2pt) node (C1) {};
\draw[fill] (-60:2) circle (2pt) node (C2) {};
\draw[->] (O) node[above left] {$ 1$}--(A1) node[above left] {$ 2$};
\draw[->] (A2) node[above left] {$ 3$}--(A1);
\draw[->] (O)--(B1) node[above] {$ 4$};
\draw[->] (B2) node[above] {$ 5$}--(B1);
\draw[->] (O)--(C1) node[above right] {$ 6$};
\draw[->] (C2) node[above right] {$ 7$}--(C1);
\end{tikzpicture}\\
\hline
$\widehat \beta(2,4,4)$ &
\begin{tikzpicture}[baseline=-.5ex,scale=0.8]
\useasboundingbox(-3.5,-3.5)rectangle(3.5,3.5);
\draw[thick] (0,0) circle (3cm);
\draw[yellow, line cap=round, line width=5, opacity=0.5] (60:1) -- (50:2) (180:1) -- (170:1.5) (190:1.75) -- (170:2) (300:1) -- (290:1.5) (310:1.75) -- (290:2);
\draw[green, line cap=round, line width=5, opacity=0.5] (0,0) -- (60:1) (0,0) -- (180:1) (0,0) -- (300:1) (170:1.5) -- (190:1.75) (290:1.5) -- (310:1.75);
\draw[red, thick] (0,0) -- (0:3) (0,0) -- (120:3) (0,0) -- (240:3);
\draw[blue, thick, fill] (0,0) -- (60:1) circle (2pt) -- (90:3) (60:1) -- (50:2)  circle (2pt) -- (30:3) (50:2) -- (60:3);
\draw[blue, thick, fill] (0,0) -- (180:1) circle (2pt) -- (220:3) (180:1) -- (170:1.5)  circle (2pt) -- (140:3) (170:1.5) -- (190:1.75) circle (2pt) -- (200:3) (190:1.75) -- (170:2) circle (2pt) -- (160:3) (170:2) -- (180:3);
\draw[blue, thick, dashed] ;
\draw[blue, thick, fill] (0,0) -- (300:1) circle (2pt) -- (340:3) (300:1) -- (290:1.5) circle (2pt) -- (260:3) (290:1.5) -- (310:1.75) circle (2pt) -- (320:3) (310:1.75) -- (290:2) circle (2pt) -- (280:3) (290:2) -- (300:3);
\draw[blue, thick, dashed] ;
\draw[thick, fill=white] (0,0) circle (2pt);
\end{tikzpicture}
&
\begin{tikzpicture}[baseline=.5ex]
\begin{scope}[yshift=1cm]
\draw[fill] (0,0) circle (2pt) node (O) {};
\draw[fill] (60:1) circle (2pt) node (A1) {};
\draw[fill] (180:1) circle (2pt) node (B1) {};
\draw[fill] (180:2) circle (2pt) node (B2) {};
\draw[fill] (180:3) circle (2pt) node (B3) {};
\draw[fill] (-60:1) circle (2pt) node (C1) {};
\draw[fill] (-60:2) circle (2pt) node (C2) {};
\draw[fill] (-60:3) circle (2pt) node (C3) {};
\draw[->] (O) node[above left] {$ 1$}--(A1) node[above left] {$ 2$};
\draw[->] (O)--(B1) node[above] {$ 3$};
\draw[->] (B2) node[above] {$ 4$}--(B1);
\draw[->] (B2)--(B3) node[above] {$ 5$};
\draw[->] (O)--(C1) node[above right] {$ 6$};
\draw[->] (C2) node[above right] {$ 7$}--(C1);
\draw[->] (C2)--(C3) node[above right] {$ 8$};
\end{scope}
\end{tikzpicture}\\
\hline
$\widehat \beta(2,3,6)$ &
\begin{tikzpicture}[baseline=-.5ex,scale=0.8]
\useasboundingbox(-3.5,-3.5)rectangle(3.5,3.5);
\draw[thick] (0,0) circle (3cm);
\draw[yellow, line cap=round, line width=5, opacity=0.5] (60:1) -- (50:2) (180:1) -- (170:1.5) (-60:1) -- (-90:1.25) (-45:1.5)--(-75:1.75) (-60:2)--(-67.5:2.5) ;
\draw[green, line cap=round, line width=5, opacity=0.5] (0,0) -- (60:1) (0,0) -- (180:1) (0,0) -- (300:1) (170:1.5) -- (190:2) (-90:1.25)--(-45:1.5) (-75:1.75)--(-60:2);
\draw[red, thick] (0,0) -- (0:3) (0,0) -- (120:3) (0,0) -- (240:3);
\draw[blue, thick, fill] (0,0) -- (60:1) circle (2pt) -- (90:3) (60:1) -- (50:2)  circle (2pt) -- (30:3) (50:2) -- (60:3);
\draw[blue, thick, fill] (0,0) -- (180:1) circle (2pt) -- (216:3) (180:1) -- (170:1.5)  circle (2pt) -- (144:3) (170:1.5) -- (190:2) circle (2pt) -- (192:3) (190:2) -- (168:3);
\draw[blue, thick, fill] (0,0) -- (-60:1) circle (2pt) -- (-15:3) (-60:1) -- (-90:1.25)  circle (2pt) -- (-105:3) (-90:1.25) -- (-45:1.5) circle (2pt) -- (-30:3) (-45:1.5) -- (-75:1.75) circle (2pt) -- (-90:3) (-75:1.75) -- (-60:2) circle (2pt) -- (-45:3) (-60:2) -- (-67.5:2.5) circle (2pt) -- (-75:3) (-67.5:2.5) -- (-60:3);
\draw[thick, fill=white] (0,0) circle (2pt);
\end{tikzpicture}
&
\begin{tikzpicture}[baseline=.5ex]
\begin{scope}[yshift=2cm]
\draw[fill] (0,0) circle (2pt) node (O) {};
\draw[fill] (60:1) circle (2pt) node (A1) {};
\draw[fill] (180:1) circle (2pt) node (B1) {};
\draw[fill] (180:2) circle (2pt) node (B2) {};
\draw[fill] (-60:1) circle (2pt) node (C1) {};
\draw[fill] (-60:2) circle (2pt) node (C2) {};
\draw[fill] (-60:3) circle (2pt) node (C3) {};
\draw[fill] (-60:4) circle (2pt) node (C4) {};
\draw[fill] (-60:5) circle (2pt) node (C5) {};
\draw[->] (O) node[above left] {$ 1$}--(A1) node[above left] {$ 2$};
\draw[->] (O)--(B1) node[above] {$ 3$};
\draw[->] (B2) node[above] {$ 4$}--(B1);
\draw[->] (O)--(C1) node[above right] {$ 5$};
\draw[->] (C2) node[above right] {$ 6$}--(C1);
\draw[->] (C2)--(C3) node[above right] {$ 7$};
\draw[->] (C4) node[above right] {$ 8$}--(C3);
\draw[->] (C4)--(C5) node[above right] {$ 9$};
\end{scope}
\end{tikzpicture}\\
\bottomrule
\end{tabular}
\caption{$N$-graphs and their quivers of type~${\exdynE_n}=\quiver(a,b,c)$ with $n=a+b+c-3$}
\label{table:affine E type}
\end{table}

In the remaining part of this subsection, we argue the following to construct a starting exact embedded Lagrangian filling for the Legendrian of type $\exdynD$.

\begin{lemma}
The $N$-graphs in Tables~\ref{table:affine D type} and~\ref{table:affine 
E type} are free.
\end{lemma}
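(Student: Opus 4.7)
The plan is to verify freeness of the $N$-graphs in Tables~\ref{table:affine D type} and~\ref{table:affine E type} by explicit construction of the Legendrian weaves $\Legendrian(\ngraph(\dynX))$. For each $N$-graph, I will build the wavefront $\wavefront(\ngraph(\dynX)) \subset \disk^2 \times \R$ locally around each hexagonal point and trivalent vertex using the prescriptions in Figure~\ref{fig:local_chart_3-graphs}, and verify that the result is an embedded Legendrian surface with no interior Reeb chords. The underlying principle is that when the $N$-graph has a tree-like structure (no interior loops formed by edges of a fixed color), the induced weave can be built sheet by sheet without chords.

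For the $\exdynE$ type, the $N$-graph $\ngraph(a,b,c)$ has a three-fold symmetric structure: a central hexagonal point with three arms of respective lengths $a+1$, $b+1$, $c+1$, each arm being a chain of blue trivalent vertices attached alternately to red edges. I would exploit the $\Z/3\Z$-symmetry to reduce the verification to a single arm plus the central hexagonal point. On each arm the wavefront consists of a sequence of ascending/descending sheets meeting at standard cusps (a $3$-strand braid weave with no closed pockets), and the central hexagonal point is a standard free configuration coming from Move~\Move{III} in Figure~\ref{fig:move1-6}. Gluing three arms at the central hexagonal point does not create any Reeb chord because all three arms exit to the boundary of $\disk^2$.

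For the $\exdynD$ type, the $N$-graph $\ngraph(\exdynD_n)$ decomposes into two symmetric end pieces (each patterned on $\ngraph(\exdynD_4)$ with four trivalent vertices arranged around the central region) joined by a central linear chain of edges. Each end piece admits a direct local verification of freeness, using the fact that its combinatorial structure is a tree of hexagonal points and trivalent vertices with all faces touching the boundary. The connecting chain is a standard sequence of trivalent vertex pairs whose weave is manifestly free. Freeness is then preserved when gluing end pieces to the chain, since the interfaces involve only faces that reach $\partial \disk^2$.

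The main obstacle will be handling the parity-dependent bookkeeping in the $\exdynD$ case, since Table~\ref{table:affine D type} lists distinct $N$-graphs for $n = 2\ell+4$ and $n = 2\ell+5$; both families require separate but parallel local analyses. A cleaner alternative, which I would pursue if the direct check becomes unwieldy, is to apply a sequence of moves \Move{I}--\Move{VI'} from Figure~\ref{fig:move1-6} to reduce each $N$-graph to a standard disjoint union of elementary free pieces, and invoke Theorem~\ref{thm:N-graph moves and legendrian isotopy} together with the fact that the moves in Figure~\ref{fig:move1-6} preserve freeness. In either approach, the heart of the argument is a combinatorial verification that no face of $\disk^2 \setminus \ngraph(\dynX)$ is enclosed in a way that would force a Reeb chord in the weave.
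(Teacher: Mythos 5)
Your proposal never actually engages with what a Reeb chord is, and that is where the gap lies. By Definition~\ref{def:free}, freeness means the weave can be woven with no interior Reeb chords, and a Reeb chord between sheets $i<j$ is precisely a critical point of the height-difference function $h_{ij}=h_j-h_i$ on the region where both sheets are smooth. The actual content of the lemma is therefore the construction of height functions whose pairwise differences have nowhere-vanishing gradient. Your two load-bearing claims --- that a ``tree-like'' $N$-graph can be woven ``sheet by sheet without chords,'' and that ``freeness is preserved when gluing end pieces to the chain, since the interfaces involve only faces that reach $\partial\disk^2$'' --- are both asserted rather than proved, and neither is obvious: gluing two free pieces can create a critical point of some $h_{ij}$ in the interior of the glued region even when every face touches the boundary, because the gradient fields coming from the two pieces may oppose each other there. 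The paper's proof does exactly this missing work: it writes down explicit local models for $\nabla h_{12},\nabla h_{23},\nabla h_{34}$ near edges, trivalent vertices, and hexagonal points, weaves them into global nonvanishing fields, and then --- this is the genuinely subtle step for a $4$-graph --- handles the composite differences $\nabla h_{13},\nabla h_{24},\nabla h_{14}$ by tilting the sheets so that $\|\nabla h_{12}\|>\|\nabla h_{23}\|+\|\nabla h_{34}\|$ and $\|\nabla h_{23}\|>\|\nabla h_{34}\|$ away from the hexagonal points, which forces the sums to be nonvanishing even where the summands point in opposite directions. Nothing in your outline substitutes for this domination argument.

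Your fallback route --- reducing each $N$-graph by the moves of Figure~\ref{fig:move1-6} to ``a standard disjoint union of elementary free pieces'' --- also does not work as stated: the moves are local and preserve the boundary data, so they cannot reduce a graph filling a nontrivial Legendrian link to a disjoint union of trivial pieces; and while the moves preserve freeness, one still needs a free $N$-graph to start the chain of equivalences, which is the very thing being proved. (As a side remark, the $\exdynE$ case is a $3$-graph already treated in \cite[Lemma 2.11]{ABL2021}; the new difficulty here is entirely in the $4$-graphs of type $\exdynD$, where the six height-difference functions must be controlled simultaneously.)
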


\begin{proof}
Recall from Definition~\ref{def:free} that an $N$-graph $\ngraph$ is free if the Legendrian weave $\Legendrian(\ngraph)$ can be woven without Reeb chords. Since we have already shown in \cite[Lemma 2.11]{ABL2021} that the $3$-graphs $\ngraph(a,b,c)$ in Table~\ref{table:affine E type} are free,  we focus on the $4$-graphs of type $\exdynD$ in Table~\ref{table:affine D type}, especially of type $\exdynD_7$, as follows. In other cases, similar arguments hold.

Since $\ngraph=\ngraph(\exdynD_7)$ is a $4$-graph, for each $x \in \D^2\setminus \ngraph$, $\pi^{-1}_{\D^2}(x)\in \Legendrian(\ngraph)$ consists of four points, and their hight with respect to $\pi_{\R}:\D^2\times \R \to \R$ induce four functions $h_i:\D^2\to \R$, $i=1,2,3,4$.
 Let us consider nonnegative functions $h_{ij}:=h_j - h_i$ for $1\leq i < j\leq 4$, then we have 
\begin{align*}
h_{12}^{-1}(0)&=\ngraph_1,& h_{23}^{-1}(0)&=\ngraph_2,& h_{34}^{-1}(0)&=\ngraph_3,
\end{align*}
where $\ngraph=(\ngraph_1, \ngraph_2,\ngraph_3)$.
We may assume that the height functions $h_1$, $h_2$, $h_3$, and $h_4$ are smooth except on $\ngraph_1$, $\ngraph_1\cup\ngraph_2$, $\ngraph_2\cup\ngraph_3$, and $\ngraph_3$, respectively.
So the gradient vector fields $\nabla h_{12}$, $\nabla h_{23}$, and $\nabla h_{34}$ are defined except on $\ngraph_1\cup\ngraph_2$, $\ngraph$, and $\ngraph_2\cup\ngraph_3$, respectively.
 Note that Reeb chords on $\D^2\setminus \ngraph$ corresponds to singular points of the gradient vector fields $\nabla h_{ij}$ for $1\leq i\leq j\leq 4$. So we need to construct hight functions $h_i$, $i=1,2,3,4$ satisfying the non-vanishing conditions.
 
We construct such gradient vector fields by weaving local charts of gradient vector fields. 
\begin{enumerate}
\item Near an edge of an $N$-graph we consider the following gradient vector fields $\nabla h_{i\; i+1}$ by tilting Legendrian sheets to avoid Reeb chords.
Note here that the direction of $\nabla h_{12}$ and of $\nabla h_{34}$ may be opposite.
Even though the following figures depict the local model for $\ngraph_2$, similar local gradient configurations valid for edges of $\ngraph_1$ and $\ngraph_3$.
\[
\begin{tikzpicture}
\begin{scope}
\draw[thick] \boundellipse{0,1}{1.25}{0.25};

\draw[red, thick] (-1,0)--(1,0);
\draw[thick] (-3/4,1/3)--(5/4,1/3);
\draw[thick] (5/4,1/3)--(3/4,-1/3);
\draw[thick] (3/4,-1/3)--(-5/4,-1/3);
\draw[thick] (-5/4,-1/3)--(-3/4,1/3);
\draw[thick] (-5/4,3/12)--(3/4,3/12);
\draw[thick] (3/4,3/12)--(5/4,-3/12);
\draw[thick] (5/4,-3/12)--(-3/4,-3/12);
\draw[thick] (-3/4,-3/12)--(-5/4,3/12);

\draw[thick] \boundellipse{0,-1}{1.25}{0.25};
\draw[thick] \boundellipse{0,-2}{1.25}{0.25};
\draw[red, thick] (-5/4,-2)--(5/4,-2);
\draw[thick, ->] (0,-1.35)--(0,-1.9);

\node at (-1.5,1){\tiny $4$};
\node at (-1.5,1/3){\tiny $3$};
\node at (-1.5,-1/3){\tiny $2$};
\node at (-1.5,-1){\tiny $1$};

\end{scope}
\begin{scope}[xshift=3.5cm,decoration={markings, mark=at position 0.5 with {\arrow{>}}}]
\draw[thick] (0,0) circle (1cm);
\draw[red, thick, dashed, opacity=0.5] (-1,0)--(1,0);
\node at (0,-1.5) {$\nabla h_{12}, \nabla h_{34}$};
\draw[postaction={decorate}] (30:1) -- (0.5,0);
\draw[postaction={decorate}] (60:1) -- (-0.1,0);
\draw[postaction={decorate}] (90:1) -- (-0.6,0);
\draw[postaction={decorate}] (120:1) -- (-1,0);
\draw[postaction={decorate}] (60:1) -- (-0.1,0);
\draw[postaction={decorate}] (-30:1) -- (0.5,0);
\draw[postaction={decorate}] (-60:1) -- (-0.1,0);
\draw[postaction={decorate}] (-90:1) -- (-0.6,0);
\draw[postaction={decorate}] (-120:1) -- (-1,0);
\end{scope}

\begin{scope}[xshift=7cm,decoration={markings, mark=at position 0.5 with {\arrow{>}}}]
\draw[thick] (0,0) circle (1cm);
\draw[red, thick, dashed, opacity=0.5] (-1,0)--(1,0);
\node at (0,-1.5) {$\nabla h_{12}, \nabla h_{34}$};
\draw[postaction={decorate}] (30:1) -- (0.5,0);
\draw[postaction={decorate}] (60:1) -- (-0.1,0);
\draw[postaction={decorate}] (90:1) -- (-0.6,0);
\draw[postaction={decorate}] (120:1) -- (-1,0);
\draw[postaction={decorate}] (60:1) -- (-0.1,0);
\draw[postaction={decorate}]  (0.5,0) -- (-150:1);
\draw[postaction={decorate}] (-0.1,0) -- (-160:1);
\draw[postaction={decorate}] (-0.6,0) -- (-170:1);
\draw[postaction={decorate}] (1,0) -- (-140:1);
\draw[postaction={decorate}] (-20:1) -- (-120:1);
\end{scope}

\begin{scope}[xshift=10.5cm,decoration={markings, mark=at position 0.5 with {\arrow{>}}}]
\draw[thick] (0,0) circle (1cm);
\draw[red, thick] (-1,0)--(1,0);
\node at (0,-1.5) {$\nabla h_{23}$};
\draw[postaction={decorate}] (1.732*0.5,0) -- (30:1);
\draw[postaction={decorate}] (0.5,0) -- (60:1);
\draw[postaction={decorate}] (0,0) -- (90:1);
\draw[postaction={decorate}] (-0.5,0) -- (120:1);
\draw[postaction={decorate}] (-1.732*0.5,0) -- (150:1);
\draw[postaction={decorate}] (1.732*0.5,0) -- (-30:1);
\draw[postaction={decorate}] (0.5,0) -- (-60:1);
\draw[postaction={decorate}] (0,0) -- (-90:1);
\draw[postaction={decorate}] (-0.5,0) -- (-120:1);
\draw[postaction={decorate}] (-1.732*0.5,0) -- (-150:1);
\end{scope}

\end{tikzpicture}
\]
Even though the vector fields $\nabla h_{12}, \nabla h_{23}$, and $\nabla h_{34}$ in the above are not defined on $\ngraph_2$, the upper part of $\nabla h_{34}$ and the lower part of $\nabla h_{34}+\nabla h_{23}$, for example, can be smoothly extended to $\ngraph_2$.
This is because the four Legendrian sheets near the edge of the $4$-graph are smooth with distinct slope, and hence the (signed) height difference between any two sheets are well-defined even on $\ngraph_2$.
\item For the trivalent vertices in $\ngraph_2$, we consider the following gradient vector field configurations:
\[

\]
\end{enumerate}

It is direct to check that the gradient $\nabla h_{12}$, $\nabla h_{23}$, and $\nabla h_{34}$ admits nonvanishing gradient vector field on each connected component of the domain.
So it suffices to check that the same holds for the following three vector fields:
\begin{align*}
\nabla h_{13}&=\nabla h_{12}+\nabla h_{23};\\
\nabla h_{24}&=\nabla h_{23}+\nabla h_{34};\\
\nabla h_{14}&=\nabla h_{12}+\nabla h_{23}+\nabla h_{34}.
\end{align*}
By tilting the Legendrians, i.e., by adjusting the slope of each sheets of Legendrians in $\D^2\times \R$, we may assume that
\begin{align*}
\| \nabla h_{12} \| &> \| \nabla h_{23} \|,&
\| \nabla h_{23} \| &> \| \nabla h_{34} \|,&
\| \nabla h_{12} \| &> \| \nabla h_{23} \|+\| \nabla h_{34} \| 
\end{align*}
except the neighborhood of the hexagonal points.
The assumption $\| \nabla h_{12} \| > \| \nabla h_{23} \|$, for example, guarantees that there are no vanishing points of $\nabla h_{13}$, even though there exist some points on $\D^2\setminus (\ngraph_1\cup \ngraph_2)$ where the direction of $\nabla h_{12}$ and the one of $\nabla h_{23}$ opposite. The same argument holds for $\nabla h_{24}$ and $\nabla h_{14}$.
\end{proof}

\subsection{Legendrian Coxeter mutation on \texorpdfstring{$N$}{N}-graphs}\label{sec:LCM on N-graphs}
Note that the induced quivers of the $N$-graphs in Tables~\ref{table:affine D 
type} and~\ref{table:affine E type} are all bipartite. In other words, there 
are two sets of vertices $I_+$ and $I_-$ of the quiver $\quiver$ such that all 
arrows are oriented from $I_+$ to $I_-$. A Coxeter mutation 
$\mutation_{\quiver}$ is defined by the composition of the mutations
\[
\mutation_\quiver=\prod_{i\in I_-}\mutation_i \cdot \prod_{i\in I_+} \mutation_i.
\]
Note that $\prod_{i\in I_+}\mutation_i$ does not depend on the order of composition of mutations $\mutation_i$ among $i\in I_+$, and the same holds for $I_-$. It is easy to check that $\prod_{i\in I_+}\mutation_i \cdot \prod_{i\in I_-} \mutation_i$ becomes the inverse of $\mutation_\quiver$, and defines another Coxeter mutation. Let us denote it by  $\mutation_\quiver^{-1}$.

Let us consider the action of Coxeter mutation $\mutation_\quiver$ on the exchange graph of type $\exdynD \exdynE$.
Recall from Remark~\ref{rmk_inifinitely_many_seeds_for_affine} that the order of $\mutation_\quiver$ is infinite.

Now we apply the Coxeter mutation in the $N$-graph setup. We call a pair $(\ngraph, \nbasis)$ of an $N$-graph together with a set of cycles $\nbasis$ is \emph{bipartite} if the induced quiver $\quiver(\ngraph, \nbasis)$ is bipartite. Then the set of one cycles $\nbasis$ is decomposed into $\nbasis_+$ and $\nbasis_-$ regarding $I_+$ and $I_-$, respectively.
Let us define a \emph{Legendrian Coxeter mutation} $\mutation_\ngraph$ on $\ngraph$ by
\begin{align*}
\mu_\ngraph&=\prod_{\gamma\in \nbasis_-}\mutation_\gamma \cdot \prod_{\gamma\in \nbasis_+}\mutation_\gamma,&
\mu_\ngraph^{-1}&=\prod_{\gamma\in \nbasis_+}\mutation_\gamma \cdot \prod_{\gamma\in \nbasis_-}\mutation_\gamma.
\end{align*}
It is worth mentioning that $\mutation_\ngraph^{\pm1}$ is well defined if the 
set of one cycles $\nbasis_\pm$ is disjoint. That is to say that 
$\prod_{\gamma\in \nbasis_\pm}\mutation_\gamma$ is independent of the order of 
mutations among $\gamma\in \nbasis_\pm$ when it is disjoint. This directly 
implies that $\mutation_\ngraph^{-1}$ is indeed the inverse of 
$\mutation_\ngraph$. Note that all sets of one cycles $\nbasis_\pm$ of the 
pairs $(\ngraph,\nbasis)$ in Tables~\ref{table:affine D type} 
and~\ref{table:affine E type} satisfy the disjoint condition.

In order to realize the Coxeter mutation in $N$-graphs setup, we need to argue 
that there is no obstruction to apply $\mutation_\ngraph^r$  to the pairs 
$(\ngraph,\nbasis)$ listed in Tables~\ref{table:affine D type} 
and~\ref{table:affine E type} for any $r \in \Z$.

The Legendrian Coxeter mutations $\mutation_\ngraph^{\pm1}$ on $(\ngraph({\exdynE}_n), \nbasis({\exdynE}_n))$ with $n=6,7,8$, so called tripod $N$-graphs, are already discussed in \cite{ABL2021}. Let us recall some terminologies. For any pair $(\ngraph,\nbasis)$ of a $3$-graph, i.e. bicolored graph, with an ordered set of one-cycles, $(\bar\ngraph,\bar\nbasis)$ denotes the pair obtained by switching two colors.

\begin{definition}[Coxeter padding for tripod $N$-graphs]
For each triple $(a,b,c)$, the annular $N$-graph depicted in Figure~\ref{figure:coxeter padding} is denoted by $\coxeterpadding(a,b,c)$ and called the \emph{Coxeter padding} of type $(a,b,c)$. We also denote the Coxeter padding with color switched by $\bar \coxeterpadding(a,b,c)$.
\end{definition}

\begin{figure}[ht]
\subfigure[$\coxeterpadding(a,b,c)$]{\makebox[0.48\textwidth]{
$
\begin{tikzpicture}[baseline=-.5ex,scale=0.5]
\draw[thick] (0,0) circle (5) (0,0) circle (3);
\foreach \i in {1,2,3} {
\begin{scope}[rotate=\i*120]
\draw[blue, thick, rounded corners] (0:3) -- (0:3.4) to[out=-75,in=80] (-40:4);
\draw[red, thick, dashed, rounded corners] (60:3) -- (60:3.3) to[out=0,in=220] (40:4) (40:4) to[out=120,in=-20] (60:4);
\draw[red, thick, rounded corners] (40:3) -- (40:3.3) to[out=-20,in=200] (20:4) (80:3) -- (80:3.3) to[out=20,in=240] (60:4) (100:3) -- (100:3.3) to[out=40,in=260] (80:4);
\draw[red, thick, rounded corners] (20:3) -- (20:3.5) to[out=-70,in=50] (-40:4) (20:4) to[out=-50,in=120] (0:4.5) -- (0:5);
\draw[red, thick] (20:4) to[out=100,in=-40] (40:4) (60:4) to[out=140,in=0] (80:4);
\draw[blue, thick] (20:5) -- (20:4) to[out=140,in=-80] (40:4) (60:5) -- (60:4) to[out=180,in=-40] (80:4) -- (80:5);
\draw[blue, thick, rounded corners] (20:4) to[out=-70,in=100] (-20:4.5) -- (-20:5);
\draw[blue, thick, dashed] (40:4) -- (40:5) (40:4) to[out=160,in=-60] (60:4);
\draw[fill=white, thick] (20:4) circle (2pt) (40:4) circle (2pt) (60:4) circle (2pt) (80:4) circle (2pt) (-40:4) circle (2pt);
\end{scope}
\curlybrace[]{10}{110}{5.2};
\draw (60:5.5) node[rotate=-30] {$a+1$};
\curlybrace[]{130}{230}{5.2};
\draw (180:5.5) node[rotate=90] {$b+1$};
\curlybrace[]{250}{350}{5.2};
\draw (300:5.5) node[rotate=30] {$c+1$};
}
\end{tikzpicture}$
}}
\subfigure[$\bar\coxeterpadding(a,b,c)$]{\makebox[0.48\textwidth]{
$
\begin{tikzpicture}[baseline=-.5ex,scale=0.5]
\draw[thick] (0,0) circle (5) (0,0) circle (3);
\foreach \i in {1,2,3} {
\begin{scope}[rotate=\i*120]
\draw[red, thick, rounded corners] (0:3) -- (0:3.4) to[out=-75,in=80] (-40:4);
\draw[blue, thick, dashed, rounded corners] (60:3) -- (60:3.3) to[out=0,in=220] (40:4) (40:4) to[out=120,in=-20] (60:4);
\draw[blue, thick, rounded corners] (40:3) -- (40:3.3) to[out=-20,in=200] (20:4) (80:3) -- (80:3.3) to[out=20,in=240] (60:4) (100:3) -- (100:3.3) to[out=40,in=260] (80:4);
\draw[blue, thick, rounded corners] (20:3) -- (20:3.5) to[out=-70,in=50] (-40:4) (20:4) to[out=-50,in=120] (0:4.5) -- (0:5);
\draw[blue, thick] (20:4) to[out=100,in=-40] (40:4) (60:4) to[out=140,in=0] (80:4);
\draw[red, thick] (20:5) -- (20:4) to[out=140,in=-80] (40:4) (60:5) -- (60:4) to[out=180,in=-40] (80:4) -- (80:5);
\draw[red, thick, rounded corners] (20:4) to[out=-70,in=100] (-20:4.5) -- (-20:5);
\draw[red, thick, dashed] (40:4) -- (40:5) (40:4) to[out=160,in=-60] (60:4);
\draw[fill=white, thick] (20:4) circle (2pt) (40:4) circle (2pt) (60:4) circle (2pt) (80:4) circle (2pt) (-40:4) circle (2pt);
\end{scope}

\curlybrace[]{10}{110}{5.2};
\draw (60:5.5) node[rotate=-30] {$a+1$};
\curlybrace[]{130}{230}{5.2};
\draw (180:5.5) node[rotate=90] {$b+1$};
\curlybrace[]{250}{350}{5.2};
\draw (300:5.5) node[rotate=30] {$c+1$};
}
\end{tikzpicture}$
}}
\\
\subfigure[$\coxeterpadding^{-1}(a,b,c)$]{\makebox[0.48\textwidth]{
$
\begin{tikzpicture}[baseline=-.5ex,xscale=-0.5,yscale=0.5,rotate=60]
\draw[thick] (0,0) circle (5) (0,0) circle (3);
\foreach \i in {1,2,3} {
\begin{scope}[rotate=\i*120]
\draw[red, thick, rounded corners] (0:3) -- (0:3.4) to[out=-75,in=80] (-40:4);
\draw[blue, thick, dashed, rounded corners] (60:3) -- (60:3.3) to[out=0,in=220] (40:4) (40:4) to[out=120,in=-20] (60:4);
\draw[blue, thick, rounded corners] (40:3) -- (40:3.3) to[out=-20,in=200] (20:4) (80:3) -- (80:3.3) to[out=20,in=240] (60:4) (100:3) -- (100:3.3) to[out=40,in=260] (80:4);
\draw[blue, thick, rounded corners] (20:3) -- (20:3.5) to[out=-70,in=50] (-40:4) (20:4) to[out=-50,in=120] (0:4.5) -- (0:5);
\draw[blue, thick] (20:4) to[out=100,in=-40] (40:4) (60:4) to[out=140,in=0] (80:4);
\draw[red, thick] (20:5) -- (20:4) to[out=140,in=-80] (40:4) (60:5) -- (60:4) to[out=180,in=-40] (80:4) -- (80:5);
\draw[red, thick, rounded corners] (20:4) to[out=-70,in=100] (-20:4.5) -- (-20:5);
\draw[red, thick, dashed] (40:4) -- (40:5) (40:4) to[out=160,in=-60] (60:4);
\draw[fill=white, thick] (20:4) circle (2pt) (40:4) circle (2pt) (60:4) circle (2pt) (80:4) circle (2pt) (-40:4) circle (2pt);
\end{scope}

\curlybrace[]{10}{110}{5.2};
\draw (60:5.5) node[rotate=-30] {$a+1$};
\curlybrace[]{130}{230}{5.2};
\draw (180:5.5) node[rotate=30] {$c+1$};
\curlybrace[]{250}{350}{5.2};
\draw (300:5.5) node[rotate=90] {$b+1$};
}
\end{tikzpicture}$
}}
\subfigure[$\bar\coxeterpadding^{-1}(a,b,c)$]{\makebox[0.48\textwidth]{
$
\begin{tikzpicture}[baseline=-.5ex,xscale=-0.5, yscale=0.5,rotate=60]
\draw[thick] (0,0) circle (5) (0,0) circle (3);
\foreach \i in {1,2,3} {
\begin{scope}[rotate=\i*120]
\draw[blue, thick, rounded corners] (0:3) -- (0:3.4) to[out=-75,in=80] (-40:4);
\draw[red, thick, dashed, rounded corners] (60:3) -- (60:3.3) to[out=0,in=220] (40:4) (40:4) to[out=120,in=-20] (60:4);
\draw[red, thick, rounded corners] (40:3) -- (40:3.3) to[out=-20,in=200] (20:4) (80:3) -- (80:3.3) to[out=20,in=240] (60:4) (100:3) -- (100:3.3) to[out=40,in=260] (80:4);
\draw[red, thick, rounded corners] (20:3) -- (20:3.5) to[out=-70,in=50] (-40:4) (20:4) to[out=-50,in=120] (0:4.5) -- (0:5);
\draw[red, thick] (20:4) to[out=100,in=-40] (40:4) (60:4) to[out=140,in=0] (80:4);
\draw[blue, thick] (20:5) -- (20:4) to[out=140,in=-80] (40:4) (60:5) -- (60:4) to[out=180,in=-40] (80:4) -- (80:5);
\draw[blue, thick, rounded corners] (20:4) to[out=-70,in=100] (-20:4.5) -- (-20:5);
\draw[blue, thick, dashed] (40:4) -- (40:5) (40:4) to[out=160,in=-60] (60:4);
\draw[fill=white, thick] (20:4) circle (2pt) (40:4) circle (2pt) (60:4) circle (2pt) (80:4) circle (2pt) (-40:4) circle (2pt);
\end{scope}
\curlybrace[]{10}{110}{5.2};
\draw (60:5.5) node[rotate=-30] {$a+1$};
\curlybrace[]{130}{230}{5.2};
\draw (180:5.5) node[rotate=30] {$c+1$};
\curlybrace[]{250}{350}{5.2};
\draw (300:5.5) node[rotate=90] {$b+1$};
}
\end{tikzpicture}$
}}
\caption{Coxeter paddings $\coxeterpadding(a,b,c)$, $\bar\coxeterpadding(a,b,c)$ and their inverses.}
\label{figure:coxeter padding}
\end{figure}

\begin{proposition}[{\cite[Proposition 5.11]{ABL2021}}]\label{prop:coxeter mutation on tripod}
For each triple $(a,b,c)$, the Legendrian Coxeter mutation on $(\ngraph(a,b,c),\nbasis(a,b,c))$ or $(\bar\ngraph(a,b,c),\bar\nbasis(a,b,c))$ is given by concatenating the Coxeter padding~$\coxeterpadding(a,b,c)$ followed by switching two colors:
\begin{align*}
\ncoxeter(\ngraph(a,b,c), \nbasis(a,b,c)) &= \coxeterpadding(a,b,c)(\bar \ngraph(a,b,c), \bar\nbasis(a,b,c));\\
\ncoxeter(\bar \ngraph(a,b,c), \nbasis(a,b,c)) &= \bar \coxeterpadding(a,b,c) (\ngraph(a,b,c), \nbasis(a,b,c)).
\end{align*}
Similarly,
\begin{align*}
\ncoxeter^{-1}(\ngraph(a,b,c), \nbasis(a,b,c)) &= \bar \coxeterpadding^{-1}(a,b,c)(\bar \ngraph(a,b,c), \bar\nbasis(a,b,c));\\
\ncoxeter^{-1}(\bar \ngraph(a,b,c), \nbasis(a,b,c)) &= \coxeterpadding^{-1}(a,b,c) (\ngraph(a,b,c), \nbasis(a,b,c)).
\end{align*}

\end{proposition}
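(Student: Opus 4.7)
The strategy is to verify the four equalities by performing the Legendrian Coxeter mutation explicitly on the tripod and identifying the resulting $N$-graph with the annular concatenation on the right-hand side. Because of the $\Z/3\Z$-rotational symmetry of $(\ngraph(a,b,c),\nbasis(a,b,c))$ and of the Coxeter padding $\coxeterpadding(a,b,c)$, it suffices to analyse a single radial leg, say the one of length $a+1$, and then extend equivariantly to the other two legs.

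First I would read off the bipartition $\nbasis=\nbasis_+\sqcup\nbasis_-$ from Figure~\ref{ngraph_nbasis_E}: the green-shaded cycles radiating from the center form $\nbasis_+$, and the yellow-shaded cycles sitting between consecutive hexagonal points form $\nbasis_-$. Because the supporting edges of the cycles in $\nbasis_+$ are pairwise disjoint, the product $\prod_{\gamma\in\nbasis_+}\mutation_\gamma$ reduces to a simultaneous application of the $\sfI$-cycle move of Figure~\ref{figure:I-mutation} to three well-separated edges per leg. After this first batch, one checks that the cycles of $\nbasis_-$ remain realised as $\sfI$-cycles, perhaps after applying Move~\Move{V} to interchange two non-consecutive color graphs, and that their supports are again pairwise disjoint, so that $\prod_{\gamma\in\nbasis_-}\mutation_\gamma$ is another simultaneous $\sfI$-mutation.

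Next I would combine the two batches and apply Moves~\Move{II} and~\Move{V} of Figure~\ref{fig:move1-6} to push the modified pattern radially outward along the leg. The expected outcome is that the outer region becomes the leg-slice of $\coxeterpadding(a,b,c)$ shown in Figure~\ref{figure:coxeter padding}, while the inner region restores the original tripod with the two colors exchanged, i.e.\ the pair $(\bar\ngraph(a,b,c),\bar\nbasis(a,b,c))$. This is a finite, local verification per trivalent vertex and hexagonal point of the leg, and it propagates by the $\Z/3\Z$-symmetry to the full disc, giving the first identity.

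The main obstacle will be the bookkeeping between the two batches: one must verify that throughout the mutation sequence the successive cycles remain geometric $\sfI$- or $\sfY$-cycles on disjoint supports, so that simultaneous mutation is legitimate and no cycle acquires an obstruction. Once the first identity is established, the remaining three follow formally by two symmetries: the color-swap involution $(\ngraph,\nbasis)\leftrightarrow(\bar\ngraph,\bar\nbasis)$, which exchanges $\coxeterpadding$ with $\bar\coxeterpadding$ in Figure~\ref{figure:coxeter padding}, and the identity $\mutation_\ngraph^{-1}=\prod_{\gamma\in\nbasis_+}\mutation_\gamma\cdot\prod_{\gamma\in\nbasis_-}\mutation_\gamma$, which reverses the order of the two batches and corresponds to reading the Coxeter padding from the outer boundary to the inner one.
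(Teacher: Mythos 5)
Your approach is essentially the one the authors take: the paper does not reprove this proposition but imports it from \cite[Proposition~5.11]{ABL2021}, and both that proof and the analogous pictorial argument given here in Appendix~\ref{sec:Coxeter padding affine D_n} for type $\exdynD$ proceed exactly as you describe --- mutate the two bipartite batches explicitly, use Moves \Move{I}, \Move{II}, \Move{V} to push the result radially outward, identify the outer annulus with the Coxeter padding and the inner disc with the color-swapped tripod, and deduce the remaining three identities from the color-swap involution and the invertibility of the padding. One small correction to your bookkeeping: the central cycle of $(\ngraph(a,b,c),\nbasis(a,b,c))$ is a $\sfY$-cycle supported at the unique hexagonal point (the yellow cycles lie between trivalent vertices, not hexagonal points), so the batch $\prod_{\gamma\in\nbasis_+}\mutation_\gamma$ requires the $\sfY$-mutation of Figure~\ref{figure:Y-mutation} there rather than a simultaneous $\sfI$-mutation.
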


\begin{figure}[ht]
\begin{tikzpicture}[baseline=-.5ex,scale=0.5]
\draw[thick] (0,0) circle (5cm);
\draw[dashed]  (0,0) circle (3cm);
\fill[opacity=0.1, even odd rule] (0,0) circle (3) (0,0) circle (5);
\foreach \i in {1,2,3} {
\begin{scope}[rotate=\i*120]
\draw[yellow, line cap=round, line width=5, opacity=0.5] (60:1) -- (50:1.5) (70:1.75) -- (50:2);
\draw[green, line cap=round, line width=5, opacity=0.5] (0,0) -- (60:1) (50:1.5) -- (70:1.75);
\draw[blue, thick, rounded corners] (0,0) -- (0:3.4) to[out=-75,in=80] (-40:4);
\draw[red, thick, fill] (0,0) -- (60:1) circle (2pt) (60:1) -- (50:1.5) circle (2pt) -- (70:1.75) circle (2pt) -- (50:2) circle (2pt);
\draw[red, thick, dashed, rounded corners] (50:2) -- (60:2.8) -- (60:3.3) to[out=0,in=220] (40:4) (40:4) to[out=120,in=-20] (60:4);
\draw[red, thick, rounded corners] (50:2) -- (40:2.8) -- (40:3.3) to[out=-20,in=200] (20:4) (70:1.75) -- (80:2.8) -- (80:3.3) to[out=20,in=240] (60:4) (60:1) -- (100:2.8) -- (100:3.3) to[out=40,in=260] (80:4);
\draw[red, thick, rounded corners] (50:1.5) -- (20:3) -- (20:3.5) to[out=-70,in=50] (-40:4) (20:4) to[out=-50,in=120] (0:4.5) -- (0:5);
\draw[red, thick] (20:4) to[out=100,in=-40] (40:4) (60:4) to[out=140,in=0] (80:4);
\draw[blue, thick] (20:5) -- (20:4) to[out=140,in=-80] (40:4) (60:5) -- (60:4) to[out=180,in=-40] (80:4) -- (80:5);
\draw[blue, thick, rounded corners] (20:4) to[out=-70,in=100] (-20:4.5) -- (-20:5);
\draw[blue, thick, dashed] (40:4) to[out=160,in=-60] (60:4) (40:4) -- (40:5);
\draw[fill=white, thick] (20:4) circle (2pt) (40:4) circle (2pt) (60:4) circle (2pt) (80:4) circle (2pt) (-40:4) circle (2pt);
\end{scope}
\draw[fill=white, thick] (0,0) circle (2pt);
}
\end{tikzpicture}
\caption{The Legendrian Coxeter mutation $\mutation_\ngraph$ on $(\ngraph(a,b,c),\nbasis(a,b,c))$}
\label{figure:after Legendrian coxeter mutation}
\end{figure}

The $N$-graph after applying Legendrian Coxeter mutation $\mutation_\ngraph$ on the pair $(\ngraph(a,b,c),\nbasis(a,b,c))$ is depicted in Figure~\ref{figure:after Legendrian coxeter mutation}. We have the following corollary immediately.

\begin{corollary}\label{cor:coxeter realization E-type}
For $n=6,7,8$ and any $r\in\Z$, the Legendrian Coxeter mutation $\mutation_\ngraph^r(\ngraph(\exdynE_n),\nbasis(\exdynE_n))$ is realizable by an $N$-graph and a good tuple of cycles.
\end{corollary}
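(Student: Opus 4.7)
The plan is induction on $|r|$, using Proposition~\ref{prop:coxeter mutation on tripod} as the engine. The base case $r=0$ is tautological, while $r=\pm 1$ is exactly the content of Proposition~\ref{prop:coxeter mutation on tripod}: one application of $\mutation_\ngraph$ (respectively $\mutation_\ngraph^{-1}$) on $(\ngraph(a,b,c),\nbasis(a,b,c))$ is realized by concatenating the annular Coxeter padding $\coxeterpadding(a,b,c)$ (respectively $\bar\coxeterpadding^{-1}(a,b,c)$) with the color-switched tripod $(\bar\ngraph(a,b,c),\bar\nbasis(a,b,c))$.

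For the inductive step, I would first observe that the outcome after one Legendrian Coxeter mutation has a very rigid form: an outer annulus (a Coxeter padding) glued along its inner boundary to a tripod $N$-graph whose cycles are a re-coloured copy of the original good tuple. Since Proposition~\ref{prop:coxeter mutation on tripod} is stated symmetrically for both $(\ngraph(a,b,c),\nbasis(a,b,c))$ and its colour-swap $(\bar\ngraph(a,b,c),\bar\nbasis(a,b,c))$, the same proposition applies to the inner tripod in the output. Thus if $\mutation_\ngraph^r(\ngraph(a,b,c),\nbasis(a,b,c))$ has already been exhibited as an $N$-graph of the form
\[
\ngraph^{(r)} \;=\; \coxeterpadding_1 \cdot \coxeterpadding_2 \cdots \coxeterpadding_r \cdot (\ngraph_\epsilon(a,b,c),\nbasis_\epsilon(a,b,c)),
\]
where each $\coxeterpadding_i\in\{\coxeterpadding(a,b,c),\bar\coxeterpadding(a,b,c)\}$ alternates and $\epsilon\in\{+,-\}$ records the current colouring of the innermost tripod, then applying $\mutation_\ngraph$ acts only on the innermost tripod cycles and, by the proposition, replaces $(\ngraph_\epsilon,\nbasis_\epsilon)$ by $\coxeterpadding_{r+1}\cdot(\ngraph_{-\epsilon},\nbasis_{-\epsilon})$. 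The analogous statement with $\bar\coxeterpadding^{-1}(a,b,c)$ and $\coxeterpadding^{-1}(a,b,c)$ handles $r<0$. Hence by induction $\mutation_\ngraph^r(\ngraph(a,b,c),\nbasis(a,b,c))$ is realized for every $r\in\Z$ as a stack of $|r|$ Coxeter paddings (or their inverses) attached to a tripod $N$-graph, and the induced cycles are a good tuple because they coincide with $\nbasis(a,b,c)$ or $\bar\nbasis(a,b,c)$ on the innermost tripod.

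The main obstacle to watch for is the well-definedness of $\mutation_\ngraph$ at each step: one must check that the cycles in $\nbasis_\epsilon(a,b,c)$ split into two disjoint subfamilies $\nbasis_\epsilon^+$ and $\nbasis_\epsilon^-$ inducing a bipartite quiver, so that the Coxeter composition $\mutation_\ngraph=\prod_{\gamma\in\nbasis_\epsilon^-}\mutation_\gamma\cdot\prod_{\gamma\in\nbasis_\epsilon^+}\mutation_\gamma$ makes sense and the geometric intersection numbers of the cycles involved agree with their algebraic intersection numbers. This is already verified for the initial tripod in Table~\ref{table:affine E type}, and since the Coxeter padding is glued \emph{outside} the innermost tripod without creating new intersections among the $\nbasis_\epsilon$-cycles, the bipartite/disjointness condition is preserved from one step to the next. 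Once this is confirmed, the induction goes through and produces the required realization for all $r\in\Z$.
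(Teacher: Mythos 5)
Your proof is correct and follows essentially the same route as the paper: the paper deduces the corollary immediately from Proposition~\ref{prop:coxeter mutation on tripod}, since each application of $\mutation_\ngraph^{\pm1}$ just stacks another Coxeter padding onto a colour-swapped tripod, and iterating is unobstructed. Your explicit induction and the remark on disjointness of $\nbasis_\pm$ (already noted in Section~\ref{sec:LCM on N-graphs}) simply spell out what the paper leaves implicit.
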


On the other hand, for the $N$-graph $(\ngraph(\exdynD_n), \nbasis(\exdynD_n))$ with cycles, the Legendrian Coxeter mutation becomes an attachment of the Coxeter padding of type $\coxeterpadding^{\pm1}(\exdynD_n)$ depicted in Table~\ref{table:coxeter paddings for affine D}.

\begin{table}[ht]
\begin{tabular}{c|c}
\toprule
$\coxeterpadding({\exdynD}_{n})$ &
\begin{tikzpicture}[baseline=-.5ex,scale=0.6]
\useasboundingbox (-3.5,-5) rectangle (14.5,5);
\draw (5.5,3.75) node {$\overbrace{\hphantom{\hspace{4cm}}}^{k=\lfloor \frac{n-3}2\rfloor}$};
\draw (5.5,-3.75) node {$\underbrace{\hphantom{\hspace{4cm}}}_{\ell=\lfloor \frac{n-4}2\rfloor}$};
\draw[blue,thick, rounded corners]
(1/3,3)-- (4/3,2) --(1,4/3)-- (2/3,2) -- (-1/3,3)
;
\begin{scope}[yscale=-1]
\draw[green, line width=5, opacity=0.5, line cap=round] 
(1,0.5) -- (1,1.5)
(0.5,-1) --(1.5,-1)
;
\draw[yellow, line width=5, opacity=0.5] 
(1,0.5) -- (2,0) --(1.5,-1)
(2,0) -- (3,0)
;
\draw[green, thick] 
(2,3) -- (2,-3)
(2,2) -- (0,2) -- (-2,0) -- (-3,0)
(2,-2) -- (0,-2) -- (-2,0)
(0,2) -- (0,-2)
(0,0) -- (2,0)
;
\draw[red, thick] 
(-3,1) -- (-2,0) to[out=15,in=-105] (0,2) (0,-2) -- (-1,-3)
(0,2) -- (1,1.5) -- (2,2)
(1,3) -- (2,2)
(1,1.5) -- (1,0.5) -- (2,0) --(3,0)
(1,0.5) -- (0,0) -- (0.5,-1) -- (1.5,-1) -- (2,0)
(0,0) to[out=-120,in=120] (0,-2) (0,2) -- (-1,3)
(0,-2) -- (0.5,-1)
(1.5,-1) -- (2,-2)
(2,-2) -- (1,-3)
(-3,-1) -- (-2,0)
;
\draw[thick, fill=white] 
(-2,0) circle (2pt) 
(0,2) circle (2pt) 
(0,-2) circle (2pt) 
(0,0) circle (2pt) 
(2,2) circle (2pt) 
(2,0) circle (2pt) 
(2,-2) circle (2pt);
\draw[red, thick, fill]
(1,0.5) circle (2pt)
(1,1.5) circle (2pt)
(0.5,-1) circle (2pt)
(1.5,-1) circle (2pt)
;
\draw[thick,]
(3,3) -- (-1,3) to[out=180, in=90] (-3,1) to (-3,-1) to[out=-90,in=180] (-1,-3) to (12,-3) to[out=0, in=-90] (14,-1) to (14,1) to[out=90,in=0] (12,3) to (3,3) 
;
\end{scope}

\begin{scope}[xshift=11cm,rotate=180]
\draw[yellow, line width=5, opacity=0.5, line cap=round] 
(1,0.5) -- (1,1.5)
(0.5,-1) --(1.5,-1)
;
\draw[green, line width=5, opacity=0.5] 
(1,0.5) -- (2,0) --(1.5,-1)
(2,0) -- (3,0)
;
\draw[blue, thick] 
(2,3) -- (2,-3)
(2,2) -- (0,2) -- (-2,0) -- (-3,0)
(2,-2) -- (0,-2) -- (-2,0)
(0,2) -- (0,-2)
(0,0) -- (2,0)
;
\draw[red, thick] 
(-3,1) -- (-2,0) to[out=-15,in=105] (0,-2) (0,2) -- (-1,3)
(0,2) -- (1,1.5) -- (2,2)
(1,3) -- (2,2)
(1,1.5) -- (1,0.5) -- (2,0) -- (3,0)
(1,0.5) -- (0,0) -- (0.5,-1) -- (1.5,-1) -- (2,0)
(0,0) to[out=120,in=-120] (0,2) (0,-2) -- (-1,-3)
(0,-2) -- (0.5,-1)
(1.5,-1) -- (2,-2)
(2,-2) -- (1,-3)
(-3,-1) -- (-2,0);
\draw[thick, fill=white] 
(-2,0) circle (2pt) 
(0,2) circle (2pt) 
(0,-2) circle (2pt) 
(0,0) circle (2pt) 
(2,2) circle (2pt) 
(2,0) circle (2pt) 
(2,-2) circle (2pt);
\draw[red, thick, fill]
(1,0.5) circle (2pt)
(1,1.5) circle (2pt)
(0.5,-1) circle (2pt)
(1.5,-1) circle (2pt)
(3,0) circle (2pt)
;
\end{scope}

\begin{scope}[xshift=6cm]
\draw[green, line width=5, opacity=0.5](-3,0)--(-2,0) (-1,0)--(-0.5,0); 
\draw[yellow, line width=5, opacity=0.5](-2,0) --(-1,0);
\draw[red, thick, rounded corners] 
(-3,0) -- (-0.5,0) (0.5,0)-- (2,0)
(-4,-2) -- (-3,-2) -- (-3,0)
(-4,2) -- (-2,2) -- (-2,3)
(-2,0) -- (-2,1) -- (-1,1) -- (-1,2) -- (-0.5,2) (0.5,2) -- (1,2) -- (1,3)
(-3,-3) -- (-3,-2) -- (-2,-2) -- (-2,-1) -- (-1,-1) -- (-1,0)
(-1,-3) -- (-1,-2) -- (-0.5,-2) (0.5,-2) -- (1,-2) -- (1,-1) -- (2,-1) -- (2,0)
(1,0) -- (1,1) -- (2,1) -- (2,2) -- (3,2)
(2,-3) -- (2,-2) -- (3,-2)
;
\draw[red, thick, fill] (-3,0) circle (2pt) (-2,0) circle (2pt) (-1,0) circle (2pt);
\draw[red, thick, dashed] (-0.5, 0) -- (0.5, 0) (-0.5, 2) -- (0.5, 2) (-0.5, -2) -- (0.5, -2);
\end{scope}
\begin{scope}[xshift=6cm,rotate=180]
\draw[green, line width=5, opacity=0.5](-1,0)--(-0.5,0); 
\draw[yellow, line width=5, opacity=0.5](-2,0) --(-1,0);
\draw[red, thick, rounded corners] 
;
\draw[red, thick, fill] (-2,0) circle (2pt) (-1,0) circle (2pt);
\end{scope}
\draw[thick, rounded corners, fill=white]
(3,1.75) -- (0.25,1.75) -- (0.25, -1.75) -- (10.75,-1.75) -- (10.75,1.75) -- (3,1.75);
\end{tikzpicture}
\\
\hline
$\coxeterpadding^{-1}({\exdynD}_{n})$ &
\begin{tikzpicture}[baseline=-.5ex,scale=0.6]
\useasboundingbox (-3.5,-5) rectangle (14.5,5);
\draw (5.5,3.75) node {$\overbrace{\hphantom{\hspace{4cm}}}^{k=\lfloor \frac{n-3}2\rfloor}$};
\draw (5.5,-3.75) node {$\underbrace{\hphantom{\hspace{4cm}}}_{\ell=\lfloor \frac{n-4}2\rfloor}$};
\draw[blue,thick, rounded corners]
(1/3,3)-- (4/3,2) --(1,4/3)-- (2/3,2) -- (-1/3,3)
;
\begin{scope}[yscale=-1]
\draw[green, line width=5, opacity=0.5, line cap=round] 
(1,0.5) -- (1,1.5)
(0.5,-1) --(1.5,-1)
;
\draw[yellow, line width=5, opacity=0.5] 
(1,0.5) -- (2,0) --(1.5,-1)
(2,0) -- (3,0)
;
\draw[green, thick] 
(2,3) -- (2,-3)
(2,2) -- (0,2) -- (-2,0) -- (-3,0)
(2,-2) -- (0,-2) -- (-2,0)
(0,2) -- (0,-2)
(0,0) -- (2,0)
;
\draw[red, thick] 
(-3,1) -- (-2,0) to[out=-15,in=105] (0,-2) -- (-1,-3)
(0,2) -- (1,1.5) -- (2,2)
(1,3) -- (2,2)
(1,1.5) -- (1,0.5) -- (2,0) --(3,0)
(1,0.5) -- (0,0) -- (0.5,-1) -- (1.5,-1) -- (2,0)
(0,0) to[out=120,in=-120] (0,2) -- (-1,3)
(0,-2) -- (0.5,-1)
(1.5,-1) -- (2,-2)
(2,-2) -- (1,-3)
(-3,-1) -- (-2,0)
;
\draw[red, thick, rounded corners]
(2,2) -- (3,2) -- (3,3)
;
\draw[thick, fill=white] 
(-2,0) circle (2pt) 
(0,2) circle (2pt) 
(0,-2) circle (2pt) 
(0,0) circle (2pt) 
(2,2) circle (2pt) 
(2,0) circle (2pt) 
(2,-2) circle (2pt);
\draw[red, thick, fill]
(1,0.5) circle (2pt)
(1,1.5) circle (2pt)
(0.5,-1) circle (2pt)
(1.5,-1) circle (2pt)
;
\draw[thick,]
(3,3) -- (-1,3) to[out=180, in=90] (-3,1) to (-3,-1) to[out=-90,in=180] (-1,-3) to (12,-3) to[out=0, in=-90] (14,-1) to (14,1) to[out=90,in=0] (12,3) to (3,3) 
;
\end{scope}

\begin{scope}[xshift=11cm,rotate=180]
\draw[yellow, line width=5, opacity=0.5, line cap=round] 
(1,0.5) -- (1,1.5)
(0.5,-1) --(1.5,-1)
;
\draw[green, line width=5, opacity=0.5] 
(1,0.5) -- (2,0) --(1.5,-1)
(2,0) -- (3,0)
;
\draw[blue, thick] 
(2,3) -- (2,-3)
(2,2) -- (0,2) -- (-2,0) -- (-3,0)
(2,-2) -- (0,-2) -- (-2,0)
(0,2) -- (0,-2)
(0,0) -- (2,0)
;
\draw[red, thick] 
(-3,1) -- (-2,0) to[out=15,in=-105] (0,2) -- (-1,3)
(0,2) -- (1,1.5) -- (2,2)
(1,3) -- (2,2)
(1,1.5) -- (1,0.5) -- (2,0) -- (3,0)
(1,0.5) -- (0,0) -- (0.5,-1) -- (1.5,-1) -- (2,0)
(0,0) to[out=-120,in=120] (0,-2) -- (-1,-3)
(0,-2) -- (0.5,-1)
(1.5,-1) -- (2,-2) -- (3,-2)
(2,-2) -- (1,-3)
(-3,-1) -- (-2,0);
\draw[red, thick, rounded corners]
(2,2) -- (3,2) -- (3,0)
;
\draw[thick, fill=white] 
(-2,0) circle (2pt) 
(0,2) circle (2pt) 
(0,-2) circle (2pt) 
(0,0) circle (2pt) 
(2,2) circle (2pt) 
(2,0) circle (2pt) 
(2,-2) circle (2pt);
\draw[red, thick, fill]
(1,0.5) circle (2pt)
(1,1.5) circle (2pt)
(0.5,-1) circle (2pt)
(1.5,-1) circle (2pt)
(3,0) circle (2pt)
;
\end{scope}

\begin{scope}[xshift=6cm]
\draw[green, line width=5, opacity=0.5](-3,0)--(-2,0) (-1,0)--(-0.5,0); 
\draw[yellow, line width=5, opacity=0.5](-2,0) --(-1,0);
\draw[red, thick, rounded corners] 
(-4,2)--(-3, 2) -- (-3,1) -- (-2,1) -- (-2,0) 
(-2,0) -- (-3,0) 
(-3,0) -- (-3,-1) -- (-2,-1) -- (-2,-2) -- (-1,-2) -- (-1,-3) 
(-2,0) -- (-1,0) 
(-2,3) -- (-2,2) -- (-1,2) -- (-1, 1) -- (-0.5, 1) 
(-1,0) -- (-0.5, 0) 
(-1,0) -- (-1,-1) -- (-0.5, -1);
\draw[red, thick, fill] (-3,0) circle (2pt) (-2,0) circle (2pt) (-1,0) circle (2pt);
\draw[red, thick, dashed] (-0.5, 0) -- (0.5, 0) (-0.5, 1) -- (0.5, 1) (-0.5, -1) -- (0.5, -1);
\end{scope}
\begin{scope}[xshift=6cm,rotate=180]
\draw[green, line width=5, opacity=0.5](-1,0)--(-0.5,0); 
\draw[yellow, line width=5, opacity=0.5](-2,0) --(-1,0);
\draw[red, thick, rounded corners] 
(-2,-2) -- (-1,-2) -- (-1,-3) 
(-2,0) -- (-1,0) 
(-2,3) -- (-1,2) -- (-1, 1) -- (-0.5, 1) 
(-1,0) -- (-0.5, 0) 
(-1,0) -- (-1, -1) -- (-0.5, -1);
\draw[red, thick, fill] (-2,0) circle (2pt) (-1,0) circle (2pt);
\end{scope}
\draw[thick, rounded corners, fill=white]
(3,1.75) -- (0.25,1.75) -- (0.25, -1.75) -- (10.75,-1.75) -- (10.75,1.75) -- (3,1.75)
;
\end{tikzpicture}
\\
\bottomrule
\end{tabular}
\caption{Coxeter paddings $\coxeterpadding^{\pm1}(\exdynD_n)$}
\label{table:coxeter paddings for affine D}
\end{table}

\begin{proposition}\label{proposition:coxeter realization D-type}
For any $r\in\Z$, the Legendrian Coxeter mutations $\mutation_\ngraph^r$ on the pairs are given by piling the Coxeter paddings $\coxeterpadding^{\pm1}(\exdynD_n)$.
\begin{align*}
\mutation_\ngraph^{\pm1}(\ngraph(\exdynD_n),\nbasis(\exdynD_n))
=\coxeterpadding^{\pm1}(\exdynD_n)(\ngraph(\exdynD_n),\nbasis(\exdynD_n)).
\end{align*}
\end{proposition}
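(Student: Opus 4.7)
The plan is to verify directly that a single Legendrian Coxeter mutation $\mutation_\ngraph$ applied to $(\ngraph(\exdynD_n),\nbasis(\exdynD_n))$ produces the pair obtained by concatenating the annular $N$-graph $\coxeterpadding(\exdynD_n)$ of Table~\ref{table:coxeter paddings for affine D} along the inner boundary, and analogously that $\mutation_\ngraph^{-1}$ produces concatenation with $\coxeterpadding^{-1}(\exdynD_n)$. The general $r\in\Z$ case then follows by iteration, since the boundary data and the shaded cycles on the outer boundary of $\coxeterpadding^{\pm 1}(\exdynD_n)(\ngraph(\exdynD_n),\nbasis(\exdynD_n))$ coincide with those of $(\ngraph(\exdynD_n),\nbasis(\exdynD_n))$, so another padding can be attached.

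First I would fix the partition $\nbasis(\exdynD_n)=\nbasis_+\sqcup \nbasis_-$ into the green- and yellow-shaded $\sfI$-cycles read off from Table~\ref{table:affine D type}, and observe that within each of $\nbasis_\pm$ the supports of the cycles are pairwise disjoint. By the locality of the $\sfI$-cycle mutation move in Figure~\ref{figure:I-mutation}, this makes each of $\prod_{\gamma\in\nbasis_+}\mutation_\gamma$ and $\prod_{\gamma\in\nbasis_-}\mutation_\gamma$ well-defined and order independent, so $\mutation_\ngraph=\prod_{\gamma\in\nbasis_-}\mutation_\gamma\cdot\prod_{\gamma\in\nbasis_+}\mutation_\gamma$ and its inverse act unambiguously on $(\ngraph(\exdynD_n),\nbasis(\exdynD_n))$.

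Next I would perform the two commuting batches of local mutations in succession and simplify the result using the $N$-graph moves \Move{II} and \Move{V} from Figure~\ref{fig:move1-6}. Because the shape of $\ngraph(\exdynD_n)$ depends on the parity of $n$, I would split the analysis into the cases $n=2\ell+4$ and $n=2\ell+5$ of Table~\ref{table:affine D type}, and treat the two cap ends (where the hexagonal points live) separately from the middle $\sfI$-chain of length $n-4$. A bookkeeping computation shows that after all the moves the resulting $N$-graph on $\disk^2$ factors as an inner copy of $(\ngraph(\exdynD_n),\nbasis(\exdynD_n))$ surrounded by an annulus which matches the picture of $\coxeterpadding(\exdynD_n)$ in Table~\ref{table:coxeter paddings for affine D} on the nose. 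The inverse statement follows by the same argument with $\nbasis_+$ and $\nbasis_-$ interchanged, yielding $\coxeterpadding^{-1}(\exdynD_n)$.

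The main obstacle is the combinatorial bookkeeping: each trivalent vertex and hexagonal point of $\ngraph(\exdynD_n)$ must be tracked through a long sequence of mutations and moves, and the final $N$-graph has to be identified as $\coxeterpadding(\exdynD_n)$ rather than merely as an equivalent $N$-graph with the correct boundary. The most efficient treatment is a pictorial case-by-case verification, which I would defer to Appendix~\ref{sec:Coxeter padding affine D_n}. The small cases $n=4,5$ already exhibit the full local structure at the caps, and the chain of length $n-4$ in the middle propagates a single $\sfI$-cycle mutation uniformly, so the general~$n$ case reduces to an inductive assembly of the cap computations with this propagating middle move.
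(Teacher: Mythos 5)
Your overall strategy coincides with the paper's: verify a single application of $\mutation_\ngraph^{\pm1}$ pictorially, identify the residue as the annulus $\coxeterpadding^{\pm1}(\exdynD_n)$ around a fresh copy of $(\ngraph(\exdynD_n),\nbasis(\exdynD_n))$, and iterate using the fact that the boundary data and cycle configuration are reproduced. The splitting by parity of $n$ and the separate treatment of the two caps versus the middle chain of length $n-4$ also match Appendix~\ref{sec:Coxeter padding affine D_n}.

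There is, however, a concrete gap at the heart of your plan. You describe $\nbasis_+\sqcup\nbasis_-$ as a partition into green- and yellow-shaded \emph{$\sfI$-cycles} and invoke the locality of the $\sfI$-cycle mutation of Figure~\ref{figure:I-mutation}. But in Table~\ref{table:affine D type} the central cycles (e.g.\ $\gamma_1$, and $\gamma_{n-1}$ at the other cap) are supported on subgraphs passing \emph{through the hexagonal points}: for $\exdynD_4$ the yellow cycle runs through both hexagonal points and touches four trivalent vertices. These are not $\sfI$-cycles, and mutating at them is exactly the nontrivial step. The paper's proof hinges on an auxiliary local operation, the move $\mathrm{(Z)}$, which packages a mutation at such a $\sfY$-like cycle with moves \Move{I} and \Move{II} so that, after the operation, $\sfI$-cycles remain $\sfI$-cycles and the $\sfY$-like cycle remains $\sfY$-like; this is what keeps the bookkeeping tractable, guarantees that the adjacent cycles of $\nbasis_\mp$ are still mutable short $\sfI$-cycles after the first batch, and ensures the final configuration reproduces the initial one so that iteration (your last step) is legitimate. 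Your proposal never says how the mutation at these long central cycles is performed or why the subsequent batch of mutations is still realizable afterwards; without an analogue of $\mathrm{(Z)}$, the "bookkeeping computation" is not merely tedious but incomplete, since the raw $\sfY$-mutation of Figure~\ref{figure:Y-mutation} introduces new vertices and hexagonal points that must be shown to simplify back to the claimed padding.
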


The pictorial proof of this proposition will be given in Appendix~\ref{sec:Coxeter padding affine D_n}.
Consequently, we have the following corollary.
\begin{corollary}\label{cor:coxeter realization D-type}
For any $r\in \Z$, 
the Legendrian Coxeter mutation $\mutation_\ngraph^r(\ngraph(\exdynD_n),\nbasis(\exdynD_n))$ is realizable by $N$-graphs and good tuple of cycles.
\end{corollary}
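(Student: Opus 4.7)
The plan is to derive the corollary by iterating Proposition~\ref{proposition:coxeter realization D-type} inductively on $|r|$. The base cases $r=0$ and $r=\pm 1$ are immediate: $r=0$ is given directly by the construction in Table~\ref{table:affine D type}, while $r=\pm 1$ is the content of Proposition~\ref{proposition:coxeter realization D-type} itself.

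For the inductive step, I would assume that $\mutation_\ngraph^r(\ngraph(\exdynD_n),\nbasis(\exdynD_n))$ admits a realization as an $N$-graph obtained by concatenating $|r|$ copies of the Coxeter padding $\coxeterpadding^{\epsilon}(\exdynD_n)$, with $\epsilon=\mathrm{sign}(r)$, onto the innermost base pair $(\ngraph(\exdynD_n),\nbasis(\exdynD_n))$. Since all cycles of $\nbasis$ lie in this innermost base, the next Legendrian Coxeter mutation $\mutation_\ngraph^{\epsilon}$ only affects the innermost portion, and Proposition~\ref{proposition:coxeter realization D-type} applied there inserts one additional copy of $\coxeterpadding^{\epsilon}(\exdynD_n)$ between the existing stack of paddings and the innermost base. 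This yields an $(|r|+1)$-fold stack and closes the induction.

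To ensure the procedure yields a bona fide $N$-graph with a good tuple of cycles at each stage, one must verify two compatibility conditions. First, the outer boundary of each Coxeter padding in Table~\ref{table:coxeter paddings for affine D} must match the inner boundary required for the next attachment; this is clear from the construction, since both boundaries of $\coxeterpadding^{\pm 1}(\exdynD_n)$ encode the same Legendrian braid $\widehat\beta(\exdynD_n)$ in $J^1\sphere^1$ (cf.\ Figure~\ref{fig:legendrian link of affine DE}). Second, goodness of the cycles must persist throughout: at each inductive step the tuple $\nbasis$ sits in the innermost base copy exactly as in the initial $(\ngraph(\exdynD_n),\nbasis(\exdynD_n))$, so the cycles are still realized as $\sfI$- and $\sfY$-cycles and hence remain good in the sense of Definition~\ref{def:good cycle}.

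The only genuine obstacle in the argument is thus already encapsulated in Proposition~\ref{proposition:coxeter realization D-type} itself, whose explicit pictorial proof is deferred to Appendix~\ref{sec:Coxeter padding affine D_n}. Once that proposition is in hand, the inductive bookkeeping above is routine, and the corollary follows.
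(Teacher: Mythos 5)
Your proposal is correct and matches the paper's intent: the paper derives this corollary directly from Proposition~\ref{proposition:coxeter realization D-type} with the single word ``Consequently,'' the point being exactly the iteration you spell out — each application of $\mutation_\ngraph^{\pm1}$ reproduces the original pair $(\ngraph(\exdynD_n),\nbasis(\exdynD_n))$ in the interior with one more copy of $\coxeterpadding^{\pm1}(\exdynD_n)$ attached, so the paddings pile up and goodness of the cycles is preserved at every stage. Your induction is just a more explicit write-up of the same argument.
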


Note that the Coxeter paddings are obtained from the Coxeter mutations $\mutation_\ngraph^{\pm1}$ conjugated by a sequence of Move (II). For the notational clarity, it is worth mentioning that $\coxeterpadding(\exdynD_n)$ and $\coxeterpadding^{-1}(\exdynD_n)$ are the inverse to each other with respect to the piling up operation introduced in Section~\ref{subsec:N-graph}.

For example, let us present the Coxeter paddings $\coxeterpadding^{\pm1}(\exdynD_4)$ as follows:
\[
\phantom{^{-1}}\coxeterpadding(\exdynD_4)=
\begin{tikzpicture}[baseline=-.5ex,yscale=-1]
\begin{scope}
\draw[thick] (0,1) -- (6,1) (0,-1) -- (6,-1);
\draw[dashed] (0,1) -- (0,-1);
\draw[thick, red] 
(0,0) -- (0.75,0) -- (1,1) (0.75,0) -- (1,-1)
(1.5,1) --(1.75,0) -- (1.5,-1)
(1.75,0) to[out=0, in=90] (2.5,-0.5) -- (2,-1)
(2.5,-0.5) --(3,-1)
(2,1) -- (2.5,0.5) -- (3,1)
(2.5,0.5) to[out=-90,in=180] (3.25,0) -- (3.5,1)
(3.25,0) -- (3.5,-1)
(5,1) -- (5.25,0) -- (5,-1)
(5.25,0) -- (6,0)
;
\draw[thick, green]
(0.5,1) -- (0.75,0) -- (0.5,-1)
(0.75,0) -- (1.75,0) -- (2.5,0.5) -- (3.25,0) -- (2.5,-0.5) -- (1.75,0)
(2.5,1) -- (2.5,0.5) (2.5,-1) -- (2.5,-0.5)
(3.25,0) -- (5.25,0) -- (5.5,1)
(5.25,0) -- (5.5,-1)
;
\draw[thick, blue]
(4,1) -- (4,-1)
(4.5,1) -- (4.5,-1)
;
\draw[thick, fill=white] 
(0.75,0) circle (2pt) 
(1.75,0) circle (2pt) 
(2.5,0.5) circle (2pt) 
(2.5,-0.5) circle (2pt) 
(3.25,0) circle (2pt) 
(5.25,0) circle (2pt) 
;
\end{scope}

\begin{scope}[xshift=6cm]
\draw[thick] (0,1) -- (5,1) (0,-1) -- (5,-1);
\draw[dashed] (5,1) -- (5,-1);
\draw[thick, red] 
(0,0) -- (0.75,0) -- (1,1) (0.75,0) -- (1,-1)
(1.5,1) --(1.75,0) -- (1.5,-1)
(1.75,0) to[out=0, in=90] (2.5,-0.5) -- (2,-1)
(2.5,-0.5) --(3,-1)
(2,1) -- (2.5,0.5) -- (3,1)
(2.5,0.5) to[out=-90,in=180] (3.25,0) -- (3.5,1)
(3.25,0) -- (3.5,-1)
(4,1) -- (4.25,0) -- (4,-1)
(4.25,0) -- (5,0)
;
\draw[thick, blue]
(0.5,1) -- (0.75,0) -- (0.5,-1)
(0.75,0) -- (1.75,0) -- (2.5,0.5) -- (3.25,0) -- (2.5,-0.5) -- (1.75,0)
(2.5,1) -- (2.5,0.5) (2.5,-1) -- (2.5,-0.5)
(3.25,0) -- (4.25,0) -- (4.5,1)
(4.25,0) -- (4.5,-1)
;
\draw[thick, fill=white] 
(0.75,0) circle (2pt) 
(1.75,0) circle (2pt) 
(2.5,0.5) circle (2pt) 
(2.5,-0.5) circle (2pt) 
(3.25,0) circle (2pt) 
(4.25,0) circle (2pt) 
;
\end{scope}
\end{tikzpicture}
\]

\[
\coxeterpadding^{-1}(\exdynD_4)=
\begin{tikzpicture}[baseline=-.5ex]
\begin{scope}
\draw[thick] (0,1) -- (6,1) (0,-1) -- (6,-1);
\draw[dashed] (0,1) -- (0,-1);
\draw[thick, red] 
(0,0) -- (0.75,0) -- (1,1) (0.75,0) -- (1,-1)
(1.5,1) --(1.75,0) -- (1.5,-1)
(1.75,0) to[out=0, in=90] (2.5,-0.5) -- (2,-1)
(2.5,-0.5) --(3,-1)
(2,1) -- (2.5,0.5) -- (3,1)
(2.5,0.5) to[out=-90,in=180] (3.25,0) -- (3.5,1)
(3.25,0) -- (3.5,-1)
(5,1) -- (5.25,0) -- (5,-1)
(5.25,0) -- (6,0)
;
\draw[thick, green]
(0.5,1) -- (0.75,0) -- (0.5,-1)
(0.75,0) -- (1.75,0) -- (2.5,0.5) -- (3.25,0) -- (2.5,-0.5) -- (1.75,0)
(2.5,1) -- (2.5,0.5) (2.5,-1) -- (2.5,-0.5)
(3.25,0) -- (5.25,0) -- (5.5,1)
(5.25,0) -- (5.5,-1)
;
\draw[thick, blue]
(4,1) -- (4,-1)
(4.5,1) -- (4.5,-1)
;
\draw[thick, fill=white] 
(0.75,0) circle (2pt) 
(1.75,0) circle (2pt) 
(2.5,0.5) circle (2pt) 
(2.5,-0.5) circle (2pt) 
(3.25,0) circle (2pt) 
(5.25,0) circle (2pt) 
;
\end{scope}

\begin{scope}[xshift=6cm]
\draw[thick] (0,1) -- (5,1) (0,-1) -- (5,-1);
\draw[dashed] (5,1) -- (5,-1);
\draw[thick, red] 
(0,0) -- (0.75,0) -- (1,1) (0.75,0) -- (1,-1)
(1.5,1) --(1.75,0) -- (1.5,-1)
(1.75,0) to[out=0, in=90] (2.5,-0.5) -- (2,-1)
(2.5,-0.5) --(3,-1)
(2,1) -- (2.5,0.5) -- (3,1)
(2.5,0.5) to[out=-90,in=180] (3.25,0) -- (3.5,1)
(3.25,0) -- (3.5,-1)
(4,1) -- (4.25,0) -- (4,-1)
(4.25,0) -- (5,0)
;
\draw[thick, blue]
(0.5,1) -- (0.75,0) -- (0.5,-1)
(0.75,0) -- (1.75,0) -- (2.5,0.5) -- (3.25,0) -- (2.5,-0.5) -- (1.75,0)
(2.5,1) -- (2.5,0.5) (2.5,-1) -- (2.5,-0.5)
(3.25,0) -- (4.25,0) -- (4.5,1)
(4.25,0) -- (4.5,-1)
;
\draw[thick, fill=white] 
(0.75,0) circle (2pt) 
(1.75,0) circle (2pt) 
(2.5,0.5) circle (2pt) 
(2.5,-0.5) circle (2pt) 
(3.25,0) circle (2pt) 
(4.25,0) circle (2pt) 
;
\end{scope}
\end{tikzpicture}
\]
Then it is direct to check that the concatenations $\coxeterpadding(\exdynD_4) \coxeterpadding^{-1}(\exdynD_4)$ and $\coxeterpadding^{-1}(\exdynD_4) \coxeterpadding(\exdynD_4)$ become trivial annulus $N$-graphs after a sequence of Move (I). The same holds for all $n\geq 4$.

\subsection{Legendrian Coxeter mutations and Legendrian loops}\label{sec:legendrian loop}

Let us start by introducing the concept of Legendrian loops. Let $\legendrian \subset (\R ^3, \xi_{\rm st})$ be a Legendrian link and $\cL(\legendrian)$ be the space of Legendrian links isotopic to that Legendrian $\legendrian$. Then a {\em Legendrian loop} $\vartheta$ is a continuous map $\vartheta\colon(\sphere^1,{\rm pt})\to (\cL(\legendrian), \legendrian)$. Note that the graph of the Legendrian loop $\vartheta$ induces a Lagrangian self-concordance of $\legendrian$ inside the symplectization $(\R\times \R^3,d(e^t \alpha_{st}))$, where $\xi_{\rm st}=\ker \alpha_{\rm st}$.

On the other hand, by the observation in Section \ref{subsec:N-graph}, $N$-graphs on annulus can be interpreted as a Lagrangian cobordism from the Legendrian of outer boundary to the one of inner boundary.

The goal of this section is to find Legendrian loops corresponding to $N$-graphs annuli coming from the Legendrian Coxeter mutations in Section \ref{sec:LCM on N-graphs}.

Let us call an $N$-graph on an annulus \emph{tame} if it is obtained by stacking elementary annulus $N$-graphs introduced in Section~\ref{subsec:N-graph}. 

\begin{lemma}
Legendrian Coxeter paddings of type ${\exdynD\exdynE}$ are tame.
\end{lemma}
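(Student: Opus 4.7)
The plan is to decompose each Coxeter padding into a finite stack of elementary annulus $N$-graphs $\ngraph_{\Move{RIII}}^{\pm 1}$ and $\ngraph_{\Move{R0}}^{\pm 1}$ by peeling off thin sub-annuli, each carrying exactly one trivalent vertex or one hexagonal point. To do this I choose a radial Morse function on the annular domain whose regular level sets are circles parallel to the two boundary components, and perturb it generically so that no two critical combinatorial features of the $N$-graph (vertices, hexagonal points, or intersection edges) lie on the same level circle. The preimages of regular values then slice the padding into sub-annuli, and every sub-annulus containing a single combinatorial feature is, up to Legendrian isotopy rel boundary, precisely one of the elementary annuli $\ngraph_{\Move{R0}}^{\pm 1}$ or $\ngraph_{\Move{RIII}}^{\pm 1}$ from Figure~\ref{fig:elementary annulus N-graph}.

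First I would run this argument for the type-$\exdynE$ padding $\coxeterpadding(a,b,c)$ of Figure~\ref{figure:coxeter padding}. Thanks to the manifest $\Z/3\Z$-rotational symmetry, it suffices to analyze one sector, and the vertices in that sector are already concentrated on two concentric circular layers (at the radii labelled $4$ and between $3$ and $4$ in the figure). Slicing radially between these two layers and between each layer and the adjacent boundary circle yields a three-level stack: one level realizing the three inner \Move{R0}-type events, one level realizing the three outer \Move{RIII}-type events, and trivial annuli that only carry parallel arcs. The inverse padding $\coxeterpadding^{-1}(a,b,c)$ is handled identically after reversing the orientation of the radial function and replacing each elementary piece by its inverse.

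For the type-$\exdynD$ padding $\coxeterpadding(\exdynD_n)$ of Table~\ref{table:coxeter paddings for affine D}, the picture is not rotationally symmetric, but splits into two fixed end-blocks of bounded complexity on the left and right plus a central band of horizontal length $k+\ell$ whose vertices and hexagonal points are arranged in a translation-periodic pattern along the $\sphere^1$-direction. The end-blocks contain only constantly many combinatorial features, so the slicing procedure above cuts each into a finite stack of elementary annuli directly from the figure. For the central band, I would slice it one vertical layer at a time, reading off the braid word on each intermediate boundary circle, and observing that consecutive braid words differ by exactly one Reidemeister \Move{III} or \Move{R0} move; this guarantees that each layer is an elementary annulus in the sense of Section~\ref{subsec:N-graph}. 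The inverse $\coxeterpadding^{-1}(\exdynD_n)$ is again obtained by reversing the radial direction.

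The main obstacle will be the bookkeeping for the type-$\exdynD$ central band, where I must certify that after each peeling step the braid word on the newly exposed inner circle agrees with the outer boundary of the next elementary annulus to be attached, so that the pieces can actually be concatenated as in Section~\ref{subsec:N-graph}. This reduces to inspecting, in Table~\ref{table:coxeter paddings for affine D}, the finite list of local patterns that appear in the central band and verifying that each is the front of a single \Move{RIII}-type or \Move{R0}-type annulus; beyond this combinatorial check no further calculation is needed.
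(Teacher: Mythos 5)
Your overall strategy -- slice the annular padding by generic level circles of a radial function so that each thin sub-annulus contains exactly one combinatorial feature, then identify each slice with an elementary annulus from Figure~\ref{fig:elementary annulus N-graph} -- is exactly what the paper does; its proof simply exhibits the resulting decompositions explicitly for $\coxeterpadding^{-1}(\exdynD_4)$ and $\coxeterpadding^{-1}(a,b,c)$ in Figures~\ref{fig:coxeter padding affine D4 is tame} and~\ref{fig:coxeter padding affine E is tame} and declares the other cases similar.

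There is, however, one step that is false as written. You assert that every sub-annulus containing a single combinatorial feature, including ``one trivalent vertex,'' is Legendrian isotopic rel boundary to $\ngraph_{\Move{RIII}}^{\pm1}$ or $\ngraph_{\Move{R0}}^{\pm1}$. No elementary annulus in Section~\ref{subsec:N-graph} contains a trivalent vertex: a slice through a trivalent vertex changes the length of the boundary braid word (it is a cusp/Reidemeister-I--type event), so it cannot equal either elementary piece, and by the definition of tameness such a slice would in fact obstruct the lemma. The argument only goes through because the Coxeter paddings contain \emph{no} trivalent vertices in their annular regions -- all solid-colored vertices visible in Table~\ref{table:coxeter paddings for affine D} and Figure~\ref{figure:coxeter padding} lie in the inner disk, not in the padding (equivalently, an exact Lagrangian concordance with $\chi=0$ forces zero trivalent vertices). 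You need to verify and state this rather than posit an elementary piece that does not exist. A smaller inaccuracy: for the type-$\exdynE$ paddings one has $N=3$, so the two nonempty subgraphs are consecutive and there are no \Move{R0} events at all; your ``three inner \Move{R0}-type events'' per sector are actually further hexagonal points, and the whole type-$\exdynE$ decomposition consists of \Move{RIII} annuli only.
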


\begin{proof}
We provide decompositions of the Coxeter paddings $\coxeterpadding^{-1}(\exdynD_4)$ and $\coxeterpadding^{-1}(a,b,c)$ into sequences of elementary annulus $N$-graphs in Figures~\ref{fig:coxeter padding affine D4 is tame} and~\ref{fig:coxeter padding affine E is tame}, respectively. The other cases are similar and we omit it.
\end{proof}

\begin{figure}[ht]
\begin{tikzpicture}
\begin{scope}
\draw[violet, line width=5, opacity=0.5] 
(2.5,8) -- (2.5,7) to[out=-90,in=90] (3.5,6) to[out=-90,in=90] (4,5) to[out=-90,in=90] (4.5,4) to[out=-90,in=90] (5.5,3) to[out=-90,in=150] (6.5,2.5)
(0,2.6) to[out=0,in=150] (0.5,2.5)
(0.5,2.5) to[out=-30,in=90] (1,2) to[out=-90,in=90] (2,1) to (2,0)
;
\draw[dashed] (0,8) to (0,0) ;
\draw[thick, green]
(0.5,8) -- (0.5,2.5) to[out=-150, in=0] (0,2.4)
(0.5,2.5) to[out=-30, in=90] (1,2) to[out=-90, in=150] (1.5,1.5) -- (1.5,1) to[out=-90, in=150] (2,0.5) -- (2,0)
(2.5,8) -- (2.5,7.5) to[out=-150,in=90] (2,7) -- (2,2) to[out=-90,in=30] (1.5,1.5)
(2.5,7.5) to[out=-30,in=90] (3,7) to (3,6.5) to[out=-150,in=90] (2.5,6) to (2.5,1) to[out=-90,in=30] (2,0.5)
(3,6.5) to[out=-30,in=90] (3.5,6) to[out=-90,in=90] (4,5) to[out=-90,in=90] (4.5,4)
to[out=-90,in=150] (5,3.5) to (5,0)
(5.5,8) to (5.5,4) to[out=-90,in=30] (5,3.5)
;
\draw[thick,red]
(0,2.6) to[out=0,in=150] (0.5,2.5)
(1,8) to (1,3) to[out=-90,in=30] (0.5,2.5) to (0.5,0)
(1.5,8) to (1.5,1.5) to[out=-150,in=90] (1,1) to (1,0)
(1.5,1.5) to[out=-30,in=90] (2,1) to (2,0.5) to[out=-150,in=90] (1.5,0)
(2,0.5) to[out=-30,in=90] (2.5,0)
(2,8) to[out=-90,in=150] (2.5,7.5) to (2.5,7) to[out=-90,in=150] (3,6.5) to (3,0)
(3,8) to[out=-90,in=30] (2.5,7.5)
(3.5,8) to (3.5,7) to[out=-90,in=30] (3, 6.5)
(5,8) to (5,3.5) to[out=-150,in=90] (4.5,3) to (4.5,0)
(5,3.5) to[out=-30,in=90] (5.5,3) to[out=-90,in=150] (6.5,2.5)
;
\draw[thick, blue]
(4,8) to (4,6) to[out=-90,in=90] (3.5,5) to (3.5,0)
(4.5,8) to (4.5,5) to[out=-90,in=90] (4,4) to (4,0)
(6.5,2.5) to[out=-150,in=90] (5.5,2) to (5.5,0)
;
\foreach \x in {0,1,...,8}
{
\draw[thick]
(0,\x) -- (6,\x);
}
\draw[thick, fill=white] 
(0.5,2.5) circle (2pt) 
(1.5,1.5) circle (2pt) 
(2,0.5) circle (2pt) 
(2.5,7.5) circle (2pt) 
(3,6.5) circle (2pt) 
(5,3.5) circle (2pt) 
;
\end{scope}

\begin{scope}[xshift=6cm]

\draw[violet, line width=5, opacity=0.5] 
(2.5,8) -- (2.5,7) to[out=-90,in=90] (3.5,6) to (3.5,4) to[out=-90,in=90] (4.5,3) to[out=-90,in=180] (5,2.6)
(0.5,2.5) to[out=-30,in=90] (1,2) to[out=-90,in=90] (2,1) to (2,0)
;
\draw[dashed] (5,8) to (5,0) ;
\draw[thick, blue]
(0.5,8) -- (0.5,2.5)
(0.5,2.5) to[out=-30, in=90] (1,2) to[out=-90, in=150] (1.5,1.5) -- (1.5,1) to[out=-90, in=150] (2,0.5) -- (2,0)
(2.5,8) -- (2.5,7.5) to[out=-150,in=90] (2,7) -- (2,2) to[out=-90,in=30] (1.5,1.5)
(2.5,7.5) to[out=-30,in=90] (3,7) to (3,6.5) to[out=-150,in=90] (2.5,6) to (2.5,1) to[out=-90,in=30] (2,0.5)
(3,6.5) to[out=-30,in=90] (3.5,6) to (3.5,4) to[out=-90,in=150] (4,3.5) to (4,0)
(4.5,8) to (4.5,4) to[out=-90,in=30] (4,3.5)
;
\draw[thick,red]
(1,8) to (1,3) to[out=-90,in=30] (0.5,2.5) to (0.5,0)
(1.5,8) to (1.5,1.5) to[out=-150,in=90] (1,1) to (1,0)
(1.5,1.5) to[out=-30,in=90] (2,1) to (2,0.5) to[out=-150,in=90] (1.5,0)
(2,0.5) to[out=-30,in=90] (2.5,0)
(2,8) to[out=-90,in=150] (2.5,7.5) to (2.5,7) to[out=-90,in=150] (3,6.5) to (3,0)
(3,8) to[out=-90,in=30] (2.5,7.5)
(3.5,8) to (3.5,7) to[out=-90,in=30] (3, 6.5)
(4,8) to (4,3.5) to[out=-150,in=90] (3.5,3) to (3.5,0)
(4,3.5) to[out=-30,in=90] (4.5,3) to[out=-90,in=180] (5,2.6)
;
\draw[thick,green]
(5,2.4) to[out=180,in=90] (4.5,2) to (4.5,0)
;
\foreach \x in {0,1,...,8}
{
\draw[thick]
(0,\x) -- (5,\x);
}
\draw[thick, fill=white] 
(0.5,2.5) circle (2pt) 
(1.5,1.5) circle (2pt) 
(2,0.5) circle (2pt) 
(2.5,7.5) circle (2pt) 
(3,6.5) circle (2pt) 
(4,3.5) circle (2pt) 
;
\end{scope}
\end{tikzpicture}
\caption{A sequence of elementary annulus $N$-graphs which gives $\coxeterpadding^{-1}({\exdynD}_4)$.}
\label{fig:coxeter padding affine D4 is tame} 
\end{figure}

\begin{figure}[ht]
\begin{tikzpicture}
\begin{scope}
\draw[thick] 
(0,0)--(3.5,0)
(0,1)--(3.5,1)
(0,2)--(3.5,2)
(0,3)--(3.5,3)
(0,4)--(3.5,4)
;
\draw[thick,red]
(0.5,0) -- (0.5,3) to[out=90,in=-150] (1,3.5) --(1,4)
(1,0) -- (1,2) to[out=90,in=-150] (1.5,2.5) to (1.5,3) to[out=90,in=-30] (1,3.5)
(2,0) to[out=90,in=-150] (2.5,0.5) to (2.5,1) to[out=90,in=-30] (2,1.5)
(3,0) to[out=90,in=-30] (2.5,0.5)
;
\draw[thick,red,dashed]
(1.5,0) to (1.5,1) to[out=90,in=-150] (2,1.5) to (2,2) to[out=90,in=-30] (1.5,2.5)
;
\begin{scope}[rotate around={180:(1.75,2)}]
\draw[thick,blue]
(0.5,0) -- (0.5,3) to[out=90,in=-150] (1,3.5) --(1,4)
(1,0) -- (1,2) to[out=90,in=-150] (1.5,2.5) to (1.5,3) to[out=90,in=-30] (1,3.5)
(2,0) to[out=90,in=-150] (2.5,0.5) to (2.5,1) to[out=90,in=-30] (2,1.5)
(3,0) to[out=90,in=-30] (2.5,0.5)
;
\draw[thick,blue,dashed]
(1.5,0) to (1.5,1) to[out=90,in=-150] (2,1.5) to (2,2) to[out=90,in=-30] (1.5,2.5)
;
\end{scope}
\draw[thick, fill=white]
(1,3.5) circle (2pt)
(1.5,2.5) circle (2pt)
(2,1.5) circle (2pt)
(2.5,0.5) circle (2pt)
;
\end{scope}

\begin{scope}[xshift=3cm]
\draw[thick] 
(0,0)--(3.5,0)
(0,1)--(3.5,1)
(0,2)--(3.5,2)
(0,3)--(3.5,3)
(0,4)--(3.5,4)
;
\draw[thick,red]
(0.5,0) -- (0.5,3) to[out=90,in=-150] (1,3.5) --(1,4)
(1,0) -- (1,2) to[out=90,in=-150] (1.5,2.5) to (1.5,3) to[out=90,in=-30] (1,3.5)
(2,0) to[out=90,in=-150] (2.5,0.5) to (2.5,1) to[out=90,in=-30] (2,1.5)
(3,0) to[out=90,in=-30] (2.5,0.5)
;
\draw[thick,red,dashed]
(1.5,0) to (1.5,1) to[out=90,in=-150] (2,1.5) to (2,2) to[out=90,in=-30] (1.5,2.5)
;
\begin{scope}[rotate around={180:(1.75,2)}]
\draw[thick,blue]
(0.5,0) -- (0.5,3) to[out=90,in=-150] (1,3.5) --(1,4)
(1,0) -- (1,2) to[out=90,in=-150] (1.5,2.5) to (1.5,3) to[out=90,in=-30] (1,3.5)
(2,0) to[out=90,in=-150] (2.5,0.5) to (2.5,1) to[out=90,in=-30] (2,1.5)
(3,0) to[out=90,in=-30] (2.5,0.5)
;
\draw[thick,blue,dashed]
(1.5,0) to (1.5,1) to[out=90,in=-150] (2,1.5) to (2,2) to[out=90,in=-30] (1.5,2.5)
;
\end{scope}
\draw[thick, fill=white]
(1,3.5) circle (2pt)
(1.5,2.5) circle (2pt)
(2,1.5) circle (2pt)
(2.5,0.5) circle (2pt)
;
\end{scope}

\begin{scope}[xshift=6cm]
\draw[thick] 
(0,0)--(3.5,0)
(0,1)--(3.5,1)
(0,2)--(3.5,2)
(0,3)--(3.5,3)
(0,4)--(3.5,4)
;
\draw[thick,red]
(0.5,0) -- (0.5,3) to[out=90,in=-150] (1,3.5) --(1,4)
(1,0) -- (1,2) to[out=90,in=-150] (1.5,2.5) to (1.5,3) to[out=90,in=-30] (1,3.5)
(2,0) to[out=90,in=-150] (2.5,0.5) to (2.5,1) to[out=90,in=-30] (2,1.5)
(3,0) to[out=90,in=-30] (2.5,0.5)
;
\draw[thick,red,dashed]
(1.5,0) to (1.5,1) to[out=90,in=-150] (2,1.5) to (2,2) to[out=90,in=-30] (1.5,2.5)
;
\begin{scope}[rotate around={180:(1.75,2)}]
\draw[thick,blue]
(0.5,0) -- (0.5,3) to[out=90,in=-150] (1,3.5) --(1,4)
(1,0) -- (1,2) to[out=90,in=-150] (1.5,2.5) to (1.5,3) to[out=90,in=-30] (1,3.5)
(2,0) to[out=90,in=-150] (2.5,0.5) to (2.5,1) to[out=90,in=-30] (2,1.5)
(3,0) to[out=90,in=-30] (2.5,0.5)
;
\draw[thick,blue,dashed]
(1.5,0) to (1.5,1) to[out=90,in=-150] (2,1.5) to (2,2) to[out=90,in=-30] (1.5,2.5)
;
\end{scope}
\draw[thick, fill=white]
(1,3.5) circle (2pt)
(1.5,2.5) circle (2pt)
(2,1.5) circle (2pt)
(2.5,0.5) circle (2pt)
;
\end{scope}
\draw[dashed] (0,0)--(0,4) (9.5,0)--(9.5,4);
\draw [decorate,decoration={brace,amplitude=10pt}]
(0.3,4.1) -- (3.2,4.1) node [black,midway,yshift=0.5cm] {$a+2$} ;
\draw [decorate,decoration={brace,amplitude=10pt}]
(3.3,4.1) -- (6.2,4.1) node [black,midway,yshift=0.5cm] {$b+2$} ;
\draw [decorate,decoration={brace,amplitude=10pt}]
(6.3,4.1) -- (9.2,4.1) node [black,midway,yshift=0.5cm] {$c+2$} ;
\end{tikzpicture}
\caption{A sequence of elementary annulus $N$-graphs which gives $\coxeterpadding^{-1}(a,b,c)$.}\label{fig:coxeter padding affine E is tame}
\end{figure}

In order to see the effect of Legendrian Coxeter mutation efficiently, let us present it by a sequence of braid moves together with keep tracking braid words shaded by violet color in Figure~\ref{fig:coxeter padding affine D4 is tame}.

\[
\begin{tikzcd}[column sep = -7pt, row sep = -5pt]
\widehat \beta({\exdynD}_4) 
&= &\1 & \2 & \2 &\color{red}{\2} 
&{\color{violet}\circled{\text{$\1$}}} &\color{red}{\2} 
&\2 &\2 &\1 & \3& \2& \1& \1& \2 &\color{red}{\2}
&{\color{violet}\circled{\text{$\3$}}}  &\color{red}{\2} 
&\2&\2&\3\\
&= &\1 &\2 &\color{red}{\2} & \color{red}{\1}& {\color{violet}\circled{\text{$\2$}}} 
&\1 &\2 &\2 &\1 &\3 &\2 &\1 &\1 &\color{red}{\2} &\color{red}{\3}  &{\color{violet}\circled{\text{$\2$}}} 
& \3 &\2&\2&\3\\
&= &\1 &\2 &{\color{violet}\circled{\text{$\1$}}} &\2 &\1 &\1 &\2 &\2 &\1 & \3 &\2
& {\color{blue}\1}&  \color{blue}{\1} &{\color{violet}\circled{\text{$\3$}}} &\2 
&\3 & \3 &\2&\2&\3\\
&= &{\color{red}\1} &\color{red}{\2} &{\color{violet}\circled{\text{$\1$}}}&\2 &\1 &\1
&\2& \2 &\1 &{\color{red} \3} &\color{red}{\2} &{\color{violet} \circled{\text{$\3$}}} &\1 &\1 & \2 
&\3 & \3 &\2&\2&\3\\
&= &{\color{violet} \circled{\text{$\2$}}} &\1 &\2 &\2 &\1 &\1 &\2 &\2 &\1
&  {\color{violet}\circled{\text{$\2$}}} &\3& \2 &\1 &\1 & \2& \3 & \3 &\2 
&{\color{red}\2} &\color{red}{\3}\\
&\doteq  &\1 &\2 &\2 &\1 &{\color{red}\1} &\color{red}{\2} 
&{\color{violet} \circled{\text{$\1$}}} &\2 & \1 &\3 &\2& \1& \1 & \2 &\3 
& {\color{red}\3} &\color{red}{ \2} & {\color{violet}\circled{\text{$\3$}}} & \2 &\3\\
&= &\1 &\2 &\2 &{\color{red}\1} &{\color{violet}\circled{\text{$\2$}}} &\color{red}{\1} &\2 
&\2 & \1 &\3 &\2 &\1& \1&  \2 &{\color{red}\3} 
& {\color{violet} \circled{\text{$\2$}}} &\color{red}{\3} &\2 &\2 & \3\\
&= &\1 &\2 &\2 &\2& {\color{violet}\circled{\text{$\1$}}}& \2 &\2 &\2 & \1& \3 &\2 &\1 &\1 
& \2 &\2 & {\color{violet} \circled{\text{$\3$}}} &\2 &\2 &\2& \3 \\
\end{tikzcd}
\]

In general for the Legendrian Coxeter padding $\coxeterpadding^{-1}({\exdynD}_n)$, we have the following sequence of Reidemeister moves:

\begin{align*}
\widehat \beta({\exdynD}_n)
&= \2^k \1 \2 \2 {\color{red}\2 \1 \2} \2 \2 \1 \2^\ell \3 \2 \1 \1 \2 {\color{red}\2 \3 \2} \2 \2 \3\\
&= \2^k \1 \2 {\color{red}\2 \1 \2} \1 \2 \2 \1 \2^\ell \3 \2 \1 \1 {\color{red}\2 \3 \2} \3 \2 \2 \3\\
&= \2^k {\color{red}\1 \2 \1} \2 \1 \1 \2 \2 \1 \2^\ell \3 \2 {\color{blue}\1 \1 \3} \2 \3 \3 \2 \2 \3\\
&= \2^k \2 \1 \2 \2 \1 \1 \2 \2 \1 \2^\ell {\color{red}\3 \2 \3} \1 \1 \2 \3 \3 \2 \2 \3\\
&= \2^{k+1} \1 \2 \2 \1 \1 \2 \2 \1 \2^{\ell+1} \3 \2 \1 \1 \2 \3 \3 \2 \2 \3\\
&\doteq \2^{k} \1 \2 \2 \1 \1 \2 {\color{red}\2 \1 \2} \2^{\ell} \3 \2 \1 \1 \2 \3 \3 \2 {\color{red}\2 \3 \2}\\
&= \2^{k} \1 \2 \2 \1 {\color{red}\1 \2 \1} \2 \1 \2^{\ell} \3 \2 \1 \1 \2 \3 {\color{red}\3 \2 \3} \2 \3\\
&= \2^{k} \1 \2 \2 {\color{red}\1 \2 \1} \2 \2 \1 \2^{\ell} \3 \2 \1 \1 \2 {\color{red}\3 \2 \3} \2 \2 \3\\
&= \2^{k} \1 \2 \2 \2 \1 \2 \2 \2 \1 \2^{\ell} \3 \2 \1 \1 \2 \2 \3 \2 \2 \2 \3,
\end{align*}
where $k=\lfloor \frac{n-3}2\rfloor$ and $\ell=\lfloor \frac{n-4}2\rfloor$.

Now we can translate the above sequence of moves onto
\begin{align*}
\widehat \beta({{\exdynD}_n})=\Delta_4 \3 \2 \2 \3 \2^{n-4} \1 \2 \2 \1 \Delta_4
\end{align*}
by conjugating the (cyclic) braid equivalence in~\eqref{eqn:front braid to N-graph braid}.
Then the effect of Coxeter padding $\coxeterpadding^{-1}({\exdynD}_n)$ onto $\widehat \beta({{\exdynD}_n})$ can be presented as a Legendrian loop $\vartheta(\exdynD)$ in Figure~\ref{fig:legendrian loop of D_intro} in the introduction.

Now move onto the case of the Coxeter padding of type $\exdynE$. Note that 
\begin{align*}
\widehat \beta(a,b,c) &= \2 \1^{a+1} \2 \1^{b+1} \2 \1^{c+1}\\
&\doteq \1^{a-1} \Delta \1^{b-1} \Delta \1^{c-1} \Delta
\end{align*}
and we translate the sequence of Reidemeister moves induced by $\bar\coxeterpadding^{-1}(a,b,c) \coxeterpadding^{-1}(a,b,c)$ into the Legendrian loop $\vartheta(\exdynE)$ depicted as in Figure~\ref{fig:legendrian loop of E_intro} in the introduction.
Note that the left column of the loop diagram corresponds to $\coxeterpadding^{-1}(a,b,c)$ while the right column corresponds to $\bar\coxeterpadding^{-1}(a,b,c)$. 

Note that the Legendrian loops induce annulus $N$-graphs, and their action on the space of $N$-graph by piling up the annulus has infinite order, see Remark~\ref{rmk_inifinitely_many_seeds_for_affine}.
In conclusion, we have

\begin{theorem}\label{thm:legendrian loop}
The Legendrian Coxeter mutation $\mutation_\ngraph^{\pm1}$ on 
$(\ngraph(\exdynD),\nbasis(\exdynD))$ and twice of Legendrian mutation 
$\mutation_\ngraph^{\pm 2}$ on $(\ngraph(\exdynE),\nbasis(\exdynE))$ induce 
Legendrian loops $\vartheta(\exdynD)$ and $\vartheta(\exdynE)$ in Figure~\ref{fig:legendrian loops}, respectively. In particular, the 
order of the Legendrian loops as elements in $\pi_1(\cL(\legendrian(\exdynX)),\legendrian(\exdynX))$ are infinite.
\end{theorem}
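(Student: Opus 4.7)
The plan is to prove the theorem in two steps: (i) identify the annular $N$-graph $\coxeterpadding^{\pm 1}(\exdynD_n)$ (resp.\ $\bar\coxeterpadding^{\pm 1}(a,b,c)\cdot\coxeterpadding^{\pm 1}(a,b,c)$) with the loop $\vartheta(\exdynD)$ (resp.\ $\vartheta(\exdynE)$) of Figure~\ref{fig:legendrian loops}; (ii) deduce that each loop has infinite order in $\pi_1(\cL(\legendrian(\exdynX)),\legendrian(\exdynX))$.

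For (i), I would start from the tameness of the Coxeter paddings established in Section~\ref{sec:legendrian loop}: each padding is an ordered stack of the elementary annulus $N$-graphs $\ngraph_{\Move{RIII}}$, $\ngraph_{\Move{R0}}$, together with annuli coming from the Moves of Figure~\ref{fig:move1-6}. Because every one of those annuli is by construction the graph of a Legendrian isotopy of the boundary (Theorem~\ref{thm:N-graph moves and legendrian isotopy}), the whole stack is the graph of a Legendrian isotopy from the outer $\legendrian(\exdynX)$ to the inner $\legendrian(\exdynX)$, i.e.\ a loop in $\cL(\legendrian(\exdynX))$. I would then read off the induced sequence of Reidemeister moves on the front braid $\widehat\beta(\exdynX)$. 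For $\exdynD_n$ the rewriting displayed just before~(\ref{eqn:front braid to N-graph braid}) already supplies this sequence; the violet-circled generators in the $\exdynD_4$ computation trace exactly a pair of crossings traveling once around the closed braid, and conjugating by the cyclic braid equivalence~(\ref{eqn:front braid to N-graph braid}) transports this motion back to $\legendrian(\exdynD_n)$, yielding the dashed violet arrows of Figure~\ref{fig:legendrian loop of D_intro}. For $\exdynE_n$ the same recipe is applied twice, once for $\coxeterpadding^{-1}(a,b,c)$ and once for $\bar\coxeterpadding^{-1}(a,b,c)$, producing the six-stage loop of Figure~\ref{fig:legendrian loop of E_intro} in which the three half twists $\Delta$ in $\1^{a-1}\Delta\1^{b-1}\Delta\1^{c-1}\Delta$ migrate cyclically.

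For (ii), I would argue contrapositively using the equivariance of $\Psi$ under mutations. By Proposition~\ref{proposition:equivariance of mutations}, the Legendrian Coxeter mutation $\mutation_\ngraph$ on $(\ngraph(\exdynX),\nbasis(\exdynX))$ realises the cluster Coxeter mutation $\qcoxeter$ on the initial seed in the cluster structure on $\cM_1(\legendrian(\exdynX))$. Suppose the loop $\vartheta(\exdynX)^N$ were null-homotopic in $\cL(\legendrian(\exdynX))$ for some $N\ge 1$. Then stacking $N$ copies of the corresponding annular $N$-graph onto $(\ngraph(\exdynX),\nbasis(\exdynX))$ would produce a pair that is Legendrian isotopic to $(\ngraph(\exdynX),\nbasis(\exdynX))$ itself, and Theorem~\ref{thm:N-graph to seed} would then force $\qcoxeter^{cN}(\initialseed)=\initialseed$ with $c=1$ for $\exdynD$ and $c=2$ for $\exdynE$. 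This contradicts Remark~\ref{rmk_inifinitely_many_seeds_for_affine}, according to which the affine bipartite belt $\{\qcoxeter^r(\initialseed)\mid r\in\Z\}$ consists of pairwise distinct seeds.

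The main obstacle is step~(i) for arbitrary $n$ in the $\exdynD$-family: the base case $\exdynD_4$ is carried out in full in the text, but for general $n$ the tame decomposition of $\coxeterpadding^{\pm 1}(\exdynD_n)$ listed in Table~\ref{table:coxeter paddings for affine D} involves $k=\lfloor(n-3)/2\rfloor$ and $\ell=\lfloor(n-4)/2\rfloor$ intermediate layers, each of which must be checked to produce the expected Reidemeister move. The detailed pictorial verification is deferred to Appendix~\ref{sec:Coxeter padding affine D_n}, which also serves as the pictorial proof of Proposition~\ref{proposition:coxeter realization D-type}.
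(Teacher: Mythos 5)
Your proposal follows essentially the same route as the paper: tameness of the Coxeter paddings identifies them with stacks of elementary annuli realizing the traced Reidemeister moves (with the general-$n$ pictures deferred to Appendix~\ref{sec:Coxeter padding affine D_n}), and the infinite order is deduced from the infinitude of the bipartite belt in Remark~\ref{rmk_inifinitely_many_seeds_for_affine}. Your step~(ii) merely makes explicit, via Proposition~\ref{proposition:equivariance of mutations} and Theorem~\ref{thm:N-graph to seed}, the contrapositive that the paper leaves as a one-line citation, so the two arguments coincide in substance.
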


\subsection{Lagrangian fillings for Legendrian links of \texorpdfstring{type $\exdynD \exdynE$}{affine Dn and En type}}

We will prove one of our main theorem on `as many exact embedded Lagrangian fillings as seeds' (Theorem~\ref{theorem:seed many}) as follows:
\begin{theorem}[As many exact embedded Lagrangian fillings as seeds]\label{thm:seed many fillings}
There are at least as many distinct exact embedded Lagrangian fillings as seeds for Legendrian links of type $\exdynD\exdynE$.
\end{theorem}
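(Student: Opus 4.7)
The plan is to realize every seed of the cluster pattern of type $\dynX\in\{\exdynD_n,\exdynE_6,\exdynE_7,\exdynE_8\}$ by an $N$-graph obtained from iterated Legendrian mutations of the initial pair $(\ngraph(\dynX),\nbasis(\dynX))$ together with boundary flags $\flags$ on $\legendrian(\dynX)$. By Theorem~\ref{thm:N-graph to seed} this pair realizes the initial seed $\initialseed$ whose principal part $\qbpr_{t_0}$ is acyclic of affine type $\dynX$, and by Proposition~\ref{proposition:equivariance of mutations} each Legendrian mutation on $N$-graphs matches the corresponding cluster $\mathcal{X}$-mutation on the side of seeds. Once every seed in $\exchange(\qbpr_{t_0})$ appears as $\Psi(\mutation_\ngraph\cdots\mutation_\ngraph(\ngraph(\dynX),\nbasis(\dynX)),\flags)$ for some sequence of Legendrian mutations, Corollary~\ref{corollary:distinct seeds imples distinct fillings} immediately yields at least as many distinct exact embedded Lagrangian fillings as there are seeds.

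The structural reduction is Lemma~\ref{lemma:normal form}: any seed $\seed_t$ admits indices $\ell\in[n]$ and $r\in\Z$ and a sequence $j_1,\dots,j_L\in[n]\setminus\{\ell\}$ such that
\[
\seed_t=(\mutation_{j_L}\cdots\mutation_{j_1})(\qcoxeter^r(\initialseed)),
\]
with every intermediate seed lying in the induced subgraph $\exchangesub{\qbpr_{t_0}}{x_{\ell;r}}\cong\exchange(\qbpr_{t_0}|_{[n]\setminus\{\ell\}})$. Consequently, it suffices to (a) realize $\qcoxeter^r(\initialseed)$ by an $N$-graph for every $r\in\Z$, and (b) starting from that $N$-graph, realize every seed in $\exchangesub{\qbpr_{t_0}}{x_{\ell;r}}$ by Legendrian mutations along cycles in $\nbasis\setminus\{\gamma_\ell\}$.

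Task (a) is exactly what the previous subsection provides: Corollaries~\ref{cor:coxeter realization E-type} and~\ref{cor:coxeter realization D-type} show that $\mutation_\ngraph^{\pm 1}$ is realized by stacking the Coxeter paddings $\coxeterpadding^{\pm 1}(\dynX)$ onto $(\ngraph(\dynX),\nbasis(\dynX))$ without obstruction, so $\mutation_\ngraph^r$ is defined for every $r\in\Z$ and realizes $\qcoxeter^r(\initialseed)$. For task (b) the key point is that deleting one vertex from any affine $\exdynD\exdynE$ Dynkin diagram yields a (possibly disconnected) \emph{finite} $\dynA\dynD\dynE$ diagram; hence $\exchangesub{\qbpr_{t_0}}{x_{\ell;r}}$ is a finite exchange graph, and the main theorem of \cite{ABL2021}---which provides as many $N$-graph realizations as seeds for Legendrian links of finite ADE type---applies to the $N$-graph obtained after attaching $r$ copies of the Coxeter padding. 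By induction on $|\nbasis(\dynX)|$ via Lemma~\ref{lemma:normal form}, iterating these two procedures produces an $N$-graph realization of every seed in $\exchange(\qbpr_{t_0})$.

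The main obstacle is not the existence of the cluster-level mutation sequence (that is handed to us by Theorem~\ref{thm_ReadingStella}), but rather verifying that at each step the Legendrian mutations of task (b) remain well-defined on the $N$-graph produced after arbitrary powers of $\mutation_\ngraph$. Concretely, one must check that after $r$ attachments of $\coxeterpadding^{\pm 1}(\dynX)$ the cycles in $\nbasis(\dynX)\setminus\{\gamma_\ell\}$ are still good in the sense of Definition~\ref{def:good cycle} and that their geometric intersection numbers continue to match the algebraic intersections dictated by the quiver, so that the finite-type sequence from \cite{ABL2021} can be transplanted verbatim. This reduces to inspecting how each Coxeter padding interacts with the non-$\ell$ cycles, using that the paddings act by color swap on the inner $N$-graph and preserve the local picture of $\sfI$- and $\sfY$-cycles away from the shaded annulus region in Figures~\ref{coxeter_mutation_D} and~\ref{coxeter_mutation_E}. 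A case-by-case verification for the four families $\exdynD_{2\ell+4}$, $\exdynD_{2\ell+5}$, $\exdynE_6=\legendrian(3,3,3)$, $\exdynE_7=\legendrian(2,4,4)$, and $\exdynE_8=\legendrian(2,3,6)$ completes the proof.
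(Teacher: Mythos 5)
Your proposal takes essentially the same route as the paper's proof: reduce via Lemma~\ref{lemma:normal form} to realizing $\qcoxeter^r(\initialseed)$ by Coxeter paddings (Corollaries~\ref{cor:coxeter realization E-type} and~\ref{cor:coxeter realization D-type}) and then realizing the remaining mutations inside $\exchangesub{\qbpr_{t_0}}{x_{\ell;r}}$ by deleting the cycle $\gamma_\ell$ and invoking the finite-type realizability of \cite{ABL2021} on the resulting sub-pairs, concluding with Corollary~\ref{corollary:distinct seeds imples distinct fillings}. The only content you defer to ``case-by-case verification'' is exactly what the paper's Proposition~\ref{prop:every seeds come from N-graphs} carries out explicitly, namely the enumeration of the induced sub-pairs (cases (1)--(7)) and the observation that the one non-tripod case ($\ell=2,3,n,n+1$ for $\exdynD_n$) reduces to a tripod after Moves~(II).
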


The key ingredient of the above theorem is the following proposition.
\begin{proposition}\label{prop:every seeds come from N-graphs}
Let $\legendrian$ be a Legendrian knot or link of type $\exdynD\exdynE$. Let 
$\flags$ be flags on $\legendrian$ as a formal parameter for the moduli space 
$\cM_1(\legendrian)$. Suppose that $\seed$ is a seed in the corresponding seed 
pattern with the initial seed from Tables~\ref{table:affine D type} and 
\ref{table:affine E type}: for $\dynX=\exdynD_n, \exdynE_6, \exdynE_7$ or $\exdynE_8$,
\[
\seed_{t_0}=\Psi(\ngraph(\dynX),\nbasis(\dynX),\cF_{\legendrian(\dynX)}).
\]
Then there exists a pair $(\ngraph,\nbasis)$ such that $\seed=\Psi(\ngraph, \nbasis,\flags)$.
\end{proposition}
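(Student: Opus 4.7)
The plan is to combine the combinatorial reduction in Lemma~\ref{lemma:normal form} with the explicit $N$-graph realizations of Coxeter mutation from Section~\ref{sec:LCM on N-graphs}, and then use the finite-type case treated in our previous work \cite{ABL2021} to close the remaining mutations. Throughout, the equivariance in Proposition~\ref{proposition:equivariance of mutations} ensures that each Legendrian mutation applied on the $N$-graph side produces precisely the $\mathcal{X}$-mutation of the associated seed.

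First, I would invoke Lemma~\ref{lemma:normal form} to fix $\ell\in[n]$, $r\in\Z$ and a sequence of indices $j_1,\dots,j_L\in[n]\setminus\{\ell\}$ such that
\[
\seed=(\mutation_{j_L}\cdots\mutation_{j_1})(\qcoxeter^{r}(\initialseed)),
\]
with every intermediate seed lying inside the induced subgraph $\exchangesub{\qbpr_{t_0}}{x_{\ell;r}}$. The point of this normal form is that, by Theorem~\ref{thm_ReadingStella}(2), $\exchangesub{\qbpr_{t_0}}{x_{\ell;r}}\cong \exchange(\qbpr_{t_0}|_{[n]\setminus\{\ell\}})$, and deleting one node from an affine Dynkin diagram of type $\exdynD\exdynE$ leaves a diagram of \emph{finite} type (possibly disconnected). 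This finiteness is what makes the closing step manageable.

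Next, I would produce an $N$-graph realizing $\qcoxeter^{r}(\initialseed)$ by iterating the Legendrian Coxeter mutation $|r|$ times on $(\ngraph(\dynX),\nbasis(\dynX))$. Here Proposition~\ref{prop:coxeter mutation on tripod} (for $\dynX=\exdynE_n$) and Proposition~\ref{proposition:coxeter realization D-type} (for $\dynX=\exdynD_n$) provide the explicit attachments of the Coxeter paddings $\coxeterpadding^{\pm 1}(\dynX)$. By Corollaries~\ref{cor:coxeter realization E-type} and~\ref{cor:coxeter realization D-type} these attachments are always realizable (there is no obstruction), and the resulting pair $(\ngraph_r,\nbasis_r)$ satisfies $\Psi(\ngraph_r,\nbasis_r,\flags)=\qcoxeter^{r}(\initialseed)$ by Proposition~\ref{proposition:equivariance of mutations}.

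Finally, I would realize the remaining mutations $\mutation_{j_1},\dots,\mutation_{j_L}$ on $(\ngraph_r,\nbasis_r)$ as Legendrian mutations. Since all $j_i\neq \ell$, the whole sequence lives inside the induced subgraph $\exchangesub{\qbpr_{t_0}}{x_{\ell;r}}$, hence inside a cluster pattern of \emph{finite} type. By the main construction of \cite{ABL2021}, in the finite-type setting any seed in the exchange graph is realized by a pair $(\ngraph,\nbasis)$ obtained from the initial one via a sequence of Legendrian mutations on good cycles; in particular there is no geometric obstruction to performing $\mutation_{j_L}\circ\cdots\circ\mutation_{j_1}$ step by step. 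I would apply this to the finite-type cluster sub-pattern containing $\qcoxeter^{r}(\initialseed)$, identifying the cycle $[\cycle_\ell]$ in $\nbasis_r$ as a ``frozen'' direction that is never mutated. Combined with Proposition~\ref{proposition:equivariance of mutations}, the resulting pair $(\ngraph,\nbasis)$ satisfies $\Psi(\ngraph,\nbasis,\flags)=\seed$, which completes the argument.

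The main obstacle is the third step: one must check that, after the $r$-fold attachment of Coxeter paddings, the remaining cycles $\nbasis_r\setminus\{[\cycle_\ell]\}$ form a good tuple of a pair whose induced quiver is indeed of the finite type obtained by deleting the $\ell$-th node, and that the $N$-graph realization in \cite{ABL2021} applies verbatim to this pair rather than only to the canonical initial $N$-graph of that finite type. This requires identifying the finite-type sub-$N$-graph inside $(\ngraph_r,\nbasis_r)$, which is visible because the Coxeter padding is attached as an annulus disjoint from the ``core'' finite-type region, and then transporting the finite-type mutation sequences through this identification.
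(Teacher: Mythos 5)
Your overall strategy coincides with the paper's: reduce to $\seed=(\mutation_{j_L}\cdots\mutation_{j_1})(\qcoxeter^{r}(\initialseed))$ via Lemma~\ref{lemma:normal form}, realize $\qcoxeter^{r}$ by Coxeter paddings (Corollaries~\ref{cor:coxeter realization E-type} and~\ref{cor:coxeter realization D-type}), and then handle the residual finite-type mutations using \cite{ABL2021}. However, the step you yourself flag as ``the main obstacle'' is exactly where the actual mathematical content of the paper's proof lies, and you do not carry it out. It is not enough to say that the finite-type realization of \cite{ABL2021} ``applies verbatim'' to the pair obtained by deleting $[\cycle_\ell]$ from $\nbasis_r$: one must verify, for every choice of $\ell$, that the resulting sub-pairs $(\ngraph^{(i)},\nbasis^{(i)})$ are literally of the tripod form $(\ngraph_{(\bullet)}(a,b,c),\nbasis_{(\bullet)}(a,b,c))$ with $\frac1a+\frac1b+\frac1c>1$ (including the $\dynA$-type chains) to which \cite[Proposition~5.15]{ABL2021} applies. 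The paper does this by an explicit seven-case enumeration of the induced pairs, and the claim of ``verbatim'' applicability is in fact \emph{false} in one of those cases: when $\dynX=\exdynD_n$ and $\ell\in\{2,3,n,n+1\}$, deleting $[\cycle_\ell]$ leaves a single connected pair of type $\dynD_n$ whose $4$-graph is \emph{not} in the standard form covered by the finite-type proposition, and an additional sequence of Move~\Move{II} is needed to transform it into $(\ngraph_{(4)}(n-2,2,2),\nbasis_{(4)}(n-2,2,2))$ before the finite-type result can be invoked.

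A second, smaller imprecision: you describe the finite-type core as sitting inside $(\ngraph_r,\nbasis_r)$ ``because the Coxeter padding is attached as an annulus disjoint from the core.'' That is the right intuition, but the enumeration of induced pairs must also account for the fact that deleting $\gamma_\ell$ generally disconnects the quiver into several components $\quiver^{(1)},\dots,\quiver^{(k)}$, and the orbit of mutations $\mutation_{j_1},\dots,\mutation_{j_L}$ splits accordingly into independent sequences on each $(\ngraph^{(i)},\nbasis^{(i)})$; the freeness/goodness of each piece and the absence of mutual interference is what licenses mutating them separately. Without the explicit identification of these pieces (the paper's Figure with the violet-shaded subregions) your argument remains a plan rather than a proof.
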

\begin{proof}
Note that our cases are of acyclic affine type, so we can apply Lemma~\ref{lemma:normal form} which says the following: For any seed $\seed_t$ in the cluster pattern, there exist $r\in\Z$ and $\ell\in [n]$ such that
\[
\seed_t = (\mutation_{j_L} \cdots \mutation_{j_1})(\qcoxeter^r(\initialseed)).
\]
Recall that all the mutation sequences at $j_1,\dots, j_L\in [n]\setminus\{\ell\}$ followed by the Coxeter mutations $\qcoxeter^r$ lie in the induced subgraph $\exchangesub{\qbpr_{t_0}}{x_{\ell;r}} \cong \exchange(\qbasis^{(\ell)})$.

Since we are interested in the realization of the seeds as pairs of $N$-graphs and good tuples of cycles, it suffices to check that there is no obstruction to realize each mutation. We already have shown in Corollary~\ref{cor:coxeter realization E-type} and Corollary \ref{cor:coxeter realization D-type} that the Coxeter mutations $\mutation_\quiver^r(\initialseed)$ for any $r\in\Z$ are realizable by pairs of $N$-graphs and good tuples of cycles. It remains to argue that the remaining sequence of mutations at $j_1,\dots, j_L\in [n]\setminus\{\ell\}$ can be realized by $N$-graphs and good tuples of cycles.

Now focus on the root system $\Phi([n]\setminus \{\ell\})$, and the corresponding pair $(\ngraph_{t_0},\nbasis_{t_0}\setminus\{\gamma_\ell\})$ of an $N$-graph and a (proper) sub-tuple of cycles. Simple but important observation is that the exchange graph of cluster pattern with respect to the pair $(\ngraph_{t_0},\nbasis_{t_0}\setminus\{\gamma_\ell\} )$ is isomorphic to $\exchange(\qbasis|_{[n]\setminus\{\ell\}})$.

Note that $\Phi([n]\setminus \{\ell\})$ may not be irreducible and is of the form $\Phi^{(1)}\times \cdots \times \Phi^{(k)}$, for some $k\in\N$. Let $\quiver_{t_0}$ be a quiver of rank $n$ corresponding to $\Phi([n])$, then an induced subquiver $\quiver_{t_0} \setminus \{\ell\}$ has $k$-connected components $\quiver^{(1)},\dots, \quiver^{(k)}$ which correspond to $\Phi^{(1)},\dots, \Phi^{(k)}$, respectively. Now consider the pair $(\ngraph_{t_0},\nbasis_{t_0})$ of an $N$-graph and an $n$-tuple of cycles realizing $\quiver_{t_0}$. Then ignoring the $\ell$-th cycle $\gamma_\ell$ in the pair $(\ngraph_{t_0},\nbasis_{t_0})$ produces $k$ pairs 
\[
(\ngraph^{(1)},\nbasis^{(1)}), \dots, (\ngraph^{(k)},\nbasis^{(k)})
\] 
which realize $\quiver^{(1)},\dots, \quiver^{(k)}$, respectively. In order to show the $N$-graph realizability of each seed, it suffices to check that there is no obstruction to perform mutations at each induced pair $(\ngraph^{(i)},\nbasis^{(i)})$, $i=1,2,\dots,k$.

Let us analyze possible induced pairs from $(\ngraph(\exdynE_n),\nbasis(\exdynE_n))$, $n=6,7,8$ as follows. Note that all are of type $(\ngraph(a,b,c),\nbasis(a,b,c))$ with $\frac{1}{a}+\frac{1}{b}+\frac{1}{c}=1$, see \cite[Figure 31]{ABL2021}:
\begin{enumerate}
\item If $\ell=1$, i.e. $\cycle_\ell$ corresponds to the central vertex, then we have the following three pairs:
\[
\{(\ngraph_{(3)}(\dynA_{a-1}), \nbasis_{(3)}(\dynA_{a-1})), (\ngraph_{(3)}(\dynA_{b-1}), \nbasis_{(3)}(\dynA_{b-1})),(\ngraph_{(3)}(\dynA_{c-1}), \nbasis_{(3)}(\dynA_{c-1}))\}.
\]
Here $\dynA_\bullet$ denotes the Dynkin diagram of type $\dynA$, and the subindex $(3)$ indicates that the induced pairs are $3$-graphs together with cycles even though they are monochromatic.
\item If $\cycle_\ell$ corresponds to a bivalent vertex, then for some $1\le r,s$ with $r+s+1=a$, up to permuting indices $a,b,c$, we have two following pairs:
\[
\{(\ngraph_{(3)}(\dynA_s),\nbasis_{(3)}(\dynA_s)),(\ngraph_{(3)}(r,b,c),\nbasis_{(3)}(r,b,c))\},
\]
\item If $\cycle_\ell$ corresponds to a leaf, then up to permuting indices $a,b,c$, we have the following pair:
\[
\{(\ngraph(a-1,b,c), \nbasis(a-1,b,c))\}.
\]
\end{enumerate}

Now list the possible induced pairs from $(\ngraph(\exdynD_{n}),\nbasis(\exdynD_{n}))$.
Note that the blue arc in the upper left side of $\ngraph(\exdynD_{n})$ does nothing to do with the Legendrian mutation at $\nbasis(\exdynD_{n})$. For simplicity, we ignore that blue arc when we consider the induced pairs.
\begin{enumerate}
\setcounter{enumi}{3}
\item If $n=4$ and $\ell=1$, i.e. corresponds to the central vertex, then we have four pairs: All the pairs are $(\ngraph_{(4)}(\dynA_1),\nbasis_{(4)}(\dynA_1))$, see Figure~\ref{fig:case(4)}.

\item If $n\ge 5$ and either $\ell=1$ or $n-1$, then we have the following three pairs, see Figure~\ref{fig:case(5)}:
\[
\{(\ngraph_{(4)}(\dynA_1),\nbasis_{(4)}(\dynA_1)), (\ngraph_{(4)}(\dynA_1),\nbasis_{(4)}(\dynA_1)), 
(\ngraph_{(4)}(n-4,2,2),\nbasis_{(4)}(n-4,2,2))\}.
\]

\item If $\ell=2,3,n$ or $n+1$, then we have a pair
\[
(\ngraph'_{(4)}(\dynD_n),\nbasis'_{(4)}(\dynD_n)).
\]
See, for example, Figure~\ref{fig:case(6)}.

\item Otherwise, we have two pairs
\[
\{ (\ngraph_{(4)}(n_1,2,2),\nbasis_{(4)}(n_1,2,2)), (\ngraph_{(4)}(n_2,2,2),\nbasis_{(4)}(n_2,2,2)) \}
\]
satisfying $n_1+n_2=n-3$, see Figure~\ref{fig:case(7)}.
\end{enumerate}

\begin{figure}[ht]
\subfigure[$n=4$ and $\ell=1$]{\makebox[0.48\textwidth]{\label{fig:case(4)}
$
\begin{tikzpicture}[baseline=-.5ex,scale=0.6]
\useasboundingbox (-2.5,-3.5) rectangle (2.5,3.5);
\begin{scope}[xshift=-1cm]
\draw[violet, thick, rounded corners, fill=violet, opacity=0.2]
(-2.5,1.5) -- (-2.5,0.5) -- (-0.5,0.5) -- (-0.5,1.5) -- cycle
(-1.5,-0.5) -- (-1.5,-2.5) -- (-0.5,-2.5) -- (-0.5,-0.5) -- cycle;
\draw[line width=7, dashed] (-1,1)--(0,0) (-1,-1)--(0,0) (2,0)--(0,0);
\draw[gray,   line width=5] (-1,1)--(0,0) (-1,-1)--(0,0) (1,0)--(0,0);
\draw[green, line cap=round, line cap=round, line width=5, opacity=0.5] (-2,1)-- (-1,1) (-1,-1)--(-1,-2);
\draw[blue,thick, rounded corners] (-1.75, 3) -- (-1.5, 2.5) -- (-1.25, 3);
\draw[thick] (1, 3) -- (-2,3) to[out=180,in=90] (-3,2) --(-3,-2) to[out=-90,in=180] (-2, -3) -- (4,-3) to[out=0,in=-90] (5,-2) -- (5,2) to[out=90,in=0] (4,3)-- (1,3);
\draw[green, thick] (0,3) -- (0,-3) (0,0) -- (-3,0);
\draw[red, thick, fill] (0,0) -- (-1,1) circle (2pt) -- +(0,2) (-1,1) -- ++(-1,0) circle (2pt) -- +(-1,0) (-2,1) -- +(0,2);
\draw[red,thick, fill] (0,0) -- (-1,-1) circle (2pt) -- +(-2,0) (-1,-1) -- ++(0,-1) circle (2pt) -- +(0,-1) (-1,-2) -- +(-2,0);
\draw[red,thick] (0,0) -- (1,0);
\draw[thick, fill=white] (0,0) circle (2pt);
\end{scope}
\begin{scope}[xshift=1cm,rotate=180]
\draw[violet, thick, rounded corners, fill=violet, opacity=0.2]
(-2.5,1.5) -- (-2.5,0.5) -- (-0.5,0.5) -- (-0.5,1.5) -- cycle
(-1.5,-0.5) -- (-1.5,-2.5) -- (-0.5,-2.5) -- (-0.5,-0.5) -- cycle;
\draw[line width=7, dashed] (-1,1)--(0,0) (-1,-1)--(0,0);
\draw[gray,  line width=5] (-1,1)--(0,0) (-1,-1)--(0,0) (1,0)--(0,0);
\draw[green, line cap=round, line cap=round, line width=5, opacity=0.5] (-2,1)-- (-1,1) (-1,-1)--(-1,-2);
\draw[blue, thick] (0,3) -- (0,-3) (0,0) -- (-3,0);
\draw[red ,thick, fill] (0,0) -- (-1,1) circle (2pt) -- +(0,2) (-1,1) -- ++(-1,0) circle (2pt) -- +(-1,0) (-2,1) -- +(0,2);
\draw[red,thick, fill] (0,0) -- (-1,-1) circle (2pt) -- +(-2,0) (-1,-1) -- ++(0,-1) circle (2pt) -- +(0,-1) (-1,-2) -- +(-2,0);
\draw[red,thick] (0,0) -- (1,0);
\draw[thick, fill=white] (0,0) circle (2pt);
\end{scope}
\end{tikzpicture}
$
}}
\subfigure[$n\ge 5$ and $\ell=1$]{\makebox[0.48\textwidth]{\label{fig:case(5)}
$
\begin{tikzpicture}[baseline=-.5ex,scale=0.6]
\useasboundingbox (-2.5,-3.5) rectangle (2.5,3.5);
\begin{scope}[xshift=1cm, yscale=-1,rotate=180]
\draw[violet, thick, rounded corners, fill=violet, opacity=0.2]
(-2.5,1.5) -- (-2.5,-2.5) -- (1.5,-2.5) -- (1.5,1.5) -- cycle;
\draw[yellow, line cap=round, line width=5, opacity=0.5] (-1,-1) -- (-1,-2) (-1,1) -- (-2,1);
\draw[green, line cap=round, line width=5,opacity=0.5] (1,0)--(0,0)--(-1,-1) (0,0)--(-1,1);
\draw[blue, thick] (0,3) -- (0,-3) (0,0) -- (-3,0);
\draw[red,thick, fill] (0,0) -- (-1,1) circle (2pt) -- +(0,2) (-1,1) -- ++(-1,0) circle (2pt) -- +(-1,0) (-2,1) -- +(0,2);
\draw[red,thick, fill] (0,0) -- (-1,-1) circle (2pt) -- +(-2,0) (-1,-1) -- ++(0,-1) circle (2pt) -- +(0,-1) (-1,-2) -- +(-2,0);
\draw[red,thick] (0,0) -- (1,0);
\draw[thick, fill=white] (0,0) circle (2pt);
\end{scope}
\begin{scope}[xshift=-1cm]
\draw[violet, thick, rounded corners, fill=violet, opacity=0.2]
(-2.5,1.5) -- (-2.5,0.5) -- (-0.5,0.5) -- (-0.5,1.5) -- cycle
(-1.5,-0.5) -- (-1.5,-2.5) -- (-0.5,-2.5) -- (-0.5,-0.5) -- cycle;
\draw[line width=7, dashed] (-1,1)--(0,0) (-1,-1)--(0,0) (1,0)--(0,0);
\draw[gray, line width=5] (-1,1)--(0,0) (-1,-1)--(0,0) (1,0)--(0,0);
\draw[green, line cap=round, line width=5,opacity=0.5] (-2,1)--(-1,1) (-1,-1)--(-1,-2);
\draw[blue,thick, rounded corners] (-1.75, 3) -- (-1.5, 2.5) -- (-1.25, 3);
\draw[thick] (1, 3) -- (-2,3) to[out=180,in=90] (-3,2) --(-3,-2) to[out=-90,in=180] (-2, -3) -- (4,-3) to[out=0,in=-90] (5,-2) -- (5,2) to[out=90,in=0] (4,3)-- (1,3);
\draw[green, thick] (0,3) -- (0,-3) (0,0) -- (-3,0);
\draw[red,thick, fill] (0,0) -- (-1,1) circle (2pt) -- +(0,2) (-1,1) -- ++(-1,0) circle (2pt) -- +(-1,0) (-2,1) -- +(0,2);
\draw[red,thick, fill] (0,0) -- (-1,-1) circle (2pt) -- +(-2,0) (-1,-1) -- ++(0,-1) circle (2pt) -- +(0,-1) (-1,-2) -- +(-2,0);
\draw[red,thick] (0,0) -- (1,0);
\draw[thick, fill=white] (0,0) circle (2pt);
\end{scope}
\draw[red, thick, fill] (0,0) circle (2pt) -- (0,-3);
\end{tikzpicture}
$}}
\\
\subfigure[$\ell=n$]{\makebox[\textwidth]{\label{fig:case(6)}
$
\begin{tikzpicture}[baseline=-.5ex,scale=0.6]
\useasboundingbox (-7.5,-3.5) rectangle (7.5,3.5);
\draw[violet, thick, rounded corners, fill=violet, opacity=0.2]
(-6,1.5) -- (-6,-2.5) -- (6,-2.5) -- (6,1.5) -- cycle;
\begin{scope}[xshift=-3.5cm]
\draw[yellow, line cap=round, line width=5, opacity=0.5] (-1,1)--(0,0) (-1,-1)--(0,0) (1,0)--(0,0);
\draw[green, line cap=round, line width=5, opacity=0.5] (-1,1)--(-2,1) (-1,-1)--(-1,-2) (1,0)--(2,0)
(3,0)--(3.5,0) (4.5,0)--(5,0) (6,0)--(7,0) (7,0)--(8,1) (7,0)--(8,-1);
\draw[blue,thick, rounded corners] (-1.75, 3) -- (-1.5, 2.5) -- (-1.25, 3);
\draw[green, thick] (0,3) -- (0,-3) (0,0) -- (-3,0);
\draw[red,thick, fill] (0,0) -- (-1,1) circle (2pt) -- +(0,2) (-1,1) -- ++(-1,0) circle (2pt) -- +(-1,0) (-2,1) -- +(0,2);
\draw[red,thick, fill] (0,0) -- (-1,-1) circle (2pt) -- +(-2,0) (-1,-1) -- ++(0,-1) circle (2pt) -- +(0,-1) (-1,-2) -- +(-2,0);
\draw[red,thick] (0,0) -- (1,0);
\draw[thick, fill=white] (0,0) circle (2pt);
\draw[thick](1, 3) -- (-2,3) to[out=180,in=90] (-3,2) --(-3,-2) to[out=-90,in=180] (-2, -3) -- (9,-3) to[out=0,in=-90] (10,-2) -- (10,2) to[out=90,in=0] (9,3)-- (1,3);
\end{scope}
\begin{scope}[xshift=3.5cm,rotate=180]
\draw[line width=7, dashed] (-1,-1) -- (-1,-2);
\draw[gray, line width=5] (-1,-1) -- (-1,-2) ;
\draw[yellow, line cap=round, line width=5, opacity=0.5] (-1,1)--(-2,1);
\draw[blue, thick] (0,3) -- (0,-3) (0,0) -- (-3,0);
\draw[red,thick, fill] (0,0) -- (-1,1) circle (2pt) -- +(0,2) (-1,1) -- ++(-1,0) circle (2pt) -- +(-1,0) (-2,1) -- +(0,2);
\draw[red,thick, fill] (0,0) -- (-1,-1) circle (2pt) -- +(-2,0) (-1,-1) -- ++(0,-1) circle (2pt) -- +(0,-1) (-1,-2) -- +(-2,0);
\draw[red,thick] (0,0) -- (1,0);
\draw[thick, fill=white] (0,0) circle (2pt);
\end{scope}
\begin{scope}[xshift=0.5cm]
\draw[yellow, line cap=round, line width=5, opacity=0.5] (-2,0) -- (-1,0) (1,0) -- (2,0);
\draw[red, thick, fill] (-3,0) circle (2pt) -- (-3, -3) (-3,0) -- (-2,0) circle (2pt) -- (-2,3) (-2,0) -- (-1,0) circle (2pt) -- (-1,-3) (-1,0) -- (-0.5,0) (0.5,0) -- (1,0) circle (2pt) -- (1,3) (1,0) -- (2, 0) circle (2pt) -- (2,-3);
\draw[red, thick, dashed] (-0.5,0) -- (0.5,0);
\end{scope}
\end{tikzpicture}
$}}
\\
\subfigure[$n\ge 6$ and $\ell=4,\dots,n-2$]{\makebox[\textwidth]{\label{fig:case(7)}
$
\begin{tikzpicture}[baseline=-.5ex,scale=0.6]
\useasboundingbox (-7.5,-3.5) rectangle (7.5,3.5);
\draw[violet, thick, rounded corners, fill=violet, opacity=0.2]
(-6.5,1.5) -- (-6.5,-2.5) -- (-0.5,-2.5) -- (-0.5,1.5) -- cycle
(6.5,2.5) -- (0.5,2.5) -- (0.5,-1.5) -- (6.5,-1.5) -- cycle;
\begin{scope}[xshift=-4cm]
\draw[line width=7, dashed] (3,0)--(5,0);
\draw[gray, line width=5] (3,0)--(5,0);
\draw[yellow, line cap=round, line width=5, opacity=0.5] (-1,1)--(0,0) (-1,-1)--(0,0) (1,0)--(0,0);
\draw[green, line cap=round, line width=5, opacity=0.5] (-1,1)--(-2,1) (-1,-1)--(-1,-2) (1,0)--(2,0)
(6,0)--(7,0) (9,1)--(9,2) (9,-1)--(10,-1);
\draw[blue,thick, rounded corners] (-1.75, 3) -- (-1.5, 2.5) -- (-1.25, 3);
\draw[green, thick] (0,3) -- (0,-3) (0,0) -- (-3,0);
\draw[red,thick, fill] (0,0) -- (-1,1) circle (2pt) -- +(0,2) (-1,1) -- ++(-1,0) circle (2pt) -- +(-1,0) (-2,1) -- +(0,2);
\draw[red,thick, fill] (0,0) -- (-1,-1) circle (2pt) -- +(-2,0) (-1,-1) -- ++(0,-1) circle (2pt) -- +(0,-1) (-1,-2) -- +(-2,0);
\draw[red,thick] (0,0) -- (1,0);
\draw[thick, fill=white] (0,0) circle (2pt);
\draw[thick](1, 3) -- (-2,3) to[out=180,in=90] (-3,2) --(-3,-2) to[out=-90,in=180] (-2, -3) -- (10,-3) to[out=0,in=-90] (11,-2) -- (11,2) to[out=90,in=0] (10,3)-- (1,3);
\end{scope}
\begin{scope}[xshift=4cm,rotate=180]
\draw[yellow, line cap=round, line width=5, opacity=0.5] (-1,1)--(0,0) (-1,-1)--(0,0) (1,0)--(0,0);
\draw[blue, thick] (0,3) -- (0,-3) (0,0) -- (-3,0);
\draw[red,thick, fill] (0,0) -- (-1,1) circle (2pt) -- +(0,2) (-1,1) -- ++(-1,0) circle (2pt) -- +(-1,0) (-2,1) -- +(0,2);
\draw[red,thick, fill] (0,0) -- (-1,-1) circle (2pt) -- +(-2,0) (-1,-1) -- ++(0,-1) circle (2pt) -- +(0,-1) (-1,-2) -- +(-2,0);
\draw[red,thick] (0,0) -- (1,0);
\draw[thick, fill=white] (0,0) circle (2pt);
\end{scope}
\begin{scope}
\draw[yellow, line cap=round, line width=5, opacity=0.5] (-2,0) -- (-1,0) (1,0) -- (2,0);
\draw[red, thick, fill] (-3,0) circle (2pt) -- (-3, -3) (-2,0) circle (2pt) -- (-2,3) (-2,0) -- (-1,0) circle (2pt) -- (-1,-3) (-1,0) -- (1,0) circle (2pt) -- (1,3) (1,0) -- (2, 0) circle (2pt) -- (2,-3) (3,0) circle (2pt) -- (3,3);
\draw[red, thick, dashed] (-3,0) -- (-2,0) (2,0) -- (3,0);
\end{scope}
\end{tikzpicture}
$}}
\caption{Induced pairs from $(\ngraph(\exdynD),\nbasis(\exdynD))$}
\label{figure:induced pairs}
\end{figure}

In each picture of Figure~\ref{figure:induced pairs}, the gray shaded cycles represent the avoiding $\ell$-th cycle, and the violet-shaded regions represent the induced pairs of a $N$-graph and a tuple of cycles.

Recall from \cite[Proposition 5.15]{ABL2021} that the initial seed in the cluster pattern of finite type, i.e. $(\ngraph_{(\bullet)}(a,b,c),\nbasis_{(\bullet)}(a,b,c))$ with $\frac{1}{a}+\frac{1}{b}+\frac{1}{c}>1$ including $(\ngraph_{(\bullet)}(\dynA_*),\nbasis_{(\bullet)}(\dynA_*))$, admit no obstruction to mutate the cycles $\nbasis_{(\bullet)}(a,b,c)$ in $\ngraph_{(\bullet)}(a,b,c)$. 
It is direct to check that all the above cases except~(6) satisfy this assumption.

Now it is enough to show the $N$-graph realizability for case (6). Note that the induced pair consists of a $4$-graph and a tuple of cycles which is of type $\dynD_n$. It is direct to check that one can obtain $(\ngraph_{(4)}(n-2,2,2),\nbasis_{(4)}(n-2,2,2))$ as a subregion from $(\ngraph'_{(4)}(\dynD_n),\nbasis'_{(4)}(\dynD_n))$ by applying a sequence of Move~(II).
\end{proof}

\begin{proof}[Proof of Theorem~\ref{thm:seed many fillings}]
A direct combination of Corollary~\ref{corollary:distinct seeds imples distinct fillings} and Proposition~\ref{prop:every seeds come from N-graphs} implies that there are at least as many exact embedded Lagrangian fillings as seeds of the corresponding cluster structure.
\end{proof}

\section{Foldings}\label{section:folding}

In this section, we will consider the cluster patterns of non-simply-laced affine type $\dynY$ which is obtained by folding a cluster pattern of type $\dynX=\exdynD$ or $\exdynE$ under the $G$-action.
More precisely, except for the first column in Table~\ref{figure:all possible foldings}, each and every column correspond to all possible triple $(\dynX, G, \dynY)$ we will consider.

\subsection{\texorpdfstring{$N$-graphs}{N-graphs} of the folded cluster pattern}

Let $(\dynX, G, \dynY)$ be a triple with $\dynX$ of type $\exdynD\exdynE$ and let $\psi:\field\to \field^G$ be a field homomorphism.
By Theorem~\ref{thm:seed many fillings}, for each seed $\seed_t$ in the cluster pattern of type $\dynX$, there exists an $N$-graph $(\ngraph_t, \nbasis_t)$ whose image under $\Psi$ becomes $\seed_t$.

Now by collecting an $N$-graph corresponding to each $(G,\psi)$-invariant seed, we have a subset which is bijectively mapped via $\Psi$ to the set of $(G,\psi)$-invariant seeds in the cluster pattern of type $\dynX$.
However, since $\dynX$ is globally foldable with respect to $G$ and every $(G,\psi)$-invariant seed is $(G,\psi)$-admissible by Theorem~\ref{thm_invariant_seeds_form_folded_pattern}, the latter is isomorphic to the cluster pattern of type $\dynY = \dynX^G$.

As a direct consequence, we have the following theorem:
\begin{theorem}\label{theorem:folded seed}
For each triple $(\dynX, G, \dynY)$ with $\dynX$ of type $\exdynD\exdynE$ and $\dynY=\dynX^G$ in Table~\ref{figure:all possible foldings}, there is a subset of $N$-graphs of type $\dynX$ which is isomorphic to the cluster pattern of type $\dynY$.
\end{theorem}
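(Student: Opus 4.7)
The plan is to assemble the theorem from three previously established ingredients: (a) Proposition~\ref{prop:every seeds come from N-graphs}, which realizes every seed of the cluster pattern of type $\dynX$ by some pair $(\ngraph,\nbasis)$ via the assignment $\Psi$; (b) Theorem~\ref{thm_invariant_seeds_form_folded_pattern}, which identifies the $(G,\psi)$-invariant seeds in the cluster pattern of type $\dynX$ with the seeds of the folded cluster pattern of type $\dynY$; and (c) Corollary~\ref{corollary:distinct seeds imples distinct fillings}, which says distinct seeds yield inequivalent $N$-graphs.

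First I would fix a triple $(\dynX, G, \dynY)$ from Table~\ref{figure:all possible foldings} and take the initial $N$-graph $(\ngraph(\dynX), \nbasis(\dynX))$ from Table~\ref{table:affine D type} or~\ref{table:affine E type}. By inspection the quivers in those tables admit the $G$-symmetry shown in Figure~\ref{figure:G-actions}, so there is a surjection $\psi\colon \field \to \field^G$ making the initial seed $\initialseed = \Psi(\ngraph(\dynX),\nbasis(\dynX),\flags)$ into a $(G,\psi)$-admissible seed; Theorem~\ref{thm_invariant_seeds_form_folded_pattern} then guarantees that it is globally foldable and that the folded seed $\initialseed^G$ is of type~$\dynY$.

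Next, for each seed $\seed'$ in the cluster pattern of the folded algebra $\cA(\initialseed^G)$, Theorem~\ref{thm_invariant_seeds_form_folded_pattern} furnishes a unique $(G,\psi)$-admissible seed $\tilde\seed$ of type $\dynX$ with $\tilde\seed^G=\seed'$, and Proposition~\ref{prop:every seeds come from N-graphs} produces a pair $(\ngraph,\nbasis)$ with $\Psi(\ngraph,\nbasis,\flags)=\tilde\seed$. Choosing one such pair per $\seed'$ and collecting them gives the desired subset of $N$-graphs. The bijection
\[
\{(\ngraph,\nbasis)\}\;\xrightarrow{\;\Psi\;}\;\{(G,\psi)\text{-admissible seeds of type }\dynX\}\;\xrightarrow{\;(\cdot)^G\;}\;\{\text{seeds of type }\dynY\}
\]
is injective by Corollary~\ref{corollary:distinct seeds imples distinct fillings} and surjective by construction.

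The subtle point, which I expect to be the main obstacle, is upgrading this set-theoretic bijection to a statement about cluster patterns — namely verifying that the subset is closed under the natural operations corresponding to orbit mutations $\mutation_I = \prod_{i\in I}\mutation_i$ of type $\dynY$. The admissibility condition $b_{i,i'}=0$ for distinct $i\sim i'$ mutable ensures that the cycles in $\nbasis$ indexed by an orbit $I$ are pairwise disjoint, so the composition $\prod_{\gamma_i\in I}\mutation_{\gamma_i}$ of Legendrian mutations is well-defined and independent of the order of composition. Proposition~\ref{proposition:equivariance of mutations} then intertwines this composite Legendrian mutation with the orbit $\mathcal{X}$-mutation on the associated seeds, so the resulting $N$-graph again represents a $(G,\psi)$-admissible seed and lies in our subset; the pictorial realizations of Coxeter mutations in Propositions~\ref{prop:coxeter mutation on tripod} and~\ref{proposition:coxeter realization D-type} provide the concrete $G$-equivariant $N$-graph model needed to apply this equivariance step.
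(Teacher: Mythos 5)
Your first three paragraphs reproduce the paper's argument: realize every seed of type $\dynX$ by an $N$-graph (the paper cites Theorem~\ref{thm:seed many fillings}, whose engine is exactly Proposition~\ref{prop:every seeds come from N-graphs}), restrict to the $(G,\psi)$-invariant seeds, and invoke Theorem~\ref{thm_invariant_seeds_form_folded_pattern} to identify that set with the cluster pattern of type $\dynY$; this is precisely the paper's proof, and it already suffices because the isomorphism of cluster patterns is established at the level of seeds, with the $N$-graphs merely indexing them. Your final paragraph, on closure under orbit mutations realized by Legendrian mutations, is not needed for this statement — in the paper that is the content of the separate Theorem~\ref{theorem:globally foldable Ngraphs} — and as written it contains an overclaim: $b_{i,i'}=0$ for $i\sim i'$ gives vanishing of the \emph{algebraic} intersection number, which does not by itself make the cycles geometrically disjoint, and the well-definedness of $\prod_{\gamma_i\in I}\mutation_{\gamma_i}$ (indeed of each individual Legendrian mutation) requires the geometric and algebraic intersections to agree; the paper handles this by decomposing into finite-type sub-$N$-graphs rather than by the disjointness argument you sketch.
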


One of the natural question is then as follows:
can we find geometric properties of $N$-graphs which are equivalent to the $(G,\psi)$-admissibility (or $(G,\psi)$-invariance) of the corresponding seed?
For example, an invariance (or a symmetry) under a certain $G$-action is a 
possible candidate.

If so, then we can find a subset of $N$-graphs corresponding to the folded cluster pattern without passing through $\Psi$.

Unfortunately, we have no successful candidates when $\dynX$ is of type $\exdynD$.
One of difficulties comes from the obvious asymmetricity of the Coxeter padding of type $\exdynD$.
Even though it corresponds to a Legendrian loop as shown in Figure~\ref{fig:legendrian loop of D_intro}, it does not seem to be helpful to see any symmetry under $G$-action for almost all cases.

\subsection{\texorpdfstring{$G$-admissibilities on $\exdynD_{2n},\exdynE_6$ and $\exdynE_7$}{G-admissibilities on affine D2n, E6 and E7}}

For $\dynX$ is of type $\exdynD$, the only one successful attempt is when $\dynY=\exdynB_n=\exdynD_{2n}^{\Z/2\Z}$. That is,
\[
(\dynX, G, \dynY) = (\exdynD_{2n}, \Z/2\Z, \exdynB_{n}).
\]

Let $(\ngraph(\exdynD_{2n}),\nbasis(\exdynD_{2n}))$ be the initial $N$-graph. 
Since the top-left blue arc is isolated and not contained in $\nbasis(\exdynD_{2n})$, it remains the same after any (realizable) Legendrian mutations and so we may assume that the top-left blue arc is in a small enough collar neighborhood $U(\boundary\disk^2)$ of the boundary $\boundary \disk^2$.

Now the $G$-action on $(\ngraph, \nbasis)$ of type $\exdynD_{2n}$ is defined as follows:
let $\ngraph_0=\ngraph\cap \disk^2_0$ be the subgraph of $\ngraph$ contained in $\disk^2_0=\disk^2\setminus U(\boundary\disk^2)$ and $\nbasis_0=\nbasis$.
\begin{enumerate}
\item Switch colors of $\ngraph_1$ and $\ngraph_3$. In other words, if $\ngraph=(\ngraph_1,\ngraph_{2},\ngraph_{3})$, then the new $N$-graph is 
\[
\bar{\ngraph}_0=(\ngraph_{3}, \ngraph_{2}, \ngraph_1).
\]
\item Rotate $(\bar{\ngraph}_0, \bar{\nbasis}_0)$ by $\pi$ to obtain $\tau(\ngraph_0)$ and $\tau(\nbasis_0)$, and
\item Replace $(\ngraph_0,\nbasis_0)$ from $(\ngraph,\nbasis)$ with $(\tau(\ngraph_0),\tau(\nbasis_0))$.
\end{enumerate}
The result will be denoted by $\tau\cdot(\ngraph,\nbasis)$.
It is obvious that $\tau$ is involutive and so the action of $G=\Z/2\Z$ generated by $\tau$ is well-defined.

\begin{figure}[ht]
\begin{tikzcd}
\begin{tikzpicture}[baseline=-.5ex,scale=0.5]
\draw[violet, thick, rounded corners, fill=violet, opacity=0.2]
(-6,2.5) -- (-6,-2.5) -- (6,-2.5) -- (6,2.5) -- cycle;
\begin{scope}[xshift=-3.5cm]
\draw[yellow, line cap=round, line width=5, opacity=0.5] (-1,1)--(0,0) (-1,-1)--(0,0) (1,0)--(0,0);
\draw[green, line cap=round, line width=5, opacity=0.5] (-1,1)--(-2,1) (-1,-1)--(-1,-2) (1,0)--(2,0)
(3,0)--(3.5,0) (4.5,0)--(5,0) (6,0)--(7,0) (7,0)--(8,1) (7,0)--(8,-1);
\draw[blue,thick, rounded corners] (-1.75, 3) -- (-1.5, 2.5) -- (-1.25, 3);
\draw[green, thick] (0,3) -- (0,-3) (0,0) -- (-3,0);
\draw[red,thick, fill] (0,0) -- (-1,1) circle (2pt) -- +(0,2) (-1,1) -- ++(-1,0) circle (2pt) -- +(-1,0) (-2,1) -- +(0,2);
\draw[red,thick, fill] (0,0) -- (-1,-1) circle (2pt) -- +(-2,0) (-1,-1) -- ++(0,-1) circle (2pt) -- +(0,-1) (-1,-2) -- +(-2,0);
\draw[red,thick] (0,0) -- (1,0);
\draw[thick, fill=white] (0,0) circle (2pt);
\draw[thick](1, 3) -- (-2,3) to[out=180,in=90] (-3,2) --(-3,-2) to[out=-90,in=180] (-2, -3) -- (9,-3) to[out=0,in=-90] (10,-2) -- (10,2) to[out=90,in=0] (9,3)-- (1,3);
\end{scope}
\begin{scope}[xshift=3.5cm,rotate=180]
\draw[yellow, line cap=round, line width=5, opacity=0.5] (-1,-1) -- (-1,-2) (-1,1)--(-2,1);
\draw[blue, thick] (0,3) -- (0,-3) (0,0) -- (-3,0);
\draw[red,thick, fill] (0,0) -- (-1,1) circle (2pt) -- +(0,2) (-1,1) -- ++(-1,0) circle (2pt) -- +(-1,0) (-2,1) -- +(0,2);
\draw[red,thick, fill] (0,0) -- (-1,-1) circle (2pt) -- +(-2,0) (-1,-1) -- ++(0,-1) circle (2pt) -- +(0,-1) (-1,-2) -- +(-2,0);
\draw[red,thick] (0,0) -- (1,0);
\draw[thick, fill=white] (0,0) circle (2pt);
\end{scope}
\begin{scope}[xshift=0.5cm]
\draw[yellow, line cap=round, line width=5, opacity=0.5] (-2,0) -- (-1,0) (1,0) -- (2,0);
\draw[red, thick, fill] (-3,0) circle (2pt) -- (-3, -3) (-3,0) -- (-2,0) circle (2pt) -- (-2,3) (-2,0) -- (-1,0) circle (2pt) -- (-1,-3) (-1,0) -- (-0.5,0) (0.5,0) -- (1,0) circle (2pt) -- (1,3) (1,0) -- (2, 0) circle (2pt) -- (2,-3);
\draw[red, thick, dashed] (-0.5,0) -- (0.5,0);
\end{scope}
\draw[white, fill, thick, rounded corners]
(-5.75,2.25) -- (-5.75,-2.25) -- (5.75,-2.25) -- (5.75,2.25) -- cycle;
\draw(0,0) node {$(\ngraph_0, \nbasis_0)$};
\draw(0,-3) node[below] {$(\ngraph, \nbasis)$};
\end{tikzpicture}
\arrow[r, leftrightarrow, "\tau"] &
\begin{tikzpicture}[baseline=-.5ex,scale=0.5]
\draw[violet, thick, rounded corners, fill=violet, opacity=0.2]
(-6,2.5) -- (-6,-2.5) -- (6,-2.5) -- (6,2.5) -- cycle;
\begin{scope}[xshift=-3.5cm]
\draw[yellow, line cap=round, line width=5, opacity=0.5] (-1,1)--(0,0) (-1,-1)--(0,0) (1,0)--(0,0);
\draw[green, line cap=round, line width=5, opacity=0.5] (-1,1)--(-2,1) (-1,-1)--(-1,-2) (1,0)--(2,0)
(3,0)--(3.5,0) (4.5,0)--(5,0) (6,0)--(7,0) (7,0)--(8,1) (7,0)--(8,-1);
\draw[blue,thick, rounded corners] (-1.75, 3) -- (-1.5, 2.5) -- (-1.25, 3);
\draw[green, thick] (0,3) -- (0,-3) (0,0) -- (-3,0);
\draw[red,thick, fill] (0,0) -- (-1,1) circle (2pt) -- +(0,2) (-1,1) -- ++(-1,0) circle (2pt) -- +(-1,0) (-2,1) -- +(0,2);
\draw[red,thick, fill] (0,0) -- (-1,-1) circle (2pt) -- +(-2,0) (-1,-1) -- ++(0,-1) circle (2pt) -- +(0,-1) (-1,-2) -- +(-2,0);
\draw[red,thick] (0,0) -- (1,0);
\draw[thick, fill=white] (0,0) circle (2pt);
\draw[thick](1, 3) -- (-2,3) to[out=180,in=90] (-3,2) --(-3,-2) to[out=-90,in=180] (-2, -3) -- (9,-3) to[out=0,in=-90] (10,-2) -- (10,2) to[out=90,in=0] (9,3)-- (1,3);
\end{scope}
\begin{scope}[xshift=3.5cm,rotate=180]
\draw[yellow, line cap=round, line width=5, opacity=0.5] (-1,-1) -- (-1,-2) (-1,1)--(-2,1);
\draw[blue, thick] (0,3) -- (0,-3) (0,0) -- (-3,0);
\draw[red,thick, fill] (0,0) -- (-1,1) circle (2pt) -- +(0,2) (-1,1) -- ++(-1,0) circle (2pt) -- +(-1,0) (-2,1) -- +(0,2);
\draw[red,thick, fill] (0,0) -- (-1,-1) circle (2pt) -- +(-2,0) (-1,-1) -- ++(0,-1) circle (2pt) -- +(0,-1) (-1,-2) -- +(-2,0);
\draw[red,thick] (0,0) -- (1,0);
\draw[thick, fill=white] (0,0) circle (2pt);
\end{scope}
\begin{scope}[xshift=0.5cm]
\draw[yellow, line cap=round, line width=5, opacity=0.5] (-2,0) -- (-1,0) (1,0) -- (2,0);
\draw[red, thick, fill] (-3,0) circle (2pt) -- (-3, -3) (-3,0) -- (-2,0) circle (2pt) -- (-2,3) (-2,0) -- (-1,0) circle (2pt) -- (-1,-3) (-1,0) -- (-0.5,0) (0.5,0) -- (1,0) circle (2pt) -- (1,3) (1,0) -- (2, 0) circle (2pt) -- (2,-3);
\draw[red, thick, dashed] (-0.5,0) -- (0.5,0);
\end{scope}
\draw[white, fill, thick, rounded corners]
(-5.75,2.25) -- (-5.75,-2.25) -- (5.75,-2.25) -- (5.75,2.25) -- cycle;
\draw(0,0) node {\rotatebox{180}{$(\bar{\ngraph}_0, \bar{\nbasis}_0)$}};
\draw(0,-3) node[below] {$\tau\cdot(\ngraph, \nbasis)$};
\end{tikzpicture}
\end{tikzcd}
\caption{$\Z/2\Z$-action on $N$-graphs of type $\exdynD_{2n}$}
\label{figure:Z/2Z-action on D_2n}
\end{figure}

On the other hand, when $\dynX$ is of type $\exdynE$, the action of $G=\Z/2\Z$ or $\Z/3\Z$ can be defined by the same way as described in \cite[Section~6]{ABL2021}.
Let us consider the cluster patterns of type $ \exdynG_2, \dynE_6^{(2)}$ and 
$\exdynF_4$ which can be obtained by folding cluster patterns of type 
$\exdynE_6$ and $\exdynE_7$. 

More precisely, we consider the following three cases: let $(\dynX, G, \dynY)$ be one of
\begin{align*}
&(\exdynE_6, \Z/3\Z, \exdynG_2),&
&(\exdynE_6, \Z/2\Z, \dynE_6^{(2)}),&
&(\exdynE_7, \Z/2\Z, \exdynF_4).
\end{align*}
Then as seen earlier, each $\dynX$ has a $G$-action as depicted in Figure~\ref{figure:G-actions} which makes $\dynX$ globally foldable with respect to $G$. From now on, we denote the generator of $G$ by $\tau$.

Now let $(\ngraph, \nbasis)$ be a pair of a $3$-graph and a good tuple of cycles of type $\dynX$. We say that $(\ngraph, \nbasis)$ has the \emph{ray symmetry} if it has the $2\pi/3$-rotation symmetry on the subset
\begin{align*}
R_{2\pi/3}\cup R_{4\pi/3}\cup R_{2\pi}&\subset \disk^2,&
R_\theta&=\{(r,\theta)\in\disk\subset\C\mid 0\le r\le 1\}
\end{align*}
as depicted in Figure~\ref{figure:ray symmetricity}.

\begin{figure}[ht]
\[
\begin{tikzpicture}[baseline=-.5ex,scale=0.8]
\draw[thick] (0,0) circle (3);
\foreach \i in {0,120, 240} {
\begin{scope}[rotate=\i]
\foreach \j in {1,2,4,5} {
\draw[blue, thick] (\j*20:2) -- (\j*20:3);
\draw[red, thick] (0:2) -- (0:3);
\draw[red, thick] (-20:1.5) arc (-20:20:1.5);
\draw[blue, thick] (-20:1) arc (-20:20:1);
}
\draw[blue, thick, dotted] (50:2.5) arc (50:70:2.5);
\end{scope}
}
\draw[double] (0,0) circle (2);
\foreach \i in {0,120, 240} {
\begin{scope}[shift=({\i+60}:0.2)]
\fill[white,opacity=0.5] (0,0) -- (\i:3) arc (\i:{\i+120}:3) -- cycle;
\end{scope}
}
\draw[orange, opacity=0.2, line width=5] (0,0) -- (0:3);
\draw[blue, opacity=0.1, line width=5] (0,0) -- (120:3);
\draw[violet, opacity=0.1, line width=5] (0,0) -- (240:3);
\draw[orange] (0:3.7) node {$R_0$};
\draw[blue] (120:3.7) node {$R_{2\pi/3}$};
\draw[violet] (240:3.7) node {$R_{4\pi/3}$};
\end{tikzpicture}\qquad
\begin{tikzpicture}[baseline=-.5ex,scale=0.8]
\begin{scope}[yshift=2cm]
\draw[orange, opacity=0.2, line width=5] (0,0) -- (3,0);
\draw[orange] (0,0) node[left] {$(R_0, (\ngraph,\nbasis)\cap R_0)=$};
\draw[thick] (-10:3) arc (-10:10:3);
\draw[red, thick] (0:2) -- (0:3);
\draw[double] (-10:2) arc (-10:10:2);
\draw[red, thick] (-10:1.5) arc (-10:10:1.5);
\draw[blue, thick] (-10:1) arc (-10:10:1);
\fill[white, opacity=0.5] (0,0.15) rectangle (3,0.6);
\fill[white, opacity=0.5] (0,-0.15) rectangle (3,-0.6);
\end{scope}
\begin{scope}
\draw[blue, opacity=0.1, line width=5] (0,0) -- (3,0);
\draw[blue] (0,0) node[left] {$(R_{2\pi/3}, (\ngraph,\nbasis)\cap R_{2\pi/3})=$};
\draw[thick] (-10:3) arc (-10:10:3);
\draw[red, thick] (0:2) -- (0:3);
\draw[double] (-10:2) arc (-10:10:2);
\draw[red, thick] (-10:1.5) arc (-10:10:1.5);
\draw[blue, thick] (-10:1) arc (-10:10:1);
\fill[white, opacity=0.5] (0,0.15) rectangle (3,0.6);
\fill[white, opacity=0.5] (0,-0.15) rectangle (3,-0.6);
\end{scope}
\begin{scope}[yshift=-2cm]
\draw[violet, opacity=0.1, line width=5] (0,0) -- (3,0);
\draw[violet] (0,0)  node[left] {$(R_{4\pi/3}, (\ngraph,\nbasis)\cap R_{4\pi/3})=$};
\draw[thick] (-10:3) arc (-10:10:3);
\draw[red, thick] (0:2) -- (0:3);
\draw[double] (-10:2) arc (-10:10:2);
\draw[red, thick] (-10:1.5) arc (-10:10:1.5);
\draw[blue, thick] (-10:1) arc (-10:10:1);
\fill[white, opacity=0.5] (0,0.15) rectangle (3,0.6);
\fill[white, opacity=0.5] (0,-0.15) rectangle (3,-0.6);
\end{scope}
\draw (1.5,1) node[rotate=90] {$\cong$};
\draw (1.5,-1) node[rotate=90] {$\cong$};
\end{tikzpicture}
\]
\caption{Ray-symmetricity}
\label{figure:ray symmetricity}
\end{figure}

For each ray symmetric $(\ngraph, \nbasis)$, we define the $G$-action according to $G$.
\begin{enumerate}
\item If $G=\Z/3\Z$, then $(\tau(\ngraph),\tau(\nbasis))$ is defined by the $2\pi/3$-rotation.
\item If $G=\Z/2\Z$, then $(\tau(\ngraph),\tau(\nbasis))$ is defined by the \emph{partial rotation} as follows:
\begin{enumerate}
\item Cut $(\ngraph, \nbasis)$ into $3$-pieces $(\ngraph_i,\nbasis_i)$ for $1\le i\le 3$ along the rays $R_{2\pi i/3}$ for $1\le i\le 3$, where $(\nbasis_i, \nbasis_i)$ is in between $R_{2\pi (i-1)/3}$ and $R_{2\pi i/3}$.
\item Interchange the last two pieces $(\ngraph_2,\nbasis_2)$ and $(\ngraph_3,\nbasis_3)$ by the rotation.
\end{enumerate}
\end{enumerate}
We define the action of $\tau\in G$ as
\[
\tau\cdot(\ngraph,\nbasis) = (\tau(\ngraph), \tau(\nbasis)).
\]
The pictorial definition of the $G$-action is shown in Figure~\ref{figure:G actions}.

\begin{figure}[ht]
\subfigure[$\Z/3\Z$-action on $(\ngraph,\nbasis)$ of type $\exdynE_6$]{
\begin{tikzcd}[ampersand replacement=\&]
\begin{tikzpicture}[baseline=-.5ex, scale=0.5]
\draw[thick] (0,0) circle (3);
\fill[orange, opacity=0.2] (0,0) -- (0:3) arc(0:120:3) -- cycle;
\fill[violet, opacity=0.1] (0,0) -- (-120:3) arc(-120:0:3) -- cycle;
\fill[blue, opacity=0.1] (0,0) -- (120:3) arc (120:240:3) -- cycle;
\foreach \i in {0, 120, 240} {
\begin{scope}[rotate=\i]
\draw[blue, thick] (30:3) -- (30:2) (60:3) -- (60:2) (90:3) -- (90:2);
\draw[red, thick] (0:3) -- (0:2);
\end{scope}
}
\draw[double] (0,0) node {$(\ngraph,\nbasis)$} circle (2);
\end{tikzpicture}
\arrow[r,"\tau"]\&
\begin{tikzpicture}[baseline=-.5ex, scale=0.5]
\draw[thick] (0,0) circle (3);
\fill[orange, opacity=0.2] (0,0) -- (120:3) arc(120:240:3) -- cycle;
\fill[violet, opacity=0.1] (0,0) -- (0:3) arc(0:120:3) -- cycle;
\fill[blue, opacity=0.1] (0,0) -- (-120:3) arc (-120:0:3) -- cycle;
\foreach \i in {0, 120, 240} {
\begin{scope}[rotate=\i]
\draw[blue, thick] (30:3) -- (30:2) (60:3) -- (60:2) (90:3) -- (90:2);
\draw[red, thick] (0:3) -- (0:2);
\end{scope}
}
\draw[double] (0,0) node[rotate=120] {$(\ngraph,\nbasis)$} circle (2);
\end{tikzpicture}
\arrow[r,"\tau"]\&
\begin{tikzpicture}[baseline=-.5ex, scale=0.5]
\draw[thick] (0,0) circle (3);
\fill[orange, opacity=0.2] (0,0) -- (-120:3) arc(-120:0:3) -- cycle;
\fill[violet, opacity=0.1] (0,0) -- (120:3) arc(120:240:3) -- cycle;
\fill[blue, opacity=0.1] (0,0) -- (0:3) arc (0:120:3) -- cycle;
\foreach \i in {0, 120, 240} {
\begin{scope}[rotate=\i]
\draw[blue, thick] (30:3) -- (30:2) (60:3) -- (60:2) (90:3) -- (90:2);
\draw[red, thick] (0:3) -- (0:2);
\end{scope}
}
\draw[double] (0,0) node[rotate=-120] {$(\ngraph,\nbasis)$} circle (2);
\end{tikzpicture}
\arrow[ll,"\tau", bend left=35]
\end{tikzcd}
}

\subfigure[$\Z/2Z$-action on $(\ngraph, \nbasis)$ of type $\exdynE_6$ or $\exdynE_7$]{
$
\begin{tikzcd}[ampersand replacement=\&, column sep=5pc, row sep=3pc]
\begin{tikzpicture}[baseline=-.5ex, scale=0.5]
\draw[thick] (0,0) circle (3);
\fill[orange, opacity=0.2] (0,0) -- (0:3) arc(0:120:3) -- cycle;
\fill[violet, opacity=0.1] (0,0) -- (-120:3) arc(-120:0:3) -- cycle;
\fill[blue, opacity=0.1] (0,0) -- (120:3) arc (120:240:3) -- cycle;
\foreach \j in {1,2,4,5} {
\draw[blue, thick] (\j*20:2) -- (\j*20:3);
\draw[red, thick] (0:2) -- (0:3);
}
\draw[blue, thick, dotted] (50:2.5) arc (50:70:2.5);
\foreach \i in {120, 240} {
\begin{scope}[rotate=\i]
\foreach \j in {1,3} {
\draw[blue, thick] (\j*30:2) -- (\j*30:3);
\draw[red, thick] (0:2) -- (0:3);
}
\draw[blue, thick, dotted] (50:2.5) arc (50:70:2.5);
\end{scope}
}
\draw[double] (0,0) circle (2);
\draw (60:1) node[rotate=-30] {$(\ngraph_1,\nbasis_1)$};
\begin{scope}[rotate=120]
\draw (60:1) node[rotate=90] {$(\ngraph_2,\nbasis_2)$};
\end{scope}
\begin{scope}[rotate=240]
\draw (60:1) node[rotate=-150] {$(\ngraph_3,\nbasis_3)$};
\end{scope}
\end{tikzpicture}
\arrow[r,"\tau", yshift=.5ex]\arrow[d, "\text{cut}"', xshift=-.5ex]\&
\begin{tikzpicture}[baseline=-.5ex, scale=0.5]
\draw[thick] (0,0) circle (3);
\fill[orange, opacity=0.2] (0,0) -- (0:3) arc(0:120:3) -- cycle;
\fill[blue, opacity=0.1] (0,0) -- (-120:3) arc(-120:0:3) -- cycle;
\fill[violet, opacity=0.1] (0,0) -- (120:3) arc (120:240:3) -- cycle;
\foreach \j in {1,2,4,5} {
\draw[blue, thick] (\j*20:2) -- (\j*20:3);
\draw[red, thick] (0:2) -- (0:3);
}
\draw[blue, thick, dotted] (50:2.5) arc (50:70:2.5);
\foreach \i in {120, 240} {
\begin{scope}[rotate=\i]
\foreach \j in {1,3} {
\draw[blue, thick] (\j*30:2) -- (\j*30:3);
\draw[red, thick] (0:2) -- (0:3);
}
\draw[blue, thick, dotted] (50:2.5) arc (50:70:2.5);
\end{scope}
}
\draw[double] (0,0) circle (2);
\draw (60:1) node[rotate=-30] {$(\ngraph_1,\nbasis_1)$};
\begin{scope}[rotate=120]
\draw (60:1) node[rotate=90] {$(\ngraph_3,\nbasis_3)$};
\end{scope}
\begin{scope}[rotate=240]
\draw (60:1) node[rotate=-150] {$(\ngraph_2,\nbasis_2)$};
\end{scope}
\end{tikzpicture}\arrow[l, "\tau", yshift=-.5ex]\arrow[d, "\text{cut}", xshift=.5ex]\\
\begin{tikzpicture}[baseline=-.5ex, scale=0.5]
\begin{scope}[shift=(60:1)]
\fill[orange, opacity=0.2] (0,0) -- (0:3) arc(0:120:3) -- cycle;
\draw[thick] (0:3) arc (0:120:3);
\foreach \j in {1,2,4,5} {
\draw[blue, thick] (\j*20:2) -- (\j*20:3);
\draw[red, thick] (0:2) -- (0:3);
\draw[red, thick] (120:2) -- (120:3);
}
\draw[double] (0:2) arc (0:120:2);
\draw[blue, thick, dotted] (50:2.5) arc (50:70:2.5);
\draw (60:1) node[rotate=-30] {$(\ngraph_1,\nbasis_1)$};
\end{scope}
\begin{scope}[rotate=120]
\begin{scope}[shift=(60:1)]
\fill[blue, opacity=0.1] (0,0) -- (0:3) arc (0:120:3) -- cycle;
\draw[thick] (0:3) arc (0:120:3);
\foreach \j in {1,3} {
\draw[blue, thick] (\j*30:2) -- (\j*30:3);
\draw[red, thick] (0:2) -- (0:3);
\draw[red, thick] (120:2) -- (120:3);
}
\draw[blue, thick, dotted] (50:2.5) arc (50:70:2.5);
\draw[double] (0:2) arc (0:120:2);
\draw (60:1) node[rotate=90] {$(\ngraph_2,\nbasis_2)$};
\end{scope}
\end{scope}
\begin{scope}[rotate=240]
\begin{scope}[shift=(60:1)]
\fill[violet, opacity=0.1] (0,0) -- (0:3) arc (0:120:3) -- cycle;
\draw[thick] (0:3) arc (0:120:3);
\foreach \j in {1,3} {
\draw[blue, thick] (\j*30:2) -- (\j*30:3);
\draw[red, thick] (0:2) -- (0:3);
\draw[red, thick] (120:2) -- (120:3);
}
\draw[blue, thick, dotted] (50:2.5) arc (50:70:2.5);
\draw[double] (0:2) arc (0:120:2);
\draw (60:1) node[rotate=-150] {$(\ngraph_3,\nbasis_3)$};
\end{scope}
\end{scope}
\end{tikzpicture}\arrow[r, "\text{partial rot.}", yshift=.5ex]\arrow[u, "\text{glue}"', xshift=.5ex]
\&
\begin{tikzpicture}[baseline=-.5ex, scale=0.5]
\begin{scope}[shift=(60:1)]
\fill[orange, opacity=0.2] (0,0) -- (0:3) arc(0:120:3) -- cycle;
\draw[thick] (0:3) arc (0:120:3);
\foreach \j in {1,2,4,5} {
\draw[blue, thick] (\j*20:2) -- (\j*20:3);
\draw[red, thick] (0:2) -- (0:3);
\draw[red, thick] (120:2) -- (120:3);
}
\draw[double] (0:2) arc (0:120:2);
\draw[blue, thick, dotted] (50:2.5) arc (50:70:2.5);
\draw (60:1) node[rotate=-30] {$(\ngraph_1,\nbasis_1)$};
\end{scope}
\begin{scope}[rotate=240]
\begin{scope}[shift=(60:1)]
\fill[blue, opacity=0.1] (0,0) -- (0:3) arc (0:120:3) -- cycle;
\draw[thick] (0:3) arc (0:120:3);
\foreach \j in {1,3} {
\draw[blue, thick] (\j*30:2) -- (\j*30:3);
\draw[red, thick] (0:2) -- (0:3);
\draw[red, thick] (120:2) -- (120:3);
}
\draw[blue, thick, dotted] (50:2.5) arc (50:70:2.5);
\draw[double] (0:2) arc (0:120:2);
\draw (60:1) node[rotate=-150] {$(\ngraph_2,\nbasis_2)$};
\end{scope}
\end{scope}
\begin{scope}[rotate=120]
\begin{scope}[shift=(60:1)]
\fill[violet, opacity=0.1] (0,0) -- (0:3) arc (0:120:3) -- cycle;
\draw[thick] (0:3) arc (0:120:3);
\foreach \j in {1,3} {
\draw[blue, thick] (\j*30:2) -- (\j*30:3);
\draw[red, thick] (0:2) -- (0:3);
\draw[red, thick] (120:2) -- (120:3);
}
\draw[blue, thick, dotted] (50:2.5) arc (50:70:2.5);
\draw[double] (0:2) arc (0:120:2);
\draw (60:1) node[rotate=90] {$(\ngraph_3,\nbasis_3)$};
\end{scope}
\end{scope}
\end{tikzpicture}
\arrow[l, "\text{partial rot.}", yshift=-.5ex]\arrow[u, "\text{glue}", xshift=-.5ex]
\end{tikzcd}
$
}

\caption{$G$-action on a ray-symmetric $(\ngraph, \nbasis)$}
\label{figure:G actions}
\end{figure}

From now on, we assume that the triple $(\dynX, G, \dynY)$ is one of the following:
\begin{align*}
&(\exdynD_{2n}, \Z/2\Z, \exdynB_{n}),&
&(\exdynE_6, \Z/3\Z, \exdynG_2),&
&(\exdynE_6, \Z/2\Z, \dynE_6^{(2)}),&
&(\exdynE_7, \Z/2\Z, \exdynF_4).
\end{align*}

\begin{definition}[$G$-admissibility]\label{definition:G-admissibility for ngraphs}
We say that $(\ngraph,\nbasis)$ of type $\dynX$ is $G$-admissible if
\begin{enumerate}
\item the $N$-graph $\ngraph$ is invariant under $G$-action, 
\item the tuples of cycles $\nbasis$ and $\tau(\nbasis)$ are identical up to relabelling as follows:
\begin{enumerate}
\item if $\dynX=\exdynD_{2n}$ and $G=\Z/2\Z$, then
\begin{align*}
\gamma_1&\stackrel{\tau}{\longleftrightarrow}\gamma_{2n-1},&
\gamma_2&\stackrel{\tau}{\longleftrightarrow}\gamma_{2n+1},&
\gamma_3&\stackrel{\tau}{\longleftrightarrow}\gamma_{2n},&
\gamma_j&\stackrel{\tau}{\longleftrightarrow}\gamma_{2n-j},
\end{align*}
where $3<j<2n-1$.
\item if $\dynX=\exdynE_6$ and $G=\Z/3\Z$, then 
\begin{align*}
\gamma_1&\stackrel{\tau}{\longleftrightarrow}\gamma_1,&
\gamma_2&\stackrel{\tau}{\longmapsto}\gamma_4\stackrel{\tau}{\longmapsto}\gamma_6\stackrel{\tau}{\longmapsto}\gamma_2,&
\gamma_3&\stackrel{\tau}{\longmapsto}\gamma_5\stackrel{\tau}{\longmapsto}\gamma_7\stackrel{\tau}{\longmapsto}\gamma_3.
\end{align*}
\item if $\dynX=\exdynE_6$ and $G=\Z/2\Z$, then 
\begin{align*}
\gamma_i&\stackrel{\tau}{\longleftrightarrow}\gamma_i\quad i\le 3,&
\gamma_4&\stackrel{\tau}{\longleftrightarrow}\gamma_6,&
\gamma_5&\stackrel{\tau}{\longleftrightarrow}\gamma_7.
\end{align*}
\item if $\dynX=\exdynE_7$ and $G=\Z/2\Z$, then 
\begin{align*}
\gamma_i&\stackrel{\tau}{\longleftrightarrow}\gamma_i\quad i\le 2,&
\gamma_3&\stackrel{\tau}{\longleftrightarrow}\gamma_6,&
\gamma_4&\stackrel{\tau}{\longleftrightarrow}\gamma_7,&
\gamma_5&\stackrel{\tau}{\longleftrightarrow}\gamma_8.
\end{align*}
\end{enumerate}
\end{enumerate}
\end{definition}

\begin{proposition}
Let $(\ngraph, \nbasis)$ be of type $\dynX$. If $(\ngraph, \nbasis)$ is $G$-admissible, then so is the quiver~$\quiver(\Legendrian(\ngraph),\nbasis)$.
\end{proposition}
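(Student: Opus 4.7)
The plan is to verify the three nontrivial conditions of $G$-admissibility of Definition~\ref{definition:admissible quiver} directly from the geometry of $(\ngraph,\nbasis)$, since there are no frozen vertices in our setup so condition~(a) of that definition is vacuous. Write $\quiver=\quiver(\Legendrian(\ngraph),\nbasis)$ and recall that its adjacency matrix $\qbasis(\quiver)=(b_{i,j})$ is the matrix of algebraic intersection numbers of the cycles $\gamma_i\in \nbasis$ on the Legendrian surface $\Legendrian(\ngraph)$.

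First I would establish $G$-invariance of $\quiver$. By hypothesis $\tau\cdot\ngraph=\ngraph$ as $N$-graphs on $\disk^2$, so $\tau$ lifts to a diffeomorphism $\widetilde\tau\colon \Legendrian(\ngraph)\to\Legendrian(\ngraph)$ preserving the natural orientation of the Legendrian weave. The induced automorphism $\widetilde\tau_*$ on $H_1(\Legendrian(\ngraph))$ permutes $\nbasis$ exactly according to the prescribed relabeling in Definition~\ref{definition:G-admissibility for ngraphs}, because that relabeling was defined to match the geometric permutation of cycles. Since algebraic intersection is a diffeomorphism invariant, $b_{\tau(i),\tau(j)}=b_{i,j}$, i.e.\ $\qbasis(\quiver)$ is $G$-invariant.

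Next I would verify condition (b), that cycles lying in the same $G$-orbit have zero algebraic intersection. For $G$-orbits of size one this is trivial. For orbits of size $>1$, the strategy is to inspect, case by case in the four triples $(\dynX,G,\dynY)$ listed before Definition~\ref{definition:G-admissibility for ngraphs}, the placement of the cycles inside the fundamental regions permuted by $\tau$ (the two half-disks for $\Z/2\Z$ on $\exdynD_{2n}$, or the three colored sectors for the $\exdynE$-cases). In each configuration in Tables~\ref{table:affine D type} and~\ref{table:affine E type}, distinct cycles in a common $G$-orbit are supported in disjoint fundamental regions, so they have empty geometric intersection on $\disk^2$; since the cycles in $\nbasis$ are realized as $\sfI$- or $\sfY$-cycles supported on edges, this forces $b_{i,i'}=0$.

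Finally, I would verify condition (c), i.e.\ $b_{i,j}\cdot b_{i',j}\ge 0$ whenever $i\sim i'$ and $j$ is any mutable vertex. Using the $G$-invariance already established, $b_{i',j}=b_{\tau(i),j}=b_{i,\tau^{-1}(j)}$, so it suffices to show $b_{i,j}\cdot b_{i,\tau^{-1}(j)}\ge 0$. Geometrically this amounts to the statement that if $\gamma_i$ meets both $\gamma_j$ and its $\tau$-image $\gamma_{\tau^{-1}(j)}$, then the two intersection signs agree. This I would again check case-by-case, relying on the observation that the orientation conventions for $\sfI$- and $\sfY$-cycles (Figure~\ref{fig:I-cycle with orientation and intersections}) are preserved by $\tau$ up to a global reflection that reverses \emph{both} intersecting cycles simultaneously, so the sign of the intersection is unchanged; alternatively, the two cycles $\gamma_j$ and $\gamma_{\tau^{-1}(j)}$ are supported in different $\tau$-sectors and each interacts with $\gamma_i$ through isomorphic local pictures.

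The main obstacle will be the sign verification in condition (c) for the partial-rotation action on $\exdynE_6$ and $\exdynE_7$: unlike the full rotation for the $\Z/3\Z$-case, the partial rotation only swaps two of the three sectors, so one must rule out the possibility that $\gamma_i$ sits on the ``boundary ray'' shared by two sectors and is hit by $\gamma_j$ and $\gamma_{\tau^{-1}(j)}$ from opposite sides. Ray-symmetricity of $(\ngraph,\nbasis)$ ensures that at such a shared ray the local picture is identical on both sides, so the two contributions either both vanish or share the same sign, completing the argument.
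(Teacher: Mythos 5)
Your argument takes a genuinely different route from the paper, and it has a gap. The paper's proof is two lines: $G$-admissibility of $(\ngraph,\nbasis)$ gives $G$-invariance of the quiver $\quiver(\Legendrian(\ngraph),\nbasis)$, and then Theorem~\ref{thm_invariant_seeds_form_folded_pattern} (the result imported from \cite{AL2021}, valid for every quiver in the cluster pattern of the types in Table~\ref{figure:all possible foldings}) upgrades $G$-invariance to $G$-admissibility. You instead try to verify conditions (b) and (c) of Definition~\ref{definition:admissible quiver} directly from the geometry of the cycles. Your first step (the quiver is $G$-invariant because the $G$-action on the $N$-graph induces a diffeomorphism of the weave permuting the cycles as prescribed) is the same as the paper's and is fine, modulo the unaddressed point that the color-switch-plus-rotation must be checked to act by an \emph{orientation-preserving} diffeomorphism so that intersection numbers are preserved rather than negated.

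The gap is in your verification of (b) and (c). The proposition quantifies over \emph{every} $G$-admissible pair $(\ngraph,\nbasis)$ of type $\dynX$ --- in particular over the $N$-graphs obtained from the initial ones by arbitrary sequences of Legendrian (orbit) mutations and Coxeter paddings, which is exactly where the statement is needed in the proof of Theorem~\ref{theorem:globally foldable Ngraphs}. Your case analysis, however, reads off the positions of the cycles from the \emph{initial} configurations in Tables~\ref{table:affine D type} and~\ref{table:affine E type}: you argue that distinct cycles in a common $G$-orbit lie in disjoint fundamental sectors and hence do not meet, and that the sign comparison in (c) can be done sector by sector. After mutation the cycles are no longer $\sfI$- or $\sfY$-cycles confined to a single sector; they can be long cycles crossing the rays $R_{2\pi i/3}$ (or the dividing line of the two half-disks), so neither the vanishing in (b) nor the sign agreement in (c) can be read off from the pictures you cite. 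This is precisely the nontrivial content that Theorem~\ref{thm_invariant_seeds_form_folded_pattern} supplies algebraically: for these affine types, $G$-invariance of a quiver in the cluster pattern already forces $b_{i,i'}=0$ and $b_{i,j}b_{i',j}\ge 0$. Without invoking that result (or reproving it), your geometric argument only establishes the proposition for the seeds whose $N$-graph representatives happen to have sector-confined cycles, which is not enough.
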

\begin{proof}
If $(\ngraph, \nbasis)$ is $G$-admissible, then the quiver 
$\quiver(\Legendrian(\ngraph), \nbasis)$ is $G$-invariant by definition. 
Moreover, it is $G$-admissible by 
Theorem~\ref{thm_invariant_seeds_form_folded_pattern} and we are done.
\end{proof}

Let us recall the globally foldability for $N$-graphs defined in \cite[Section~6.3]{ABL2021}. We say that $(\ngraph, \nbasis)$ of type $\dynX$ is \emph{globally foldable} with respect to $G$ if $(\ngraph,\nbasis)$ is $G$-admissible and for any sequence of mutable $G$-orbits $I_1,\dots, I_\ell$, there eists a $G$-admissible $(\ngraph',\nbasis')$ such that
\[
\quiver(\Legendrian(\ngraph'),\nbasis')=
(\mutation_{I_\ell}\cdots\mutation_{I_1})(\quiver(\Legendrian(\ngraph),\nbasis)).
\]

\begin{remark}
Since we already know that $\dynX$ is globally foldable with respect to $G$, this definition requires only the realizability of $N$-graphs.
\end{remark}

\begin{theorem}\label{theorem:globally foldable Ngraphs}
The $N$-graph with a good tuple of cycles $(\ngraph(\dynX), \nbasis(\dynX))$ is globally foldable with respect to $G$.
\end{theorem}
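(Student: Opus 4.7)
The plan is to prove global foldability by induction on the length of a sequence of orbit mutations, verifying the base case by direct inspection and handling the inductive step by exploiting the disjointness of cycles in a common $G$-orbit together with the equivariance of Legendrian mutation (Proposition~\ref{proposition:equivariance of mutations}).

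For the base case, I would check that $(\ngraph(\dynX), \nbasis(\dynX))$ is $G$-admissible by direct inspection of Tables~\ref{table:affine D type} and \ref{table:affine E type}. For $\dynX=\exdynE_6$, the $3$-graph $\ngraph(3,3,3)$ has manifest $\Z/3\Z$-rotational symmetry realizing Definition~\ref{definition:G-admissibility for ngraphs}(2b), and it also admits the partial-rotation $\Z/2\Z$-symmetry handling (2c); for $\dynX=\exdynE_7$, the pair $\ngraph(2,4,4)$ has the partial-rotation symmetry between its two degree-$4$ branches. For $\dynX=\exdynD_{2n}$, after isotoping the isolated upper-left blue arc into a collar neighborhood of $\boundary\disk^2$, the remaining $4$-graph is symmetric under the composition of the color-swap $\ngraph_1 \leftrightarrow \ngraph_3$ and the $\pi$-rotation, and the cycles of $\nbasis(\dynX)$ are permuted as required by (2a).

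For the inductive step, let $(\ngraph, \nbasis)$ be $G$-admissible realizing a $G$-admissible seed $\seed = \Psi(\ngraph, \nbasis, \flags)$, and let $I$ be a mutable $G$-orbit. By the $G$-admissibility condition (Definition~\ref{definition:admissible quiver}(2b)) transported through $\Psi$, the cycles $\{\gamma_i\}_{i\in I}$ have pairwise zero algebraic intersection, so after $G$-equivariant Legendrian moves (Figure~\ref{fig:move1-6}) one may assume they are disjoint $\sfI$-cycles supported in $G$-translates of a single local patch. The composite Legendrian mutation $\mutation_I \coloneqq \prod_{i \in I} \mutation_{\gamma_i}$ is then well-defined and independent of order. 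Setting $(\ngraph',\nbasis') = \mutation_I(\ngraph, \nbasis)$, Proposition~\ref{proposition:equivariance of mutations} gives $\Psi(\ngraph', \nbasis', \flags) = \mutation_I(\seed)$, which is $G$-admissible at the quiver level by Theorem~\ref{thm_invariant_seeds_form_folded_pattern}. Geometric $G$-invariance of $\ngraph'$ then follows because the local mutation patches at $\{\gamma_i\}_{i \in I}$ are $G$-permuted while $\ngraph$ and $\ngraph'$ agree outside a small neighborhood of them.

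The main obstacle will be verifying the geometric $G$-equivariance in the $(\exdynD_{2n}, \Z/2\Z)$ case, since here the involution $\tau$ is not a pure ambient isometry but also swaps the colors $\ngraph_1 \leftrightarrow \ngraph_3$. The point to check is that Legendrian mutation commutes with this color-swap at the level of local models, i.e.\ $\overline{\mutation_\gamma(\ngraph)} = \mutation_{\bar\gamma}(\bar\ngraph)$, where $\bar{(\cdot)}$ denotes the color swap; this will follow by inspection of the $\sfI$-cycle mutation of Figure~\ref{figure:I-mutation}, which is symmetric under the simultaneous swap of the two outermost sheets, and propagates to $\sfY$-cycles via their decomposition through Moves~\Move{I} and \Move{II}. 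Combined with the manifest $\pi$-rotational equivariance, this yields the required $\tau$-equivariance of each orbit mutation and completes the induction.
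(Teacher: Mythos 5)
Your proposal has a genuine gap in the inductive step, and it is precisely at the point where the actual difficulty of the theorem lives. You assert that, for a mutable $G$-orbit $I$ in a $G$-admissible pair $(\ngraph,\nbasis)$ reached after an arbitrary sequence of orbit mutations, the cycles $\{\gamma_i\}_{i\in I}$ "after $G$-equivariant Legendrian moves \ldots may be assumed to be disjoint $\sfI$-cycles supported in $G$-translates of a single local patch." Zero algebraic intersection does not give you this: the whole subtlety of Legendrian mutation (flagged in the introduction of the paper) is that a cluster mutation is always possible algebraically, while the corresponding Legendrian mutation requires the cycles to be \emph{good} — simultaneously realizable as $\sfI$-cycles after $N$-graph moves — and this geometric condition is not known to propagate through arbitrary mutation sequences. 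Your induction therefore assumes at every step exactly the realizability statement that needs to be proved; equivariance of $\Psi$ (Proposition~\ref{proposition:equivariance of mutations}) and $G$-admissibility of the mutated \emph{quiver} tell you what the answer should look like but not that an $N$-graph realizing it exists with the required cycle structure.

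The paper's proof circumvents this by never mutating along an arbitrary sequence. It invokes Lemma~\ref{lemma:normal form} to put every target seed in the normal form $(\mutation_{I_L}\cdots\mutation_{I_1})(\mutation_\quiver^r(\seed_{t_0}))$ where the orbit sequence $I_1,\dots,I_L$ misses some orbit $J$. The Coxeter power $\mutation_\ngraph^r$ is realized explicitly by attaching Coxeter paddings (Corollaries~\ref{cor:coxeter realization E-type} and~\ref{cor:coxeter realization D-type}), which returns the initial, manifestly $G$-admissible $N$-graph up to padding; then deleting the cycles in the missed orbit $J$ splits the $N$-graph into pieces of finite type $\dynA$, $\dynD$, $\dynE$, where realizability of every mutation sequence was already established in the finite-type paper \cite[Proposition~5.15]{ABL2021}. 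The $G$-admissibility of the result is then read off from the construction. If you want to keep an inductive flavor, you would need to supply a proof that goodness of the tuple $\nbasis$ is preserved under each orbit mutation — which is essentially equivalent to redoing the reduction above — rather than asserting it. Your local observation that mutation commutes with the color swap $\ngraph_1\leftrightarrow\ngraph_3$ is correct and relevant to the $(\exdynD_{2n},\Z/2\Z)$ case, but it does not address the realizability issue.
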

\begin{proof}
Let $(\ngraph(\dynX), \nbasis(\dynX))$ be given as depicted in Table~\ref{table:affine E type} and denote the initial seed $\seed_{t_0}$ via $\Psi$ as follows:
\begin{align*}
\seed_{t_0} &= \Psi(\ngraph(\dynX), \nbasis(\dynX), \flags)=(\bfx_{t_0}, \quiver_{t_0}).
\end{align*}
By Theorem~\ref{thm_invariant_seeds_form_folded_pattern}, any 
$(G,\psi)$-admissible seed $\seed=(\bfx, \quiver)$ can be reached from the 
initial seed $\seed_{t_0}=(\bfx_{t_0}, \quiver_{t_0})$ via a sequence of orbit 
mutations. Indeed, for each $\seed$, by Lemma~\ref{lemma:normal form}, there 
exist an integer $r$ and a sequence of mutations 
$\mutation^\dynY_{j_1},\dots,\mutation^{\dynY}_{j_L}$ between folded seeds 
$\seed_{t_0}^G$ and $\seed^G$ of type $\dynY$
\[
\seed^G = \left(\mutation^{\dynY}_{j_L}\cdots\mutation^{\dynY}_{j_1}\right)((\mutation_\quiver^{\dynY})^r(\seed_{t_0}^G)),
\]
where the sequence $j_1,\dots,j_L$ misses at least one index for $\dynY$. Equivalently, there is a unique lift of the sequence of orbit mutations $\mu^\dynX_{I_1},\dots,\mu^{\dynX}_{I_L}$ from $\seed_{t_0}$ to $\seed$
\[
\seed = \left(\mutation^{\dynX}_{I_L}\cdots\mutation^{\dynX}_{I_1}\right)((\mutation_\quiver^{\dynX})^r(\seed_{t_0})),
\]
where $I_\ell$ is the $G$-orbit corresponding to $j_\ell$, and the sequence 
$I_1,\dots, I_L$ misses at least one $G$-orbit, say $J$.

Furthermore, Theorem~\ref{thm:seed many fillings} tells us that there exists a pair $(\ngraph, \nbasis)$ that realizes the seed $\seed$ via $\Psi$
\[
\seed = \Psi(\ngraph, \nbasis, \flags).
\]
Since $\seed$ is already $(G,\psi)$-admissible, the quiver $\quiver(\Legendrian(\ngraph),\nbasis)$ is $G$-admissible. However, the pair $(\ngraph,\nbasis)$ itself is not yet known to be $G$-admissible in the sense of Definition~\ref{definition:G-admissibility for ngraphs}, and therefore it suffices to show the $G$-admissibility for $(\ngraph, \nbasis)$.

The rest of the proof is essentially the same as the proof of Theorem~6.10 in 
\cite{ABL2021}. We will show the existence of the Legendrian mutation
\[
\left(\mutation_{I_L}\cdots\mutation_{I_1}\right)
(\mutation_\ngraph^r(\ngraph(\dynX),\nbasis(\dynX))).
\]

The realizability under $\mutation_\ngraph^r$ is guaranteed by 
Corollaries~\ref{cor:coxeter realization E-type} and \ref{cor:coxeter 
realization D-type} and the resulting $N$-graph 
$\mutation_\ngraph^r(\ngraph(\dynX),\nbasis(\dynX))$ is the same as the initial 
$N$-graph up to Coxeter padding attachment, and so it is $G$-admissible. On the 
other hand, since the sequence $I_1,\dots, I_L$ misses the orbit $J$, we 
separate the resulting $N$-graph into $\{(\ngraph^{(i)},\nbasis^{(i)})\}$ by 
using the cycles corresponding to the set $J$ as before so that each piece 
$(\ngraph^{(i)},\nbasis^{(i)})$ becomes an $N$-graph of finite type $\dynA_n$, 
$\dynD_n$, or $\dynE_n$.
Now the orbit mutations $\mutation_{I_\ell}$ will be separated into several 
sequences $\mutation^{(i)}$ of single mutations on separated $N$-graphs. Hence 
the realizability under orbit mutations follows from the realizability of each 
piece $(\ngraph^{(i)},\nbasis^{(i)})$ under $\mutation^{(i)}$, which are done 
already by \cite[Proposition~5.15]{ABL2021}. Finally, the $G$-admissibility of 
the final $N$-graph obviously follows from the construction.
\end{proof}

\begin{theorem}\label{theorem:folded Lagrangian fillings}
The following holds:
\begin{enumerate}
\item There exists a set of $\Z/2\Z$-admissible $4$-graphs of the Legendrian link $\legendrian(\exdynD_{2n})$ admits the cluster pattern of type $\exdynB_n$.
\item There exists a set of $\Z/3\Z$-admissible $3$-graphs of the Legendrian link $\legendrian(\exdynE_6)$ admits the cluster pattern of type $\exdynG_2$.
\item There exists a set of $\Z/2\Z$-admissible $3$-graphs of the Legendrian link $\legendrian(\exdynE_6)$ admits the cluster pattern of type $\dynE_6^{(2)}$.
\item There exists a set of $\Z/2\Z$-admissible $3$-graphs of the Legendrian link $\legendrian(\exdynE_7)$ admits the cluster pattern of type $\exdynF_4$.
\end{enumerate}
\end{theorem}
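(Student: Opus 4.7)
The plan is to combine Theorem~\ref{theorem:globally foldable Ngraphs} with Theorem~\ref{thm_invariant_seeds_form_folded_pattern} via the assignment $\Psi$ from $N$-graphs to seeds. For each triple $(\dynX, G, \dynY)$ listed in the statement, the strategy is to take as the initial object the pair $(\ngraph(\dynX), \nbasis(\dynX))$ of Tables~\ref{table:affine D type} and~\ref{table:affine E type}, verify that it is $G$-admissible in the sense of Definition~\ref{definition:G-admissibility for ngraphs}, and then run all sequences of orbit mutations on it to sweep out an entire family of $G$-admissible $N$-graphs whose image under $\Psi$ forms the folded cluster pattern of type $\dynY$.

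First I would confirm the $G$-admissibility of the initial $(\ngraph(\dynX), \nbasis(\dynX))$. For the tripod cases $\dynX=\exdynE_6, \exdynE_7$ this is a direct inspection: the $3$-graph is manifestly ray-symmetric, and the labelling of $\nbasis(\dynX)$ matches the prescribed permutation in Definition~\ref{definition:G-admissibility for ngraphs} exactly as in Figure~\ref{figure:G actions}. For $(\exdynD_{2n}, \Z/2\Z, \exdynB_n)$, one verifies that after sliding the isolated top-left blue arc into a collar neighborhood of $\partial\D^2$ and applying the color-switching-plus-rotation $\tau$ depicted in Figure~\ref{figure:Z/2Z-action on D_2n}, the inner part of $(\ngraph(\exdynD_{2n}), \nbasis(\exdynD_{2n}))$ is recovered up to the stated relabelling of cycles. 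One also checks that the induced folded initial quiver $\quiver(\Legendrian(\ngraph(\dynX)), \nbasis(\dynX))^G$ is of type $\dynY$, which is a finite computation reducing to the examples tabulated in Example~\ref{example_folding_E} and Table~\ref{figure:all possible foldings}.

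Next, Theorem~\ref{theorem:globally foldable Ngraphs} ensures that for any sequence of mutable $G$-orbits $I_1,\dots,I_L$, the result $(\mutation_{I_L}\cdots\mutation_{I_1})(\ngraph(\dynX), \nbasis(\dynX))$ is again a realizable $G$-admissible pair of an $N$-graph and a good tuple of cycles. Feeding these into $\Psi$ produces $(G,\psi)$-admissible seeds in the cluster algebra of type $\dynX$. By Theorem~\ref{thm_invariant_seeds_form_folded_pattern}, the folding operation $\seed\mapsto \seed^G$ maps the totality of such seeds bijectively onto the cluster pattern of the cluster algebra of type $\dynY$. Composing these two correspondences yields the desired identification: the chosen family of $G$-admissible $N$-graphs realizes the cluster pattern of type $\dynY$, proving each of the four assertions at once.

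The main obstacle I expect is the careful bookkeeping in the $\exdynD_{2n}$ case. Unlike the $\exdynE$ cases, the $G$-action here is a hybrid of color swap and rotation rather than an honest rotation, and the Coxeter paddings $\coxeterpadding^{\pm 1}(\exdynD_{2n})$ of Table~\ref{table:coxeter paddings for affine D} are not evidently $\tau$-symmetric; to confirm that the Legendrian Coxeter orbit mutation preserves $G$-admissibility one will likely need a preliminary sequence of Moves~\Move{I}, \Move{II}, and~\Move{V} to re-present $\coxeterpadding^{\pm 1}(\exdynD_{2n})$ in a form symmetric under~$\tau$ before concluding. A similar but simpler check is needed for orbit mutations within each piece $(\ngraph^{(i)}, \nbasis^{(i)})$ arising in the decomposition in the proof of Theorem~\ref{theorem:globally foldable Ngraphs}, but those pieces are of finite type and were already handled in \cite[Proposition~5.15]{ABL2021}.
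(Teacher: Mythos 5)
Your proposal is correct and follows essentially the same route as the paper, whose proof of this theorem is literally the one-line combination of Theorem~\ref{theorem:globally foldable Ngraphs} with Theorem~\ref{thm_invariant_seeds_form_folded_pattern}. The additional verifications you describe (the $G$-admissibility of the initial pairs and the behavior of the Coxeter paddings under $\tau$) are already absorbed into the proof of Theorem~\ref{theorem:globally foldable Ngraphs}, so invoking that theorem as you do makes them redundant rather than missing.
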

\begin{proof}
This is a combination of Theorem~\ref{theorem:globally foldable Ngraphs} and 
Theorem~\ref{thm_invariant_seeds_form_folded_pattern}.
\end{proof}

\appendix

\section{Coxeter paddings \texorpdfstring{$\coxeterpadding(\exdynD_n)$}{for Affine Dn}}\label{sec:Coxeter padding affine D_n}

Let us recall the pair $(\ngraph(\exdynD_n), \nbasis(\exdynD_n))$ given in Table~\ref{table:affine D type}.
We will perform the Legendrian Coxeter mutation $\mutation_\ngraph$ on $(\ngraph(\exdynD_n), \nbasis(\exdynD_n))$ in order to provide the pictorial proof of Proposition~\ref{proposition:coxeter realization D-type}.

Before we take mutations, we first introduce a useful operation on $N$-graphs 
described below, called the \emph{move} $\mathrm{(Z)}$.
\[
\begin{tikzcd}
\begin{tikzpicture}[baseline=-.5ex,scale=0.5]
\draw[yellow, line cap=round, line width=5, opacity=0.5] (-2,1)--(-1,1) (-1,-1)--(-1,-2);
\draw[green, line cap=round, line width=5, opacity=0.5,] (-1,1)--(0,0) (-1,-1)--(0,0) (0,0)--(1,0);
\draw[red, thick] (0,3) -- (0,-3) (0,0) -- (-3,0);
\draw[blue,thick, fill] (0,0) -- (-1,1) circle (2pt) -- +(0,2) (-1,1) -- ++(-1,0) circle (2pt) -- +(-1,0) (-2,1) -- +(0,2);
\draw[blue,thick, fill] (0,0) -- (-1,-1) circle (2pt) -- +(-2,0) (-1,-1) -- ++(0,-1) circle (2pt) -- +(0,-1) (-1,-2) -- +(-2,0);
\draw[blue,thick] (0,0) -- (1,0);
\draw[thick, fill=white] (0,0) circle (2pt);
\end{tikzpicture}\arrow[r,"\mathrm{(II)}"]&
\begin{tikzpicture}[baseline=-.5ex,scale=0.5]
\draw[yellow, line cap=round, line width=5, opacity=0.5] (-2,1)--(-1,1) (-1,-2)--(1,0)--(1,1);
\draw[green, line cap=round, line width=5, opacity=0.5] (-1,1)--(0,0) (0,0)--(2,0);
\draw[red, thick, fill] (1,3) -- (1,1) circle (2pt) -- (0,0) (1,1) -- (1,0) (1,0) -- (1,-3) (1,0) -- (-3,0);
\node at (-0.5,0.5)[above right] {$\gamma$};
\draw[blue, thick] (0,0) to[out=30,in=150] (1,0);
\draw[blue,thick, fill] (0,0) -- (-1,1) circle (2pt) -- +(0,2) (-1,1) -- ++(-1,0) circle (2pt) -- +(-1,0) (-2,1) -- +(0,2) (-1,-2) circle (2pt);
\draw[blue,thick]  (-1,-3) -- (-1,-2) -- (1,0) (-1,-2) -- (-3,-2);
\draw[blue,thick,rounded corners](0,0) -- (-1,-1) -- +(-2,0);
\draw[blue,thick] (1,0) -- (2,0);
\draw[thick, fill=white] (0,0) circle (2pt) (1,0) circle (2pt);
\end{tikzpicture}\arrow[r,"\mu_\gamma"]&
\begin{tikzpicture}[baseline=-.5ex,scale=0.5]
\draw[yellow, line cap=round, line width=5, opacity=0.5] (-2,2)--(-1,1) (0,-2)--(1,-1)--(1,0) to[out=120,in=-120] (1,1) -- (1,2);
\draw[green, line cap=round, line width=5, opacity=0.5,] (-1,1)--(0,1)--(1,0)--(2,0);
\draw[red, thick, fill] (1,3) -- (1,-3) (-3,0) -- (-1,0) (1,2) circle (2pt) -- (0,2);
\draw[red, thick] (1,1) -- (0,2) -- (0,1) -- (1,0) (0,1) -- (-1,0) -- (1,-1);
\draw[blue, thick, fill] (0,2) -- (-1,3) (0,1) -- (-1,1) circle (2pt) -- (-2,2) circle (2pt) -- (-2,3) (-1,1) -- (-1,0) (-2,2) -- (-3,2) (1,-1) -- (0,-2) circle (2pt) -- (-3,-2) (0,-2) -- (0,-3);
\draw[blue,thick, rounded corners] (-1,0) -- (-2,-1) -- (-3,-1) ;
\draw[blue, thick] (1,1) -- (2,1) (1,0) -- (2,0) (1,-1) -- (2,-1);
\draw[blue, thick] (-1,0) to[out=15,in=-105] (0,1) (0,2) to[out=-15,in=105] (1,1) (1,1) to[out=-120,in=120] (1,0) (1,0) to[out=-120,in=120] (1,-1) (0,2) to[out=-60,in=60] (0,1);
\draw[thick, fill=white] (-1,0) circle (2pt) (1,0) circle (2pt) (0,1) circle (2pt) (0,2) circle (2pt) (1,1) circle (2pt) (1,-1) circle (2pt);
\end{tikzpicture}\arrow[r,"\mathrm{(II)}"]&
\begin{tikzpicture}[baseline=-.5ex,scale=0.5]
\draw[yellow, line cap=round, line width=5, opacity=0.5] (-2,2)--(-1,1) (0,-2)--(1,-1)--(1,0)--(0.67, 0.67);
\draw[green, line cap=round, line width=5, opacity=0.5,] (-1,1)--(0,1)--(1,0)--(2,0);
\draw[red, thick, fill] (1,3)--(1,-3) (-3,0) -- (-1,0) (-1,0) -- (1,-1) (1,1) -- (0,1);
\draw[red, thick] (-1,0) -- (0,1) -- (1,0);
\draw[blue, thick, fill] (1,1) -- (0.67, 0.67) circle (2pt) -- (0,1) (0.67, 0.67) -- (1,0);
\draw[blue, thick] (-1,0) to[out=15, in=-105] (0,1) (1,0) to[out=-120,in=120] (1,-1);
\draw[blue, thick, fill] (0,1) -- (-1,1) circle (2pt) -- (-2,2) circle (2pt) -- (-2,3) (-1,0) -- (-1,1) (-2,2) -- (-3,2);
\draw[blue, thick] (1,1) -- (0,2) -- (-1,3);
\draw[blue, thick] (1,1) -- ++(1,0) (1,0) -- ++(1,0) (1,-1) -- ++(1,0);
\draw[blue, thick, fill] (1,-1) -- (0,-2) circle (2pt) -- (-3,-2) (0,-2) -- (0,-3) ;
\draw[blue,thick, rounded corners] (-1,0) -- (-2,-1) -- (-3,-1) ;
\draw[thick, fill=white] (-1,0) circle (2pt) (0,1) circle (2pt) (1,-1) circle (2pt) (1,0) circle (2pt) (1,1) circle (2pt);
\end{tikzpicture}\arrow[d, "\mathrm{(II)}"]\\
\begin{tikzpicture}[baseline=-.5ex,scale=0.5]
\draw[yellow, line cap=round, line width=5, opacity=0.5, line cap=round] 
(1,0.5) -- (1,1.5)
(0.5,-1) --(1.5,-1)
;
\draw[green, line cap=round, line width=5, opacity=0.5] 
(1,0.5) -- (2,0) --(1.5,-1)
(2,0) -- (3,0)
;
\draw[red, thick] 
(2,3) -- (2,-3)
(2,2) -- (0,2) -- (-2,0) -- (-3,0)
(2,-2) -- (0,-2) -- (-2,0)
(0,2) -- (0,-2)
(0,0) -- (2,0)
;
\draw[blue, thick] 
(-3,1) -- (-2,0) to[out=15,in=-105] (0,2) -- (-1,3)
(0,2) -- (1,1.5) -- (2,2) -- (3,2)
(1,3) -- (2,2)
(1,1.5) -- (1,0.5) -- (2,0) -- (3,0)
(1,0.5) -- (0,0) -- (0.5,-1) -- (1.5,-1) -- (2,0)
(0,0) to[out=-120,in=120] (0,-2) -- (-1,-3)
(0,-2) -- (0.5,-1)
(1.5,-1) -- (2,-2) -- (3,-2)
(2,-2) -- (1,-3)
(-3,-1) -- (-2,0)
;
\draw[thick, fill=white] 
(-2,0) circle (2pt) 
(0,2) circle (2pt) 
(0,-2) circle (2pt) 
(0,0) circle (2pt) 
(2,2) circle (2pt) 
(2,0) circle (2pt) 
(2,-2) circle (2pt);
\draw[blue, thick, fill]
(1,0.5) circle (2pt)
(1,1.5) circle (2pt)
(0.5,-1) circle (2pt)
(1.5,-1) circle (2pt)
; 
\end{tikzpicture}
&
\begin{tikzpicture}[baseline=-.5ex,scale=0.5]
\draw[yellow, line cap=round, line width=5, opacity=0.5] 
(-2,2)--(-1,2)--(-1,0)--(0,-1) 
(0,0.5)--(0,1.5);
\draw[green, line cap=round, line width=5, opacity=0.5,] (0,-1)--(1,0)--(2,0) (0,0.5)--(1,0);
\draw[red, thick] (1,3)--(1,-3) (-3,0) -- (-1,2)--(1,2) (-1,2) --(-1,0)--(1,0) (-1,0) to[out=-90,in=180] (1,-2);
\draw[blue, thick] 
(-3,2) -- (-2,2) -- (-2,3) 
(-2,2) -- (-1,2) -- (0,1.5) -- (1,2) -- (0,3)
(2,2)--(1,2) 
(0,1.5)--(0,0.5)--(1,0)--(2,0) 
(0,0.5)--(-1,0) -- (-2,-3)
(0,-3) --(1,-2) -- (2,-2)
(1,-2) -- (-1,0)
(0,-1) -- (1,0)
(-1,2) -- (-3,-2)
;
\draw[blue, thick, fill] 
(-2,2) circle (2pt)
(0,1.5) circle (2pt)
(0,0.5) circle (2pt)
(0,-1) circle (2pt)
;
\draw[thick, fill=white] 
(-1,2) circle (2pt) 
(-1,0) circle (2pt) 
(1,2) circle (2pt) 
(1,0) circle (2pt) 
(1,-2) circle (2pt) 
;
\end{tikzpicture}\arrow[l,"\mathrm{(II)^2}"]
&
\begin{tikzpicture}[baseline=-.5ex,scale=0.5]
\draw[yellow, line cap=round, line width=5, opacity=0.5] (-2,2)--(-1,1) (0.5, 0.75) -- (0.5,0.25);
\draw[green, line cap=round, line width=5, opacity=0.5,] (-1,1)--(0,1)--(0,0) to[out=-30,in=-120] (1,0) --(2,0) (1,0) -- (0.5,0.25);
\draw[red, thick, fill] (1,3)--(1,-3) (-3,0) -- (-1,0) (-1,0) -- (0,-1) (1,1) -- (0,1) (0,0) -- (0,-1);
\draw[red, thick] (-1,0) -- (0,1) -- (0,0) -- (1,0) (0,-1) -- (1,-1);
\draw[blue, thick, fill] (1,1) -- (0.5, 0.75) circle (2pt) -- (0.5,0.25) circle (2pt) -- (1,0) (0.5, 0.75) -- (0,1) (0.5, 0.25) -- (0,0);
\draw[blue, thick] (-1,0) to[out=15, in=-105] (0,1) (0,0) to[out=-120,in=120] (0,-1) (0,-1) to[out=30, in=150] (1,-1) (0,0) to[out=-30, in=-150] (1,0);
\draw[blue, thick, fill] (0,1) -- (-1,1) circle (2pt) -- (-2,2) circle (2pt) -- (-2,3) (-1,0) -- (-1,1) (-2,2) -- (-3,2);
\draw[blue, thick] (1,1) -- (0,2) -- (-1,3);
\draw[blue, thick] (1,1) -- ++(1,0) (1,0) -- ++(1,0) (1,-1) -- ++(1,0);
\draw[blue, thick] (0,-1) -- (-2,-3) (1,-1) -- (-1,-3) (-1,0) -- (-3,-2);
\draw[thick, fill=white] (-1,0) circle (2pt) (0,-1) circle (2pt) (0,0) circle (2pt) (0,1) circle (2pt) (1,-1) circle (2pt) (1,0) circle (2pt) (1,1) circle (2pt);
\end{tikzpicture}\arrow[l,"\mathrm{(I,II)^*}"]&
\begin{tikzpicture}[baseline=-.5ex,scale=0.5]
\draw[yellow, line cap=round, line width=5, opacity=0.5] (-2,2)--(-1,1) (1,-0.5)--(1,0)--(0.67, 0.67);
\draw[green, line cap=round, line width=5, opacity=0.5,] (-1,1)--(0,1)--(1,0)--(2,0);
\draw[red, thick, fill] (1,3)--(1,-3) (-3,0) -- (-1,0) (-1,0) -- (0,-1) (1,1) -- (0,1) (1,-0.5) circle (2pt) -- (0,-1);
\draw[red, thick] (-1,0) -- (0,1) -- (1,0) (0,-1) -- (1,-1);
\draw[blue, thick, fill] (1,1) -- (0.67, 0.67) circle (2pt) -- (0,1) (0.67, 0.67) -- (1,0) (1,0) -- (0,-1);
\draw[blue, thick] (-1,0) to[out=15, in=-105] (0,1) (0,-1) to[out=20, in=160] (1,-1);
\draw[blue, thick, fill] (0,1) -- (-1,1) circle (2pt) -- (-2,2) circle (2pt) -- (-2,3) (-1,0) -- (-1,1) (-2,2) -- (-3,2);
\draw[blue, thick] (1,1) -- (0,2) -- (-1,3);
\draw[blue, thick] (1,1) -- ++(1,0) (1,0) -- ++(1,0) (1,-1) -- ++(1,0);
\draw[blue, thick] (0,-1) -- (-2,-3) (1,-1) -- (-1,-3) (-1,0) -- (-3,-2);
\draw[thick, fill=white] (-1,0) circle (2pt) (0,-1) circle (2pt) (0,1) circle (2pt) (1,-1) circle (2pt) (1,0) circle (2pt) (1,1) circle (2pt);
\end{tikzpicture}\arrow[l,"\mathrm{(II)}"]
\end{tikzcd}
\]

\begin{remark}
The reader should not confuse that even though we call this operation the 
\emph{move}, it does not induce any equivalence on $N$-graphs since it involves 
a mutation $\mutation_\cycle$.
\end{remark}

One important observation is that one can take the move $\mathrm{(Z)}$ instead of the Legendrian mutation $\mutation_\cycle$ on the $\sfY$-like cycle $\cycle$, and after the move, the $\sfY$-like cycle becomes the $\sfY$-like cycle and $\sfI$-cycles become $\sfI$-cycles again.
\begin{remark}
We use an ambiguous terminology `$\sfY$-like cycle' since the global shape of $\cycle$ is unknown.
However, the meaning is obvious and we omit the detail.
\end{remark}

Equipped with the move $\mathrm{(Z)}$ as a (local) mutation, the Legendrian 
Coxeter mutation $\mutation_\ngraph$ can be explicitly performed as follows: we 
will explain only the positive Legendrian Coxeter mutation~$\mutation_\ngraph$.

\pagebreak
\smallskip
\noindent(1) $n=4$. For the mutation $\mutation_1$ on the central cycle $\cycle_1$, we will perform the move $\mathrm{(Z)}$ twice at both six valent vertices.
Then all other cycles become (short) $\sfI$-cycles which can mutate easily.
See Figure~\ref{figure:Legendrian Coxeter mutations for affine D4}.

\begin{figure}[H]
\begin{tikzcd}[column sep=2pc, row sep=2pc]
&\begin{tikzpicture}[baseline=-.5ex,scale=0.5,rotate=90]
\begin{scope}[xshift=-2cm]
\begin{scope}[xshift=-1cm]
\draw[yellow,   line width=5, opacity=0.5] (-1,1)--(0,0) (-1,-1)--(0,0) (1,0)--(0,0);
\draw[green, line cap=round, line width=5, opacity=0.5] (-2,1)-- (-1,1) (-1,-1)--(-1,-2)
(3,1)--(3,2) (3,-1)--(4,-1);
\draw[blue,thick, rounded corners] (-1.75, 3) -- (-1.5, 2.5) -- (-1.25, 3);
\draw[thick] (1, 3) -- (-2,3) to[out=180,in=90] (-3,2) --(-3,-2) to[out=-90,in=180] (-2, -3) -- (4,-3) to[out=0,in=-90] (5,-2) -- (5,2) to[out=90,in=0] (4,3)-- (1,3);
\draw[green, thick] (0,3) -- (0,-3) (0,0) -- (-3,0);
\draw[red, thick, fill] (0,0) -- (-1,1) circle (2pt) -- +(0,2) (-1,1) -- ++(-1,0) circle (2pt) -- +(-1,0) (-2,1) -- +(0,2);
\draw[red,thick, fill] (0,0) -- (-1,-1) circle (2pt) -- +(-2,0) (-1,-1) -- ++(0,-1) circle (2pt) -- +(0,-1) (-1,-2) -- +(-2,0);
\draw[red,thick] (0,0) -- (1,0);
\draw[thick, fill=white] (0,0) circle (2pt);
\end{scope}
\begin{scope}[xshift=1cm,rotate=180]
\draw[yellow,  line width=5, opacity=0.5] (-1,1)--(0,0) (-1,-1)--(0,0) (1,0)--(0,0);
\draw[blue, thick] (0,3) -- (0,-3) (0,0) -- (-3,0);
\draw[red ,thick, fill] (0,0) -- (-1,1) circle (2pt) -- +(0,2) (-1,1) -- ++(-1,0) circle (2pt) -- +(-1,0) (-2,1) -- +(0,2);
\draw[red,thick, fill] (0,0) -- (-1,-1) circle (2pt) -- +(-2,0) (-1,-1) -- ++(0,-1) circle (2pt) -- +(0,-1) (-1,-2) -- +(-2,0);
\draw[red,thick] (0,0) -- (1,0);
\draw[thick, fill=white] (0,0) circle (2pt);
\end{scope}
\end{scope}
\end{tikzpicture}\arrow[dl, bend right, "\mutation_{\nbasis_-}"']
\arrow[dr, bend left, "\mutation_{\nbasis_+}"]\\[-5pc]
\begin{tikzpicture}[baseline=-.5ex,scale=0.5,rotate=90]
\begin{scope}[xshift=-1cm]
\draw[yellow,   line width=5, opacity=0.5] (-1,1)--(0,0) (-1,-1)--(0,0) (1,0)--(0,0);
\draw[green, line cap=round, line width=5, opacity=0.5] (-1,2)-- (-1,1) (-1,-1)--(-2,-1) (3,1)--(4,1) (3,-1)--(3,-2);
\draw[blue,thick, rounded corners] (-1.75, 3) -- (-1.5, 2.5) -- (-1.25, 3);
\draw[thick] (1, 3) -- (-2,3) to[out=180,in=90] (-3,2) --(-3,-2) to[out=-90,in=180] (-2, -3) -- (4,-3) to[out=0,in=-90] (5,-2) -- (5,2) to[out=90,in=0] (4,3)-- (1,3);
\draw[green, thick] (0,3) -- (0,-3) (0,0) -- (-3,0);
\draw[red, thick, fill] (0,0) -- (-1,1) circle (2pt) -- +(0,2) (-1,1) -- +(-2,0) (-1,2) circle(2pt) -- +(-2,0);
\draw[red,thick, fill] (0,0) -- (-1,-1) circle (2pt) -- +(-2,0) (-1,-1) -- +(0,-2) (-2,-1) circle(2pt) -- +(0,-2);
\draw[red,thick] (0,0) -- (1,0);
\draw[thick, fill=white] (0,0) circle (2pt);
\end{scope}
\begin{scope}[xshift=1cm,rotate=180]
\draw[yellow,  line width=5, opacity=0.5] (-1,1)--(0,0) (-1,-1)--(0,0) (1,0)--(0,0);
\draw[blue, thick] (0,3) -- (0,-3) (0,0) -- (-3,0);
\draw[red, thick, fill] (0,0) -- (-1,1) circle (2pt) -- +(0,2) (-1,1) -- +(-2,0) (-1,2) circle(2pt) -- +(-2,0);
\draw[red,thick, fill] (0,0) -- (-1,-1) circle (2pt) -- +(-2,0) (-1,-1) -- +(0,-2) (-2,-1) circle(2pt) -- +(0,-2);
\draw[red,thick] (0,0) -- (1,0);
\draw[thick, fill=white] (0,0) circle (2pt);
\end{scope}
\end{tikzpicture}\arrow[d,"\mutation_{\nbasis_+}"']
& &
\begin{tikzpicture}[baseline=-.5ex,scale=0.5,rotate=90]
\begin{scope}[xshift=-3cm]
\begin{scope}
\draw[green, line width=5, opacity=0.5, line cap=round] 
(1,0.5) -- (1,1.5)
(0.5,-1) --(1.5,-1)
;
\draw[yellow, line width=5, opacity=0.5] 
(1,0.5) -- (2,0) --(1.5,-1)
(2,0) -- (3,0)
;
\draw[blue,thick, rounded corners]
(1/3,3)-- (2/3,8/3) --(1/3,7/3)-- (0,8/3) -- (-1/3,3)
;
\draw[green, thick] 
(2,3) -- (2,-3)
(2,2) -- (0,2) -- (-2,0) -- (-3,0)
(2,-2) -- (0,-2) -- (-2,0)
(0,2) -- (0,-2)
(0,0) -- (2,0)
;
\draw[red, thick] 
(-3,1) -- (-2,0) to[out=15,in=-105] (0,2) -- (-1,3)
(0,2) -- (1,1.5) -- (2,2) -- (3,2)
(1,3) -- (2,2)
(1,1.5) -- (1,0.5) -- (2,0) -- (3,0)
(1,0.5) -- (0,0) -- (0.5,-1) -- (1.5,-1) -- (2,0)
(0,0) to[out=-120,in=120] (0,-2) -- (-1,-3)
(0,-2) -- (0.5,-1)
(1.5,-1) -- (2,-2) -- (3,-2)
(2,-2) -- (1,-3)
(-3,-1) -- (-2,0)
;
\draw[thick, fill=white] 
(-2,0) circle (2pt) 
(0,2) circle (2pt) 
(0,-2) circle (2pt) 
(0,0) circle (2pt) 
(2,2) circle (2pt) 
(2,0) circle (2pt) 
(2,-2) circle (2pt);
\draw[red, thick, fill]
(1,0.5) circle (2pt)
(1,1.5) circle (2pt)
(0.5,-1) circle (2pt)
(1.5,-1) circle (2pt)
;
\draw[thick,]
(3,3) -- (-1,3) to[out=180, in=90] (-3,1) to (-3,-1) to[out=-90,in=180] (-1,-3) to (7,-3) to[out=0, in=-90] (9,-1) to (9,1) to[out=90,in=0] (7,3) to (3,3) 
;
\end{scope}
\begin{scope}[xshift=6cm,rotate=180]
\draw[green, line width=5, opacity=0.5, line cap=round] 
(1,0.5) -- (1,1.5)
(0.5,-1) --(1.5,-1)
;
\draw[yellow, line width=5, opacity=0.5] 
(1,0.5) -- (2,0) --(1.5,-1)
(2,0) -- (3,0)
;
\draw[blue, thick] 
(2,3) -- (2,-3)
(2,2) -- (0,2) -- (-2,0) -- (-3,0)
(2,-2) -- (0,-2) -- (-2,0)
(0,2) -- (0,-2)
(0,0) -- (2,0)
;
\draw[red, thick] 
(-3,1) -- (-2,0) to[out=15,in=-105] (0,2) -- (-1,3)
(0,2) -- (1,1.5) -- (2,2) -- (3,2)
(1,3) -- (2,2)
(1,1.5) -- (1,0.5) -- (2,0) -- (3,0)
(1,0.5) -- (0,0) -- (0.5,-1) -- (1.5,-1) -- (2,0)
(0,0) to[out=-120,in=120] (0,-2) -- (-1,-3)
(0,-2) -- (0.5,-1)
(1.5,-1) -- (2,-2) -- (3,-2)
(2,-2) -- (1,-3)
(-3,-1) -- (-2,0)
;
\draw[thick, fill=white] 
(-2,0) circle (2pt) 
(0,2) circle (2pt) 
(0,-2) circle (2pt) 
(0,0) circle (2pt) 
(2,2) circle (2pt) 
(2,0) circle (2pt) 
(2,-2) circle (2pt);
\draw[red, thick, fill]
(1,0.5) circle (2pt)
(1,1.5) circle (2pt)
(0.5,-1) circle (2pt)
(1.5,-1) circle (2pt)
;
\end{scope}
\end{scope}
\end{tikzpicture}\arrow[d,"\mutation_{\nbasis_-}"]\\
\begin{tikzpicture}[baseline=-.5ex,scale=0.5,rotate=90]
\draw[blue,thick, rounded corners]
(1/3,3)-- (4/3,2) --(1,4/3)-- (2/3,2) -- (-1/3,3)
;
\draw[dashed, rounded corners]
(3,1.75) -- (0.25,1.75) -- (0.25, -1.75) -- (5.75,-1.75) -- (5.75,1.75) -- (3,1.75)
;
\begin{scope}[yscale=-1]
\draw[green, line width=5, opacity=0.5, line cap=round] 
(1,0.5) -- (1,1.5)
(0.5,-1) --(1.5,-1)
;
\draw[yellow, line width=5, opacity=0.5] 
(1,0.5) -- (2,0) --(1.5,-1)
(2,0) -- (3,0)
;
\draw[green, thick] 
(2,3) -- (2,-3)
(2,2) -- (0,2) -- (-2,0) -- (-3,0)
(2,-2) -- (0,-2) -- (-2,0)
(0,2) -- (0,-2)
(0,0) -- (2,0)
;
\draw[red, thick] 
(-3,1) -- (-2,0) to[out=15,in=-105] (0,2) (0,-2) -- (-1,-3)
(0,2) -- (1,1.5) -- (2,2) -- (3,2)
(1,3) -- (2,2)
(1,1.5) -- (1,0.5) -- (2,0) -- (3,0)
(1,0.5) -- (0,0) -- (0.5,-1) -- (1.5,-1) -- (2,0)
(0,0) to[out=-120,in=120] (0,-2) (0,2) -- (-1,3)
(0,-2) -- (0.5,-1)
(1.5,-1) -- (2,-2) -- (3,-2)
(2,-2) -- (1,-3)
(-3,-1) -- (-2,0)
;
\draw[thick, fill=white] 
(-2,0) circle (2pt) 
(0,2) circle (2pt) 
(0,-2) circle (2pt) 
(0,0) circle (2pt) 
(2,2) circle (2pt) 
(2,0) circle (2pt) 
(2,-2) circle (2pt);
\draw[red, thick, fill]
(1,0.5) circle (2pt)
(1,1.5) circle (2pt)
(0.5,-1) circle (2pt)
(1.5,-1) circle (2pt)
;
\draw[thick,]
(3,3) -- (-1,3) to[out=180, in=90] (-3,1) to (-3,-1) to[out=-90,in=180] (-1,-3) to (7,-3) to[out=0, in=-90] (9,-1) to (9,1) to[out=90,in=0] (7,3) to (3,3) 
;
\end{scope}
\begin{scope}[xshift=6cm,rotate=180,yscale=-1]
\draw[green, line width=5, opacity=0.5, line cap=round] 
(1,0.5) -- (1,1.5)
(0.5,-1) --(1.5,-1)
;
\draw[yellow, line width=5, opacity=0.5] 
(1,0.5) -- (2,0) --(1.5,-1)
(2,0) -- (3,0)
;
\draw[blue, thick] 
(2,3) -- (2,-3)
(2,2) -- (0,2) -- (-2,0) -- (-3,0)
(2,-2) -- (0,-2) -- (-2,0)
(0,2) -- (0,-2)
(0,0) -- (2,0)
;
\draw[red, thick] 
(-3,1) -- (-2,0) to[out=15,in=-105] (0,2) (0,-2) -- (-1,-3)
(0,2) -- (1,1.5) -- (2,2) -- (3,2)
(1,3) -- (2,2)
(1,1.5) -- (1,0.5) -- (2,0) -- (3,0)
(1,0.5) -- (0,0) -- (0.5,-1) -- (1.5,-1) -- (2,0)
(0,0) to[out=-120,in=120] (0,-2) (0,2) -- (-1,3)
(0,-2) -- (0.5,-1)
(1.5,-1) -- (2,-2) -- (3,-2)
(2,-2) -- (1,-3)
(-3,-1) -- (-2,0)
;
\draw[thick, fill=white] 
(-2,0) circle (2pt) 
(0,2) circle (2pt) 
(0,-2) circle (2pt) 
(0,0) circle (2pt) 
(2,2) circle (2pt) 
(2,0) circle (2pt) 
(2,-2) circle (2pt);
\draw[red, thick, fill]
(1,0.5) circle (2pt)
(1,1.5) circle (2pt)
(0.5,-1) circle (2pt)
(1.5,-1) circle (2pt)
;
\end{scope}
\end{tikzpicture}
& &
\begin{tikzpicture}[baseline=-.5ex,scale=0.5,rotate=90]
\draw[blue,thick, rounded corners]
(1/3,3)-- (4/3,2) --(1,4/3)-- (2/3,2) -- (-1/3,3)
;
\draw[dashed, rounded corners]
(3,1.75) -- (0.25,1.75) -- (0.25, -1.75) -- (5.75,-1.75) -- (5.75,1.75) -- (3,1.75)
;
\begin{scope}[yscale=-1]
\draw[green, line width=5, opacity=0.5, line cap=round] 
(1,0.5) -- (1,1.5)
(0.5,-1) --(1.5,-1)
;
\draw[yellow, line width=5, opacity=0.5] 
(1,0.5) -- (2,0) --(1.5,-1)
(2,0) -- (3,0)
;
\draw[green, thick] 
(2,3) -- (2,-3)
(2,2) -- (0,2) -- (-2,0) -- (-3,0)
(2,-2) -- (0,-2) -- (-2,0)
(0,2) -- (0,-2)
(0,0) -- (2,0)
;
\draw[red, thick] 
(-3,1) -- (-2,0) to[out=-15,in=105] (0,-2) -- (-1,-3)
(0,2) -- (1,1.5) -- (2,2) -- (3,2)
(1,3) -- (2,2)
(1,1.5) -- (1,0.5) -- (2,0) -- (3,0)
(1,0.5) -- (0,0) -- (0.5,-1) -- (1.5,-1) -- (2,0)
(0,0) to[out=120,in=-120] (0,2) -- (-1,3)
(0,-2) -- (0.5,-1)
(1.5,-1) -- (2,-2) -- (3,-2)
(2,-2) -- (1,-3)
(-3,-1) -- (-2,0)
;
\draw[thick, fill=white] 
(-2,0) circle (2pt) 
(0,2) circle (2pt) 
(0,-2) circle (2pt) 
(0,0) circle (2pt) 
(2,2) circle (2pt) 
(2,0) circle (2pt) 
(2,-2) circle (2pt);
\draw[red, thick, fill]
(1,0.5) circle (2pt)
(1,1.5) circle (2pt)
(0.5,-1) circle (2pt)
(1.5,-1) circle (2pt)
;
\draw[thick,]
(3,3) -- (-1,3) to[out=180, in=90] (-3,1) to (-3,-1) to[out=-90,in=180] (-1,-3) to (7,-3) to[out=0, in=-90] (9,-1) to (9,1) to[out=90,in=0] (7,3) to (3,3) 
;
\end{scope}
\begin{scope}[xshift=6cm,rotate=180,yscale=-1]
\draw[green, line width=5, opacity=0.5, line cap=round] 
(1,0.5) -- (1,1.5)
(0.5,-1) --(1.5,-1)
;
\draw[yellow, line width=5, opacity=0.5] 
(1,0.5) -- (2,0) --(1.5,-1)
(2,0) -- (3,0)
;
\draw[blue, thick] 
(2,3) -- (2,-3)
(2,2) -- (0,2) -- (-2,0) -- (-3,0)
(2,-2) -- (0,-2) -- (-2,0)
(0,2) -- (0,-2)
(0,0) -- (2,0)
;
\draw[red, thick] 
(-3,1) -- (-2,0) to[out=-15,in=105] (0,-2) -- (-1,-3)
(0,2) -- (1,1.5) -- (2,2) -- (3,2)
(1,3) -- (2,2)
(1,1.5) -- (1,0.5) -- (2,0) -- (3,0)
(1,0.5) -- (0,0) -- (0.5,-1) -- (1.5,-1) -- (2,0)
(0,0) to[out=120,in=-120] (0,2) -- (-1,3)
(0,-2) -- (0.5,-1)
(1.5,-1) -- (2,-2) -- (3,-2)
(2,-2) -- (1,-3)
(-3,-1) -- (-2,0)
;
\draw[thick, fill=white] 
(-2,0) circle (2pt) 
(0,2) circle (2pt) 
(0,-2) circle (2pt) 
(0,0) circle (2pt) 
(2,2) circle (2pt) 
(2,0) circle (2pt) 
(2,-2) circle (2pt);
\draw[red, thick, fill]
(1,0.5) circle (2pt)
(1,1.5) circle (2pt)
(0.5,-1) circle (2pt)
(1.5,-1) circle (2pt)
;
\end{scope}
\end{tikzpicture}
\end{tikzcd}
\caption{Legendrian Coxeter mutations for $(\ngraph({\exdynD}_4),\nbasis({\exdynD}_4))$.}
\label{figure:Legendrian Coxeter mutations for affine D4}
\end{figure}

\pagebreak
\smallskip
\noindent(2) $n=5$. As before, we perform the move $\mathrm{(Z)}$ near the cycle $\gamma_1$ instead of the mutation $\mutation_1$.
Then two adjacent cycles $\gamma_2$ and $\gamma_3$ become short $\sfI$-cycles, and so do two cycles $\gamma_5$ and $\gamma_6$ in other side.
After the mutations $\mutation_5$ and $\mutation_6$, the move $\mathrm{(Z)}$ near the cycle $\gamma_4$ is still applicable. Since the last move preserves short $\sfI$-cycles $\gamma_2$ and $\gamma_3$, one can easily take mutations there.
See Figure~\ref{figure:Legendrian Coxeter mutations for affine D5}.

\begin{figure}[H]
\begin{tikzcd}[column sep=2pc, row sep=2pc]
&\begin{tikzpicture}[baseline=-.5ex,scale=0.5,rotate=90]
\begin{scope}[xshift=-3cm]
\begin{scope}[xshift=-1cm]
\draw[yellow, line width=5,opacity=0.5] (-1,1)--(0,0) (-1,-1)--(0,0) (1,0)--(0,0);
\draw[green, line width=5,opacity=0.5] (-2,1)--(-1,1) (-1,-1)--(-1,-2) (1,0)--(2,0)--(3,1) (2,0)--(3,-1);
\draw[blue,thick, rounded corners] (-1.75, 3) -- (-1.5, 2.5) -- (-1.25, 3);
\draw[thick] (1, 3) -- (-2,3) to[out=180,in=90] (-3,2) --(-3,-2) to[out=-90,in=180] (-2, -3) -- (4,-3) to[out=0,in=-90] (5,-2) -- (5,2) to[out=90,in=0] (4,3)-- (1,3);
\draw[green, thick] (0,3) -- (0,-3) (0,0) -- (-3,0);
\draw[red,thick, fill] (0,0) -- (-1,1) circle (2pt) -- +(0,2) (-1,1) -- ++(-1,0) circle (2pt) -- +(-1,0) (-2,1) -- +(0,2);
\draw[red,thick, fill] (0,0) -- (-1,-1) circle (2pt) -- +(-2,0) (-1,-1) -- ++(0,-1) circle (2pt) -- +(0,-1) (-1,-2) -- +(-2,0);
\draw[red,thick] (0,0) -- (1,0);
\draw[thick, fill=white] (0,0) circle (2pt);
\end{scope}
\draw[red, thick, fill] (0,0) circle (2pt) -- (0,-3);
\begin{scope}[xshift=1cm, yscale=-1,rotate=180]
\draw[yellow, line width=5, opacity=0.5] (-1,-1) -- (-1,-2) (-1,1) -- (-2,1);
\draw[blue, thick] (0,3) -- (0,-3) (0,0) -- (-3,0);
\draw[red,thick, fill] (0,0) -- (-1,1) circle (2pt) -- +(0,2) (-1,1) -- ++(-1,0) circle (2pt) -- +(-1,0) (-2,1) -- +(0,2);
\draw[red,thick, fill] (0,0) -- (-1,-1) circle (2pt) -- +(-2,0) (-1,-1) -- ++(0,-1) circle (2pt) -- +(0,-1) (-1,-2) -- +(-2,0);
\draw[red,thick] (0,0) -- (1,0);
\draw[thick, fill=white] (0,0) circle (2pt);
\end{scope}
\end{scope}
\end{tikzpicture}
\arrow[dr,bend left,"\mu_{\nbasis_+}"]
\arrow[dl,bend right,"\mu_{\nbasis_-}"'] 
\\[-5pc]
\begin{tikzpicture}[baseline=-.5ex,xscale=-0.5,yscale=0.5,rotate=-90]
\begin{scope}[xshift=-4cm]
\begin{scope}
\draw[yellow, line width=5, opacity=0.5, line cap=round] 
(1,0.5) -- (1,1.5)
(0.5,-1) --(1.5,-1)
;
\draw[green, line width=5, opacity=0.5] 
(1,0.5) -- (2,0) --(1.5,-1)
(2,0) -- (3,0)
;
\draw[blue,thick, rounded corners] (5.75, 3) -- (5.5, 2.5) -- (5.25, 3);
\draw[blue, thick] 
(2,3) -- (2,-3)
(2,2) -- (0,2) -- (-2,0) -- (-3,0)
(2,-2) -- (0,-2) -- (-2,0)
(0,2) -- (0,-2)
(0,0) -- (2,0)
;
\draw[red, thick] 
(-3,1) -- (-2,0) to[out=15,in=-105] (0,2) -- (-1,3)
(0,2) -- (1,1.5) -- (2,2)
(1,3) -- (2,2)
(1,1.5) -- (1,0.5) -- (2,0) -- (3,0)
(1,0.5) -- (0,0) -- (0.5,-1) -- (1.5,-1) -- (2,0)
(0,0) to[out=-120,in=120] (0,-2) -- (-1,-3)
(0,-2) -- (0.5,-1)
(1.5,-1) -- (2,-2)
(2,-2) -- (1,-3)
(-3,-1) -- (-2,0)
;
\draw[red, thick, rounded corners]
(2,2) -- (3,2) -- (3,0)
(2,-2) -- (3,-2) -- (3,-3)
;
\draw[thick, fill=white] 
(-2,0) circle (2pt) 
(0,2) circle (2pt) 
(0,-2) circle (2pt) 
(0,0) circle (2pt) 
(2,2) circle (2pt) 
(2,0) circle (2pt) 
(2,-2) circle (2pt);
\draw[red, thick, fill]
(1,0.5) circle (2pt)
(1,1.5) circle (2pt)
(0.5,-1) circle (2pt)
(1.5,-1) circle (2pt)
(3,0) circle (2pt)
;
\draw[thick]
(3,3) -- (-2,3) to[out=180, in=90] (-3,2) to (-3,-2) to[out=-90,in=180] (-2,-3) to (6,-3) to[out=0, in=-90] (7,-2) to (7,2) to[out=90,in=0] (6,3) to (3,3) 
;
\end{scope}
\draw[yellow, opacity=0.5, line width=5] (3,0)--(4,0)--(5,1) (4,0)--(5,-1);
\begin{scope}[xshift=4cm,rotate=180]
\draw[green, line width=5, opacity=0.5] (-1,-1) -- (-1,-2) (-1,1) -- (-2,1);
\draw[green, thick] (0,3) -- (0,-3) (0,0) -- (-3,0);
\draw[red,thick, fill] (0,0) -- (-1,1) circle (2pt) -- +(0,2) (-1,1) -- ++(-1,0) circle (2pt) -- +(-1,0) (-2,1) -- +(0,2);
\draw[red,thick, fill] (0,0) -- (-1,-1) circle (2pt) -- +(-2,0) (-1,-1) -- ++(0,-1) circle (2pt) -- +(0,-1) (-1,-2) -- +(-2,0);
\draw[red,thick] (0,0) -- (1,0);
\draw[thick, fill=white] (0,0) circle (2pt);
\end{scope}
\end{scope}
\end{tikzpicture}
\arrow[d,"\mu_{\nbasis_+}"'] & &
\begin{tikzpicture}[baseline=-.5ex,scale=0.5,rotate=90]
\begin{scope}
\draw[green, line width=5, opacity=0.5, line cap=round] 
(1,0.5) -- (1,1.5)
(0.5,-1) --(1.5,-1)
;
\draw[yellow, line width=5, opacity=0.5] 
(1,0.5) -- (2,0) --(1.5,-1)
(2,0) -- (3,0)
;
\draw[blue,thick, rounded corners]
(1/3,3)-- (2/3,8/3) --(1/3,7/3)-- (0,8/3) -- (-1/3,3)
;
\draw[green, thick] 
(2,3) -- (2,-3)
(2,2) -- (0,2) -- (-2,0) -- (-3,0)
(2,-2) -- (0,-2) -- (-2,0)
(0,2) -- (0,-2)
(0,0) -- (2,0)
;
\draw[red, thick] 
(-3,1) -- (-2,0) to[out=15,in=-105] (0,2) -- (-1,3)
(0,2) -- (1,1.5) -- (2,2)
(1,3) -- (2,2)
(1,1.5) -- (1,0.5) -- (2,0) -- (3,0)
(1,0.5) -- (0,0) -- (0.5,-1) -- (1.5,-1) -- (2,0)
(0,0) to[out=-120,in=120] (0,-2) -- (-1,-3)
(0,-2) -- (0.5,-1)
(1.5,-1) -- (2,-2)
(2,-2) -- (1,-3)
(-3,-1) -- (-2,0)
;
\draw[red, thick, rounded corners]
(2,2) -- (3,2) -- (3,0)
(2,-2) -- (3,-2) -- (3,-3)
;
\draw[thick, fill=white] 
(-2,0) circle (2pt) 
(0,2) circle (2pt) 
(0,-2) circle (2pt) 
(0,0) circle (2pt) 
(2,2) circle (2pt) 
(2,0) circle (2pt) 
(2,-2) circle (2pt);
\draw[red, thick, fill]
(1,0.5) circle (2pt)
(1,1.5) circle (2pt)
(0.5,-1) circle (2pt)
(1.5,-1) circle (2pt)
(3,0) circle (2pt)
;
\draw[thick]
(3,3) -- (-2,3) to[out=180, in=90] (-3,2) to (-3,-2) to[out=-90,in=180] (-2,-3) to (6,-3) to[out=0, in=-90] (7,-2) to (7,2) to[out=90,in=0] (6,3) to (3,3) 
;
\end{scope}
\draw[green, opacity=0.5, line width=5] (3,0)--(4,0)--(5,1) (4,0)--(5,-1);
\begin{scope}[xshift=4cm,rotate=180]
\draw[yellow, line width=5, opacity=0.5] (-1,-1) -- (-1,-2) (-1,1) -- (-2,1);
\draw[blue, thick] (0,3) -- (0,-3) (0,0) -- (-3,0);
\draw[red,thick, fill] (0,0) -- (-1,1) circle (2pt) -- +(0,2) (-1,1) -- ++(-1,0) circle (2pt) -- +(-1,0) (-2,1) -- +(0,2);
\draw[red,thick, fill] (0,0) -- (-1,-1) circle (2pt) -- +(-2,0) (-1,-1) -- ++(0,-1) circle (2pt) -- +(0,-1) (-1,-2) -- +(-2,0);
\draw[red,thick] (0,0) -- (1,0);
\draw[thick, fill=white] (0,0) circle (2pt);
\end{scope}
\end{tikzpicture}
\arrow[d,"\mu_{\nbasis_-}"]\\
\begin{tikzpicture}[baseline=-.5ex,xscale=-0.5,yscale=0.5,rotate=-90]
\begin{scope}[xshift=-7cm]
\draw[blue,thick, rounded corners]
(7-1/3,3)-- (7-4/3,2) --(7-1,4/3)-- (7-2/3,2) -- (7+1/3,3)
;
\draw[dashed, rounded corners]
(3,1.75) -- (0.25,1.75) -- (0.25, -1.75) -- (6.75,-1.75) -- (6.75,1.75) -- (3,1.75)
;
\begin{scope}[yscale=-1]
\draw[yellow, line width=5, opacity=0.5, line cap=round] 
(1,0.5) -- (1,1.5)
(0.5,-1) --(1.5,-1)
;
\draw[green, line width=5, opacity=0.5] 
(1,0.5) -- (2,0) --(1.5,-1)
(2,0) -- (4,0)
;
\draw[blue, thick] 
(2,3) -- (2,-3)
(2,2) -- (0,2) -- (-2,0) -- (-3,0)
(2,-2) -- (0,-2) -- (-2,0)
(0,2) -- (0,-2)
(0,0) -- (2,0)
;
\draw[red, thick] 
(-3,1) -- (-2,0) to[out=-15,in=105] (0,-2) -- (-1,-3)
(0,2) -- (1,1.5) -- (2,2)
(1,3) -- (2,2)
(1,1.5) -- (1,0.5) -- (2,0) -- (4,0)
(1,0.5) -- (0,0) -- (0.5,-1) -- (1.5,-1) -- (2,0)
(0,0) to[out=120,in=-120] (0,2) -- (-1,3)
(0,-2) -- (0.5,-1)
(1.5,-1) -- (2,-2)--(4,-2)
(2,-2) -- (1,-3)
(-3,-1) -- (-2,0)
;
\draw[red, thick, rounded corners]
(2,2) -- (3,2) -- (3,3)
;
\draw[thick, fill=white] 
(-2,0) circle (2pt) 
(0,2) circle (2pt) 
(0,-2) circle (2pt) 
(0,0) circle (2pt) 
(2,2) circle (2pt) 
(2,0) circle (2pt) 
(2,-2) circle (2pt);
\draw[red, thick, fill]
(1,0.5) circle (2pt)
(1,1.5) circle (2pt)
(0.5,-1) circle (2pt)
(1.5,-1) circle (2pt)
;
\draw[thick,]
(3,3) -- (-1,3) to[out=180, in=90] (-3,1) to (-3,-1) to[out=-90,in=180] (-1,-3) to (8,-3) to[out=0, in=-90] (10,-1) to (10,1) to[out=90,in=0] (8,3) to (3,3) 
;
\end{scope}

\begin{scope}[xshift=7cm,rotate=180]
\draw[green, line width=5, opacity=0.5, line cap=round] 
(1,0.5) -- (1,1.5)
(0.5,-1) --(1.5,-1)
;
\draw[yellow, line width=5, opacity=0.5] 
(1,0.5) -- (2,0) --(1.5,-1)
(2,0) -- (3,0)
;
\draw[green, thick] 
(2,3) -- (2,-3)
(2,2) -- (0,2) -- (-2,0) -- (-3,0)
(2,-2) -- (0,-2) -- (-2,0)
(0,2) -- (0,-2)
(0,0) -- (2,0)
;
\draw[red, thick] 
(-3,1) -- (-2,0) to[out=15,in=-105] (0,2) -- (-1,3)
(0,2) -- (1,1.5) -- (2,2)
(1,3) -- (2,2)
(1,1.5) -- (1,0.5) -- (2,0) -- (3,0)
(1,0.5) -- (0,0) -- (0.5,-1) -- (1.5,-1) -- (2,0)
(0,0) to[out=-120,in=120] (0,-2) -- (-1,-3)
(0,-2) -- (0.5,-1)
(1.5,-1) -- (2,-2) -- (3,-2)
(2,-2) -- (1,-3)
(-3,-1) -- (-2,0);
\draw[red, thick, rounded corners]
(2,2) -- (3,2) -- (3,0)
;
\draw[thick, fill=white] 
(-2,0) circle (2pt) 
(0,2) circle (2pt) 
(0,-2) circle (2pt) 
(0,0) circle (2pt) 
(2,2) circle (2pt) 
(2,0) circle (2pt) 
(2,-2) circle (2pt);
\draw[red, thick, fill]
(1,0.5) circle (2pt)
(1,1.5) circle (2pt)
(0.5,-1) circle (2pt)
(1.5,-1) circle (2pt)
(3,0) circle (2pt)
;
\end{scope}
\end{scope}
\end{tikzpicture} 
& & \begin{tikzpicture}[baseline=-.5ex,scale=0.5,rotate=90]
\draw[blue,thick, rounded corners]
(1/3,3)-- (4/3,2) --(1,4/3)-- (2/3,2) -- (-1/3,3)
;
\draw[dashed, rounded corners]
(3,1.75) -- (0.25,1.75) -- (0.25, -1.75) -- (6.75,-1.75) -- (6.75,1.75) -- (3,1.75)
;
\begin{scope}[yscale=-1]
\draw[green, line width=5, opacity=0.5, line cap=round] 
(1,0.5) -- (1,1.5)
(0.5,-1) --(1.5,-1)
;
\draw[yellow, line width=5, opacity=0.5] 
(1,0.5) -- (2,0) --(1.5,-1)
(2,0) -- (4,0)
;
\draw[green, thick] 
(2,3) -- (2,-3)
(2,2) -- (0,2) -- (-2,0) -- (-3,0)
(2,-2) -- (0,-2) -- (-2,0)
(0,2) -- (0,-2)
(0,0) -- (2,0)
;
\draw[red, thick] 
(-3,1) -- (-2,0) to[out=-15,in=105] (0,-2) -- (-1,-3)
(0,2) -- (1,1.5) -- (2,2)
(1,3) -- (2,2)
(1,1.5) -- (1,0.5) -- (2,0) -- (4,0)
(1,0.5) -- (0,0) -- (0.5,-1) -- (1.5,-1) -- (2,0)
(0,0) to[out=120,in=-120] (0,2) -- (-1,3)
(0,-2) -- (0.5,-1)
(1.5,-1) -- (2,-2)--(4,-2)
(2,-2) -- (1,-3)
(-3,-1) -- (-2,0)
;
\draw[red, thick, rounded corners]
(2,2) -- (3,2) -- (3,3)
;
\draw[thick, fill=white] 
(-2,0) circle (2pt) 
(0,2) circle (2pt) 
(0,-2) circle (2pt) 
(0,0) circle (2pt) 
(2,2) circle (2pt) 
(2,0) circle (2pt) 
(2,-2) circle (2pt);
\draw[red, thick, fill]
(1,0.5) circle (2pt)
(1,1.5) circle (2pt)
(0.5,-1) circle (2pt)
(1.5,-1) circle (2pt)
;
\draw[thick,]
(3,3) -- (-1,3) to[out=180, in=90] (-3,1) to (-3,-1) to[out=-90,in=180] (-1,-3) to (8,-3) to[out=0, in=-90] (10,-1) to (10,1) to[out=90,in=0] (8,3) to (3,3) 
;
\end{scope}

\begin{scope}[xshift=7cm,rotate=180]
\draw[yellow, line width=5, opacity=0.5, line cap=round] 
(1,0.5) -- (1,1.5)
(0.5,-1) --(1.5,-1)
;
\draw[green, line width=5, opacity=0.5] 
(1,0.5) -- (2,0) --(1.5,-1)
(2,0) -- (3,0)
;
\draw[blue, thick] 
(2,3) -- (2,-3)
(2,2) -- (0,2) -- (-2,0) -- (-3,0)
(2,-2) -- (0,-2) -- (-2,0)
(0,2) -- (0,-2)
(0,0) -- (2,0)
;
\draw[red, thick] 
(-3,1) -- (-2,0) to[out=15,in=-105] (0,2) -- (-1,3)
(0,2) -- (1,1.5) -- (2,2)
(1,3) -- (2,2)
(1,1.5) -- (1,0.5) -- (2,0) -- (3,0)
(1,0.5) -- (0,0) -- (0.5,-1) -- (1.5,-1) -- (2,0)
(0,0) to[out=-120,in=120] (0,-2) -- (-1,-3)
(0,-2) -- (0.5,-1)
(1.5,-1) -- (2,-2) -- (3,-2)
(2,-2) -- (1,-3)
(-3,-1) -- (-2,0);
\draw[red, thick, rounded corners]
(2,2) -- (3,2) -- (3,0)
;
\draw[thick, fill=white] 
(-2,0) circle (2pt) 
(0,2) circle (2pt) 
(0,-2) circle (2pt) 
(0,0) circle (2pt) 
(2,2) circle (2pt) 
(2,0) circle (2pt) 
(2,-2) circle (2pt);
\draw[red, thick, fill]
(1,0.5) circle (2pt)
(1,1.5) circle (2pt)
(0.5,-1) circle (2pt)
(1.5,-1) circle (2pt)
(3,0) circle (2pt)
;
\end{scope}
\end{tikzpicture}
\end{tikzcd}
\caption{Legendrian Coxeter mutations for $(\ngraph({\exdynD}_5,\nbasis({\exdynD}_5))$.}
\label{figure:Legendrian Coxeter mutations for affine D5}
\end{figure}

\smallskip
\noindent(3) $n\ge 6$. All other cases are essentially the same as above. More precisely, two $\mathrm{(Z)}$-moves happen simultaneously or sequentially according to the parity of $n$.
Since the move $\mathrm{(Z)}$ preserves the types of cycles such as $\sfI$ and $\sfY$, there are no obstructions to take mutations. See Figures~\ref{figure:Legendrian Coxeter mutations for affine D even} and \ref{figure:Legendrian Coxeter mutations for affine D odd}.

\begin{figure}[H]
\begin{tikzcd}[row sep=0.5pc]

\end{tikzcd}
\caption{Legendrian Coxeter mutation $\mutation_\ngraph^{\pm1}$ for $(\ngraph({\exdynD}_{2\ell+5},\nbasis({\exdynD}_{2\ell+5}))$.}
\label{figure:Legendrian Coxeter mutations for affine D odd}
\end{figure}

\end{document}